\numberwithin{equation}{section}
\theoremstyle{plain}
\newtheorem{theorem}{Theorem}[section]
\newtheorem{claim}[theorem]{Claim}
\newtheorem{lemma}[theorem]{Lemma}
\newtheorem{conj}[theorem]{Conjecture}
\newtheorem{proposition}[theorem]{Proposition}
\newtheorem{corollary}[theorem]{Corollary}
\theoremstyle{definition}
\newtheorem{definition}[theorem]{Definition}
\newtheorem*{definition*}{Definition}
\newtheorem{remark}{Remark}[section]
\newcommand{\ind}{{\sf 1}}
\newcommand{\bP}{\mathbf{P}}
\newcommand{\bE}{\mathbf{E}}
\newcommand{\bbP}{\mathbb{P}}
\newcommand{\bbE}{\mathbb{E}}
\newcommand{\R}{\mathbb{R}}
\newcommand{\bbN}{\mathbb{N}}
\newcommand{\N}{\mathbb{N}}
\newcommand{\Var}{\mathbb{V}\mathrm{ar}}
\newcommand{\cA}{{\ensuremath{\mathcal A}} }
\newcommand{\cF}{{\ensuremath{\mathcal F}} }
\newcommand{\cC}{{\ensuremath{\mathcal C}} }
\newcommand{\cN}{{\ensuremath{\mathcal N}} }
\newcommand{\cL}{{\ensuremath{\mathcal L}} }
\newcommand{\cZ}{{\ensuremath{\mathcal Z}} }
\newcommand{\cM}{{\ensuremath{\mathcal M}} }
\newcommand{\cX}{\mathcal{X}}
\newcommand{\cY}{\mathcal{Y}}
\renewcommand{\epsilon}{\varepsilon}
\renewcommand{\phi}{\varphi}
\newcommand{\ga}{\alpha}
\newcommand{\gb}{\beta}
\newcommand{\gga}{\gamma}            
\newcommand{\gd}{\delta}
\newcommand{\eps}{\varepsilon}      
\newcommand{\gk}{\kappa}
\newcommand{\gl}{\lambda}
\newcommand{\gs}{\sigma}
\newcommand{\gth}{\theta}
\newcommand{\gU}{\Upsilon}
\newcommand{\gh}{\eta}
\newcommand{\dd}{\mathrm{d}}
\renewcommand{\preceq}{\preccurlyeq}		
\renewcommand{\succeq}{\succcurlyeq}		
\renewcommand{\hat}{\widehat}
\renewcommand{\tilde}{\widetilde}
\newcommand{\ol}{\overline}
\newcommand{\ul}{\underline}
\newcommand{\mueps}{\mu_{\eps}}
\newcommand{\llbis}{\leq_{\gth}}
\newcommand{\ggbis}{\geq_{\gth}}
\newcommand{\Ai}{\ensuremath{\mathrm{Ai}}}
\newcommand{\Bi}{\ensuremath{\mathrm{Bi}}}
\newcommand{\rma}{\mathrm{a}}
\newcommand{\bbT}{\mathbb{T}}
\newcommand{\supercrit}{{\mathrm{sup}}}
\newcommand{\crit}{{\mathrm{crit}}}
\newcommand{\subcrit}{{\mathrm{sub}}}
\newcommand{\supdecr}{ {\mathrm{sup}\text{-}\mathrm{d}} }
\newcommand{\shifted}{{\mathrm{red}}}
\newcommand{\twhite}{{\mathrm{white}}}
\newcommand{\tred}{{\mathrm{red}}}
\newcommand{\anyrg}{{\ensuremath{\ast}}}
\newcommand{\shft}{{\ensuremath{Q_T}}}
\newcommand{\shftend}[1]{{\ensuremath{Q_T^{#1}}}}
\newcommand{\gsens}{\ensuremath{\mathcal{S}_{\gh}}}
\newcommand{\gsensrg}{\ensuremath{{\gsens^{\anyrg}}}}
\newcommand{\rgens}{\{\supercrit,\crit,\subcrit\}}
\newcommand{\meas}{\mathtt{C}}
\newcommand{\measN}{\mathtt{C}_N}
\newcommand{\crem}{{\mathtt{CREM}}}
\newcommand{\ncrem}{{N\text{-}\mathtt{CREM}}}
\newcommand{\Ber}{\mathrm{Ber}}
\newcommand{\nat}{\mathrm{nat}}
\title[Time-inhomogeneous N-BBM]{Time-inhomogeneous \texorpdfstring{$N$}{N}-particle Branching Brownian Motion and the continuous random energy model}
\author[A. Legrand]{Alexandre Legrand}
\address{Department of Mathematics, Università degli Studi di Padova and Istituto Nazionale di Alta Matematica.}
\author[P. Maillard]{Pascal Maillard}
\address{Institut de Mathématiques de Toulouse, Université de Toulouse and Institut Universitaire de France.}
\email{legrand@math.unipd.it; pascal.maillard@math.univ-toulouse.fr}
\keywords{Branching Brownian motion, branching random walk, time-inhomogeneous diffusion, algorithmic lower bounds, selection, beam search, Airy functions}
\subjclass[2020]{Primary:  60J80, 68Q17, 82C21 ; Secondary: 60J70, 92D25, 60K35.}
\begin{document}
\begin{abstract}
The $N$-particle branching Brownian motion ($N$-BBM) is a branching Markov process which describes the evolution of a population of particles undergoing reproduction and selection. It has attracted a lot of interest due to its relations to the study of front propagation phenomena on the one hand, and to (hierarchical) physical $p$-spin models on the other hand, among which the continuous random energy model (CREM). This paper investigates the asymptotic displacement of the $N$-BBM in a time-inhomogeneous setting, and when the time horizon $T$ and the number of particles $N$ jointly tend to infinity. We estimate the maximal displacement of the process up to the second order, and show that the latter undergoes a \emph{transition} at the scale $\log N\approx T^{1/3}$. In particular when $\log N\ll T^{1/3}$ we recover the \emph{Brunet-Derrida behavior} which was proven in a time-homogeneous setting and for $T\to+\infty$ then $N\to+\infty$. Furthermore, our results can also be interpreted from the perspective of \emph{algorithmic optimisation} on some spin glass models, since the time-inhomogeneous $N$-BBM can be seen as the realization of an optimization procedure called \emph{beam search} on the aforementioned CREM. The CREM has been proven by L.~Addario-Berry and the second author to undergo an \emph{algorithm hardness threshold} phenomenon, and the results of the present paper describe precisely the efficiency of the beam search algorithm around that threshold.
\end{abstract}
\maketitle
\tableofcontents
	
\section{Introduction and main results}\label{sec:intro}
\subsection{Branching Brownian motion and $N$-BBM}
	
	The \emph{branching Brownian motion} (BBM) can be described as follows. At time $t=0$ we consider a (non-empty) initial configuration of particles on the real line, which all start moving as standard Brownian motions until some exponentially distributed random times with parameter $\gb_0$. All those movements and exponential ``clocks'' are taken independently from one another. When one of the exponential clocks rings, the corresponding particle splits into a random number $\xi\geq2$ of new ones at its location. Then, those particles start evolving the same way, independently, with their own exponential clocks. Following a generalization first introduced in~\cite{DS88}, in this paper we will be interested in BBM's with \emph{time-inhomogeneous} motion. More precisely, let $\gs:[0,1]\to (0,\infty)$ be a smooth function (twice continuously differentiable is enough): then for some fixed final time $T>0$, we assume that, at time $t\in[0,T]$, the infinitesimal variance of all the Brownian motions involved in the construction is given by $\gs^2(t/T)$. 
	
The BBM has generated a lot of interest in the last decades, notably due to its relation to the study of \emph{front propagation phenomena}, see Section~\ref{sec:motivation:NBBM} below and~\cite{BD97, BDMM06} in the time-homogeneous setting. Following the ideas therein, we define the time-inhomogeneous $N$-\emph{particle branching Brownian motion} ($N$-BBM) by adding the following \emph{selection} mechanism to the time-inhomogeneous BBM: we start from an initial configuration containing at most $N$ particles, $N\in\N$. At any time of a splitting event, we only keep the $N$ particles at the highest positions. We denote by $\cX^{N}_T$ the particle configuration of the $N$-BBM at time $T$, seen as a (finite) counting measure on $\R$ (full formal notation and construction of the $N$-BBM are presented more extensively below). We write $\max(\cX^{N}_T)$ for the maximal displacement of the process at time $T$, i.e. the position of the highest living particle from the $N$-BBM. Similarly, $\cX_T$ denotes the particle configuration of the BBM (without selection) at time $T$, and $\max(\cX_T)$ the maximal displacement of the BBM.
	
	The maximum displacement of the BBM has been extensively investigated in the literature, both for the time-homogeneous and inhomogeneous cases, see~\cite{Bra83, Bra78, DS88,McK75} or more recently~\cite{Aid13, MZ16, Mal15} among other works. On the other hand, studying the maximum of the homogeneous $N$-BBM is a more difficult matter: it was first done in~\cite{BD97} with heuristic methods, and later in \cite{Mai16} (see also \cite{BG10}). 
	This paper, to our knowledge, is the first to investigate the time-inhomogeneous $N$-BBM. More precisely, we study its asymptotic displacement, including the maximum position at time $T$.  See Figure~\ref{fig:NBBM:simu1} for a simulation.
	
	\begin{figure}[ht]
		\centering
		\includegraphics[width=0.75 \textwidth]{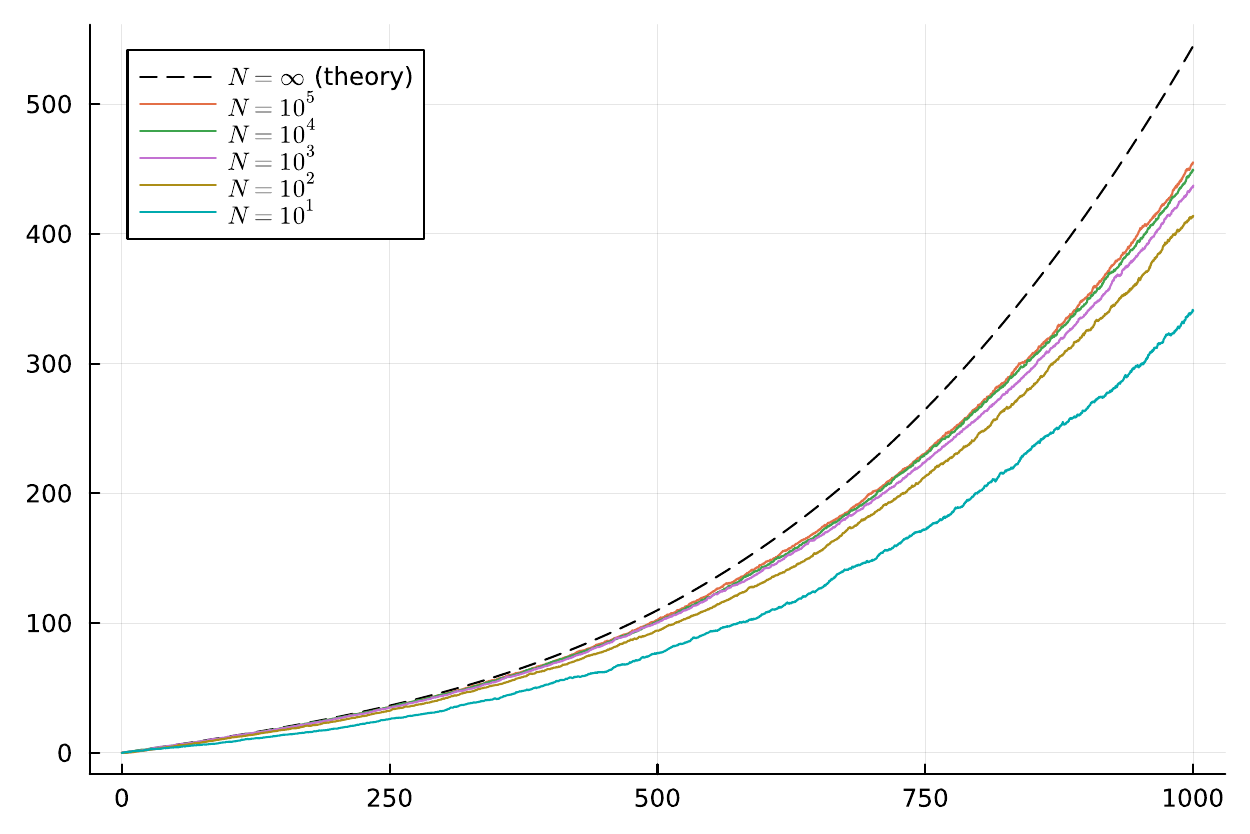}
		\caption{Running maximum of simulations of time-inhomogeneous $N$-BBM with varying $N$. Parameters: $T = 1000$, $\sigma(t) = 0.125+t^2$. The dashed curve corresponds to the theoretical position of the running maximum of the time-inhomogeneous BBM without selection ($N=\infty$), which is attained up to random $O(1)$ fluctuations, see \cite{BH15}.}
		\label{fig:NBBM:simu1}
	\end{figure}
	
	Moreover, in this paper we will be interested in the case where $N$ depends on $T$, with $N = N(T)\to\infty$ as $T\to\infty$. More precisely, we define
	\begin{equation}\label{eq:defL}
		L(T)\;:=\;\log N(T)\,,
	\end{equation}
	and we shall consider the regime where $1 \ll \log N(T) \ll T$. Note that the original BBM formally corresponds to $N=+\infty$. We will discuss in Sections~\ref{sec:crem} and~\ref{sec:beam-search} below why this setting has natural motivations and applications in the field of \emph{algorithmic optimisation} on some \emph{spin glass} models, beyond being interesting for the $N$-BBM in its own right. 
	
	\begin{remark}
		We stick to the convention that $N$ is chosen as a function of $T$, but we could also take $T$ as a function of $N$ or $N$ and $T$ as a function of a third parameter such that $N$ and $T$ both go to infinity. The results can be adapted straightforwardly.
	\end{remark}

	\subsection{Statement of results on the $N$-BBM}
	\subsubsection*{Notations}
	Throughout this paper, $\cC^2([0,1])$ denotes the set of $\cC^2$ functions from $[0,1]$ to $(0,+\infty)$ (in particular they are positive). Let the infinitesimal variance of the $N(T)$-BBM be given by $\gs^2(t/T)$, $t\in[0,T]$, for some $\gs\in\cC^2([0,1])$. Define
	\begin{equation}\label{eq:defv}
		v(s) \;=\; \int_0^s \sigma(s)\,\dd s\;,\qquad s\in[0,1],
	\end{equation}
	which is called the \emph{natural speed} of the BBM. Recall that $\xi$ denotes the offspring distribution of particles, and $\gb_0$ denotes the branching rate: in this paper we assume $\bbE[\xi^2]<+\infty$, and, using the Brownian scaling property, one can assume without loss of generality that $\gb_0=(2(\bbE[\xi]-1))^{-1}$.
	
	Let $\meas$ denote the set of all finite particle configurations, i.e. all finite counting measures on $\R$; and for $N\geq1$, let $\measN:=\{\mu\in\meas,\,\mu(\R)\leq N\}$. For $\mu\in\meas$ (resp. $\mu\in\measN$), the law of the BBM (resp. $N$-BBM) starting from the initial configuration $\mu$ will be denoted by $\bbP_{\mu}$ (we use the same notation for both, and what branching process is considered at any given time will always be clear from context). 
	
	For any $\mu\in\meas_{N(T)}$, define
	\begin{align}\label{eq:defshift}
		\shft(\mu)\;:=\; &\sup\big\{q\in\R\,\big|\, \exists \gk\in[0,1]: \mu\big([q-\gk\gs(0)L(T),+\infty)\big)\,\geq\, N(T)^{\gk}\big\}\\
		\notag =\;&\inf\big\{q\in\R\,\big|\, \forall \gk\in[0,1],\, \mu\big([q-\gk\gs(0)L(T),+\infty)\big)\,<\, N(T)^{\gk}\big\}\,.
	\end{align}
	We will see below that this term determines how the choice of the initial configuration $\mu\in\meas_{N(T)}$ reverberates on the displacement of the $N(T)$-BBM after a long time (see Section~\ref{sec:coupling:mainprops} for more details). It can be interpreted as some kind of ``entropy-position trade-off'' for the initial configuration: for instance, if the process starts from $M\in\{1,\ldots,N(T)\}$ particles all located at $y\in\R$, then the maximum displacement of the $N(T)$-BBM after a long time will be shifted approximately by $\shft(M \gd_{y})=y+\gs(0)\log M$.
	
	Let $\Ai$ and $\Bi$ denote respectively the Airy functions of first and second kind, and define
	\begin{equation}\label{def:Psi}
		\Psi(q) \;:=\; \frac{q^{2/3}}{2^{1/3}} \sup\big\{\gl\leq0\,\big|\, \Ai(\gl)\Bi(\gl+(2q)^{1/3})=\Ai(\gl+(2q)^{1/3})\Bi(\gl)\big\}\,<\,0\;,\quad q>0\;,
	\end{equation}
	and $\Psi(-q):=q+\Psi(q)$ for $-q<0$; and finally $\Psi(0):=-\frac{\pi^2}{2}$. It is proven in~\cite[Lemmata 1.7, A.4]{Mal15} that $q\mapsto \Psi(q)$ is well-posed and convex (hence continuous) on $\R$. Moreover, let  $\rma_1$ denotes the absolute value of the largest root of $\Ai$ (i.e. $\rma_1= 2.33811\ldots$); then one has $\Psi(q)\sim-\frac{\rma_1 q^{2/3}}{2^{1/3}}$ as $q\to+\infty$.
	
	Let us denote the positive and negative parts of a real number with
	\begin{equation}
		(x)^+\,:=\, x\ind_{\{x\geq0\}}\;,\qquad   (x)^-\,:=\, -x\ind_{\{x\leq0\}}\;,\qquad \forall\,x\in\R\,,
	\end{equation}
	and, for a function $f:\R\to\R$, let us write similarly $(f)^+(x):=(f(x))^+$ and $(f)^-(x):=(f(x))^-$, $x\in\R$. Furthermore, $o_{\bbP}(f(T))$ denotes a random quantity which, when divided by $f(T)$, converges to 0 in $\bbP_{\mu_T}$-probability as $T\to+\infty$. Finally, we say that a function $\gs\in\cC^2([0,1])$ \emph{changes its monotonicity finitely many times} if there exists $u_0=0<u_1<\ldots<u_p=1$ such that $\gs$ is monotonic (i.e. non-increasing or non-decreasing) on each $[u_{i-1},u_i]$, $1\leq i\leq p$. 
	
	\subsubsection*{Main result: asymptotic of the maximum}
	We now state the main result of this paper. 
	\begin{theorem}\label{thm:main} Let $\gs\in\cC^2([0,1])$, let $L(T)=\log N(T)\to+\infty$ as $T\to+\infty$, and denote by $\cX_T^{N(T)}$ the empirical measure on $\R$ at time $T>0$ of a $N(T)$-BBM with infinitesimal variance $\gs^2(\cdot/T)$, started from some initial configuration $\mu_T\in\meas_{N(T)}$, $T\geq0$. Let $\max(\cX_T^{N(T)})$ denote the maximal displacement of the process at time $T$. We have the following.
		
		$(i)$ \emph{(Sub-critical regime)} Assume $1\ll L(T)\ll T^{1/3}$. Then,
		\begin{equation}\label{eq:thm:subcrit}
			\max(\cX_T^{N(T)}) = \shft(\mu_T) + v(1)T\left(1 - \frac{\pi^2}{2L(T)^2}\right) + o_{\bbP}\left(\frac{T}{L(T)^2}\right)\,.
		\end{equation}
		
		$(ii)$ \emph{(Critical regime)} Assume $L(T)\sim \ga T^{1/3}$ for some $\ga\in(0,+\infty)$. Then,
		\begin{equation}\label{eq:thm:crit}
			\max(\cX_T^{N(T)}) = \shft(\mu_T) + v(1)T + \left[\int_0^1\frac{\gs(u)}{\ga^2}\Psi\Big(-\ga^3\frac{\gs'(u)}{\gs(u)}\Big)\dd u \right]T^{1/3} + o_{\bbP}\big(T^{1/3}\big)\,.
		\end{equation}
		
		$(iii)$ \emph{(Super-critical regime)} Assume $T^{1/3}\ll L(T)\ll T$, and that $\gs$ changes its monotonicity finitely many times. Then,
		\begin{equation}\label{eq:thm:supercrit}
			\max(\cX_T^{N(T)}) = \shft(\mu_T) + v(1)T + \left[\int_0^1(\gs')^+(u)\,\dd u\right]L(T) + o_{\bbP}\big(L(T)\big)\,.
		\end{equation}
	\end{theorem}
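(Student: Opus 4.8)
The plan is to prove matching upper and lower bounds for $\max(\cX_T^{N(T)})$ via a block decomposition, and to read off all three regimes from a single critical-type estimate. First, as established in Section~\ref{sec:coupling:mainprops}, the initial configuration influences the maximum only through $\shft(\mu_T)$: by monotonicity of the $N$-BBM in its starting datum and the inequalities in \eqref{eq:defshift}, $\mu_T$ is sandwiched---for the relevant stochastic order---between two canonical configurations, morally $\gd_{\shft(\mu_T)}$ and $N(T)\gd_{\shft(\mu_T)-\gs(0)L(T)}$, whose front gap is $o$ of every error term in the statement; hence one may assume $\mu_T=\gd_0$ and add $\shft(\mu_T)$ at the very end. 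Next, fix a large integer $K$, set $u_i=i/K$, and partition $[0,T]$ into the blocks $[u_{i-1}T,u_iT]$; on the $i$-th block replace $\gs$ by its affine interpolant $\gs(u_{i-1})+\gs'(u_{i-1})(\cdot-u_{i-1})$. Keeping the affine (not merely the constant) term is essential: in the critical regime the slope $\gs'$ is exactly what produces the $\Psi$-correction, so a piecewise-constant approximation would miss the $T^{1/3}$ term entirely. We then let $T\to\infty$ first and $K\to\infty$ afterwards.

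\textbf{The building block.} The core is how far the maximum of an $N$-BBM with affine variance profile moves over one block, started from a full configuration. In the critical scaling $L(T)\sim\ga T^{1/3}$ a block has length of order $L(T)^3$, which is precisely the relaxation time of the $N$-BBM front; the front therefore never equilibrates and its displacement is governed by the \emph{transient} profile rather than by an asymptotic speed. Centering at the natural speed and rescaling space by $T^{1/3}$, the transient profile solves a linear parabolic equation on a slab of width $\propto\gs L(T)$ with an affine potential coming from the tilt $\gs'$; the associated principal eigenvalue problem is solved by combinations of $\Ai$ and $\Bi$, and its eigenvalue equals, after rescaling, the quantity $\tfrac{\gs(u)}{\ga^2}\Psi\big(-\ga^3\gs'(u)/\gs(u)\big)$ of \eqref{eq:thm:crit}---here we invoke the well-posedness and convexity of $\Psi$ from \cite[Lemmata~1.7, A.4]{Mal15} and its asymptotics $\Psi(q)\sim-\rma_1 q^{2/3}2^{-1/3}$, together with the matched first/second-moment bounds for BBM killed at a parabolic barrier. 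The same expression degenerates correctly at the edges of the critical window: as $\ga\to0$, $\Psi(-\ga^3\gs'/\gs)\to\Psi(0)=-\pi^2/2$ and the block contribution collapses to the homogeneous Brunet--Derrida correction $-\tfrac{\pi^2}{2L(T)^2}v(1)T$ of \cite{BD97,BG10,Mai16} integrated against $\gs$, giving \eqref{eq:thm:subcrit}; as $\ga\to\infty$, using $\Psi(-q)=q+\Psi(q)$ and the $q^{2/3}$-asymptotics, only the positive-tilt part survives to order $L(T)$, yielding $\big[\int_0^1(\gs')^+\big]L(T)$ as in \eqref{eq:thm:supercrit}. The positive part is natural: the cloud of $N$ particles has width $\asymp\gs L(T)$, so where $\gs$ increases the cloud must widen and its top edge is pushed forward at rate $\gs' L(T)$, whereas where $\gs$ decreases the cloud narrows from the bottom and its top edge does not recede to leading order.

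\textbf{The two bounds and the chaining.} For each block I would prove the two bounds as follows. For the \emph{upper bound}, couple the $N$-BBM to an ordinary BBM and place a parabolic absorbing barrier at distance $\Theta(\gs L(T))$ below the candidate front curve, the constant chosen so that the back of the $N$-BBM front stays above it with high probability; since the $N$-BBM keeps at most $N$ particles, on that event every surviving $N$-BBM trajectory is a BBM trajectory staying above the barrier, and a many-to-one computation combined with the Airy eigenvalue estimate shows that the expected number of such particles above the candidate curve is $\ll 1$ as soon as the curve lies strictly to the right of the prediction. For the \emph{lower bound}, construct a parabolic tube of width $\asymp\gs L(T)$ tuned to the principal Airy eigenfunction and designed so that the BBM population in and above it stays below $N$---those particles then survive selection---and run a second-moment (Paley--Zygmund) argument to show the tube is populated up to within $o_{\bbP}$ of its top, which reaches the candidate curve. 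Chaining the blocks uses the Markov property of the $N$-BBM together with the monotone coupling and the $\shft$-reduction of the first step: the random configuration at the end of block $i$ is replaced, at negligible cost, by the canonical input of block $i+1$ with $\shft$ equal to the block-$i$ maximum, so the per-block errors telescope; the $K$ affine-approximation errors contribute $O(1/K)$ times the main term and vanish as $K\to\infty$, and Bramson-type $O(\log)$ and $O(1)$ terms are absorbed into the stated $o_{\bbP}$.

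\textbf{Main obstacle.} The delicate point is the critical building-block estimate: showing that the stochastic $N$-BBM front in a tilting environment follows the deterministic Airy-eigenvalue prediction with fluctuations $o_{\bbP}(T^{1/3})$ over a block. Unlike the homogeneous speed theorems of \cite{BD97,BG10,Mai16}, the front here is permanently transient and the variance is changing, so one needs quantitative and, above all, \emph{uniform} control of the killed-BBM/$N$-BBM edge profile and of the number of particles near the front; making every error uniform enough to survive the iterated limit ($T\to\infty$ then $K\to\infty$) is the crux. A secondary difficulty is the super-critical regime: the argument localizes cleanly on intervals where $\gs$ is monotone, but near an interior local maximum of $\gs$ the front may transiently overshoot, and the hypothesis that $\gs$ changes monotonicity finitely often is exactly what lets us cut $[0,1]$ into finitely many monotone stretches and control such crossings. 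The sub-critical regime, by contrast, is comparatively soft---constant-variance blocks already suffice and the answer is merely the homogeneous Brunet--Derrida correction integrated against $\gs$.
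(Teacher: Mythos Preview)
Your plan differs from the paper's proof in two structural ways, and contains one genuine gap.

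\textbf{Structural differences.} The paper does \emph{not} use a macroscopic block decomposition $[0,T]=\bigcup[u_{i-1}T,u_iT]$ for Theorem~\ref{thm:main}. In the critical and super-critical regimes it works on the full interval $[0,T]$ with regime-specific barriers~(\ref{def:gga:supercrit}--\ref{def:gga:crit}) and separate moment estimates (Propositions~\ref{prop:1stmom:supercrit}--\ref{prop:killedpart:crit}); only in the sub-critical regime does it chain, and there the blocks have microscopic length $t^\subcrit_T\asymp L(T)^4$ with a quantile-based (not $\shft$-based) iteration. Your proposed unification---one critical-type building block, then read off the other regimes via $\ga\to0$ and $\ga\to\infty$---is attractive but would require the building-block estimate to hold with $o_{\bbP}(T^{1/3})$ error \emph{uniformly} as $\ga$ ranges over $(0,\infty)$, which you do not establish and the paper deliberately avoids. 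Your macroscopic chaining via $\shft$ at block boundaries also presupposes control of the empirical distribution at each boundary (essentially Proposition~\ref{prop:endtimedistribution}), which the paper proves only \emph{after} Theorem~\ref{thm:main}; the paper's later Theorem~\ref{thm:inhselec} does chain macroscopic blocks this way, but by then Proposition~\ref{prop:endtimedistribution} is available.

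\textbf{The gap: your upper bound.} Your scheme is: place a single lower barrier $\gga$, argue that $\min(\cX^N)>\gga$ with high probability, deduce $\cX^N\subset\{\text{BBM above }\gga\}$, and then show by many-to-one that the expected number of BBM-above-$\gga$ particles exceeding the candidate level is $\ll 1$. The last step fails in the critical and sub-critical regimes. The paper's first-moment bound $\bbE[|A^\anyrg_{T,I}|]\le N^{x-\inf I+o(1)}$ (e.g.\ Proposition~\ref{prop:1stmom:crit}) relies on confinement between \emph{two} barriers: the exponential decay $e^{\Psi\cdot T/(hL)^2}$ from the slab eigenvalue is exactly what the curvature term $w_{h,T}$ in $\gga_T^\crit$ is tuned to cancel. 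With only a lower barrier the half-line principal ``eigenvalue'' is different (polynomial, not exponential, decay of the survival probability), the $w$-contribution is uncancelled, and the first moment picks up an extra factor $e^{cT^{1/3}}$ which swamps the $N^{-(h-x)}$ saving. This is precisely the ``peaking'' phenomenon discussed at the start of Section~\ref{sec:UB}: particles that rise far above the front spawn large subpopulations whose descendants populate high levels. The paper's resolution is not a single-barrier first moment but the $N^+$-BBM coupling of Lemma~\ref{lem:N-N+couping} together with the two-colour (white/red) construction~(\ref{eq:def:ggashifted}--\ref{eq:defNred}): white particles that hit $\ol\gga^\anyrg_T$ are not killed but recoloured and subjected to a \emph{second}, shifted pair of barriers, and one shows (Proposition~\ref{prop:allregimes:UB:olgga}) that no red particle reaches $\ol\gga^\shifted_T$. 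Remark~\ref{rem:coloring} states explicitly that no single pair of barriers can simultaneously keep $\ge N$ particles alive and have $o(1)$ expected hits of the upper barrier in the critical and sub-critical regimes; your proposal does not contain any mechanism that replaces this idea. The requirement ``$\min(\cX^N)>\gga$ with high probability'' is itself a lower bound on the $N$-BBM back edge and introduces a dependency on the lower-bound half of the theorem that the paper's self-contained $N^+$ argument avoids.
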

	In the super-critical regime~\eqref{eq:thm:supercrit}, if $\gs$ is non-increasing, Theorem~\ref{thm:main} gives little information. However, we complete it with the following result.
	\begin{proposition}\label{prop:main:gsdecreasing}
		Let $\gs\in\cC^2([0,1])$ be strictly decreasing, let $T^{1/3}\ll L(T)\leq +\infty$, and consider the initial particle configuration $\gd_0$ (i.e. one particle at the origin). Then as $T\to+\infty$, one has,
		\begin{equation}
			\max(\cX^{N(T)}_T) = v(1)T - \frac{\mathrm{a}_1}{2^{1/3}}\left[\int_0^1\gs(u)^{1/3}|\gs'(u)|^{2/3}\,\dd u\right]T^{1/3} + o_{\bbP}\big(T^{1/3}\big)\,.
		\end{equation}
	\end{proposition}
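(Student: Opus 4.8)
The plan is to obtain Proposition~\ref{prop:main:gsdecreasing} as a limiting case of Theorem~\ref{thm:main}$(ii)$ when the slope parameter tends to infinity, combined with the known asymptotics of the maximal displacement of the variable-speed BBM \emph{without} selection. Write $C:=\frac{\rma_1}{2^{1/3}}\int_0^1\gs(u)^{1/3}|\gs'(u)|^{2/3}\,\dd u$, so the target is $\max(\cX^{N(T)}_T)=v(1)T-CT^{1/3}+o_{\bbP}(T^{1/3})$, and I note at the outset that $\shft(\gd_0)=0$, so the shift term disappears.

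For the \emph{lower bound}, I would fix $\ga\in(0,+\infty)$, set $N_\ga(T):=\lceil e^{\ga T^{1/3}}\rceil$ (so $\log N_\ga(T)\sim\ga T^{1/3}$), and use that $L(T)/T^{1/3}\to+\infty$ forces $N(T)\ge N_\ga(T)$ for $T$ large, whence $\max(\cX^{N(T)}_T)\ge\max(\cX^{N_\ga(T)}_T)$ by monotonicity of the maximal displacement of the $N$-BBM in $N$ (cf.~the coupling of Section~\ref{sec:coupling:mainprops}). Theorem~\ref{thm:main}$(ii)$ applied with beam width $N_\ga$ and initial configuration $\gd_0$ gives
\[
\max(\cX^{N_\ga(T)}_T)=v(1)T+\Big[\int_0^1\frac{\gs(u)}{\ga^2}\,\Psi\!\Big(-\ga^3\,\tfrac{\gs'(u)}{\gs(u)}\Big)\,\dd u\Big]\,T^{1/3}+o_{\bbP}(T^{1/3}).
\]
Since $\gs$ is strictly decreasing, $-\gs'/\gs\ge0$ on $[0,1]$, and since $\Psi$ is convex with $\Psi(0)=-\tfrac{\pi^2}2$ and $\Psi(q)\sim-\tfrac{\rma_1}{2^{1/3}}q^{2/3}$ as $q\to+\infty$, one has a bound $|\Psi(q)|\le C_0(1+q^{2/3})$ on $[0,+\infty)$. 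Hence $\frac{\gs(u)}{\ga^2}\Psi(-\ga^3\gs'(u)/\gs(u))\to-\frac{\rma_1}{2^{1/3}}\gs(u)^{1/3}|\gs'(u)|^{2/3}$ pointwise (the powers of $\ga$ cancelling), dominated by $C_0(\gs(u)+\gs(u)^{1/3}|\gs'(u)|^{2/3})$ uniformly in $\ga\ge1$, so dominated convergence yields $\int_0^1\frac{\gs(u)}{\ga^2}\Psi(-\ga^3\gs'(u)/\gs(u))\,\dd u\to-C$ as $\ga\to+\infty$. Consequently, given $\eps>0$, choosing $\ga$ large makes the bracket $\ge-C-\eps$, so $\bbP\big(\max(\cX^{N(T)}_T)\ge v(1)T-(C+\eps)T^{1/3}\big)\to1$.

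For the \emph{upper bound}, I would use that any ancestral lineage of the $N(T)$-BBM surviving to time $T$ is, under the natural coupling, also a lineage of the BBM without selection, so $\max(\cX^{N(T)}_T)\le\max(\cX_T)$ almost surely; it then remains to invoke $\max(\cX_T)=v(1)T-CT^{1/3}+o_{\bbP}(T^{1/3})$ for the variable-speed BBM with a strictly decreasing $\cC^2$ profile — precisely the $L(T)=+\infty$ case of the proposition — which is available from the variable-speed BBM literature (e.g.~\cite{MZ16,BH15,Mal15}), and alternatively derivable by a concave-barrier first/second-moment estimate: slicing $[0,1]$ into $O(T^{2/3})$ sub-intervals on which $\gs$ is nearly constant reduces the problem to a sum of homogeneous-BBM contributions, each governed by the largest root $\rma_1$ of $\Ai$, and the summation recovers the constant $C$. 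Combining the two bounds, $\bbP\big(|\max(\cX^{N(T)}_T)-v(1)T+CT^{1/3}|\le\eps T^{1/3}\big)\to1$ for every $\eps>0$, which is the claim. I expect the main obstacle to be exactly this upper bound on $\max(\cX_T)$ at the required precision — the exact $T^{1/3}$-constant and an $o_{\bbP}(T^{1/3})$ remainder — for an \emph{arbitrary} strictly decreasing $\cC^2$ profile rather than a specific one; the remaining ingredients (monotonicity in $N$, the $\ga\to+\infty$ passage to the limit, and bookkeeping of the $o_{\bbP}$-terms) should be routine.
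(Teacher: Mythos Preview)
Your proposal is correct and follows essentially the same route as the paper: the lower bound by coupling with a critical $N_\ga(T)$-BBM, applying Theorem~\ref{thm:main}(ii), and letting $\ga\to+\infty$ via the asymptotics of $\Psi$; the upper bound by the trivial coupling $\cX^{N(T)}_T\prec\cX_T$ together with the result of~\cite{MZ16} on the unconstrained variable-speed BBM. Your dominated-convergence justification for the $\ga\to+\infty$ limit is a bit more explicit than the paper's, and your identification of the upper bound on $\max(\cX_T)$ as the only non-routine external input is exactly right (see Remark~\ref{rem:decreas}).
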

	Some assumptions in this proposition (namely, constraining the initial configuration to be $\gd_0$, and assuming $\gs$ is \emph{strictly} decreasing) are not as general as one would expect when compared to Theorem~\ref{thm:main}: we further discuss them in the proof, see Section~\ref{sec:compl:gsdecreasing} below.
	
	\begin{remark}\label{rem:BBMspeed}
		Let us point out that Proposition~\ref{prop:main:gsdecreasing} also holds for $L(T)=+\infty$, that is for the maximum of the BBM without selection when $\gs$ is decreasing: this has already been proven in~\cite{MZ16}. However, when $\gs$ is not decreasing, the maximum of the BBM without selection is $v_{\max}T+o(T)$ for some $v_{\max}>v(1)$ (this is a well-known result, which follows e.g. from direct adaptations of~\cite{BK04-2} or~\cite{Mal15}). This is in line with~\eqref{eq:thm:supercrit}, since one can let $L(T)$ be arbitrarily close to $T$, hence it implies that the maximum of the BBM without selection is larger than any $v(1)T+o(T)$ for $\gs$ not decreasing.
	\end{remark}

	\subsubsection*{Notational convention and rephrasing of the main result.}
	In order to write Theorem~\ref{thm:main} and upcoming statements in a more condensed form, let us denote the three regimes (i.e. $T^{1/3}\ll L(T)\ll T$, $L(T)\sim \ga T^{1/3}$ and $1\ll L(T)\ll T^{1/3}$) respectively with the superscripts ``sup'', ``crit'' and ``sub''. We will also occasionally consider the regime $L(T) \gg T^{1/3}$ in the case of non-increasing $\sigma$, which we denote with the superscript ``$\supdecr$''. In what follows, \emph{we will always assume that $N$ depends on $T$ according to one of the four regimes.} Furthermore, in the ``sup'' regime we always implicitly assume that $\gs$ changes its monotonicity finitely many times (this technical restriction is further discussed below) and in the ``$\supdecr$'' regime we assume that $\sigma$ is decreasing. Then, we define the \emph{error scaling terms} for each regime with
	\begin{equation}\label{eq:defbanyrg}
		b^\supercrit_T\,:=\, L(T)\,,\qquad
		b^\crit_T\,=\, b^\supdecr_T
		\,:=\, T^{1/3}\,,\qquad\text{and}\qquad
		b^\subcrit_T\,:=\, \frac{T}{L(T)^2}\,,
	\end{equation}
	for $T\geq0$; and the \emph{limiting terms} with,
	\begin{equation}\label{eq:defmanyrg}\begin{aligned}
			m^\supercrit_T\,&:=\, v(1)T + \left[\int_0^1(\gs')^+(u)\,\dd u\right]L(T)\,,\\
			m^\crit_T\,&:=\,v(1)T + \left[\int_0^1\frac{\gs(u)}{\ga^2}\Psi\Big(-\ga^3\frac{\gs'(u)}{\gs(u)}\Big)\dd u\right] T^{1/3}  \,,\\
			m^\subcrit_T\,&:=\, v(1)T\left(1 - \frac{\pi^2}{2L(T)^2}\right) \,,\\
			\text{and}\qquad m^\supdecr_T\,&:=\, v(1)T - \frac{\mathrm{a}_1}{2^{1/3}}\left[\int_0^1\gs(u)^{1/3}|\gs'(u)|^{2/3}\,\dd u\right]T^{1/3}.
		\end{aligned}
	\end{equation}
	Theorem~\ref{thm:main} and Proposition~\ref{prop:main:gsdecreasing} can then be summarized as follows:
	\begin{theorem}[Rephrasing of Theorem~\ref{thm:main} and Proposition~\ref{prop:main:gsdecreasing}]
		\label{thm:main_rephrased}
		Let the assumptions of Theorem~\ref{thm:main} (in the regimes ``$\mathrm{sup}$'', ``$\mathrm{crit}$'' or ``$\mathrm{sub}$'') or of Proposition~\ref{prop:main:gsdecreasing} (in the regime ``$\mathrm{sup}$-$\mathrm{d}$'') hold. Then, for every $\anyrg\in\{\supercrit,\crit,\subcrit,\supdecr\}$, we have as $T\to+\infty$,
		\begin{equation}\label{eq:mainresult:manyrg}
			\max(\cX_T^{N(T)}) = \shft(\mu_T)+ m^\anyrg_T + o_{\bbP}\big(b^\anyrg_T\big).
		\end{equation}
	\end{theorem}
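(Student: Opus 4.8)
By construction the error scales $b^\anyrg_T$ and the limiting terms $m^\anyrg_T$ collected in \eqref{eq:defbanyrg}--\eqref{eq:defmanyrg} are exactly the right-hand sides of \eqref{eq:thm:subcrit}, \eqref{eq:thm:crit}, \eqref{eq:thm:supercrit} and of Proposition~\ref{prop:main:gsdecreasing}, so \eqref{eq:mainresult:manyrg} is nothing but a compact reformulation of Theorem~\ref{thm:main} and Proposition~\ref{prop:main:gsdecreasing}; the content is in proving those, and my plan for them is a slicing-and-gluing scheme. As a preliminary reduction I would use the monotonicity of the $N$-BBM under selection --- which is precisely what the functional $\shft$ is designed to track, cf.\ Section~\ref{sec:coupling:mainprops} --- to sandwich the process started from $\mu_T\in\meas_{N(T)}$ between those started from $\gd_{\shft(\mu_T)}$ and from $N(T)\gd_{\shft(\mu_T)-\gs(0)L(T)}$; since these two configurations share the same value of $\shft$ and lie on either side of $\mu_T$ in the coupling order, the three maximal displacements agree up to $o_{\bbP}(b^\anyrg_T)$, and one is reduced to the reference case $\shft(\mu_T)=0$, $\mu_T=\gd_0$.

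\emph{Per-slice asymptotics.} Partition $[0,1]$ at rescaled times $0=u_0<\cdots<u_K=1$ so that $\gs\approx\gs(u_i)$ is nearly constant on each piece $[u_{i-1},u_i]$ --- and, in the super-critical regime, so that $\gs$ is moreover monotone on each piece, which is legitimate since there $\gs$ changes monotonicity only finitely often. On a piece of rescaled length $\gd$, i.e.\ macroscopic duration $\gd T$ and infinitesimal variance $\approx\gs(u_i)^2$, Brownian scaling reduces matters to the homogeneous unit-variance $N$-BBM, viewed if necessary in a slowly drifting frame that absorbs the change of $\gs$ across the piece. The sharp maximal-displacement asymptotics of that process are available: in the relaxed regime (run-time $\gg(\log N)^3$) from \cite{BG10,Mai16}, which gives the relaxed speed with its $\pi^2/(\log N)^2$ correction, and in the transient regime (run-time $\lesssim(\log N)^3$), where a linearly moving absorbing barrier is the relevant effective model, from \cite{Mal15}, whose velocity correction is precisely the function $\Psi$ (with $\rma_1$ controlling its large-argument behavior). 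The delicate output of this step is how the front's ``shift'' $\shft$ evolves along a piece, and in particular how it jumps when $\gs$ steps from $\gs(u_{i-1})$ to $\gs(u_i)$: a configuration at equilibrium for variance $\gs(u_{i-1})^2$ has, relative to variance $\gs(u_i)^2$, a value of $\shft$ shifted by an amount whose sign and order of magnitude are governed by $\gs'/\gs$ and by the ratio of $(\log N)^3$ to the piece length. This is the mechanism behind both the $\gs'/\gs$-dependence of the critical formula and the asymmetry between $\gs$ increasing (a gain at scale $\gs'L(T)$) and $\gs$ decreasing (a deficit at scale $T^{1/3}$).

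\emph{Gluing and Riemann sums.} Concatenating the slices, $\max(\cX_T^{N(T)})$ equals $v(1)T$ plus the sum of the per-slice velocity corrections plus the accumulated $\shft$-adjustments, with errors adding up; tuning $K=K(T)$ per regime then yields the four formulas. Sub-critically one lets $K\to\infty$ with every piece still long compared to $(\log N)^3$, so only relaxed corrections occur and their sum is a Riemann sum converging to $-\tfrac{\pi^2}{2L(T)^2}v(1)T$. Critically, $(\log N)^3\sim\ga^3T$ is macroscopic, every short piece is transient with effective barrier parameter $q=-\ga^3\gs'(u)/\gs(u)$ at rescaled time $u$, and the corrections sum to $\big[\int_0^1\tfrac{\gs(u)}{\ga^2}\Psi(-\ga^3\gs'(u)/\gs(u))\,\dd u\big]T^{1/3}$. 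Super-critically, $(\log N)^3\gg T$ and one is in an even more transient regime: on the parts where $\gs$ is increasing one writes $\Psi(-q)=q+\Psi(q)$ with $q\to+\infty$ and the first term dominates since $\Psi(q)=o(q)$, yielding the ``staircase'' gain $\big[\int_0^1(\gs')^+\big]L(T)$, whereas when $\gs$ is non-increasing that gain is absent and, using $\Psi(q)\sim-\tfrac{\rma_1 q^{2/3}}{2^{1/3}}$, the leading surviving term is the $T^{1/3}$ term of Proposition~\ref{prop:main:gsdecreasing}. One may push these limits through with uniform control, or --- more robustly --- prove the sub- and super-critical regimes directly: quasi-static comparison with relaxed fronts for the former, and a direct ``staircase'' argument on the successive increments of $\gs$ for the latter.

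\emph{Main obstacle, and the decreasing case.} The hard part is the per-slice step: obtaining the maximal-displacement asymptotics of the homogeneous $N$-BBM with a linearly moving barrier \emph{uniformly} over the full range of run-time-to-$(\log N)^3$ ratios encountered, including the transient Airy window, and with enough control on the endpoint front profiles that the accumulated error stays $o_{\bbP}(b^\anyrg_T)$ over $K(T)\to\infty$ slices; this should require sharpening the estimates of \cite{Mal15} and a careful treatment of the turning points of $\gs$. Proposition~\ref{prop:main:gsdecreasing} needs a dedicated argument precisely because its leading correction is the subdominant Airy term; the restrictions there ($\mu_T=\gd_0$, $\gs$ strictly decreasing) are what permit a clean comparison with the barrier problem and with the selection-free BBM of \cite{MZ16}, while sidestepping the delicate behavior near points where $\gs'$ vanishes.
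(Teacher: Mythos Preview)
Your opening sentence is right: Theorem~\ref{thm:main_rephrased} is purely a rephrasing and the paper gives it no separate proof. The substantive part of your proposal is a plan for Theorem~\ref{thm:main} and Proposition~\ref{prop:main:gsdecreasing}, and there your route differs materially from the paper's.

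The paper does \emph{not} slice $[0,T]$ and reduce to the homogeneous $N$-BBM on each piece. Instead it works directly in the time-inhomogeneous setting: for each regime it writes down explicit global barriers $\gga^\anyrg_T,\ol\gga^\anyrg_T$ on all of $[0,T]$ (see \eqref{def:gga:supercrit}--\eqref{def:gga:crit}), designed so that $\ol\gga^\anyrg_T(T)\approx m^\anyrg_T$ as the parameters $(h,x)\to(1,1)$ (Lemma~\ref{lem:comparison:m:gga}). The technical core (Section~\ref{sec:moments}) is first- and second-moment estimates for the BBM between these barriers via the many-to-one and many-to-two lemmas plus Girsanov; slicing does occur, but only \emph{inside} the moment proofs (e.g.\ Lemma~\ref{lem:mallein15} on subintervals of length $\asymp L(T)^2$), not at the level of the $N$-BBM. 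The lower bound (Section~\ref{sec:LB}) couples the barrier-BBM with an $N^-$-BBM; the upper bound (Section~\ref{sec:UB}) uses an $N^+$-BBM coupling together with a two-color construction to handle particles that hit the upper barrier---a genuine obstruction (Remark~\ref{rem:coloring}) that your sketch does not anticipate. Only in the sub-critical regime does the paper use a block decomposition of $[0,T]$, and the per-block input is its own barrier estimates, not the relaxed speed from~\cite{BG10,Mai16}. Your adiabatic picture is much closer to how the paper later treats the \emph{inhomogeneous-selection} extension (Section~\ref{sec:compl:inhselec}), but there Theorem~\ref{thm:main} is already available as a black box on each slice.

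Two concrete gaps in your sketch. First, your sandwich is wrong as stated: neither $\gd_{\shft(\mu_T)}\prec\mu_T$ nor $\mu_T\prec N(T)\gd_{\shft(\mu_T)-\gs(0)L(T)}$ holds in general (take e.g.\ $\mu_T=N\gd_0$, for which $\shft(\mu_T)=\gs(0)L(T)>0=\max(\mu_T)$). The paper instead dominates below by $N^\gk\gd_{-\eps-\gk\gs(0)L(T)}$ for a well-chosen $\gk$ and above by the almost-exponential measure $\mu_\eps$ (Section~\ref{sec:coupling:mainprops}). Second, and more structurally, gluing slices requires the front profile at interfaces---essentially Proposition~\ref{prop:endtimedistribution}---but in the paper that is a \emph{consequence} of the global barrier estimates (via Lemma~\ref{lem:enddist}), not an input, and it is unavailable with deterministic centering in the sub-critical regime (Remark~\ref{rem:enddist:subcrit}). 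So your per-slice step would in the end have to rebuild the same barrier machinery. For Proposition~\ref{prop:main:gsdecreasing}, by contrast, you are on target: the paper sandwiches exactly as you say, between a critical $M_\ga(T)$-BBM (letting $\ga\to\infty$) and the selection-free BBM of~\cite{MZ16}.
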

	
	\subsubsection*{Complementary result: empirical measure and diameter.}
	We complement the main result with a statement, in the critical and super-critical regimes, about the empirical measure of the particles below the asymptotic maximum and the diameter of the configuration at the final time. We do not expect the very same claim to hold in general in the subcritical regime: especially for~\eqref{eq:prop:endtimedist:1}, a random centering would be required, see Remark~\ref{rem:enddist:subcrit}. We write $\log_+(x) = \log(x\vee 1)$.
	\begin{proposition}\label{prop:endtimedistribution}
		Suppose the assumptions of Theorem~\ref{thm:main} hold and that we are in the critical or super-critical regime, i.e.~$\anyrg\in\{\crit,\supercrit\}$. Then, as $T\to\infty$,
		\begin{equation}\label{eq:prop:endtimedist:1}
			\sup_{y\in[0,1]}\left|\,\frac{\log_+\cX_T^{N(T)}\big([\shft(\mu_T) + m_T^\anyrg - y\gs(1)L(T),+\infty)\big) }{L(T)}\,-\,y\,\right|\;\longrightarrow\;0\,,
		\end{equation}
		in $\bbP_{\mu_T}$-probability. 
		Additionally, if $\min(\cX_T^{N(T)})$ denotes the position of the minimum of the $N(T)$-BBM at time $T$, then
		\begin{equation}\label{eq:prop:endtimedist:2}
			\max(\cX_T^{N(T)}) - \min(\cX_T^{N(T)}) = \sigma(1)L(T) + o_{\bbP}(L(T)).
		\end{equation}
	\end{proposition}
	\begin{remark}
		We can rephrase the first part of Proposition~\ref{prop:endtimedistribution} as follows: in the critical and super-critical regimes, one has for $T$ large, \[\cX_T^{N(T)}([\shft(\mu_T) + m_T^\anyrg - y\gs(1)L(T),+\infty)) \,=\, N(T)^{y+o_{\bbP}(1)},\] uniformly in $y\leq1$ not too close to zero.
	\end{remark}
	
	
	\subsubsection*{$N$-BBM with deterministic branching times}
	Our results also apply to a variant of the $N$-BBM in which particles branch simultaneously at deterministic times on a time grid $a\N$, for some $a>0$. Let $\xi$ and $\sigma$ be as above. The process is then defined as follows: Given $T>0$, particles diffuse independently according to time inhomogeneous branching Brownian motions with infinitesimal variance $\gs^2(t/T)$ at time $t$. Furthermore, at each time which is a multiple of $a = 2\log\bbE[\xi]$, each individual is replaced independently from the others by a random number of particles with the same distribution as $\xi$. In the following, the BBM with deterministic branching times will be called ``BBMdb'', and its counterpart with selection will be called ``$N$-BBMdb''.
	
	\begin{proposition}\label{prop:main:detbranching}
		Let $N(T)\to+\infty$ as $T\to+\infty$, then the results of Theorem~\ref{thm:main}, Propositions~\ref{prop:main:gsdecreasing} and~\ref{prop:endtimedistribution} also apply to the $N(T)$-BBMdb.
	\end{proposition}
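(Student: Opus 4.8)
The plan is to transfer the results for the time-inhomogeneous $N$-BBM (Theorem~\ref{thm:main} and Propositions~\ref{prop:main:gsdecreasing} and \ref{prop:endtimedistribution}) to the $N$-BBMdb by a coupling/comparison argument, exploiting the fact that the two processes differ only in \emph{when} branching occurs, not in how particles move or how selection is applied. First I would set up the natural identification: in the $N$-BBMdb, between consecutive grid times the particles evolve as independent time-inhomogeneous Brownian motions with variance $\gs^2(t/T)$, and at each multiple of $a=2\log\bbE[\xi]$ every particle is replaced by an i.i.d.\ copy of $\xi$ offspring, after which the top $N$ particles are retained. The key observation is that the choice $\gb_0 = (2(\bbE[\xi]-1))^{-1}$ for the BBM and $a = 2\log\bbE[\xi]$ for the BBMdb are precisely calibrated so that, over a macroscopic time interval, the expected number of particles grows at the same exponential rate $\exp(t/2)$ per unit time in the Brownian-scaled picture; consequently the first and second moment asymptotics of the two branching mechanisms agree to leading order, and the natural speed $v(1)T$ is the same for both.

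The core of the argument is that essentially every ingredient used to prove Theorem~\ref{thm:main} is robust to this change. The proof of the main result proceeds (as the later sections of the paper develop) via (a) a coupling relating the $N(T)$-BBM started from $\mu_T$ to one started from a canonical configuration, controlled through $\shft(\mu_T)$; (b) first- and second-moment estimates on the number of BBM particles staying above suitable space-time barriers; and (c) a renormalization/concatenation over short time blocks where $\gs$ is nearly constant, on each of which one invokes the homogeneous $N$-BBM estimates (Mallein \cite{Mai16}, Maillard--Zeitouni \cite{MZ16}, and the Airy-function asymptotics of \cite{Mal15}). For the $N$-BBMdb one repeats each of these steps: the barrier estimates only require a ballot-type/first-moment bound for branching random walk with the offspring law of $\xi$, which holds verbatim for deterministic branching; the homogeneous-block inputs have discrete-time analogues (the $N$-BRW results, which \cite{Mai16} and \cite{Mal15} already cover, or which follow from the same arguments); and the definition of $\shft$ and the coupling of initial conditions are insensitive to the branching schedule. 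I would phrase this as: define $b^\anyrg_T$ and $m^\anyrg_T$ exactly as in \eqref{eq:defbanyrg}--\eqref{eq:defmanyrg}, and check that each lemma in the proof of Theorem~\ref{thm:main}, when its "BBM" hypothesis is replaced by "BBMdb", still holds with the same constants --- the only change being cosmetic (sums over generations replacing time-integrals of the branching rate, with the calibration $a = 2\log\bbE[\xi]$ making the exponents match).

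A cleaner alternative, which I would pursue if possible to avoid re-deriving every estimate, is a direct sandwiching: couple a single realization of the driving Brownian increments and selection, and show that the $N$-BBMdb maximum is squeezed between two $N'$-BBM maxima with $N' = N^{1\pm o(1)}$ run with slightly perturbed variance profiles, using that replacing a rate-$\gb_0$ Poisson branching clock by a deterministic spacing $a$ perturbs the genealogy only at scale $O(1)$ in time and hence $O(1)$ in space, which is negligible against all of $b^\supercrit_T, b^\crit_T, b^\subcrit_T$. The main obstacle is that BBM and BBMdb genealogies cannot be coupled particle-by-particle in an exact monotone way (a Poisson number of branchings in a unit interval is not deterministically comparable to one branching event), so the sandwiching has to be done at the level of \emph{measures} via stochastic domination after a large-deviation control on the branching counts --- this is where care is needed. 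I expect this measure-level domination, together with verifying that the homogeneous $N$-BRW inputs are available with the precision required in the critical regime (the $\Psi$ term and its $T^{1/3}$ scaling), to be the main technical point; everything else is a routine transcription of the BBM proof. I would therefore structure the proof of Proposition~\ref{prop:main:detbranching} as a short section that (i) states the discrete-time analogues of the homogeneous-case lemmas and the moment estimates, citing \cite{Mai16,MZ16,Mal15} where they already appear and indicating the trivial modifications otherwise, and (ii) observes that the remainder of the argument for Theorem~\ref{thm:main}, Proposition~\ref{prop:main:gsdecreasing} and Proposition~\ref{prop:endtimedistribution} goes through unchanged.
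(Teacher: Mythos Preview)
Your final plan (i)---re-checking each ingredient of the proof with the branching mechanism replaced---is exactly what the paper does in Section~\ref{sec:compl:detbranching}, and you are right to abandon the sandwiching alternative. Two clarifications worth noting: first, the paper pinpoints the only nontrivial modifications as the Many-to-one and Many-to-two formulae (Lemmata~\ref{lem:MtOGirs}--\ref{lem:MtT}), replacing $e^{t/2}$ by $(\bbE\xi)^{\lfloor t/a\rfloor}=e^{t/2+O(1)}$ in the former and stating a discrete-time version of the latter; everything in Sections~\ref{sec:moments}--\ref{sec:UB} then carries over verbatim because it uses only these moment bounds, not any ``homogeneous $N$-BBM on blocks'' black box as you describe (the only external analytic input in the critical regime is the single-particle Airy estimate \cite[Lemma~2.4]{Mal15}). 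Second, for the analogue of Proposition~\ref{prop:main:gsdecreasing}, the result~\eqref{eq:MZ16} from \cite{MZ16} is continuous-time only, so the paper substitutes \cite[Theorem~1.3]{Mal15} (which covers Gaussian BRW and hence the BBMdb) to get the upper-bound input~\eqref{eq:gsdecreasing:upperbound}.
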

	\begin{remark}
		Note that we have chosen the value of $a$ above for convenience, but we can handle any value of $a$ by a time-change and using a different function $\gs$.
	\end{remark}

	\subsection{The continuous random energy model, and algorithmic hardness}
	\label{sec:crem}
	We now present one of the main motivations and applications of our results. A large body of literature is concerned with algorithmic hardness thresholds for combinatorial optimization problems on random instances. This has been a very active research area in the last two decades, drawing extensively on results and methods from the theory of \emph{spin glasses} in statistical mechanics. See e.g.~\cite{ElAlaoui2021,Gamarnik2021} and the references therein. A stylized model of a spin glass is the continuous random energy model (CREM), initially introduced by Derrida and Spohn~\cite{DS88} and Bovier and Kurkova~\cite{BK04-2}. The CREM is a certain Gaussian process on the rooted binary tree of depth $T$, which we now introduce.
	
	
	Consider a function $A$ from $[0,1]$ to $[0,1]$ which is non-decreasing and satisfies $A(0)=0$, $A(1)=1$. Let $\bbT_T:=\{\emptyset\}\cup\bigcup_{i=1}^T\{0,1\}^i$ denote the rooted binary tree of depth $T\in\N$, and for $0\leq i\leq T$, write $V_i$ for the set of vertices $u\in \bbT_T$ with depth $|u|=i$. Let $X^\crem_\emptyset:=0$. For $u\in \bbT_{T-1}$, and $ui$ one of its two offspring, $i\in\{0,1\}$, let $X^\crem_{ui}-X_u$ be a centered Gaussian random variable with variance
	\[T\,\big(A\big(\tfrac{|u|+1}T\big)-A\big(\tfrac {|u|}T\big)\big)\,,\]
	and assume the $X^\crem_{ui}-X^\crem_u$, $u\in \bbT_{T-1}$, $i\in\{0,1\}$ are independent. Then the (Hamiltonian of the) CREM with parameters $A(\cdot)$ and $T$ is given by the values of the process $X$ on the leaves of $\bbT_T$, that is $(X^\crem_u)_{u\in V_T}$. One may consider the CREM as an isotropic centered Gaussian process on the rooted binary tree, since the covariance function only depends on the distance between the vertices and their distance to the root, analogously to isotropic Gaussian processes on Euclidean space, see e.g.~Berman~\cite{Ber80} and the references therein.
	
	If $A$ is smooth, then the CREM can also be constructed from a variant of the time-inhomogeneous BBMdb presented above. Indeed, consider $(\tilde\cX_t)_{t\in[0,T]}$ a BBMdb with infinitesimal variance $\gs^2(s):=A'(s)$, $s\in[0,1]$, and where all particles branch simultaneously into $\xi\equiv2$ offspring each at each time of the grid $a\N$, where we take $a:=2\log 2$. We assume $T\in a\N$, but the process ends at time $T$ \emph{before} branching. Assuming $\tilde\cX$ starts from two particles at the origin (or that it branches instantly at time 0), one obtains the following equality in law:
	\begin{equation}\label{eq:BBM-CREMidentity}
		\tilde\cX_T \;\overset{(d)}{=}\; \big(\sqrt{2\log 2}\,X^\crem_u\big)_{u\in V_{T/(2\log2)}}\;.
	\end{equation}

	\subsubsection*{Algorithmic hardness of the CREM}
	The algorithmic hardness of optimizing the Hamiltonian of the CREM has been studied by Addario-Berry and Maillard~\cite{ABM19}. We recall their main result, which states informally as follows (we denote by $V_T$ the leaves of the tree):
	
	\begin{theorem}[from \cite{ABM19}]\label{thm:ABM19}
		Consider the CREM with parameters $A(\cdot)$ and $T\in\N$, the former being a continuously differentiable  function on $[0,1]$. Let $\gs^2(\cdot):=A'(\cdot)$ and $v_c \coloneqq \sqrt{2\log2} \int_0^1\gs(s)\dd s$. Let $\eps>0$, then the following holds:
		
		$(i)$ There exists an algorithm with run-time linear in $T$ that finds a vertex $u\in V_T$ such that $X^\crem_u\geq (v_c-\eps)T$ with high probability.
		
		$(ii)$ There exists $\gga=\gga(A,\eps)>0$ such that for $T$ sufficiently large, for any algorithm, the number of queries performed before finding a vertex $u\in V_T$ such that $X^\crem_u\geq (v_c+\eps)T$ is stochastically bounded from below by a geometrically distributed random variable with parameter $\exp(-\gga T)$.
	\end{theorem}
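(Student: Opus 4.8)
\emph{Proof strategy.}
For part~$(i)$ the plan is to analyse a \emph{block-greedy} algorithm. Fix a large integer $k=k(\eps)$, group the $T$ levels into $\lceil T/k\rceil$ consecutive blocks of $k$ levels, and build a root-to-leaf path iteratively: from the current vertex $w$ at level $jk$, query all $2^{k}$ descendants of $w$ at level $(j+1)k$ and move to the one of largest $X^{\crem}$-value. Maximising $X^{\crem}_{\cdot}$ over this depth-$k$ subtree amounts to maximising the displacement accumulated inside the block, i.e.\ the maximal displacement of a branching random walk of depth $k$ whose Gaussian steps have variance $\approx\gs^{2}(jk/T)$ (using $A\in C^{1}$); its mean is $\gs(jk/T)\sqrt{2\log 2}\,k\,(1-O(k^{-1}\log k))$ with $O(1)$ sub-Gaussian fluctuations, and the displacements gained in distinct blocks are independent. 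Summing over the $\lceil T/k\rceil$ blocks and applying Hoeffding's inequality, the terminal leaf $u\in V_{T}$ satisfies, with high probability,
\[
X^{\crem}_{u}\ \ge\ \sqrt{2\log 2}\Big(\int_{0}^{1}\gs(s)\,\dd s\Big)T-C\tfrac{\log k}{k}\,T-o(T)\ =\ v_{c}T-C\tfrac{\log k}{k}\,T-o(T),
\]
so choosing $k$ with $C\log k/k<\eps$ proves $(i)$; the run-time is $\lceil T/k\rceil\cdot 2^{k}=O_{\eps}(T)$, linear in $T$.

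Part~$(ii)$ is a first-moment argument, whose whole difficulty is the \emph{adaptivity} of the algorithm---one cannot union bound over a prescribed family of leaves. The plan is to work at a coarse scale. Partition the rescaled time $[0,1]$ into $O_{\eps}(1)$ macro-intervals of a small width $w_{0}=w_{0}(\eps)$, chosen small enough that $\gs$ is nearly constant, $\approx\gs(s_{j})$, on the $j$-th one, and partition the levels correspondingly into macro-blocks of length $\ell=w_{0}T$. There are two inputs. First, a deterministic one: after $m$ queries the algorithm knows a subtree with $\le m$ vertices, so from any vertex $w$ at the start of a macro-block it reaches at most $2^{\ell}$ vertices at its end; and, at the cost of only a polynomial factor in the run-time, we may assume the algorithm queries vertices along connected subtrees, which makes the event ``$w$ is queried'' depend only on edges at levels $\le$ that of $w$, hence independent of the $\ell$ fresh edge-increments inside $w$'s macro-block. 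Second, a probabilistic one: for a fixed such $w$, a union bound over the $2^{\ell}$ endpoints gives
\[
\bP\big(\text{displacement inside the block}\ \ge\ (\sqrt{2\log 2}\,\gs(s_{j})+\delta)\,\ell\big)\ \le\ 2^{\ell}e^{-(\log 2+c_{\delta})\ell}\ =\ e^{-c_{\delta}\ell},
\]
with $c_{\delta}=c_{\delta}(\eps)>0$ provided $\gs$ is close enough to constant on the block. Summing the last estimate over the (at most $m$) queried macro-block starting vertices---this uses the independence above, through a Markov bound on the expected number of ``bad'' starting vertices---shows that, off an event of probability $\le O_{\eps}(1)\,m\,e^{-c_{\delta}\ell}$, no leaf known to the algorithm exceeds $\sum_{j}(\sqrt{2\log 2}\,\gs(s_{j})+\delta)\ell$, which equals $v_{c}T$ up to the term $\delta T$ and a Riemann-sum error; shrinking $\delta$ and $w_{0}$ makes this $<(v_{c}+\eps)T$. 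Finally, since $\ell=w_{0}T$ is a fixed fraction of $T$, the exponent $c_{\delta}\ell$ beats the prefactor $m=e^{\gga T}$ for $\gga=\gga(A,\eps)>0$ small, so the probability that the algorithm finds a leaf of value $\ge(v_{c}+\eps)T$ within $m$ queries is at most $m\,e^{-\gga T}$ for every $m$; this yields the stochastic domination by a geometric variable of parameter $e^{-\gga T}$. The point where $v_{c}$ enters is the Cauchy--Schwarz inequality $v_{c}^{2}=2\log 2\,\big(\int_{0}^{1}\gs\big)^{2}\le 2\log 2\int_{0}^{1}\gs^{2}=2A(1)\log 2$, which (together with its per-block version) is exactly what makes $c_{\delta}>0$, i.e.\ produces the strictly positive exponential gap.

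The hardest step is making the adaptivity in $(ii)$ rigorous: turning ``at most $2^{\ell}$ endpoints are reachable from a given start'' into a clean union bound over \emph{adaptively chosen} start-vertices requires carefully exploiting the independence of the within-block increments from the algorithm's past, and the whole construction---in particular the choice of $w_{0}$, $\delta$, $\gga$---must be made uniformly over a general, possibly non-concave, speed function $A$, including levels where $\gs=\sqrt{A'}$ degenerates.
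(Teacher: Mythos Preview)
The paper does not contain a proof of this theorem. Theorem~\ref{thm:ABM19} is explicitly labelled ``from \cite{ABM19}'' and is introduced with ``We recall their main result, which states informally as follows''; it is quoted from Addario-Berry and Maillard~\cite{ABM19} as background and motivation, and the present paper never attempts to reprove it. There is therefore nothing in the paper to compare your proposal against.

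That said, a few remarks on your sketch itself. Part~$(i)$ is essentially the standard block-greedy argument and is sound. For part~$(ii)$, the overall architecture (macro-blocks, union bound over $2^\ell$ endpoints, bounding the number of adaptively chosen start vertices by the number of queries, independence of within-block increments from the algorithm's past) is the right one, and the exponential gap you describe is indeed what drives the result. However, your closing comment about Cauchy--Schwarz, $v_c^2 \le 2\log 2\int_0^1\gs^2 = 2A(1)\log 2$, is not what produces the positive exponent $c_\delta$. The per-block bound you wrote just above it is a straight Gaussian tail estimate: the probability that a single Gaussian of variance $\gs(s_j)^2\ell$ exceeds $(\sqrt{2\log 2}\,\gs(s_j)+\delta)\ell$ is $\exp(-(\log 2 + c_\delta)\ell)$ with $c_\delta>0$ for any $\delta>0$, and the union bound over $2^\ell$ endpoints absorbs exactly the $\log 2$ term. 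No Cauchy--Schwarz is needed there; that inequality relates $v_c$ to the true maximum of the CREM, which is a different threshold and not at issue in~$(ii)$. Also, your reduction ``at the cost of a polynomial factor, assume the algorithm queries along connected subtrees'' deserves more care: in the CREM one queries vertices, not edges, and a single query of a deep vertex $u$ reveals $X^\crem_u$, which is a sum of all edge-increments on the root-to-$u$ path; so the independence you invoke between ``$w$ is queried'' and the increments below $w$ is not automatic and needs to be argued (e.g.\ by first querying all ancestors, or by a filtration argument). These are the points where a full proof of~$(ii)$ would need to be written out carefully.
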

	
	In other words, Theorem~\ref{thm:ABM19} proves the existence of an \emph{algorithmic hardness threshold} for the CREM: finding a vertex with a value greater than $(v_c+\varepsilon)T$ for a given $\varepsilon > 0$ typically requires a number of queries exponential in $T$, whereas values smaller than $(v_c-\varepsilon)T$ can be obtained in linear time. An \emph{algorithm} in this context is, roughly speaking, any random sequence of vertices, such that the choice of the next vertex only depends on the values of the previous vertices. Note that this result is not interesting from a purely theoretical computer science perspective, since the input size of the problem is exponential in $T$. However, as argued in Addario-Berry and Maillard~\cite{ABM19} and in Section~\ref{sec:spin_glasses} below, this result sheds light on the efficiency of algorithms on other spin glass models, for which the input complexity is indeed polynomial in $T$.
	
	In light of Theorem~\ref{thm:ABM19}, it is natural to ask about the complexity of finding vertices in the CREM with value \emph{near the threshold} $v_c T$. The present paper provides a partial answer to this question. Indeed, our results on the BBM allow us to study in detail the efficiency of a particular algorithm---the $N$-CREM---which has complexity (i.e. number of queries) $O(TN)$, and which we now introduce.
	
	Let $N\in\N$, and let $(X^\crem_u)_{u\in \bbT_T}$ be the construction of the CREM on the whole tree $\bbT_T$ as presented above. We may construct the $N$-CREM with the following procedure: perform a breadth-first exploration of the tree $\bbT_T$, noting encountered values of $X^\crem$ at depth $k$ with $X_{k,1}, X_{k,2},\ldots$ Then, remove from the tree all vertices (as well as the sub-tree they support) from $V_k$ which are not associated with one of the $N$ highest values from the sequence $X_{k,i}$, $i\geq1$. Repeat that procedure at depth $k+1$, considering only the offspring of vertices which were not removed. When that procedure ends, it yields a family which we denote $(X^\ncrem_{T,i})_{i\leq N(T)}\subset (X^\crem_u)_{u\in V_T}$: this can be seen as an optimization algorithm on the CREM, with complexity ---that is, the number of queries throughout the procedure--- of order $O(TN)$. In particular, choosing a specific sequence $N=N(T)$ allows for any complexity in $T$ for that algorithm. One can also interpret $(X^\ncrem_{T,i})_{i\leq N}$ as the final values of a (discrete time) branching random walk with selection. We refer to Section~\ref{sec:beam-search} below for further discussion on this algorithm and its interpretation as a \emph{beam search} procedure.
	
	Recall that we defined the $N$-BBMdb above, i.e. the BBM with selection and deterministic branching times. Then the BBM--CREM correspondence~\eqref{eq:BBM-CREMidentity} also applies to the $N$-particles variants: more precisely, one has
	\begin{equation}\label{eq:NBBM-NCREMidentity}
		\tilde\cX^{2N}_T \;\overset{(d)}{=}\; \big(\sqrt{2\log 2}\;X^\ncrem_{T/(2\log2),i}\big)_{1\leq i\leq N}\;.
	\end{equation}
	\begin{remark}
		The presence of a $2N$ in the l.h.s. of~\eqref{eq:NBBM-NCREMidentity} comes from the fact that, in the $N$-CREM, one has to consider $2N$ Gaussian increments, then select the $N$ highest before having the particles reproduce; whereas in the $N$-BBMdb, the selection happens just after the reproduction event. Notice that, if $N=N(T)\to+\infty$ as $T\to+\infty$, one has $\log (2N)\sim\log N=:L(T)$: so the maximal displacement of the $N$-BBM and $(2N)$-BBM have the same asymptotics in Theorem~\ref{thm:main}.
	\end{remark}
	
	We now present our main results on the $N$-CREM. 
	Let us write $\max(X^\ncrem_T):=\max\{X^\ncrem_{T,i},\, 1\leq i\leq N(T)\}$, and recall~\eqref{eq:defbanyrg} and \eqref{eq:defmanyrg}. We have the following, which is an immediate corollary of Proposition~\ref{prop:main:detbranching} and~\eqref{eq:NBBM-NCREMidentity}.
	\begin{theorem}\label{thm:crem}
		Consider the CREM with parameters $A(\cdot)$ and $T\in\N$. Let $N=N(T)\to+\infty$ as $T\to+\infty$, and consider the associated $N$-CREM. Let $\anyrg\in\{\supercrit,\crit,\subcrit,\supdecr\}$ denote the regime satisfied by $L(T):=\log N(T)$, and $\gs^2(\cdot):=A'(\cdot)\in\cC^2([0,1])$. Then as $T\to+\infty$, one has,
		\begin{equation}\label{eq:crem:main}
			\max(X^\ncrem_T)\,=\, (2\log2)^{-1/2}\,m^\anyrg_{(2\log 2)T} + o_\bbP\big(b^\anyrg_{(2\log 2)T}\big)\,.
		\end{equation}
		Moreover, assume $\anyrg\in\{\crit,\supercrit\}$; then one has as $T\to+\infty$,
		\begin{equation}
			\sup_{y\in[0,1]}\left|\,\frac{\log_+\#\big\{i\leq N,\, X^\ncrem_{T,i} \geq (2\log2)^{-1/2}[m^\anyrg_{(2\log2)T} - y\gs(1)L(T)]\big\} }{L(T)}\,-\,y\,\right|\,\longrightarrow\,0\,,
		\end{equation}
		in $\bbP_{\mu_T}$-probability, and 
		\begin{equation}
			\max(X^\ncrem_T) - \min(X^\ncrem_T) = (2\log2)^{-1/2} \sigma(1)L(T) + o_{\bbP}(L(T)).
		\end{equation}
	\end{theorem}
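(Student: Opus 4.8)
The plan is to obtain Theorem~\ref{thm:crem} as a direct consequence of the distributional identity~\eqref{eq:NBBM-NCREMidentity} and of Proposition~\ref{prop:main:detbranching}, which transfers Theorem~\ref{thm:main} and Propositions~\ref{prop:main:gsdecreasing}--\ref{prop:endtimedistribution} to the $N$-BBMdb. First I would fix the notation $T':=(2\log2)T$ and $\gs^2:=A'\in\cC^2([0,1])$, and record that by~\eqref{eq:NBBM-NCREMidentity} the random variable $\sqrt{2\log2}\,\max(X^\ncrem_T)$ has the law of $\max(\tilde\cX^{2N}_{T'})$, where $\tilde\cX^{2N}$ is the $2N(T)$-BBMdb with infinitesimal variance $\gs^2(\cdot/T')$ started from the configuration $2\gd_0$ of two particles at the origin. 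Applying Proposition~\ref{prop:main:detbranching} with final time $T'$ and beam width $2N(T)$ then yields $\max(\tilde\cX^{2N}_{T'})=\shft(2\gd_0)+m^\anyrg_{T'}+o_{\bbP}(b^\anyrg_{T'})$, and the analogous identities for the empirical-measure and diameter statements of Proposition~\ref{prop:endtimedistribution}.

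Two elementary bookkeeping remarks turn this into the claimed statement. First, since $\log(2N(T))=L(T)+\log2\sim L(T)$, the regime $\anyrg$ and the orders of magnitude of $b^\anyrg_{T'}$ and of the coefficients appearing in $m^\anyrg_{T'}$ are the same whether they are read off $L(T)$ or $\log(2N(T))$ (in the critical regime the parameter $\ga$ simply rescales by $(2\log2)^{-1/3}$, which is exactly what is encoded in $m^\anyrg_{(2\log2)T}$); thus the hypotheses of Theorem~\ref{thm:crem}, phrased via $L(T)=\log N(T)$, are precisely those needed to apply Proposition~\ref{prop:main:detbranching} to $\tilde\cX^{2N}$. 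Second, a one-line computation from the definition~\eqref{eq:defshift} gives $\shft(2\gd_0)=\gs(0)\log2=O(1)$, which is $o(b^\anyrg_{T'})$ because $b^\anyrg_{T'}\to+\infty$ in each of the four regimes. Dividing the displayed identity by $\sqrt{2\log2}$ then produces~\eqref{eq:crem:main}.

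For the empirical-measure and diameter identities (in the regimes $\anyrg\in\{\crit,\supercrit\}$) the same substitution works after a routine rescaling: Proposition~\ref{prop:endtimedistribution} applied to $\tilde\cX^{2N}$ naturally produces $\log(2N(T))$ both in the spatial offset $y\gs(1)\log(2N(T))$ and in the normalising denominator, and one reparametrises $y\mapsto y\,\log(2N(T))/L(T)$ (a bijection of $[0,1]$ onto an interval containing it, with ratio tending to $1$) while absorbing the $O(1)$ translation $\shft(2\gd_0)$ into an $O(1/L(T))$ perturbation of $y$; the supremum over $y\in[0,1]$, now written with $L(T)$ in place of $\log(2N(T))$, remains $o_{\bbP}(1)$. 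The diameter statement follows likewise, noting $\sigma(1)\log(2N(T))=\sigma(1)L(T)+O(1)$.

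Since~\eqref{eq:NBBM-NCREMidentity} is an exact identity in law, this corollary carries essentially no genuine difficulty — the real work lies upstream in Theorem~\ref{thm:main} and Proposition~\ref{prop:main:detbranching}. The only point I expect to require a word of care is the ``$\supdecr$'' regime, where~\eqref{eq:crem:main} invokes Proposition~\ref{prop:main:gsdecreasing}, which is stated for the initial datum $\gd_0$ rather than $2\gd_0$; replacing one particle at the origin by two alters the maximal displacement only by $o_{\bbP}(T^{1/3})=o_{\bbP}(b^\supdecr_{T'})$ — the extension to a bounded initial configuration at the origin being part of the discussion of the initial-configuration hypothesis in Section~\ref{sec:compl:gsdecreasing} — so the conclusion is unaffected.
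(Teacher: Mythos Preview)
Your proposal is correct and follows exactly the route the paper indicates: the paper states just before Theorem~\ref{thm:crem} that it ``is an immediate corollary of Proposition~\ref{prop:main:detbranching} and~\eqref{eq:NBBM-NCREMidentity}'', and gives no further proof. Your bookkeeping (the time change $T'=(2\log2)T$, the observation $\log(2N)\sim L(T)$, the computation $\shft(2\gd_0)=\gs(0)\log2=O(1)=o(b^\anyrg_{T'})$, and the reparametrisation for the empirical-measure statement) is precisely what is needed to make that ``immediate corollary'' explicit; the only slight overstatement is that Section~\ref{sec:compl:gsdecreasing} \emph{discusses} rather than settles the $2\gd_0$ versus $\gd_0$ issue in the $\supdecr$ regime, but your observation that the maximum over two independent copies inherits the $o_\bbP(T^{1/3})$ bound from~\eqref{eq:MZ16} (or~\cite[Theorem~1.3]{Mal15} for the BBMdb) closes this trivially.
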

	
	\begin{remark}
		Equation~\eqref{eq:crem:main} from Theorem~\ref{thm:crem} can be stated in words as follows. Let $N = \exp(T^\kappa)$. When $\kappa < 1/3$, then with high probability, the value found by the $N$-CREM algorithm (i.e. its \emph{output}) is far below the threshold, more precisely, at $v_c T - O(T^{1-2\kappa})$.
		When transitioning into the critical regime ($\kappa=1/3$), the second order term of the output escapes from a singularity at the scale $T^{1/3}$ (see also Figure~\ref{fig:NBRW:simu} below). 
		On the other hand, if $\kappa > 1/3$, then with high probability the output of the algorithm is above the threshold and of order $v_cT + O(T^{\kappa})$---unless $\gs$ is non-increasing, in which case the output of the algorithm is $o(T^{1/3})$-close to the maximum value of the CREM, which is of order $v_cT - O(T^{1/3})$ with high probability. 
	\end{remark}
	
	\subsection*{Organization of the paper}
	In Section~\ref{sec:motivations_and_comparison} we compare our results with the literature on spin glasses and branching Brownian motion; we also present an overview of the proof in Section~\ref{sec:overview_proof}, as well as extensive numerical simulations on a related discrete-space model in Section~\ref{sec:motivation:BRWconj}. 
	
	In Section~\ref{sec:coupling} we detail the construction of the time-inhomogeneous BBM and $N(T)$-BBM, and we establish some coupling results between these processes that are used throughout this paper. Moreover, in Section~\ref{sec:coupling:mainprops} we state two key results, Propositions~\ref{prop:main:LB} and~\ref{prop:main:UB}, which provide respectively a lower bound and an upper bound on the maximal displacement of the $N(T)$-BBM for some specific initial configurations. Using these and a coupling argument, we deduce Theorem~\ref{thm:main} for any initial configuration. 
	
	Sections~\ref{sec:prelim} through~\ref{sec:UB} are dedicated to the proofs of Propositions~\ref{prop:main:LB} and~\ref{prop:main:UB}. The core idea of the proof is to study the BBM with certain well-chosen \emph{killing barriers}, which approximates the $N(T)$-BBM. In Section~\ref{sec:prelim} we introduce the relevant barriers depending on the regime: super-critical, sub-critical and critical; as well as some preliminary results. Then in Section~\ref{sec:moments} we compute moment estimates on some functions of the BBM between barriers in each of those regimes. With these moment estimates, the proofs of Propositions~\ref{prop:main:LB} and~\ref{prop:main:UB} are performed in Sections~\ref{sec:LB} and~\ref{sec:UB} respectively. 
	This completes the proof of Theorem~\ref{thm:main}.
	
	Finally, Section~\ref{sec:compl} contains the proofs of all remaining statements from Section~\ref{sec:intro}, that is Propositions~\ref{prop:main:gsdecreasing},~\ref{prop:endtimedistribution} and~\ref{prop:main:detbranching} (from which one deduces Theorem~\ref{thm:crem}), as well as some complementary results on the $N$-BBM with time-inhomogeneous selection. All of these rely on Theorem~\ref{thm:main}, or arguments from its proof presented in previous sections.

	\section{Motivations and comments}
	\label{sec:motivations_and_comparison}

	\subsection{Algorithms on spin glasses}
	\label{sec:spin_glasses}
	In this section, we discuss algorithmic optimization in general spin glasses. In the recent years, several authors have proposed optimization algorithms for mixed $p$-spin models inspired by Parisi ultrametricity and the TAP approach for spin glasses, see Subag~\cite{subag2021following} for spherical spin glasses, as well as Montanari~\cite{montanari2021optimization} and El Alaoui, Montanari, Sellke~\cite{ElAlaoui2021} for Ising spin glasses. See also Huang and Sellke~\cite{huang2022tight,huang2023algorithmic} for hardness results in this setting. These algorithms can be regarded as analogs to the algorithms considered here and in Addario-Berry and Maillard \cite{ABM19} for the CREM. Note that in all of these models, it is by now understood or generally believed that a necessary and sufficient obstruction to efficient approximation of the ground state is provoked by the so-called ``overlap gap property'' ---this is also called the ``overlap gap conjecture''. The overlap gap property, roughly speaking, states that the support of the Parisi measure has a ``gap'', i.e. is not an interval, at sufficiently small temperature. See e.g. Gamarnik and Jagannath~\cite{gamarnik_overlap_2021}, as well as the survey by Gamarnik~\cite{Gamarnik2021}. The overlap gap conjecture indeed holds for the CREM under quite general assumptions. To wit, it is known for the CREM that the algorithmic hardness threshold is equal to the ground state exactly if the covariance function $A(\cdot)$ is concave \cite{ABM19}. Moreover, it has long been known that the Parisi measure is supported on the extreme points of the concave hull of $A(\cdot)$ \cite{BK07}. This proves the overlap gap conjecture, apart from boundary cases, where the function $A(\cdot)$ is concave and has affine parts. However, a more precise and more restrictive statement of the overlap gap property requires that the probability of two replicas to have an overlap in the gap is exponentially small~\cite{gamarnik_overlap_2021} which, we believe, rules out such cases. For example, in the simplest case, where $A(x) \equiv x$, i.e.~for branching random walks, it is known that this probability decays only polynomially fast \cite{derrida_genealogy_2016}. 
	
	In the same spirit, we hope that the current article may serve as a starting point to the study of efficiency of optimization algorithms \emph{close to the algorithmic hardness threshold in general spin glass models}. In fact, the results from the current article shed a light on the transition from polynomial to exponential complexity of optimization algorithms near the algorithmic hardness threshold, particularly when this threshold is strictly below the ground state, i.e.~when the overlap gap property holds. For the CREM, we establish the appearance of three different asymptotic regimes and a deep relation with the Brunet--Derrida correction for the noisy FKPP equation (see Section~\ref{sec:motivation:NBBM} below). It would be very interesting to study this transition in other spin glass models, such as (mixed) $p$-spin models.

	Finally, let us mention that $p$-spin models admit a natural dynamics which is reversible with respect to the Gibbs measure: Langevin dynamics (spherical models) and Glauber dynamics (Ising models). These dynamics have also been proven to be asymptotically optimal in some of these models, see Sellke~\cite{Sel24}. 
	Spin-flip dynamics have also been considered for the random energy model in relation to aging and metastability, see for example \cite{benarous_glauber_2003,baity-jesi_activated_2018}. However, this dynamics does not seem to be efficient for algorithmic purposes in the case of the CREM, in fact, it is not hard to show that the natural spin-flip dynamic has exponentially large mixing time for every positive inverse temperature $\beta$.

	The efficiency of more general optimization algorithms for the CREM is an interesting open problem. We believe that the $N$-CREM considered here is close to optimal within the class of algorithms of a given complexity. Indeed, due to the particular structure of the CREM (in particular, the branching property), it is always favorable (on average) to explore subtrees of vertices of large values as opposed to subtrees of vertices of smaller values. The $N$-CREM is therefore a very natural candidate for an asymptotically optimal algorithm for this model.

	\subsection{Beam-search algorithms}
	\label{sec:beam-search}
	The $N$-CREM considered in this article can be viewed as a particular greedy-type algorithm, in a similar spirit as the algorithm studied in \cite{ABM19}. More precisely, it may be regarded as a \emph{beam search} algorithm \cite{Sha87AI}, with the parameter $N$ being the \emph{width of the beam}. Our main result (Theorem~\ref{thm:crem}) then precisely describes how the output of the algorithm depends on the width of the beam $N$, with a phase transition happening at $\log N \approx T^{1/3}$.

	A \emph{practical takeaway} might be the following. For the beam search algorithm, on the one hand, increasing the width $N$ of the beam \emph{substantially improves the output in the subcritical regime} $\log N\ll T^{1/3}$, due to the singular second order term $-T/(\log N)^2$ in the value of the output. 
	For example, passing from $N=T$ to $N = T^2$ makes this term four times smaller, increasing the value of the output by a term of order $T/(\log T)^2$, asymptotically when $T$ is large. 
	On the other hand, in the super-critical regime $\log N \gg T^{1/3}$, increasing the width of the beam comes with a more tenuous improvement of the output; more precisely, it only grows logarithmically in $N$.

	The efficiency of beam search algorithms is still an active research area, see e.g.~\cite{Lemons2022} and the references therein. The beam search algorithm considered here is quite special, due to the nature of the CREM. For example, the output of the algorithm is a non-decreasing function of the width of the beam, which is in general not the case \cite{Lemons2022}. Nevertheless, we hope that our results shed light on the behavior of general beam search algorithms for hard optimization problems on random instances, as the width of the beam grows to infinity.

	\subsection{$N$-BBM: Comparison with previous results}\label{sec:motivation:NBBM}
	\subsubsection*{The Brunet-Derrida behavior in the sub-critical regime.} 
	Results similar to~\eqref{eq:thm:subcrit} have already been obtained for some (time-homogeneous) branching processes with selection, see e.g.~\cite{BG10, Mai16} respectively for the $N$-particles branching random walk ($N$-BRW) and the $N$-BBM. In those papers, the authors prove for fixed $N\in\N$ the existence of an \emph{asymptotic speed} $v_N:=\lim_{T\to+\infty} \max(\cX^N_T)/T$, where $(\cX^N_T)$ denotes either the $N$-BRW or $N$-BBM. Then, when $N\to+\infty$, this asymptotic speed converges very slowly ---like $(\log N)^{-2}$--- to $v_{\infty}:=\lim_{T\to+\infty} \max(\cX_T)/T$, the asymptotic speed of the corresponding (time-homogeneous) branching process without selection. This slow convergence has been called \emph{Brunet-Derrida behavior}: it was first observed in~\cite{BD97} with heuristic methods and numerical simulations, and it is expected to hold for many models that fall under the universality class of the FKPP equation (see~\cite{BDMM06}). 
	
	Our Theorem~\ref{thm:main} generalizes the Brunet-Derrida behavior to time-inhomogeneous BBM with selection, for parameters $(T,N)$ in the range 
	$1\ll \log N \ll T^{1/3}$. We can recover from this an asymptotic valid as $T\to\infty$, for large but fixed $N$:
	\begin{corollary}\label{corol:finiteN}
		Let $(\cX_T^{N})_{T\geq0}$ an $N$-BBM (or $N$-BBMdb) with infinitesimal variance $\gs^2(\cdot/T)$. For any $\eps>0$ and sequence $(\mu_T)_{T\geq0}$ in $\meas$, one has,
		\begin{align}\label{eq:corol:finiteN}
			\limsup_{N\to+\infty} \, \limsup_{T\to+\infty} \,\bbP_{\mu_T}\left(\frac{(\log N)^2}{T}\left| \max(\cX_T^{N}) -\shft(\mu_T)- v(1)T\left(1-\frac{\pi^2}{2(\log N)^2}\right)\right|>\eps\right)&\\
			=\,0\,.&\notag
		\end{align}
	\end{corollary}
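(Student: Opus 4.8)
The plan is to deduce Corollary~\ref{corol:finiteN} from part~$(i)$ of Theorem~\ref{thm:main} (and, for the $N$-BBMdb, from Proposition~\ref{prop:main:detbranching}) by a diagonal argument: Theorem~\ref{thm:main}$(i)$ already carries all the probabilistic content, and the only remaining task is to interpolate between the regime ``$N=N(T)$ with $1\ll\log N(T)\ll T^{1/3}$'' and the iterated limit ``$T\to\infty$ first, then $N\to\infty$'' by a suitable step function.

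First I would reduce to the case $\mu_T\in\meas_N$. Given $\mu\in\meas$ with more than $N$ atoms, let $\mu'\in\meas_N$ be the sub-configuration of its $N$ highest atoms; since the $N$-BBM applies selection at branching times, running it from $\mu$ amounts to running it from $\mu'$. Moreover $\shft(\mu)=\shft(\mu')$: for every $a\in\R$ one has $\mu'([a,+\infty))=\min(\mu([a,+\infty)),N)$, and as $N^{\gk}\le N$ for $\gk\in[0,1]$ the requirement $\mu([a,+\infty))\ge N^{\gk}$ is equivalent to $\mu'([a,+\infty))\ge N^{\gk}$, so~\eqref{eq:defshift} is unchanged. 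Thus we may assume $\mu_T\in\meas_N$ throughout.

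Now fix $\eps>0$ and put, for $N\in\N$ and $T\ge0$,
\[
f(N,T)\;:=\;\bbP_{\mu_T}\!\left(\frac{(\log N)^2}{T}\left|\max(\cX_T^{N})-\shft(\mu_T)-v(1)T\Big(1-\frac{\pi^2}{2(\log N)^2}\Big)\right|>\eps\right),
\]
so that~\eqref{eq:corol:finiteN} reads $\limsup_{N\to\infty}\limsup_{T\to\infty}f(N,T)=0$. Suppose not; then there are $\delta>0$ and $N_k\uparrow\infty$ with $\limsup_{T\to\infty}f(N_k,T)\ge\delta$ for all $k$, and hence times $T_1<T_2<\cdots\to\infty$ with $T_k\ge(k\log N_k)^3$ and $f(N_k,T_k)\ge\delta$. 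Set $N(T):=N_k$ for $T\in[T_k,T_{k+1})$ and $N(T):=N_1$ for $T<T_1$. Then $\log N(T)\to\infty$, while for $T\in[T_k,T_{k+1})$ one has $\log N(T)/T^{1/3}\le\log N_k/T_k^{1/3}\le1/k\to0$; so $1\ll\log N(T)\ll T^{1/3}$ and $N(\cdot)$ is in the subcritical regime.

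Applying Theorem~\ref{thm:main}$(i)$ (or Proposition~\ref{prop:main:detbranching}) to $N(\cdot)$, the definition of $o_{\bbP}$ gives $f(N(T),T)\to0$ as $T\to\infty$, hence $f(N_k,T_k)=f(N(T_k),T_k)\to0$, contradicting $f(N_k,T_k)\ge\delta$. As $\eps>0$ was arbitrary this proves the corollary. The argument is mostly bookkeeping; the one delicate point is the choice of the interpolating times $T_k$, which must grow fast enough relative to $N_k$ to keep $\log N(T)$ below the critical scale $T^{1/3}$ along the whole trajectory, while still only requiring $T_k\to\infty$ so that $\log N(T)\to\infty$ and the bad events survive along the subsequence $(T_k)$. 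This is the (minor) main obstacle; nothing else is needed.
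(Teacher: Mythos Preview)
Your proof is correct and follows essentially the same diagonal argument sketched in the paper: assume the iterated $\limsup$ is positive, extract $(N_k,T_k)$ with $T_k\gg(\log N_k)^3$ along which the probability stays bounded away from zero, and contradict the subcritical case of Theorem~\ref{thm:main}. One tiny imprecision: from $\limsup_{T}f(N_k,T)\ge\delta$ you cannot always find $T_k$ with $f(N_k,T_k)\ge\delta$; take the initial $\delta$ strictly below the double $\limsup$ (or use $\delta/2$) so that the strict inequality propagates and the extraction goes through.
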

	This corollary follows naturally from~\eqref{eq:thm:subcrit} and a diagonal argument: if~\eqref{eq:corol:finiteN} does not hold, then one can construct a sequence $(N_k,T_k)_{k\geq1}$ for which the probability above remains large. However, one can freely choose $(T_k)_{k\geq1}$ such that $T_k \gg (\log N_k)^3$ for large $k$, and this directly contradicts the sub-critical result from Theorem~\ref{thm:main} (we leave the details of the proof to the reader). Furthermore, let us point out that in the time homogeneous case, the Brunet-Derrida behavior is expected to hold up to a number of particles $N$ satisfying $\log N\ll T^{1/2}$, see e.g. the discussion after Theorem 1.1 of Mallein~\cite{Mal17}. In contrast, in the time-inhomogeneous setting, this is true only in the range $\log N\ll T^{1/3}$, as Theorem~\ref{thm:main} shows.
	
	We remark that the asymptotics of the $N$-BBM when $N\to\infty$ and $T$ is fixed (the so-called \emph{hydrodynamic limit}) have also generated a lot of interest \cite{DeMasiFerrariPresuttiSoprano-LotoHydrodynamicsNBBM2019,Berestycki2018,AtarWeakFormulation2025,BerardFrenaisHydrodynamicLimit2023}. 
	
	\subsubsection*{1:3 space-time scaling in branching Brownian motion and branching random walks}
	The 1:3 space-time scaling has appeared many times in the study of branching Brownian motion and branching random walks. For the time-homogeneous versions of these processes, it appears in the $N$-particle process mentioned above as well as in the process with absorption at a linear space-time barrier, with the earliest appearance being, to our knowledge, in Kesten \cite{Kesten1978} and later developments by many authors \cite{Berard2011,Berestycki2010a,BBS13,Berestycki2014a,Berestycki2015,Derrida2007,GHS11,YaglomBBM,BBMsmalltime,Pemantle2009}. Pemantle \cite{Pemantle2009} is motivated by algorithmic aspects, inspired by Aldous~\cite{AldousGreedy,Aldous1998}. The 1:3 scaling also appears in the study of the particles in BBM or BRW without selection remaining close to the running maximum throughout their trajectory, which is usually called a ``consistent(ly) maximal displacement'' \cite{Fang2010,Faraud2012b,Jaffuel2012,Roberts2012}. 
	In these different references, it is observed that, generally speaking, a branching process constrained to a region of space of size $L$ for a time $T$ undergoes some critical behavior at scale $L\asymp T^{1/3}$. Since a branching process limited to $N$ particles occupies a region of size of order $\log N$ (recall~\eqref{eq:prop:endtimedist:2}), this matches the critical regime observed in Theorem~\ref{thm:main}.
	
	In the context of time-inhomogeneous BBM or BRW, the 1:3 scaling has been considered to our knowledge only in the study of extremal particles, in the regimes where their trajectories stay close to the running maximum during a macroscopic time \cite{Mal15,MZ16}. Relatedly, it appears in the time-inhomogeneous Fisher-KPP equation \cite{Nolen2014}, due to a duality relation with the time-inhomogeneous BBM.

	\subsection{Overview of the proof}
	\label{sec:overview_proof}
	As mentioned above, the long-time behavior of homogeneous $N$-BBM and its variants have been quite intensively studied in the mathematical literature since the seminal article by Bérard and Gouéré~\cite{BG10}. However, it appears that these studies have been focused on two specific time-scales: 
	\begin{enumerate}
	    \item $T\to\infty$, then $N\to\infty$ (i.e.~$N$ large but constant), \cite{BG10,BerestyckiZhaoShapeMultidimensional2018,Mallein2015a,Pain2015,BerestyckiToughSelectionPrinciple2024}
	    \item $\log N\asymp T^{1/3}$ \cite{Mai16,Mal17}
	\end{enumerate}
	All of these papers rely on more or less precise comparisons with BBM or BRW killed at certain space-time curves, which are amenable to explicit first and second moment calculations. Additionally, in the setting where $T\to\infty$, then $N\to\infty$, arguments based on regeneration times and Kingman's ergodic theorem are generally applied, sometimes also Birkhoff's ergodic theorem \cite{BerestyckiToughSelectionPrinciple2024}. These arguments do not extend to a time-inhomogeneous setting. Even in the time-homogeneous setting, there is little hope that they could be used to deal with all regimes of time $T$ depending on $N$. This leads to important technical challenges which we overcome in this article, depending on the regime.
	
	Before detailing the approaches for each regime, we wish to highlight that we have taken great care in avoiding repetitions: many lemmas are applicable in two or more regimes. The general idea is to bound the $N$-BBM from above and from below by auxiliary models, called $N^+$-BBM and $N^-$-BBM, similar to Maillard~\cite{Mai16}. These models are defined using barriers at which particles are killed, and which ensure that, with high probability, the number of surviving particles is larger or smaller than $N$, respectively. Contrary to previous works on \emph{extremal} particles in time-inhomogeneous BBM such as Mallein~\cite{Mal15} or Maillard and Zeitouni~\cite{MZ16}, it turns out that it is not enough here to consider only one upper or lower barrier but both at the same time, which complicates their definition, leading in particular to the appearance of the function $\Psi$ defined in \eqref{def:Psi} in the bounds for the maximum, in the \emph{critical} regime.
	
	The number of particles surviving between the barriers is controlled using first and second moment estimates. The first and second moment estimates rely on the study of the heat kernel of a time-inhomogeneous Brownian motion killed upon exiting certain space-time tubes. Some of these estimates are similar to those from Mallein~\cite{Mal15}, who considers the regime $T\asymp (\log N)^3$. In order to deal with all regimes, we derive them from scratch, benefiting from the fact that the Brownian motion simplifies the calculations compared to the branching random walk. 	It is these estimates which are particularly delicate in the \emph{super-critical} regime. In this regime, the time-inhomogeneous $N$-BBM behaves radically different to the time-homogeneous $N$-BBM. In particular, we observe that the main contribution to the first moment comes from trajectories that localize near one of the boundaries of the tube, see Figure~\ref{fig:supercrit_strategy}, leading to additional constraints that have to be introduced. We refer to Section~\ref{sec:moments:supercrit} for details.
	
	The comparison argument is particularly challenging in the proof of the lower bound in the \emph{subcritical} regime $\log N \ll T^{1/3}$. Here, comparison with a single BBM with barriers does not work as errors blow up. Since the process behaves like a concatenation of homogeneous $N$-BBM, it is natural to apply results or methods for this model. However, the arguments mentioned above based on regeneration times are not strong enough to cover the whole subcritical regime. We therefore develop a new proof method, which does not make use of regeneration times. This proof method can potentially be applied to the study of other variants of the (homogeneous) $N$-BBM. Roughly speaking, we slice the time interval into pieces of length slightly larger than $(\log N)^3$, and compare with a BBM with barriers in each piece. We then distinguish among two cases. If the number of pieces is not too large ($\log N \gg \log T$ works), then it is enough to restrict to a certain event ensuring that each piece behaves typically. On the other hand if the size of the pieces is small enough ($\log N \ll T^{1/8}$ works), we can bound expectation and variance of the displacement of the maximum in between each piece to obtain the result. This argument appears in Section~\ref{sec:LB}, making use of estimates from Section~\ref{sec:moments:subcrit}.
	
	Another difficulty arises in the proof of the upper bound, when bounding the $N$-BBM from above by an $N^+$-BBM. In order to control the particles in the $N^+$-BBM, it is necessary to add an additional upper barrier. However, this breaks the comparison to $N$-BBM. To circumvent this, we treat the particles hitting the upper barrier separately, in the spirit of Berestycki-Berestycki-Schweinsberg \cite{BBS13} and Maillard~\cite{Mai16}. We refer to the beginning of Section~\ref{sec:UB} for details.
	
	\subsection{Generic branching random walk and numerical simulations}\label{sec:motivation:BRWconj}
	In Theorem~\ref{thm:crem} we provided an asymptotic of $\max(\cX^{\ncrem}_T)$ as $T\to+\infty$ for the $N$-CREM, $N=N(T)$, and we commented above 
	that it may be seen as an optimization algorithm on realizations of the CREM. We conjecture that such results adapt to more general (i.e. non-Gaussian) branching random walks (BRW). In this section we present the conjectured formulae that would extend Theorem~\ref{thm:main} to a generic BRW, and then we present numerical simulations in the case of a Bernoulli BRW. 
	
	\subsubsection*{Conjecture for the general BRW}
	We introduce some notation in the vein of~\cite{Mal15}, which we only use in this section. Let $(\cL_s)_{s\in[0,1]}$ be a family of laws of point processes. Then, the BRW with offspring distributions $(\cL_s)_{s\in[0,1]}$ is constructed until time $T\in\N$, starting from some initial configuration $\mu_T\in\meas$, by induction: at generation $t<T$, an individual $u\in\cN_t$ located in $x\in\R$ generates $|L^u_{t/T}|$ children located respectively in $x+Y$ for $Y\in L^u_{t/T}$, where the point processes $L^u_{t/T}\sim \cL_{t/T}$ are independent in $u$, $t$. 
	We write $\xi_{t/T}\sim |L^u_{t/T}|$ for the law of the number of children at generation $t$, and we assume $\bbE[\xi_s^2]<+\infty$ and $m_s:=\bbE[\xi_s]\geq 1$ for all $s\in[0,1]$. We write,
	\begin{equation}\label{eq:app:laplace}
		\gk_{s}(\gth)\,:=\, \log \bbE\left[\sum_{Y\in L_s} e^{\gth Y}\right]\,,\qquad \gth\geq0\,,\,s\in[0,1]\,,
	\end{equation}
	for the log-Laplace transform of the offspring point processes. Consider its Fenchel-Legendre transform, that is
	\begin{equation}\label{eq:app:legendre}
		\gk_s^*(v)\,=\,\sup_{\gth>0} \,[v\gth-\gk_s(\gth)]\,,\qquad v\in\R\,,\,s\in[0,1]\,.
	\end{equation}
	Morally, if $a=(a_s)_{s\in[0,1]}$ denotes a bounded, measurable function (called the ``speed profile''), the number of particles from the BRW that remain close to $\sum_{s=1}^t a_{s/T}$ at all times $t\in\{0,\ldots,T\}$, is roughly $\exp(- \sum_{s=1}^T \gk^*_{s/T}(a_{s/T}))$, see e.g.~\cite{Big95, Aldous1998} or more recently~\cite{Mal15}. In particular, letting $v_{\max}$ be the first order of the speed of the maximum of the BRW (without selection), it satisfies,
	\[
	v_{\max}=\sup\left\{\int_0^1 a_s\dd s\,\middle|\, (a_s)_{s\in[0,1]} \text{ such that }  \forall s\leq 1,\, \int_0^s \gk^*_{u}(a_{u}) \dd u \leq 0\right\}.
	\]
	Assume that for all $s\in[0,1]$, there exists a greatest root $v_s$ of $\kappa_s^*(v)=0$, and that $\kappa_s^*$ is finite in a neighborhood of $v_s$; then the ``natural speed'' of the process is defined by
	\begin{equation}\label{eq:app:natspeed}
		v_{\nat}=\sup\left\{\int_0^1 a_s\dd s\,\middle|\, (a_s)_{s\in[0,1]} \text{ such that }  \forall s\leq 1,\, \gk^*_{s}(a_{s}) \leq 0\right\}\,=\,\int_0^1v_s\dd s\,\leq\,v_{\max}\,.
	\end{equation}
	Finally, one defines $(\theta_s)_{s\in[0,1]}$, $(\gs_s)_{s\in[0,1]}$ with,
	\begin{equation}\label{eq:app:def:phiandgs}
		\forall\, s\leq 1\,,\quad \theta_s:=\partial_v\gk^*_s(v_s) =(\partial_\gth \gk_s)^{-1}(v_s)\,,\quad\text{and}\quad \gs_s^2=\partial^2_\gth \gk_s(\theta_s) = 1/\partial_v^2 \gk^*_s(v_s)\,.
	\end{equation}
	\begin{remark}\label{rem:conjBRW:gaussian}
		In the case of a centered Gaussian BRW (i.e. the variables $Y\in L_s$ are independent with law $\cN(0, \gs^2(s) )$), then one has $\gk_s(\gth)=\log m_s + \gth^2\gs^2(s)/2$. In particular one has $v_s:=\gs(s)\sqrt{2\log m_s}$, $s\in[0,1]$; and $v_s$, $\gs(s)$ do satisfy~\eqref{eq:app:def:phiandgs} with $\theta_s:=\sqrt{2\log m_s}/\gs(s)$. In particular for $m_s=m$ constant for all $s\in[0,1]$, $v_{\nat}=v(1)\sqrt{2\log m}$ matches the definition of $v(1)$ from Section~\ref{sec:intro} (up to a scaling factor coming from our initial choice of branching rate $\gb_0$).
	\end{remark}
	
	\begin{conj}\label{conj:BRW}
		With the notation above, let $N(T)=e^{L(T)}$ and consider $\cX^{N(T)}_T$ the configuration at generation $T$ of an $N(T)$-BRW started from a single particle at the origin (i.e. $\mu_T=\gd_0$) and with offspring distributions $(\cL_{t/T})$, $t\leq T$. Assume that $(v_s)_{s\in[0,1]}$ is well-defined. Then, if $L(T)\sim \ga T^{1/3}$ for some $\ga\in\R$, one has as $T\to+\infty$,
		\begin{equation}\label{eq:conj:crit}
			\max\big(\cX^{N(T)}_T\big) = v_{\nat} T +  \Bigg[\int_0^1 \frac{\theta_s\gs^2_s}{\ga^2}\Psi\Bigg(\frac{\ga^3\dot\theta_s}{\theta_s^3 \gs_s^2}\Bigg) \dd s\Bigg]T^{1/3} + o_\bbP\big(T^{1/3}\big)\,.
		\end{equation}
		If $1\ll L(T)\ll T^{1/3}$, then
		\begin{equation}\label{eq:conj:subcrit}
			\max\big(\cX^{N(T)}_T\big) = v_{\nat} T - \left(\frac{\pi^2}2 \int_0^1 \theta_s\gs^2_s \,ds \right)\frac{T} {L(T)^2} + o_\bbP\left(\frac{T}{L(T)^2}\right)\,.
		\end{equation}
		If $T^{1/3}\ll L(T)\ll T$, then
		\begin{equation}\label{eq:conj:supercrit}
			\max\big(\cX^{N(T)}_T\big) = v_{\nat} T + \Bigg[\int_0^1 \frac{(\dot\theta_s)^-}{\theta_s^2} \dd s\Bigg]L(T) + o_\bbP\big(L(T)\big)\,,
		\end{equation}
		where $(\cdot)^-$ denotes the negative part; and if additionally $\dot\theta_s\geq0$ for all $s\in[0,1]$, then
		\begin{equation}\label{eq:conj:supercrit:gsdecreasing} 
			\max\big(\cX^{N(T)}_T\big) = v_{\nat} T - \frac{\mathrm{a}_1}{2^{1/3}}\Bigg[\int_0^1 \frac{(\dot\theta_s\gs_s)^{2/3}}{\theta_s} \dd s\Bigg]T^{1/3} + o_\bbP\big(T^{1/3}\big)\,.
		\end{equation}
	\end{conj}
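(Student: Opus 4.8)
The plan is to transcribe the proof of Theorem~\ref{thm:main} to the general branching random walk, the Gaussian structure being used there essentially only through explicit random‑walk estimates. I would first establish analogues of Propositions~\ref{prop:main:LB} and~\ref{prop:main:UB} for the $N(T)$‑BRW absorbed at well‑chosen space–time killing barriers, and then transfer them to the genuine $N(T)$‑BRW using the monotonicity of the selection rule in $N$ and in the initial configuration (for the appropriate stochastic order), exactly as in Section~\ref{sec:coupling}. The starting point is a change of measure along the natural speed profile: at generation $t$ one tilts the offspring point process $\cL_{t/T}$ by the optimal parameter $\theta_{t/T}$ from~\eqref{eq:app:def:phiandgs}, via a time‑inhomogeneous many‑to‑one lemma / spine decomposition. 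Under the tilt the spine is an inhomogeneous random walk whose $t$‑th increment has mean $\gk_{t/T}'(\theta_{t/T})=v_{t/T}$ and variance $\gk_{t/T}''(\theta_{t/T})=\gs_{t/T}^2$, so that after recentring by the natural speed $\sum_{s\le t}v_{s/T}$ it satisfies an invariance principle with local variance $\gs_s^2$; the accompanying additive‑martingale weight turns the population‑size constraint into the rate‑function condition $\int_0^s\gk_u^*(a_u)\,\dd u\le 0$, which is what singles out $v_\nat$ at first order.

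\textbf{Barriers and the Airy term.} The new phenomenon compared with the Gaussian case is that the drift of the tilted walk \emph{relative to the optimal speed} is driven by $\dot\theta_s$ rather than by $\gs'/\gs$: a Taylor expansion of $\theta\mapsto\gk_s(\theta)$ around $\theta_s$ shows that changing the tilt by $\dd\theta_s$ costs $\tfrac12\gs_s^2(\dd\theta_s)^2$ in the exponent per unit time, which after the $1{:}3$ rescaling produces exactly the prefactor $\theta_s\gs_s^2/\ga^2$ and the argument $\ga^3\dot\theta_s/(\theta_s^3\gs_s^2)$ of~\eqref{eq:conj:crit}. On a mesoscopic block of length $\gg T^{2/3}$ but $o(T)$, on which $\theta_s,\gs_s$ are essentially frozen, the tilted BRW confined between two barriers a distance $\asymp\ga T^{1/3}$ apart behaves, after Brownian rescaling, like a branching Brownian motion of variance $\gs_s^2$ with a small linear space–time drift, killed at the edges of an interval; its growth rate is the principal Dirichlet eigenvalue of the associated Schr\"odinger operator, i.e.\ the Airy‑function quantity defining $\Psi$ in~\eqref{def:Psi} --- this is precisely the computation behind Lemmata~1.7 and~A.4 of~\cite{Mal15}. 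Summing the $T^{o(1)}$ blocks and passing to the continuum limit yields $\int_0^1\tfrac{\theta_s\gs_s^2}{\ga^2}\Psi(\ga^3\dot\theta_s/(\theta_s^3\gs_s^2))\,\dd s$ in the critical regime; the sub‑ and super‑critical formulae follow either by rerunning the argument with parabolic, respectively piecewise‑linear, barriers, or at the level of the answer by letting $\ga\to0$ with $\Psi(0)=-\pi^2/2$ (giving~\eqref{eq:conj:subcrit}) and $\ga\to+\infty$ with $\Psi(q)\sim-\rma_1 q^{2/3}/2^{1/3}$ and $\Psi(-q)=q+\Psi(q)\sim q$ (giving~\eqref{eq:conj:supercrit} and, when $\dot\theta_s\ge0$, \eqref{eq:conj:supercrit:gsdecreasing}). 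Consistency with Theorem~\ref{thm:main} in the centred Gaussian case is the substitution $\theta_s=1/\gs(s)$, $\gs_s=\gs(s)$, $\dot\theta_s=-\gs'(s)/\gs(s)^2$.

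\textbf{Main obstacle.} The difficulty is analytic, not structural: one must upgrade the Gaussian estimates of Sections~\ref{sec:moments}, \ref{sec:LB} and~\ref{sec:UB} to a genuine point‑process setting, where the underlying walk after tilting is a random walk rather than a Brownian motion. This needs (i) ballot‑type estimates for inhomogeneous random walks near a moving absorbing barrier, with the correct polynomial prefactor and uniform over the $T$‑dependent family of barriers, which forces integrability hypotheses on the $\cL_s$ stronger than $\bbE[\xi_s^2]<\infty$ --- presumably of the $X\log^2 X$ type on $\sum_{Y\in L_s}e^{\theta_s Y}$, as already required for the time‑homogeneous $N$‑BRW and in~\cite{Mal15,Mal17} --- together with a local CLT uniform along the time‑inhomogeneity; and (ii) a sharp second‑moment bound on the number of surviving particles, which is delicate near criticality since two particles branching late still have a non‑negligible chance of both surviving in the narrow corridor. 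A secondary point is that a general BRW lacks the fine monotonicity in $\gs$ and in killing levels used in the Gaussian proof, so the upper bound (a first‑moment/truncation argument) and the reduction to nice initial data must be redone with softer combinatorial couplings of the selection mechanism. I expect these to be the genuinely hard parts; the rest is a faithful, if long, transcription of the proof of Theorem~\ref{thm:main}.
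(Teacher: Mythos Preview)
The statement you are attempting to prove is a \emph{conjecture} in the paper (Conjecture~\ref{conj:BRW}), not a theorem: the paper does not provide a proof, only the conjectured formulae together with numerical simulations for a Bernoulli BRW (Section~\ref{sec:motivation:BRWconj}). There is therefore no proof in the paper to compare against.

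That said, your sketch is well aligned with the paper's own assessment of what a proof would require. In the perspectives section the authors write that ``the largest part of Sections~\ref{sec:LB} and~\ref{sec:UB} does \emph{not} rely on the Gaussian distribution'' and that ``most of the required work should come from obtaining moment estimates as in Section~\ref{sec:moments}, which we expect to be technically involved.'' This is exactly your diagnosis: the coupling architecture of Section~\ref{sec:coupling} and the block-decomposition arguments of Sections~\ref{sec:LB}--\ref{sec:UB} carry over, while the Gaussian-specific content is concentrated in the first- and second-moment computations for the BBM between barriers (Lemmata~\ref{lem:MtOGirs}, \ref{lem:mai16}, and the Propositions of Section~\ref{sec:moments}). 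Your identification of the tilted spine with parameter $\theta_{t/T}$, the resulting appearance of $\dot\theta_s/(\theta_s^3\gs_s^2)$ in place of $-\gs'/\gs$, and the need for ballot-type and local-CLT estimates for time-inhomogeneous random walks under $X\log^2 X$-type moment conditions, is a reasonable blueprint and consistent with the techniques in~\cite{Mal15,Mal17}. The consistency check with the Gaussian case via $\theta_s\propto 1/\gs(s)$ matches Remark~\ref{rem:conjBRW:gaussian}.

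In short: your proposal is not wrong, but it is a proof \emph{program} for an open conjecture rather than a comparison with an existing proof; the paper explicitly leaves this as future work.
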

	
	On different matter, recall that we left the case $L(T)\asymp T$ (that is $N(T)=e^{\gamma T}$ for some $\gamma>0$) completely open. Considering the definitions above, we may write the following conjecture, which matches~\cite[(5.1)]{ABM19} in particular.
	\begin{conj}\label{conj:NBRW:Nlarge}
		For the $N$-BRW with $N(T)=e^{\gamma T}$, $\gamma>0$, one has $\max(\cX^{N(T)}_T) = v_{\max}^\gamma T + o_{\bbP}(T)$ as $T\to+\infty$, where
		\[
		v_{\max}^{\gamma}:=\sup\left\{\int_0^1 a_s\dd s\,\middle|\, (a_s)_{s\in[0,1]} \text{ such that }  \forall s\leq 1,\, -\gamma \leq \int_0^s \gk^*_{u}(a_{u}) \dd u \leq 0\right\}.
		\]
	\end{conj}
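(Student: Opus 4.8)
\smallskip
\noindent\textbf{Towards a proof of Conjecture~\ref{conj:NBRW:Nlarge}.}
One would establish, for every fixed $\eps>0$, the bounds $\max(\cX^{N(T)}_T)\ge(v_{\max}^{\gamma}-\eps)T$ and $\max(\cX^{N(T)}_T)\le(v_{\max}^{\gamma}+\eps)T$ with $\bbP_{\gd_0}$-probability tending to $1$, by the barrier method underlying Theorem~\ref{thm:main}, adapted to the regime $\log N(T)=\gamma T$. For a Lipschitz speed profile $a=(a_s)_{s\in[0,1]}$ write $R_a(s):=\int_0^s\gk_u^*(a_u)\,\dd u$; the constraint defining $v_{\max}^{\gamma}$ reads $-\gamma\le R_a(s)\le0$ for all $s$, and the guiding heuristic---a time-inhomogeneous many-to-one/many-to-two estimate in the vein of~\cite{Mal15, Big95}---is that the number of particles of the selection-free BRW whose trajectory stays within $o(T)$ of $\big(T\!\int_0^s a_u\,\dd u\big)_{s\le t}$ equals $\exp\!\big(-TR_a(t)+o_{\bbP}(T)\big)$. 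A preliminary deterministic lemma is needed: by convexity of the $\gk_\cdot^*$ together with mollification and time-reparametrisation, the supremum defining $v_{\max}^{\gamma}$ is unchanged if one restricts to smooth profiles obeying the strict bounds $-\gamma+\gd\le R_a(s)\le-\gd$ for some $\gd=\gd(a)>0$; conversely, any profile with $R_a(s)\in[-\gamma-\eps',\eps']$ for all $s$ satisfies $\int_0^1 a_u\,\dd u\le v_{\max}^{\gamma}+\go(\eps')$ with $\go(\eps')\to0$ as $\eps'\downarrow0$.

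\emph{Lower bound.} Fix $\eps>0$ and, via the lemma, choose a smooth profile $a$ with $\int_0^1 a_u\,\dd u\ge v_{\max}^{\gamma}-\eps$ and $-\gamma+\gd\le R_a(s)\le-\gd$ for all $s$. Run the selection-free BRW from $\gd_0$, killed on leaving a tube of width $o(T)$ around $\big(T\!\int_0^s a_u\,\dd u\big)_s$. A first- and truncated-second-moment computation---the analogue, for general offspring point processes, of the estimates of Section~\ref{sec:moments}, now carried out through the log-Laplace transforms $\gk_s$ and the tilted walks with parameters $\gth_s,\gs_s$---shows that with probability $\to1$ this ``tube-BRW'' has, at each time $t$, a population equal to $\exp(-TR_a(t)+o_{\bbP}(T))\in[\exp(\tfrac{\gd}{2}T),\exp((\gamma-\tfrac{\gd}{2})T)]$, and that at time $T$ its highest particle sits at $T\!\int_0^1 a_u\,\dd u+o_{\bbP}(T)$. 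The upper margin $R_a\le-\gd$ keeps the tube richly populated (so that its top is indeed attained at time $T$); the lower margin $R_a\ge-\gamma+\gd$ ensures that even the selection-free BRW has at most $\exp((\gamma-\gd)T)<N(T)$ particles above the tube, hence every tube particle has $N(T)$-BRW-rank well below $N(T)$. A short induction over generations then shows, using this last fact and the natural coupling, that every tube-BRW particle is a particle of the $N(T)$-BRW (a comparison in the spirit of Section~\ref{sec:coupling}). Therefore $\max(\cX^{N(T)}_T)\ge T\!\int_0^1 a_u\,\dd u+o_{\bbP}(T)\ge(v_{\max}^{\gamma}-\eps)T+o_{\bbP}(T)$; let $\eps\downarrow0$.

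\emph{Upper bound.} Let $(W_t)_{t\le T}$ denote the trajectory of the maximal particle and put $R_W(s):=\int_0^s\gk_u^*(\dot W_u)\,\dd u$ for the cumulative rate of the rescaled trajectory $u\mapsto W_{uT}/T$ (read off a fine grid). First, a first-moment bound over a polynomially large family of grid trajectories yields that, with high probability, \emph{every} particle alive at time $T$ has cumulative rate $\le\eta$ at every time $s\le1$ (in particular its trajectory never rises too fast), for any fixed $\eta>0$; so $R_W(s)\le\eta$ for all $s$. Second---the probabilistic core---conditionally on $(W_t)_{t\le sT}$, a spinal second-moment (many-to-two) estimate around this trajectory should give that, with high probability, at least $\exp(-TR_W(s)-o_{\bbP}(T))$ particles of the selection-free BRW have a trajectory lying everywhere above $(W_t)_{t\le sT}$. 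Being above the (surviving) ancestor of the maximal particle, all of these belong to the $N(T)$-BRW at time $sT$, by monotonicity of the selection: a particle whose trajectory stays above that of a particle surviving the selection up to time $t$ also survives up to time $t$, its rank being never worse. Since $\cX^{N(T)}_{sT}(\R)\le N(T)$, this forces $\exp(-TR_W(s))\le N(T)\,e^{o_{\bbP}(T)}$, i.e.\ $R_W(s)\ge-\gamma-o_{\bbP}(1)$, for every $s$. Combining, with high probability $-\gamma-o_{\bbP}(1)\le R_W(s)\le\eta$ for all $s\in[0,1]$, whence the deterministic lemma gives $W_T/T=\int_0^1\dot W_u\,\dd u\le v_{\max}^{\gamma}+\go(\eta)+o_{\bbP}(1)$; letting $\eta\downarrow0$ yields $\max(\cX^{N(T)}_T)\le v_{\max}^{\gamma}T+o_{\bbP}(T)$.

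\emph{Main obstacle.} The deterministic lemma, the monotonicity of the selection, and the coupling inductions are soft; the substance of the proof---and the reason this is only a conjecture---lies in the moment estimates when $\log N(T)\asymp T$. The estimates of Section~\ref{sec:moments} must first be re-derived for general (non-Gaussian) offspring point processes, the explicit Gaussian change of measure being replaced by the exponential tilts of $\gk_s$; this is presumably where the hypothesis $\bbE[\xi_s^2]<\infty$ is used, and stronger integrability may well be needed to push the second-moment method through. More seriously, when $\log N(T)=\gamma T$ \emph{every} relevant length scale---the height of the barriers, the width of the cloud (recall~\eqref{eq:prop:endtimedist:2}), and $\log N(T)$ itself---is of order $T$, so none of the truncation errors appearing in the second-moment computations is automatically $o(T)$: one has to control, uniformly, the contribution of near-miss trajectories hugging the barrier from below and of atypically prolific branching events, both negligible in the sub-exponential regimes of Theorem~\ref{thm:main}. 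The genuine crux is to make the conditional spinal second-moment estimate of the upper bound quantitative, and uniform over the random (size-biased) trajectory of the running maximum.
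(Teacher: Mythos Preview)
The paper does not prove this statement: it is explicitly presented as a \emph{conjecture}, and the paragraph preceding it says ``recall that we left the case $L(T)\asymp T$ \ldots\ completely open.'' There is therefore no proof in the paper to compare your proposal against.

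Your sketch is a reasonable heuristic outline of how the barrier/moment methods of the paper might be pushed into the regime $\log N(T)\asymp T$, and you correctly identify the two genuine obstructions: (i) extending the moment estimates of Section~\ref{sec:moments} to non-Gaussian offspring laws via exponential tilts of $\gk_s$, and (ii) the collapse of scale separation when $\log N(T)$, the barrier width, and the time horizon are all of order $T$, so that none of the error terms in the second-moment computations is automatically $o(T)$. These are precisely the reasons the paper leaves the question open. Your upper-bound step---the conditional spinal estimate showing that $\exp(-TR_W(s))$ particles lie above the ancestral trajectory of the maximum---is the most speculative part: the monotonicity argument you invoke (``a particle whose trajectory stays above that of a surviving particle also survives'') is not quite correct as stated for the $N$-BRW, since rank is determined by instantaneous position, not by the entire trajectory, and a particle can momentarily dip below the ancestor of the maximum while still surviving. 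Making this step rigorous would likely require a coupling via $N^+$-BRW in the spirit of Lemma~\ref{lem:N-N+couping}, rather than a pathwise comparison.
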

	
	\subsubsection*{Example: Bernoulli BRW}
	We now turn to the case of Bernoulli increments: for $p\in[0,1]$, we write $Y\sim \Ber(p)$ if $\bP(Y=+1)=1-\bP(Y=0)=p$. In particular $\bbE[Y]=p$ and $\Var(Y)=p(1-p)$. Let $p:[0,1]\to(0,1)$ a $\cC^2$ function (we write $p_s:=p(s)$), and assume that $\cL_s$, $s\in[0,1]$ is such that, for $L_s\sim\cL_s$, then the variables $Y\in L_s$ are independent with law $\Ber(p_s)$. Then the log-Laplace and Fenchel-Legendre transforms from~(\ref{eq:app:laplace}--\ref{eq:app:legendre}) can be written,
	\begin{equation*}
		\gk_s(\gth)\,=\, \log m_s + \log(1+ p_s(e^\gth -1))\,,
	\end{equation*}
	and
	\[
	\notag   \gk_s^*(v)\,=\, -\log m_s + D_{KL}(\Ber(a)||\Ber(p_s))\,=\, -\log m_s + (1-v)\log\frac{1-v}{1-p_s} + v\log\frac{v}{p_s}\,,
	\]
	where $D_{KL}(\cdot||\cdot)$ denotes the Kullback–Leibler divergence. Moreover, the speed profile $(v_s)_{s\in[0,1]}$ and the natural speed $v_{\nat}$ in~\eqref{eq:app:natspeed} are well defined if $m_sp_s < 1$ for all $s\in[0,1]$. 
	
	In the following, we take $\xi_s\equiv 2 = m_s$ for all $s\in[0,1]$ (i.e. branching is binary), and $p_s<1/2$. Then, the functions $v_s$, $\theta_s$ and $\gs_s$, $s\in[0,1]$ are well defined and can be explicitly expressed in terms of each other. One can numerically calculate $v_s$ as the greatest root of $\kappa_s^*(v) = 0$, and express $\theta_s$ and $\sigma_s$ in terms of $v_s$ as follows:
	\begin{align}
		\theta_s &= \partial_v \kappa_s^*(v_s) = \log\left(\frac{(1-p_s)v_s}{(1-v_s)p_s}\right),\\
		\sigma_s^2 &= 1/\partial_v^2 \gk^*_s(v_s)= v_s(1-v_s)\,.
	\end{align}
	
	\begin{figure}[ht]\begin{center}
			\begin{subfigure}[t]{0.65\textwidth}
				\includegraphics[width=\textwidth]{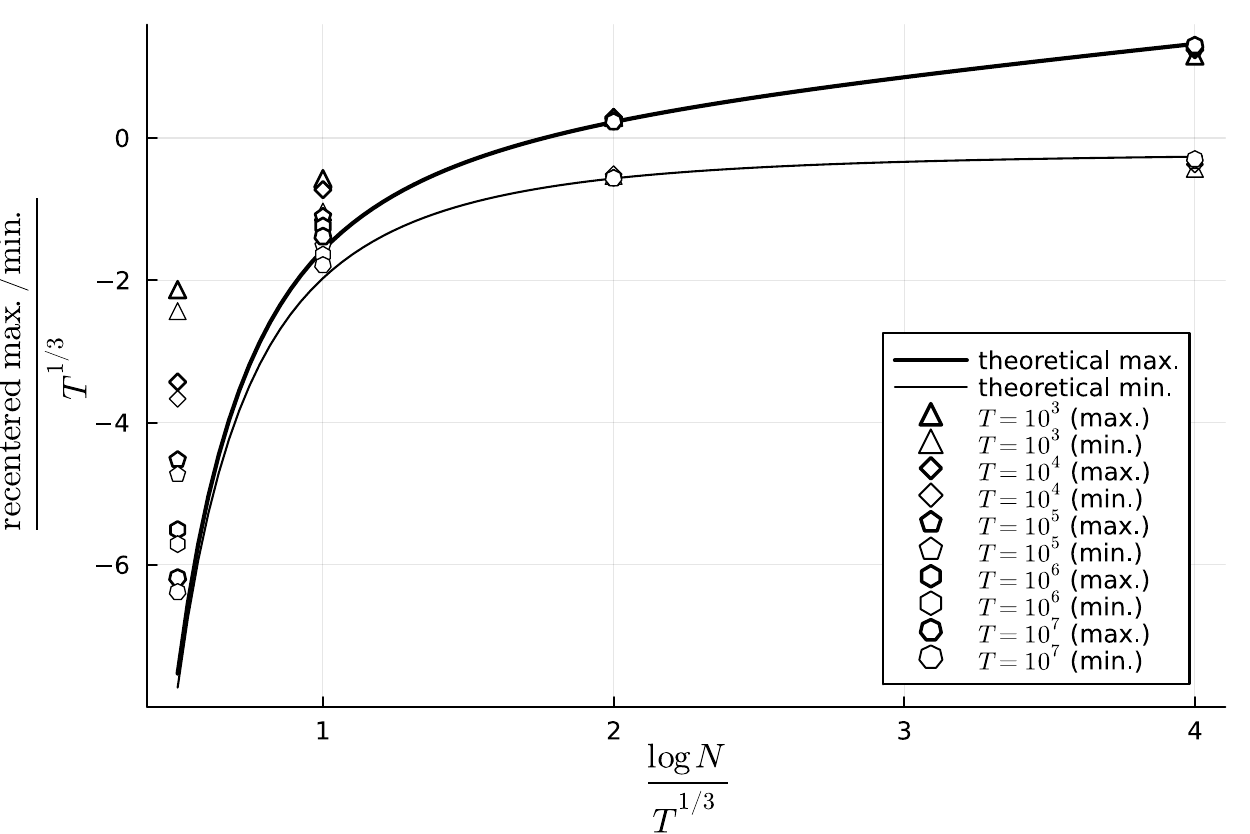}
				\caption{$p_s=0.4-0.3s$ ($\gth_s$ decreasing in $s$)}
			\end{subfigure}
			\begin{subfigure}[t]{0.65\textwidth} 
				\includegraphics[width=\textwidth]{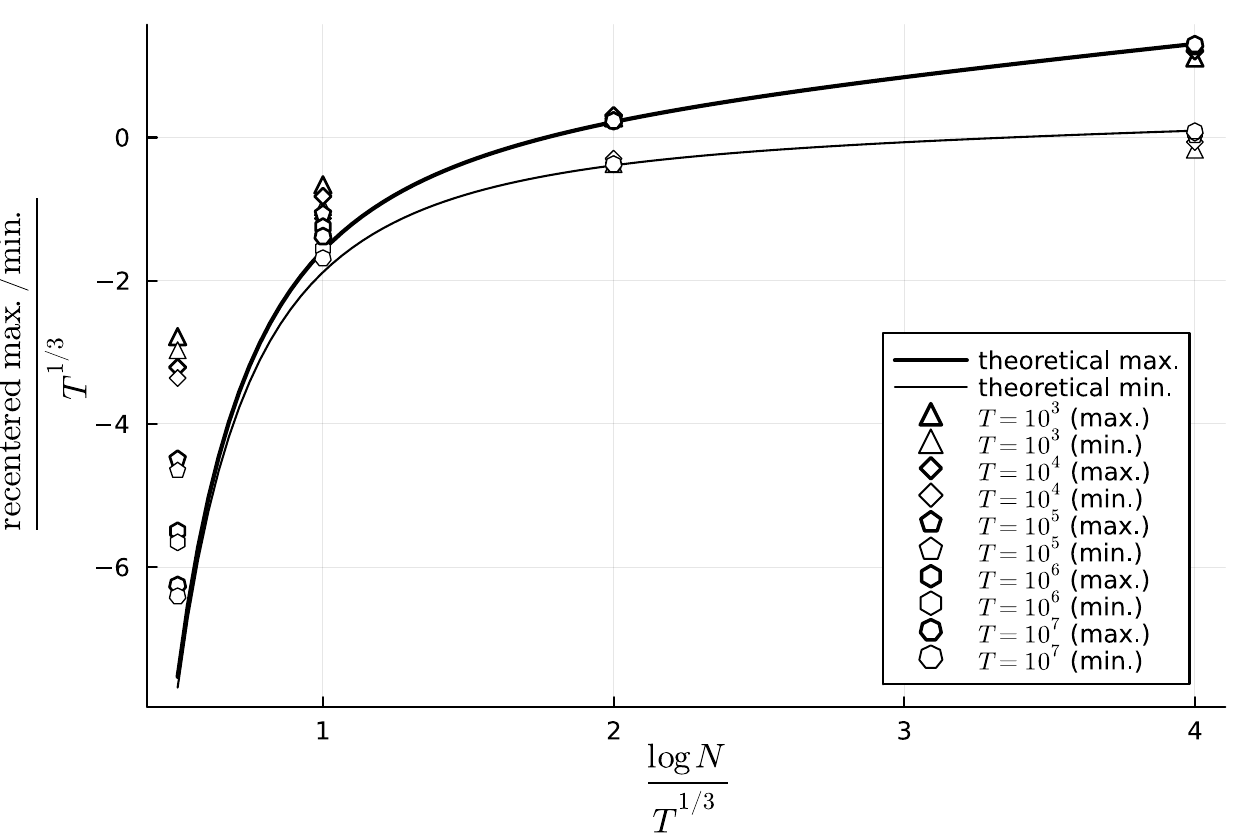}
				\caption{$p_s=0.1+0.3s$ ($\gth_s$ increasing in $s$)}
			\end{subfigure}
			\caption{Numerical simulations of the recentered, rescaled maximum and minimum positions $\overline{\mathrm{max}}^{N(T)}_T$ and $\overline{\mathrm{min}}^{N(T)}_T$ (see \eqref{eq:max_min_recentered_rescaled}) of the binary, Bernoulli $N$-BRW and comparison with the theoretical values. See text for details.}
			\label{fig:NBRW:simu}
	\end{center}\end{figure}
	
	We conducted extensive numerical simulations of maximal (and minimal) displacements of this particular Bernoulli $N$-BRW. These simulations were made for two different choices of $p_s$, $s\in[0,1]$ and various values of $T$ and $N=N(T,\ga)$, the latter being chosen such that $(\log N)/T^{1/3}$ takes a predetermined value $\ga>0$, with $\ga \in\{0.5,1,2,4\}$. For every simulation, we start with $N$ particles in 0, and we plot 
	\begin{equation}
		\label{eq:max_min_recentered_rescaled}
		\overline{\mathrm{max}}^{N(T)}_T \coloneqq \frac{\max\big(\cX^{N(T)}_T\big) - v_{\nat}T}{T^{1/3}}, \quad \overline{\mathrm{min}}^{N(T)}_T \coloneqq \frac{\min\big(\cX^{N(T)}_T\big) - v_{\nat}T}{T^{1/3}}
	\end{equation}
	the position of the maximum and minimum, recentered by the first-order term provided by Conjecture~\ref{conj:BRW} and rescaled by $T^{1/3}$, as a function of $\ga$.
	We further compare the output with the theoretical result as $T\to\infty$, with fixed $\alpha$. The results of the simulations are presented in Figure~\ref{fig:NBRW:simu}.
	
	The simulations use a trick from Brunet and Derrida~\cite{Brunet1999}, which consists of storing the number of particles at each site, instead of the position of every particle individually. This allows for an algorithm with a complexity of $O(\alpha L(T) T)$ arithmetical operations, since only $O(\alpha L(T))$ sites are occupied at every time, with very high probability. The code, written in Julia, took several hours to run on a 2020 MacBook Pro with M1 chip.

	\subsection{Some perspectives}
	Let us discuss several ways in which our work could be expanded.\smallskip
	
	\noindent \emph{Technical restrictions.} There are a few technical assumptions in Theorem~\ref{thm:main} which we do not expect to be optimal. For instance, we believe that our results hold for all $\gs(\cdot)\in\cC^1([0,1])$, however proving this does not seem to be straightforward. 
%
	\smallskip
	
	\noindent \emph{Genealogy of the $N$-BBM.} In~\cite{BBS13}, the authors study the genealogy of a sample of particles in a (time homogeneous) BBM with drift and absorption, and prove that it converges to the genealogy of the Bolthausen-Sznitman coalescent. More specifically, they choose a near critical drift depending on some constant $L>0$, such that the process contains roughly $e^L$ particles throughout a time interval of length $L^3$, and they remain in a space interval of length $L$; moreover the absorption only kills the bottom-most particles of the process. Comparing these properties with those of the $N(T)$-BBM, we therefore expect the same convergence to hold for the genealogy of the $N(T)$-BBM in the critical regime $\log N\approx T^{1/3}$, up to a time-change of the coalescent due to the inhomogeneity in time. 
	\smallskip
	%
	
	\noindent \emph{General time-inhomogeneous BRW.} 
	Our current results only apply to the $N$-BBM and Gaussian $N$-BRW, but we conjecture that they can be extended to more general BRW laws, as presented in Conjecture~\ref{conj:BRW}. Moreover, let us stress that the largest part of Sections~\ref{sec:LB} and~\ref{sec:UB} does \emph{not} rely on the Gaussian distribution (nor the random branching times, see Section~\ref{sec:compl:detbranching}). Therefore, most of the required work should come from obtaining moment estimates as in Section~\ref{sec:moments}, which we expect to be technically involved ---even more than in the Gaussian case, which is the object of the present paper and already involves significant bookkeeping.\smallskip

	\section{Construction and couplings of the \texorpdfstring{$N$}{N}-BBM}\label{sec:coupling}
	\subsection{Definition of the time-inhomogeneous BBM}
	\label{sec:coupling:construct}
	Let us start this section by recalling elementary facts on time-inhomogeneous Brownian motions, and introducing some notation. 
	Throughout this paper, the standard, time-homogeneous Brownian motion on $\R$ will be denoted by $(W_t)_{t\geq0}$. Let $T>0$ and $\gs\in\cC^2([0,T])$: then the \emph{time-inhomogeneous Brownian motion} on $[0,T]$, with infinitesimal variance $\gs^2(\cdot/T)$ and started from $x\in\R$, is the Gaussian process  $(B_t)_{t\in[0,T]}$ with continuous sample paths such that,
	\[\bE_x[B_t] = x\quad \text{and} \quad \bE_x[(B_s-x)(B_t-x)]\,=\,\int_0^{s\wedge t}\gs^2(u/T)\,\dd u\;,\qquad\forall\, s,t\in[0,T]\,.\]
	It satisfies
	\begin{equation}\label{eq:BM:timechange}
		(B_t)_{t\in[0,T]}\,\overset{\text{(d)}}{=}\, (x+W_{J(t)})_{t\in[0,T]}\;,\qquad\text{where}\quad J(t):=\int_0^t\gs^2(s/T)\,\dd s\,.
	\end{equation}
	In this paper, the law and expectation of a (non-branching) Brownian motion started from $x\in\R$ will always be denoted by $\bP_x$ and $\bE_x$ respectively. Similarly, the law of the Brownian motion started from time-space location $(s,x)\in[0,T]\times \R$ (i.e. shifted in time by $s$) will be denoted by $\bP_{(s,x)}$. It will always be clear from context whether the process considered is the time-homogeneous ($W$) or inhomogeneous ($B$) variant.\medskip
	
	\subsubsection*{The branching Brownian motion (BBM)}
	We now turn to the branching Brownian motion. We do not expand too much on precise definitions of branching Markov processes, but the reader can refer to~\cite{INW69Part1, INW69Part3} for a very complete and general construction, or e.g.~\cite{AN72} for a more accessible presentation.
	
	The \emph{(time-inhomogeneous) branching Brownian motion} (BBM) on $[0,T]$ can be described with some random families, $(\cN_t)_{t\in[0,T]}$ and $(X_u(t))_{u\in\cN_t,t\in[0,T]}$, where the (finite) set $\cN_t$ denotes the labels of \emph{particles} alive at time $t\in[0,T]$, and $X_u(t)$ denotes the \emph{position} at time $t\in[0,T]$ of a particle $u\in\cN_t$; which satisfies the following properties:
	
	--- each individual $u\in\cN_t$, $t\in[0,T]$ dies at rate $\gb_0\geq0$, and is immediately replaced at the same position by a random number of descendants following the law of a given random variable $\xi\geq2$,
	
	--- for $u\in\cN_t$, $t\in[0,T]$, the function $(X_u(s))_{s\in[0,t]}$ denotes the positions of $u$ and its ancestors throughout $[0,t]$: it has the same law as a time-inhomogeneous Brownian motion $(B_s)_{s\in[0,T]}$ started from $X_u(0)$,
	
	--- the evolution of any particle (lifespan, number of descendants and displacement), once born, is independent of the other living particles.
	
	\begin{remark} Throughout the remainder of this paper, unless stated otherwise, the branching processes we consider have offspring distribution $\xi\geq2$ with $\bbE[\xi^2]<+\infty$, and branching rate $\gb_0=(2(\bbE[\xi]-1))^{-1}$; in particular, if the process is started from a single particle at $x\in\R$, a standard computation yields $\bbE_{\gd_x}[|\cN_t|]=e^{t/2}$ for all $t\in[0,T]$ (see e.g.~\cite{AN72}). We do not write those assumptions again.
	\end{remark}
	
	Recall that $\meas$ denotes the set of all finite counting measures on $\R$. Then, the family $(\cX_t)_{t\in[0,T]}$ defined by $\cX_t:=\sum_{u\in\cN_t} \gd_{X_u(t)}$  defines a Markov process on $\meas$, which completely describes the particle configurations of the BBM ---in the following, we only write the sets of labels $(\cN_t)_{t\in[0,T]}$ explicitly if they are needed. Since we assumed $\bbE[\xi^2]<+\infty$, the total population of the process does not blow up on $[0,T]$ with probability 1 (see e.g.~\cite{Sav69} for a proof). For $\mu\in\meas$, the law and expectation of the (time-inhomogeneous) BBM started from the initial configuration $\cX_0=\mu$ will be denoted by $\bbP_\mu$ and $\bbE_\mu$ respectively throughout this paper. When it is started from a single particle at the origin (i.e. $\mu=\gd_0$), we shall sometimes omit the subscript and write $\bbP$, $\bbE$.
	
	With a slight abuse of notation, any finite counting measure $\mu\in\meas$ can be written as a finite subset of $\R$, with possible repetition of its elements. In particular, for $\mu,\nu\in\meas$, one may write $\mu\subset\nu$ if all atoms in the counting measure $\mu$ are also present in $\nu$. Regarding the BBM, one has $\max(\cX_t)=\max_{u\in\cN_t} X_u(t)$ with that notation. Finally, let us mention that one can consider a time-homogeneous BBM very similarly by replacing $(B_s)_{s\in[0,T]}$ with $(W_s)_{s\in[0,T]}$ in the definition above; but unless specified otherwise, we shall only consider time-inhomogeneous BBM's throughout this paper.
	

	\subsection{Selection mechanisms and \texorpdfstring{$N$}{N}-BBM}\label{sec:coupling:NBBM}
		Recall that $\measN$ denotes the set of counting measures on $\R$ with total mass at most $N$. The \emph{$N$-particles branching Brownian motion} ($N$-BBM) started from $\mu\in\measN$ can be defined from the original BBM $(\cX_t)_{t\in[0,T]}$ by only keeping its $N$ highest particles at all time, \emph{killing} (i.e. removing from the process) the others as well as their offspring. If several particles are located at the same height, we break ties arbitrarily (e.g.~by using the lexicographic order on the set of labels $U$, see below). Note that we allow the $N$-BBM to start with fewer than $N$ particles. Its particle configuration and set of (living) particles at time $t\in[0,T]$ are respectively denoted by $\cX^N_t$ and $\cN^N_t$ (the positions of particles are still denoted by $X_u(t)$, $u\in\cN^N_t$, $t\in[0,T]$).
		
		A formal construction of the $N$-BBM can be achieved through the use of \emph{stopping lines}, which is the analogue of a stopping time for a branching Markov process. We briefly recall the basic definitions, referring the reader to~\cite{Big04, Chau91, Jag89} or~\cite[Appendix~1]{Mai16} for more details.
Consider $U=\{\emptyset\}\cup\bigcup_{n\ge 1}\bbN^{n}$ the set of finite words over the alphabet $\bbN$, and for $u,v\in U$ let us write $u\preceq v$ if $u$ is a prefix of $v$. Following Chauvin~\cite{Chau91}, we label\footnote{In the literature it is standard to consider the BBM started from a single particle labelled with the empty word $\emptyset$; however one can also start with finitely many particles labelled by integers, i.e.\ words of length 1.} the particles of the BBM in the set $U$ in such a way that the genealogical order matches the prefix order on $U$: this means $\bigcup_{t\in[0,T]}\cN_t\subset U$ and for $s\leq t$, $v\in\cN_s$, $u\in\cN_t$, one has $v\preceq u$ if and only if $v$ is an ancestor of $u$. For $(v,s),(u,t)\in U\times[0,T]$, we write $(v,s)\preceq(u,t)$ if $v\preceq u$ and $s\leq t$; and $(v,s)\prec(u,t)$ if additionally $(v,s)\neq(u,t)$. A subset $\ell\subset U\times[0,T]$ is a \emph{line} if for all $(u,t)\in\ell$, one has $(v,s)\notin \ell$ for all $(v,s)\prec (u,t)$. One can extend that order relation in the following way: for $(u,t)\in U\times[0,T]$ and a line $\ell$, we write $\ell\preceq(u,t)$ if there exists $(v,s)\in\ell$, $(v,s)\preceq(u,t)$; and for $A\subset U\times[0,T]$, we write $\ell \preceq A$ if $\ell\preceq (u,t)$ for all $(u,t)\in A$. In the case of the BBM and for any line $\ell$, we can define the $\gs$-algebra
\[
\mathcal{F}_\ell \coloneqq \gs\left(\{u\in\cN_t\},X_u(t)\,;\, (u,t)\in U\times[0,T], \ell \nprec (u,t)\right)\,,
\] which, informally, contains all information from the BBM except for the descendants of particles in $\ell$. Then, an \emph{(optional) stopping line} for the BBM is a random line $\cL$ such that for all $(u,t)\in\cL$, $u\in\cN_t$, and such that for every line $\ell$, $\{\cL\preceq\ell\}\in\cF_\ell$: in other words, this means that determining if an individual $(u,t)$ is in $\cL$ does not depend on the descendants of $(u,t)$.
    
Therefore, for any stopping line $\cL$, one can define the BBM with selection mechanism $\cL$ by removing particles as soon as they ``hit'' $\cL$: i.e. it is the particle process which contains all particles $(u,t)$ from the BBM (that is $u\in\cN_t$, $t\in[0,T]$) such that $\cL\not\preceq(u,t)$. In particular, the $N$-BBM is an example of a BBM with selection mechanism: by induction  over the sequence $(t_n)_{t\ge1}$ of (random) branching epochs of the BBM, one can easily construct a stopping line $\cL$, such that the BBM with selection mechanism $\cL$ is an $N$-BBM.

We further introduce the following two classes of selection mechanisms:
    \begin{definition}\label{def:N-N+BBM} Let $N\in\N$ and consider a BBM with some selection mechanism $\cL$.
    
    $(i)$ We say that it is an $N^-$-BBM if, whenever at least $N$ particles are above another particle, the latter gets killed (but possibly more particles get killed).
    
    $(ii)$ We say that it is an $N^+$-BBM if, whenever a particle gets killed, there are at least $N$ particles above it (but the process may contain more than $N$ particles).
    \end{definition}
Notice that the $N$-BBM is both an $N^-$-BBM and an $N^+$-BBM.
    

	\subsection{Monotone couplings}\label{sec:coupling:NBBMcoupl}
	For $\mu,\nu\in\meas$, we write $\mu\prec\nu$ if $\mu([x,+\infty))\leq\nu([x,+\infty))$ for all $x\in\R$; in particular this implies $\mu(\R)\leq \nu(\R)$ and $\max(\mu)\leq\max(\nu)$. Moreover, for two random counting measures $\cX$, $\cY$ on $\R$, we say that ``$\cX$ is stochastically dominated by $\cY$'' if there exists a coupling between $\cX$ and $\cY$ such that $\bbP(\cX\prec\cY)=1$. In this section, we are interested in couplings between BBM's and/or $N$-BBM's which preserve the comparison $\prec$ through time.

We first present a monotone coupling result between the $N^-$-BBM, $N^+$-BBM and $N$-BBM from~\cite{Mai16}, as well as an immediate corollary which will be used many times in the present paper. 
	\begin{lemma}[Lemma~2.9 in \cite{Mai16}]\label{lem:N-N+couping} Let $\mu_0^-$, $\mu_0^N$ and $\mu_0^+$ denote (possibly random) finite counting measures on $\R$ with $\mu_0^N(\R)\leq N$ and $\mu_0^-\prec\mu_0^N\prec\mu_0^+$. Let $\cX^N$ (resp. $\cX^{N^-}$ $\cX^{N^+}$) be a time-inhomogeneous $N$-BBM (resp. $N^-$-BBM, $N^+$-BBM) with diffusion $\gs:[0,1]\to(0,+\infty)$ and started from $\mu_0^N$ (resp. $\mu_0^-$, $\mu_0^+$). Then there exists a coupling between the three processes such that, with probability 1, $\cX_t^{N^-}\prec \cX_t^{N}\prec \cX_t^{N^+}$  for all $t\in[0,T]$.
	\end{lemma}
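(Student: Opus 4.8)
The statement to prove is Lemma~\ref{lem:N-N+couping}, a monotone coupling between the $N^-$-BBM, the $N$-BBM, and the $N^+$-BBM, given initial configurations ordered as $\mu_0^-\prec\mu_0^N\prec\mu_0^+$. Since this is cited as Lemma~2.9 of \cite{Mai16}, the cleanest approach is to reduce to that reference; but it is worth sketching the mechanism, since the construction must be checked to go through in the time-inhomogeneous setting (the coupling is purely combinatorial in the branching/selection structure and does not see the diffusion coefficient, so no new difficulty arises there). The plan is to build all three processes on a common probability space by driving them with the \emph{same} underlying BBM---same Poisson branching clocks, same offspring counts $\xi$, same Brownian increments for each label $u\in U$---and then to define the three selection mechanisms as stopping lines on this shared BBM. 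The point is that $\cX^{N^-}$ is defined by the ``aggressive'' rule of Definition~\ref{def:N-N+BBM}$(i)$, $\cX^{N^+}$ by the ``lenient'' rule $(ii)$, and $\cX^N$ by the exact $N$-rule, all read off the same realization.

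First I would set up the coupling inductively over the (a.s.\ discrete, a.s.\ finite on $[0,T]$) sequence of branching epochs $0 = t_0 < t_1 < t_2 < \cdots$ of the shared BBM. Between consecutive epochs no selection event occurs for the $N$-BBM or the $N^+$-BBM (the $N^-$-BBM may kill particles at any time, but one checks it suffices to perform its killings at the branching epochs as well, up to enlarging the line; alternatively one runs the induction over the finer grid of all potential selection times). The induction hypothesis is that just before epoch $t_{n}$ the three living configurations satisfy $\cX^{N^-}_{t_n^-}\prec \cX^N_{t_n^-}\prec \cX^{N^+}_{t_n^-}$ \emph{and}, crucially, that the surviving particles in the smaller process form (as a labelled set) a subset of those in the larger process---this labelled inclusion, not just the measure inequality $\prec$, is what makes the step go through, because a particle surviving in $\cX^{N^-}$ then automatically exists in $\cX^N$ and $\cX^{N^+}$ with the same position. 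At epoch $t_n$, apply the same branching event to every label that is still alive in the relevant process, then apply each selection rule. One verifies: $(a)$ adding offspring preserves $\prec$ and labelled inclusion; $(b)$ the $N$-rule keeps exactly the top $N$, and since every particle of the would-be $\cX^{N}$-population that lies above a given threshold is also present in the would-be $\cX^{N^+}$-population, killing down to $N$ in the middle process removes only particles that are at or below the $N$-th highest, so $\cX^N \prec \cX^{N^+}$ is maintained; symmetrically the $N^-$-rule kills at least everything the $N$-rule would kill, giving $\cX^{N^-}\prec \cX^N$. The key monotonicity fact to invoke repeatedly is the elementary lemma: if $\mu\prec\nu$ and $\mu'$ (resp.\ $\nu'$) is obtained from $\mu$ (resp.\ $\nu$) by removing the particles below the $k$-th highest for some $k$, then $\mu'\prec\nu'$---and more generally that an $N^-$-removal from the smaller is dominated by an $N^+$-removal (or no removal) from the larger.

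The step I expect to be the main obstacle is making the bookkeeping of \emph{labelled} inclusion survive the selection steps rigorously: when a particle is killed, all its not-yet-born descendants must be removed too, and one must argue that the stopping-line formalism of Section~\ref{sec:coupling:NBBM} lets us encode ``$\cX^{N^-}$-killed $\subseteq$ $\cX^N$-killed $\subseteq$ $\cX^{N^+}$-killed'' as an inclusion of the corresponding stopping lines $\cL^{N^+}\preceq \cL^N \preceq \cL^{N^-}$ (in the order-of-lines sense), so that the three selected processes are honestly coupled as sub-processes of the one BBM. One also has to handle ties at equal heights consistently across the three processes---using the same fixed tie-breaking rule (lexicographic order on $U$) in all three is what is needed, and one checks this does not disturb the inequalities. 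Once the inductive step is verified at a generic epoch and the a.s.\ finiteness of the number of epochs on $[0,T]$ (guaranteed by $\bbE[\xi^2]<\infty$, as recalled earlier) rules out accumulation, concatenating the steps yields the coupling on all of $[0,T]$, proving $\cX^{N^-}_t\prec\cX^N_t\prec\cX^{N^+}_t$ for all $t\in[0,T]$ with probability $1$. Since none of this used the particular form $\gs^2(\cdot/T)$ of the infinitesimal variance---only that all three processes use the \emph{same} $\gs$---the time-inhomogeneous case is covered verbatim by the argument in \cite{Mai16}, and I would state the proof as such a reduction while recording the above construction for completeness.
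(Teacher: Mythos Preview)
Your overall approach---reduce to \cite[Lemma~2.9]{Mai16} and observe that the construction there never uses the specific form of the diffusion, only that all three processes share the \emph{same} $\gs$---is exactly what the paper does. The paper in fact omits the proof entirely, simply remarking that a careful reading of \cite[Section~2.3]{Mai16} shows the time-homogeneity assumption and the ``only left-most particles are killed'' restriction play no role.

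One caution about your sketch, though: the ``labelled inclusion'' invariant you propose (that the surviving particles of the smaller process are, as a labelled set with the same positions, a subset of those of the larger) will \emph{not} hold in general. The hypothesis is $\mu_0^-\prec\mu_0^N\prec\mu_0^+$, not $\mu_0^-\subset\mu_0^N\subset\mu_0^+$; for example, if $\mu_0^-=\gd_0$ and $\mu_0^N=2\gd_1$, there is no shared particle at all. The construction in \cite{Mai16} (following B\'erard--Gou\'er\'e \cite{BG10} for branching random walks) instead maintains $\prec$ by a \emph{rank-based} coupling: at each branching epoch the $k$-th highest surviving particle of one process is matched with the $k$-th highest of the other, and these matched pairs share the same Brownian increment and the same offspring realization until the next epoch. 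The inductive check is then that branching-plus-selection preserves $\prec$ under this matching, which is an elementary order-statistics argument. Your labelled-inclusion picture is the right intuition for the special case of nested initial configurations (and indeed for Proposition~\ref{prop:coupling:BBM}$(i)$), but the general lemma needs the rank-based version.
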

    \begin{remark}
    There are two minor differences between this statement and~\cite[Lemma~2.9]{Mai16}: first, the latter only considers the time-homogeneous branching Brownian motion, and second, in the Definition~\ref{def:N-N+BBM} we removed the assumption that ``Only left-most particles are killed''. With a thorough reading of~\cite[Section~2.3]{Mai16}, one can check that these assumptions actually play no role in the proof of the coupling, provided that the same diffusion function $\gs:[0,1]\to(0,+\infty)$ is used in the three processes. For the sake of conciseness, we do not reproduce the proof in this paper.
    \end{remark}    
    \begin{corollary}\label{cor:coupling:N}
    Let $N_1, N_2\in\N$, $N_1\leq N_2$. Let $\mu_1\in\meas_{N_1}$ and $\mu_2\in\meas_{N_2}$ which satisfy $\mu_1\prec\mu_2$: then there exists $(\cX^{N_1}_{t})_{t\in[0,T]}$ and $(\cX^{N_2}_{t})_{t\in[0,T]}$ respectively an $N_1$- and an $N_2$-BBM, such that $\cX^{N_1}_{0}=\mu_1$, $\cX^{N_2}_{0}=\mu_2$ and $\cX^{N_1}_{t}\prec\cX^{N_2}_{t}$ for all $t\in[0,T]$ with probability 1.
    \end{corollary}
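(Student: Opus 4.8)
The plan is to derive Corollary~\ref{cor:coupling:N} directly from the monotone coupling of Lemma~\ref{lem:N-N+couping}, the point being that an $N_2$-BBM can be realized as an $N_1^+$-BBM, and an $N_1$-BBM as itself, so that the three-way sandwich of the lemma applies with the middle slot being the $N_1$-BBM.

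\emph{Step 1: Identify the $N_2$-BBM as an $N_1^+$-BBM.} Observe that any $N_2$-BBM with $N_2\geq N_1$ satisfies the defining property of an $N_1^+$-BBM from Definition~\ref{def:N-N+BBM}$(ii)$: whenever a particle is killed in the $N_2$-BBM, there are by construction at least $N_2\geq N_1$ particles above it, so in particular at least $N_1$ particles above it. (The selection mechanism of the $N_2$-BBM is a stopping line, hence admissible.) Thus the $N_2$-BBM started from $\mu_2$ is an instance of an $N_1^+$-BBM started from $\mu_2$.

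\emph{Step 2: Apply Lemma~\ref{lem:N-N+couping} with the roles $\mu_0^-=\mu_0^N=\mu_1$ and $\mu_0^+=\mu_2$.} We have $\mu_1\in\meas_{N_1}$, so $\mu_1(\R)\leq N_1$, and the chain of inequalities $\mu_1\prec\mu_1\prec\mu_2$ holds by hypothesis (reflexivity of $\prec$ for the first comparison, the assumption $\mu_1\prec\mu_2$ for the second). Lemma~\ref{lem:N-N+couping} then provides a coupling of an $N_1^-$-BBM $\cX^{N_1^-}$ started from $\mu_1$, an $N_1$-BBM $\cX^{N_1}$ started from $\mu_1$, and an $N_1^+$-BBM $\cX^{N_1^+}$ started from $\mu_2$ (all with the same diffusion function $\gs$), such that $\cX^{N_1^-}_t\prec\cX^{N_1}_t\prec\cX^{N_1^+}_t$ for all $t\in[0,T]$ with probability~$1$. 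By Step~1 we may take $\cX^{N_1^+}$ to be an $N_2$-BBM; set $\cX^{N_2}:=\cX^{N_1^+}$ and discard $\cX^{N_1^-}$. The retained pair $(\cX^{N_1},\cX^{N_2})$ then has the required marginals, the required initial conditions $\cX^{N_1}_0=\mu_1$, $\cX^{N_2}_0=\mu_2$, and satisfies $\cX^{N_1}_t\prec\cX^{N_2}_t$ for all $t\in[0,T]$ almost surely.

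There is essentially no hard analytic content here; the corollary is a bookkeeping consequence of the lemma, and the only thing to be careful about is the identification in Step~1 — namely that the $N_2$-BBM genuinely fits the $N_1^+$-BBM definition, which hinges on the inequality $N_2\geq N_1$ and on the fact that the $N_2$-BBM's selection rule is a legitimate (optional) stopping line so that it qualifies as ``a BBM with some selection mechanism'' in the sense of Definition~\ref{def:N-N+BBM}. One also notes that Lemma~\ref{lem:N-N+couping} requires the three processes to share the same diffusion $\gs$, which is automatic here since we are comparing an $N_1$- and an $N_2$-BBM with a common infinitesimal variance profile.
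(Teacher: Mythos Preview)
Your proof is correct and follows exactly the same approach as the paper's: both observe that for $N_1\le N_2$ an $N_2$-BBM is an $(N_1)^+$-BBM, and then invoke Lemma~\ref{lem:N-N+couping}. Your write-up is more detailed, but the argument is identical.
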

\begin{proof}
This follows e.g.\ from the fact that, for $N_1\le N_2$, an $N_2$-BBM is also an $(N_1)^+$-BBM.
\end{proof}    
    
Moreover, we provide some additional coupling statements that will be used in this paper. Recall that, with an abuse of notation, any counting measure $\mu\in\meas$ can be seen as a finite subset of $\R$ (with possible repetition of its elements); and notice that, for $\mu,\nu\in\meas$, the statement ``$\mu\subset\nu$'' is strictly stronger than ``$\mu\prec\nu$''.
\begin{proposition}\label{prop:coupling:BBM}
$(i)$ For $N\in\N$, $\mu\in\measN$, there exists a coupling between a BBM (without selection) and an $N$-BBM both started from $\mu$, such that, with probability 1, one has $\cX^N_t\subset\cX_t$ for all $t\in[0,T]$.

$(ii)$ Let $\mu_1,\mu_2\in\meas$ such that $\mu_1\prec\mu_2$. There exists a coupling between two BBM's $(\cX_{1,t})_{t\in[0,T]}$, $(\cX_{2,t})_{t\in[0,T]}$ (without selection) such that $\cX_{1,0}=\mu_1$, $\cX_{2,0}=\mu_2$ and $\cX_{1,t}\prec\cX_{2,t}$ for all $t\in[0,T]$ with probability 1. 
\end{proposition}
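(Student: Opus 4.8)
The plan is to build both couplings by hand using the same underlying randomness for the two processes, exploiting the branching structure. For part $(i)$, start with a single realization of the BBM without selection from $\mu$, using the stopping-line construction from Section~\ref{sec:coupling:NBBM}: there exists a stopping line $\cL$ such that the BBM with selection mechanism $\cL$ is exactly an $N$-BBM. By the very definition of a BBM with a selection mechanism, its particle set at time $t$ is $\{u\in\cN_t : \cL\nprec(u,t)\}$, a (random) subset of $\cN_t$, and the positions are unchanged. Hence with this coupling $\cX^N_t\subset\cX_t$ for all $t\in[0,T]$ with probability $1$; there is essentially nothing to prove beyond invoking the construction already recalled.

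For part $(ii)$, I would first reduce to the case where $\mu_2$ is obtained from $\mu_1$ by adding a single atom, and then iterate: since $\mu_1\prec\mu_2$ means $\mu_1([x,\infty))\le\mu_2([x,\infty))$ for all $x$, one can interpolate through a finite chain $\mu_1=\nu_0\prec\nu_1\prec\cdots\prec\nu_k=\mu_2$ where each $\nu_{j+1}$ has exactly one more atom than $\nu_j$, and that extra atom sits at a position at least as high as the corresponding atom it ``replaces'' in the natural pairing (formally: enumerate atoms of $\mu_1$ as $x_1\ge\cdots\ge x_m$ and of $\mu_2$ as $y_1\ge\cdots\ge y_n$; then $m\le n$ and $x_i\le y_i$ for all $i\le m$, so one can first raise each $x_i$ to $y_i$ one at a time — using that a BBM started from a higher point stochastically dominates one started from a lower point, via a single shared Brownian-increment-and-branching environment — and then insert the extra atoms $y_{m+1},\dots,y_n$ one at a time). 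Composing the couplings along this chain (each monotone coupling preserves $\prec$ through time, and $\prec$ is transitive and preserved under composition of couplings) yields the claim. Alternatively, and more cleanly, I would realize $\cX_2$ directly as the superposition of independent BBMs, one rooted at each atom $y_j$ of $\mu_2$, and realize $\cX_1$ using the \emph{same} driving noise for the sub-family rooted at $y_1,\dots,y_m$ but started from $x_1,\dots,x_m$; the pathwise monotonicity of a single time-inhomogeneous BBM in its starting point (same Brownian path drives both, so $X_u^{(1)}(t) = X_u^{(2)}(t) - (y_i - x_i)\le X_u^{(2)}(t)$ along matched genealogies, which gives $\cX_{1,t}\prec\cX_{2,t}$) plus the fact that adding the independent BBMs rooted at $y_{m+1},\dots,y_n$ only increases $\cX_{2,t}([x,\infty))$, gives the result in one step.

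The only mildly delicate point is the ``shift-monotonicity'' of a single BBM: if $(\cN_t,(X_u(t)))$ is a BBM from $x$ and we add a constant $c\ge 0$ to every position, we get a BBM from $x+c$ on the same probability space with $X_u(t)+c\ge X_u(t)$ along every label, hence $\delta$-by-$\delta$ domination of the tail counts. Since the branching mechanism is translation-invariant in space and the time-inhomogeneous Brownian increments are translation-invariant, this is immediate, but it should be stated explicitly. I expect no real obstacle here; the main (very minor) care needed is in the bookkeeping of the finite interpolation $\nu_0\prec\cdots\prec\nu_k$ and in checking that composing the couplings of Corollary~\ref{cor:coupling:N}-type along the chain is legitimate, which follows from transitivity of $\prec$ and the standard gluing of couplings. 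I would keep the write-up short, presenting the superposition construction for $(ii)$ as the main argument and the stopping-line remark for $(i)$.
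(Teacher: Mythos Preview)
Your proposal is correct and matches the paper's approach essentially verbatim: the paper dispatches $(i)$ in one line by invoking the stopping-line construction of the $N$-BBM as a sub-process of the BBM, and for $(ii)$ it uses exactly your ``superposition'' construction---enumerate the atoms $x_1\ge\cdots\ge x_{k_1}$ and $y_1\ge\cdots\ge y_{k_2}$, take $k_2$ i.i.d.\ BBMs from $\delta_0$, and shift the $i$-th copy by $x_i$ (resp.\ $y_i$) to build $\cX_1$ (resp.\ $\cX_2$). Your interpolation-chain alternative is not needed and the paper does not pursue it; go directly with the superposition argument you flagged as cleaner.
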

\begin{proof}
The first statement $(i)$ is a direct consequence of the definition of the $N$-BBM as a BBM with selection mechanism, so we only need to prove $(ii)$.
Let $k_1=\mu_1(\R)$, $k_2=\mu_2(\R)$ (so $k_1\leq k_2$). There exists $(x_i)_{1\leq i\leq k_1}$ and $(y_i)_{1\leq i\leq k_2}$ such that 
\[
\mu_1=\sum_{i=1}^{k_1}\gd_{x_i}\,,\quad x_1\geq\ldots\geq x_{k_1}\,,\qquad
\text{and}\qquad \mu_2=\sum_{i=1}^{k_2}\gd_{y_i}\,,\quad y_1\geq\ldots\geq y_{k_2}\,.
\]
Moreover, the assumption $\mu_1\prec\mu_2$ implies $x_i\leq y_i$ for all $1\leq i\leq k_1$.

We let $(\cM^1_t,\cZ^1_t)_{t\in[0,T]},\ldots,(\cM^{k_2}_t\cZ^{k_2}_t)_{t\in[0,T]}$ be $k_2$ i.i.d. copies of a time-inhomogeneous BBM, all starting from the initial configuration $\gd_0\in\cC$, and we write for any fixed $x\in\R$, $i\leq k_2$ and $t\in[0,T]$,
\[x+\cZ^i_t:=\sum_{u\in\cM^i_t}\gd_{x+X_u(t)}.\]
		Therefore, letting for $t\in[0,T]$,
\[
\cX_{1,t} := \sum_{i=1}^{k_1} (x_i+\cZ^i_t) \,,\qquad\text{and}\qquad \cX_{2,t} := \sum_{i=1}^{k_2} (y_i+\cZ^i_t)\,,
\]
one notices that $(\cX_{1,t})_{t\in[0,T]}$ and $(\cX_{2,t})_{t\in[0,T]}$ are two BBM's respectively started from $\mu_1$ and $\mu_2$, and they satisfy $\cX_{1,t}\prec\cX_{2,t}$ for all $t\in[0,T]$, finishing the proof. 
\end{proof}

We conclude this section with a direct consequence of Corollary~\ref{cor:coupling:N} regarding the \emph{quantiles} of the $N$-BBM configurations. 
For $M\in\N$, and $\mu\in\meas_N$, let us define
\begin{equation}\label{def:quantile:M}
q_M(\mu)\,\coloneqq\, \inf\{x\in\R; \mu([x,+\infty))< M \}\,=\, \sup\{x\in\R; \mu([x,+\infty))\ge M \}\,.
\end{equation}
In other words, $q_M(\mu)$ is the position of the $M$-th highest particle in the configuration $\mu$, with $q_M(\mu)=-\infty$ if $\mu(\R)<M$. By definition, for any $\mu,\nu\in\meas$ with $\mu\prec\nu$, one has $q_M(\mu)\leq q_M(\nu)$ for all $M\in\N$. In particular, we have the following direct consequence of Corollary~\ref{cor:coupling:N}.
\begin{corollary}\label{corol:quantile:NBBM}
Let $N\in\N$. Let $\mu_1,\mu_2\in\meas_{N}$ which satisfy $\mu_1\prec\mu_2$, and $(\cX^{N}_{1,t})_{t\in[0,T]}$, resp. $(\cX^{N}_{2,t})_{t\in[0,T]}$, an $N$-BBM started from $\mu_1$, resp. $\mu_2$. Then there exists a coupling between the two processes such that with probability one, one has $q_M(\cX^N_{1,t})\leq q_M(\cX^N_{2,t})$ for all $t\in[0,T]$, $M\in\N$.
\end{corollary}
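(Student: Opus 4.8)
The plan is to reduce the statement to Corollary~\ref{cor:coupling:N} combined with the elementary monotonicity of the quantile functionals $q_M$ under the order $\prec$ recorded just before the statement.

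First, I would apply Corollary~\ref{cor:coupling:N} in the degenerate case $N_1=N_2=N$. Since $\mu_1,\mu_2\in\meas_{N}$ and $\mu_1\prec\mu_2$, that corollary produces a coupling of an $N$-BBM $(\cX^{N}_{1,t})_{t\in[0,T]}$ started from $\mu_1$ and an $N$-BBM $(\cX^{N}_{2,t})_{t\in[0,T]}$ started from $\mu_2$ such that, with probability one, $\cX^{N}_{1,t}\prec\cX^{N}_{2,t}$ for all $t\in[0,T]$ simultaneously.

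Second, I would invoke the observation made right after the definition~\eqref{def:quantile:M}: for any $\mu,\nu\in\meas$ with $\mu\prec\nu$ one has $q_M(\mu)\le q_M(\nu)$ for every $M\in\N$. Indeed, $\mu([x,+\infty))\ge M$ forces $\nu([x,+\infty))\ge M$, so the set whose supremum defines $q_M(\nu)$ contains the set defining $q_M(\mu)$. Applying this deterministic fact pointwise in $t$ along the coupling of the first step, on the almost-sure event $\{\cX^{N}_{1,t}\prec\cX^{N}_{2,t}\text{ for all }t\in[0,T]\}$ we obtain $q_M(\cX^{N}_{1,t})\le q_M(\cX^{N}_{2,t})$ simultaneously for all $t\in[0,T]$ and all $M\in\N$, which is exactly the claim.

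There is essentially no genuine obstacle here; the only point worth a word of care is that the conclusion is asked to hold on a single almost-sure event, uniformly in both $t$ and $M$. This is automatic: Corollary~\ref{cor:coupling:N} already delivers the domination $\cX^{N}_{1,t}\prec\cX^{N}_{2,t}$ on one almost-sure event valid for all $t$ at once, and the passage to quantiles is a purely deterministic step that holds for every $M$ simultaneously on that same event.
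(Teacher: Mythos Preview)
Your proof is correct and matches the paper's approach exactly: the paper presents this corollary as a ``direct consequence of Corollary~\ref{cor:coupling:N}'' together with the observation (stated just before) that $\mu\prec\nu$ implies $q_M(\mu)\le q_M(\nu)$ for all $M$, which is precisely the two-step argument you have written out.
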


	\subsection{Main propositions and proof of Theorem~\ref{thm:main}}\label{sec:coupling:mainprops}
	Let $N=N(T)\to+\infty$ as $T\to+\infty$, and define $L(T)=\log N(T)$. Using the coupling propositions presented above, notably Corollary~\ref{cor:coupling:N}, we claim that it is sufficient to prove Theorem~\ref{thm:main} for some specific initial configurations, and the main result follows. In order to condense all upcoming statements, let us recall the following notation: the three regimes ($L(T)\ll T^{1/3}$, $L(T)\gg T^{1/3}$ and $L(T)\sim\ga T^{1/3}$) are respectively denoted by the abbreviations $\subcrit, \supercrit$ and $\crit$. Recall the definitions of the scaling and limiting terms in all regimes from~\eqref{eq:defbanyrg} and~\eqref{eq:defmanyrg}. In the remainder of this paper, we shall write ``let $\anyrg\in\{\supercrit,\subcrit,\crit\}$'' instead of ``let $1\ll L(T)\ll T$ which satisfies either $L(T)\ll T^{1/3}$, $L(T)\gg T^{1/3}$ or $L(T)\sim \ga T^{1/3}$ for some $\ga>0$ as $T\to+\infty$''; and the symbol $\anyrg$ shall denote the regime corresponding to the choice of $L(T)$. In particular, many upcoming statements are formulated in terms of $b^\anyrg_T$, $m^\anyrg_T$ instead of $b^\supercrit$, $b^\subcrit\ldots m^\crit_T$, (and similarly for any upcoming notation). 
	
	Let us introduce two specific families of initial configurations. 
	On the one hand, for $\gk\in[0,1]$ we shall consider the measure $\lfloor N^\gk\rfloor \gd_{-\gk\gs(0)L(T)}\in\measN$. On the other hand we define for $\eps\in(0,1)$,
	\begin{equation}\label{eq:defmuK:sec2}
		\mu_{\eps}\;:=\; \sum_{k=0}^{\lceil\eps^{-1}\rceil} \Big\lceil N^{k\eps+\frac\eps2} \Big\rceil \gd_{-k\eps\gs(0)L(T)}\;\in\; \meas\;.
	\end{equation}
	\begin{remark}
		Notice that $\mu_\eps$ contains more than $N$ particles: when starting an $N$-BBM from $\mu_\eps$, we instantaneously kill all particles which are not in the $N$ highest.
	\end{remark} 
	The measure $\mu_\eps$ is a discrete approximation of an exponential distribution of (roughly) $N$ particles over the interval $[-\gs(0)L(T),0]$. More precisely, $\mu_\eps$ is composed of finitely many atoms that contribute similarly to the maximal displacement of the $N$-BBM that spawns from it. Indeed, recall the definition of the entropy-position trade-off function $Q_T$ in \eqref{eq:defshift}: then one observes that, for $\gk\in[0,1]$ and $\eps\in(0,1)$,
	\[Q_T(\lfloor N^{\gk}\rfloor \gd_{-\gk\gs(0)L(T)})=\gs(0)\log\big(\lfloor N^\gk\rfloor N^{-\gk}\big)= o(1)\,.\]
	
	It will be convenient to formulate statements which hold uniformly over some class of variance functions $\gs^2(\cdot/T). $ 
	Therefore, we define for $\gh>0$ small,
	\begin{equation}\label{eq:defgsens}
		\gsens\;:=\;\big\{\gs\in\cC^2([0,1])\,\big|\, \forall u\in[0,1],\, |\gs'(u)|\leq \gh^{-1},\, |\gs''(u)|\leq \gh^{-1}\,\text{ and }\,\gh\leq \gs(u)\leq \gh^{-1} \big\}\,,
	\end{equation}
	in particular $\cC^2([0,1])=\bigcup_{\gh>0}\,\gsens$, and $\gs\in\gsens$ implies
	\begin{equation}\label{eq:gsapprox}
		\forall\; 0\leq u, v\leq 1\;,\qquad \left|\frac{\gs(u)}{\gs(v)}-1\right|\,\leq\, \gh^{-2}|u-v|\,.
	\end{equation}
	For the convenience of the notation, we also define $\gsens^{\crit}= \gsens^{\subcrit}:= \gsens$, and
	\begin{align}\label{eq:defgsens:anyrg}
		&\gsens^{\supercrit}\;\\
\notag		&:=\;\left\{\gs\in\gsens\,\middle|\, 
		\exists n\ge 1,\ 0=u_0<\ldots<u_n=1:\,\begin{aligned}&\forall 1\leq i\leq n,\,\gs \text{ is monotonic on } [u_{i-1},u_i];\\ &\forall 1\leq i\leq n,\, u_i-u_{i-1}\ge\gh \end{aligned}
		\right\}.
	\end{align}
	With those definitions, we have the following results. 
	\begin{proposition}\label{prop:main:LB}
		Let $\anyrg\in\rgens$, and $\gl, \gh>0$. Then,
		\begin{equation}\label{eq:prop:main:LB}
			\lim_{T\to+\infty}\,\sup_{\gk\in[0,1]} \,\sup_{\gs\in\gsensrg} \,\bbP_{N^{\gk}\gd_{-\gk\gs(0)L(T)}}\left(\frac{1}{b^\anyrg_T}\left(\max(\cX_T^{N(T)}) - m^\anyrg_T\right)\leq -\gl \right) \;=\; 0\,.
		\end{equation}
	\end{proposition}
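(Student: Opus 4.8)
\textbf{Proof plan for Proposition~\ref{prop:main:LB}.}
The plan is to establish the lower bound on the maximal displacement of the $N(T)$-BBM by approximating it from below with a BBM killed below a well-chosen space-time barrier. Recall from Proposition~\ref{prop:coupling:BBM}$(i)$ and the definition of the $N^+$-BBM (Definition~\ref{def:N-N+BBM}) that, if we kill particles only when there are at least $N(T)$ particles strictly above them, we obtain a process that stochastically dominates the $N(T)$-BBM from below. The strategy is therefore: construct a (deterministic) curved barrier $t\mapsto \gb_t$ such that, with high probability, the BBM without selection keeps at most $N(T)$ particles above $\gb_t$ at all times $t\in[0,T]$; then the BBM killed below $\gb_t$ is an $N^+$-BBM, hence dominates the $N(T)$-BBM, so it suffices to show that this killed BBM has a particle at position $\geq m^\anyrg_T - \gl b^\anyrg_T$ at time $T$ with high probability (uniformly in $\gk$ and in $\gs\in\gsensrg$).

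The barriers themselves are regime-dependent and should be the ones introduced in Section~\ref{sec:prelim}; heuristically, they interpolate linearly (with slope close to the ``natural'' drift $\gs(t/T)$ rescaled appropriately) between $-\gk\gs(0)L(T)$ at time $0$ and $m^\anyrg_T$ at time $T$, with a curvature correction of order $-\pi^2 T/(2L(T)^2)$ (sub-critical), $\Psi$-type Airy correction at scale $T^{1/3}$ (critical), or $(\gs')^+ L(T)$ (super-critical). First I would reduce, via the time-change $J(t)=\int_0^t\gs^2(s/T)\,\dd s$ from~\eqref{eq:BM:timechange}, the trajectory estimates to statements about time-homogeneous BBM between two absorbing barriers, and then invoke the moment estimates of Section~\ref{sec:moments}: a first-moment (many-to-one) computation controls the expected number of particles staying above $\gb_t$ throughout $[0,T]$ and pins it below $N(T)$ (so the $N^+$-domination applies), while a truncated second-moment / Paley--Zygmund argument produces, with probability bounded away from $0$ uniformly, at least one surviving particle reaching the target height at time $T$. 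The dependence of $\shft(\mu_T)$ on the initial configuration $\mu_T = N^\gk\gd_{-\gk\gs(0)L(T)}$ is exactly captured by the additive shift $-\gk\gs(0)L(T)$, which is why the starting point enters the barrier precisely at this value; since $\shft(N^\gk\gd_{-\gk\gs(0)L(T)}) = o(1)$, one checks that the contribution $\gk\gs(0)L(T)$ is absorbed correctly into the linear part of the barrier for each $\gk\in[0,1]$.

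To upgrade the ``probability bounded away from zero'' to ``probability tending to one'', I would use a standard bootstrapping over an initial time layer: during a short initial interval $[0,\eta T]$ (or $[0,KL(T)^?]$ in the critical regime), the BBM generates many independent subtrees descending from particles spread near the top of the barrier, and the probability that all of them fail to reach the target decays, yielding the $o(1)$ failure probability. The uniformity over $\gk\in[0,1]$ follows because all estimates are monotone or continuous in the starting height and $\gk$ ranges over a compact set; the uniformity over $\gs\in\gsensrg$ follows because the moment estimates of Section~\ref{sec:moments} are (by design) uniform over $\gsens$ given the two-sided bounds and derivative bounds encoded in~\eqref{eq:defgsens}, together with~\eqref{eq:gsapprox}, and in the super-critical regime the extra condition in~\eqref{eq:defgsens:anyrg} (finitely many monotonicity changes with gaps $\geq\gh$) makes the piecewise-monotone decomposition of the barrier uniform as well.

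The main obstacle I anticipate is the construction and analysis of the barriers in the \emph{critical} regime, where the correction term involves the Airy function $\Psi$ of~\eqref{def:Psi}: here one cannot use a simple linear barrier, and the relevant first- and second-moment estimates require sharp control of Brownian bridge-type probabilities of staying in a narrow tube of width $\asymp T^{1/3}$ over time $T$ — this is precisely where the $\Ai$/$\Bi$ eigenvalue problem enters, via the spectral expansion of the killed heat semigroup on an interval (as in~\cite{Mal15}). Matching the leading eigenvalue asymptotics to the $\int_0^1 \gs(u)\ga^{-2}\Psi(-\ga^3\gs'(u)/\gs(u))\,\dd u$ integral — handling the time-inhomogeneity by slicing $[0,T]$ into blocks on which $\gs$ is nearly constant and concatenating the corresponding eigenvalue contributions — is the delicate computational heart of the argument, and keeping all error terms $o(T^{1/3})$ uniformly in $\gs\in\gsens$ is where most of the work lies. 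The sub-critical and super-critical barriers are comparatively softer (linear barriers with, respectively, a $\pi^2/(2L^2)$ curvature penalty coming from the ground-state energy of the interval, and a piecewise-linear barrier tracking $(\gs')^+$), but still require the same moment-method skeleton.
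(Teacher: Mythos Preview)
Your high-level architecture (barrier comparison plus a coupling to an $N^\pm$-process) is correct, but two concrete points are wrong or missing, and the first one breaks the argument as written.

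\textbf{Direction of the coupling.} For a \emph{lower} bound on $\max(\cX^{N(T)}_T)$ you need a process that is stochastically \emph{below} the $N$-BBM; by Lemma~\ref{lem:N-N+couping} this means an $N^-$-BBM, not an $N^+$-BBM. A BBM killed at barriers which happens to contain at most $N$ particles at all times is (vacuously) an $N^-$-BBM; the $N^+$-BBM sits \emph{above} the $N$-BBM and is used only for the upper bound (Proposition~\ref{prop:main:UB}). Proposition~\ref{prop:coupling:BBM}(i) is not the relevant coupling here.

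\textbf{One barrier is not enough.} The substantive gap is your claim that a single lower barrier $\beta_t$ can be chosen so that the BBM killed below it keeps at most $N(T)$ particles with high probability. This fails: without an \emph{upper} barrier, particles that rise well above $\beta_t$ spawn exponentially many descendants before returning, and the first-moment count already blows up far beyond $N(T)$ (in the sub-critical picture, the survival probability of Brownian motion above a single near-critical barrier decays only polynomially, not like $e^{-\pi^2 t/(2L^2)}$). The paper's cure is to use \emph{two} barriers $\gamma^{\anyrg}_T$ and $\ol\gamma^{\anyrg}_T$ at distance $h\gs L(T)$ with $h=1-\eps<1$; the narrow tube forces the population below $N$ w.h.p.\ (Proposition~\ref{prop:allregimes:LB:npart}), while $h$ close to $1$ keeps $\ol\gamma^{\anyrg}_T(T)$ within $o(b^{\anyrg}_T)$ of $m^{\anyrg}_T$ (Lemma~\ref{lem:comparison:m:gga}). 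The Paley--Zygmund step then shows that particles reach the vicinity of the upper barrier (Proposition~\ref{prop:allregimes:LB:endpoint}), and the $N^-$-coupling of Lemma~\ref{lem:N-N+couping} transfers this to the $N$-BBM (Lemma~\ref{lem:main:LB:coupling}).

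\textbf{Sub-critical block decomposition.} You omit that in the sub-critical regime the moment estimates of Section~\ref{sec:moments:subcrit} are valid only for $t\ll\sqrt{TL(T)^3}$, not up to $T$. The paper therefore cuts $[0,T]$ into blocks of length $t^{\subcrit}_T\asymp L(T)^4$ and iterates, using a quantile-based auxiliary process and either a union bound (when $L(T)\gg\log T$) or a variance estimate (when $L(T)\ll T^{1/8}$); see Section~\ref{sec:LB:coreprf}. Your ``initial time layer'' bootstrapping addresses a different issue (positive probability $\to$ high probability) and does not substitute for this.
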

	
	\begin{proposition}\label{prop:main:UB} 
		Let $\anyrg\in\rgens$, and $\gl,\gh>0$. Then,
		\begin{equation}\label{eq:prop:main:UB}
			\lim_{\eps\to0}\,\limsup_{T\to+\infty}\,\sup_{\gs\in\gsensrg}\,\bbP_{\mu_\eps}\left(\frac{1}{b^\anyrg_T}\left(\max(\cX_T^{N(T)}) - m^\anyrg_T\right)\geq \gl \right) \;=\; 0\,.
		\end{equation}
	\end{proposition}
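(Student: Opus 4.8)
\textbf{Proof proposal for Proposition~\ref{prop:main:UB}.}

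The plan is to prove the upper bound by comparing the $N(T)$-BBM started from $\mu_\eps$ with an auxiliary BBM \emph{without selection but with killing barriers}, and then controlling the latter by first-moment (expected number of crossings) estimates. First I would use Proposition~\ref{prop:coupling:BBM}$(i)$ together with Corollary~\ref{cor:coupling:N}: since $\mu_\eps$ contains more than $N(T)$ particles and we instantaneously kill all but the top $N(T)$, and since an $N$-BBM is in particular an $N^+$-BBM, it suffices to bound $\max(\cX_T^{N(T)})$ from above by the maximum of a BBM \emph{without selection} started from a suitable configuration, but where we superimpose a well-chosen upper killing barrier $t\mapsto \phi^\anyrg_t$ (depending on the regime $\anyrg$). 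The key point, made precise in Section~\ref{sec:prelim} below, is that if at no time does the $N^+$-BBM have $N(T)$ particles above a given level, then no particle has been killed by selection there; hence the $N(T)$-BBM coincides, above that level, with the unkilled BBM. So the strategy reduces to: (a) choose the barrier $\phi^\anyrg$ so that $\phi^\anyrg_T = m^\anyrg_T + \gl b^\anyrg_T/2$ (say), (b) show that with high probability the BBM-with-barrier started from $\mu_\eps$ never has $\geq N(T)$ particles simultaneously above $\phi^\anyrg_\cdot$, which forces the $N(T)$-BBM to stay below $\phi^\anyrg$ and in particular below $m^\anyrg_T + \gl b^\anyrg_T$ at time $T$.

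The heart of step (b) is a first-moment computation: one must show that the expected number of BBM particles that cross the barrier $\phi^\anyrg$ before time $T$, starting from the $\lceil N^{k\eps+\eps/2}\rceil$ particles of $\mu_\eps$ at height $-k\eps\gs(0)L(T)$, is $o(1)$ as $T\to\infty$ (after $\eps\to0$), uniformly over $\gs\in\gsensrg$. By the many-to-one lemma, this expectation equals $e^{t/2}$ times a single time-inhomogeneous Brownian motion crossing probability, which after the time-change $J(t)=\int_0^t\gs^2(s/T)\,\dd s$ and Girsanov/Cameron--Martin tilting by the natural-speed drift reduces to a Gaussian barrier-crossing estimate; the $e^{t/2}$ factor is exactly compensated (to leading order) by the exponential cost of the linear drift $v(1)T$, leaving a remainder governed by the second-order shape of $\phi^\anyrg$. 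The precise choice of $\phi^\anyrg$ — linear-plus-correction of order $b^\anyrg_T$, with the correction profile built from $\Psi$ (critical), from $\pi^2/(2L^2)$ (subcritical), or from $\int(\gs')^+$ (supercritical) — is exactly what makes this first moment small; these barriers and the corresponding moment estimates are the content of Sections~\ref{sec:prelim} and~\ref{sec:moments}, which I would invoke. The contributions of the different atoms of $\mu_\eps$ must be summed, and here the geometric spacing of $\mu_\eps$ is crucial: the atom at $-k\eps\gs(0)L(T)$ carries $N^{k\eps+\eps/2}$ particles but is $k\eps\gs(0)L(T)$ further from the barrier, and these two effects cancel up to the $\eps/2$ slack, so the sum over $k$ is $O(\eps^{-1})$ times a term that is $o(1)$, hence $o(1)$ after letting $\eps\to0$ last.

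I would organize the argument as follows. Step 1: reduce via the couplings (Proposition~\ref{prop:coupling:BBM}, Corollary~\ref{cor:coupling:N}) to a BBM-with-upper-barrier statement, as above. Step 2: for each regime $\anyrg$, recall the barrier $\phi^\anyrg$ from Section~\ref{sec:prelim} and verify $\phi^\anyrg_T \geq m^\anyrg_T + \tfrac{\gl}{2} b^\anyrg_T$ while still being a ``small'' barrier. Step 3: bound $\bbP_{\mu_\eps}(\max(\cX_T^{N(T)}) \geq m^\anyrg_T + \gl b^\anyrg_T)$ by the probability that some particle of the unkilled BBM crosses $\phi^\anyrg$, via the equivalence between no-crossing and no-selection-below-$\phi^\anyrg$. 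Step 4: apply Markov's inequality and the many-to-one lemma to bound the crossing probability by the expected number of crossers; invoke the moment estimates of Section~\ref{sec:moments} to show this expectation is $\leq C(\eps)\,o_T(1)$ uniformly over $\gsensrg$; then take $\limsup_{T\to\infty}$ and finally $\eps\to0$. The main obstacle is Step 4 in the \emph{critical} regime: there the barrier is genuinely curved, and the relevant Brownian crossing asymptotics involve the eigenvalue problem for the Airy operator on a moving interval — this is precisely where the function $\Psi$ of~\eqref{def:Psi} enters, and getting a sharp enough (rather than merely exponentially small) bound, uniformly in $\gs\in\gsens$ and robust to the local approximation $\gs(u)\approx\gs(0)+\gs'(0)u$, is the technically delicate part; I would handle it by piecing together the estimates of Section~\ref{sec:moments} on short time windows on which $\gs$ is nearly affine, using~\eqref{eq:gsapprox}.
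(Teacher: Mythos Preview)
Your reduction in Steps~1--3 has a genuine gap: a BBM started from $\mu_\eps$ (roughly $N$ particles) with \emph{no lower barrier} has population of order $N e^{T/2}$ by time $T$, and the expected number of particles crossing any upper barrier sitting at height $m^\anyrg_T + O(b^\anyrg_T)$ is enormous, not $o(1)$. The many-to-one compensation you invoke (``$e^{t/2}$ is exactly cancelled by the exponential cost of the drift $v(1)T$'') is correct for a \emph{single} initial particle, but you start from $N$ particles, so the first moment is of order $N$, not $o(1)$. The claimed ``equivalence between no-crossing and no-selection-below-$\phi^\anyrg$'' does not hold: selection in the $N$-BBM acts at the \emph{bottom} of the cloud, and the $N$-BBM can very well have all $N$ particles above $\phi^\anyrg$.

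The paper's route is substantially different. One must introduce a \emph{lower} barrier $\gga^\anyrg_T$ as well, so that the process becomes an $N^+$-BBM (which dominates the $N$-BBM by Lemma~\ref{lem:N-N+couping}); this requires showing that at least $N$ particles remain above $\gga^\anyrg_T$ at all times (Proposition~\ref{prop:allregimes:UB:npart}). But now there is a second obstruction, spelled out in Remark~\ref{rem:coloring}: with barrier parameters close to $(1,1)$ (as forced by Lemma~\ref{lem:comparison:m:gga}), one \emph{cannot} simultaneously keep $\ge N$ particles alive and have the expected number of upper-barrier hits below $1$ in the critical and sub-critical regimes. The paper's key idea is therefore a one-step iteration: particles reaching $\ol\gga^\anyrg_T$ are not killed but \emph{colored red} and subjected to a slightly shifted barrier pair $(\gga^\shifted_T,\ol\gga^\shifted_T)$; one then shows (Proposition~\ref{prop:allregimes:UB:olgga}) that the expected number of red particles reaching the \emph{shifted} upper barrier $\ol\gga^\shifted_T$ is $N^{-c}$, which is what your first-moment argument was aiming for but could not reach directly. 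Finally, in the sub-critical regime the moment estimates of Section~\ref{sec:moments:subcrit} only hold for $t\ll\sqrt{TL(T)^3}$, so an additional block decomposition of $[0,T]$ is required; see the end of Section~\ref{sec:UB:coreprf}.
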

	
	Propositions~\ref{prop:main:LB} and~\ref{prop:main:UB} may be seen as particular cases of Theorem~\ref{thm:main} (with some added uniformity in $\gs(\cdot)$). Their proofs are contained in Sections~\ref{sec:LB} and~\ref{sec:UB} respectively, and rely on moment estimates from Section~\ref{sec:moments}. In the remainder of this section, we deduce Theorem~\ref{thm:main} from these propositions.
	
	\begin{proof}[Proof of Theorem~\ref{thm:main} subject to Propositions~\ref{prop:main:LB} and \ref{prop:main:UB}] 
		Let $\mu_T\in\meas_{N(T)}$. Recall the definition of $\shft(\cdot)$ from~\eqref{eq:defshift} and notice that, for any $x\in\R$,
		\begin{equation}\label{eq:shft:invariance}
			\shft(\mu_T(\cdot-x))\,=\, x+\shft(\mu_T))\,.
		\end{equation}
		Hence, by shifting the process and initial configuration by $-\shft(\mu_T)$, $T\geq0$, we may assume without loss of generality that $\shft(\mu_T)=0$. Let $\gl>0$, and let us write with a union bound,
		\begin{align}\label{eq:prf:thm:main:1}
			&\bbP_{\mu_T}\left(\frac{1}{b^\anyrg_T}\left|\max(\cX_T^{N(T)}) - m^\anyrg_T\right|\geq \gl \right)\\
			\notag &\qquad\leq\, \bbP_{\mu_T}\left(\frac{1}{b^\anyrg_T}\left(\max(\cX_T^{N(T)}) - m^\anyrg_T\right)\leq -\gl \right) + \bbP_{\mu_T}\left(\frac{1}{b^\anyrg_T}\left(\max(\cX_T^{N(T)}) - m^\anyrg_T\right)\geq \gl \right).
		\end{align}
		Then we treat both terms separately.
		
		Let $\eps>0$. Since $\shft(\mu_T)=0$, there exists $\gk=\gk(\eps,T)\in[0,1]$ such that 
		\[\mu_T\big([-\eps-\gk\gs(0)L(T),+\infty)\big)\,\geq\, N(T)^{\gk}\,.\]
		In particular, this implies $\mu_T \succ N^{\gk} \gd_{-\eps-\gk\gs(0)L(T)}$. Therefore, Corollary~\ref{cor:coupling:N} and a shift by $\eps$ yield,
		\begin{align*}
		&\bbP_{\mu_T}\left(\frac{1}{b^\anyrg_T}\left(\max(\cX_T^{N(T)}) - m^\anyrg_T\right)\leq -\gl \right)\\
		&\qquad\leq\,\bbP_{N^{\gk} \gd_{(-\gk\gs(0)L(T))}}\left(\frac{1}{b^\anyrg_T}\left(\max(\cX_T^{N(T)}) -\eps - m^\anyrg_T\right)\leq -\gl \right),\end{align*}
		and since $\eps/b^\anyrg_T<\gl/2$ for $T$ sufficiently large, we deduce from Proposition~\ref{prop:main:LB} that the first term in~\eqref{eq:prf:thm:main:1} vanishes as $T\to+\infty$.
		
		On the other hand, for $\eps>0$ the definition of $\shft(\mu_T)$ implies
		\[\forall\,\gk\in[0,1]\,,\qquad \mu_T\big([-(\gk-\eps)\gs(0)L(T),+\infty)\big)\,<\, N^{\gk}.\]
		Recall the definition of $\mu_\eps$ from~\eqref{eq:defmuK:sec2}. In particular, one notices for all $\gk\in[0,1]$,
		\[
		\mu_\eps\big([-(\gk+\eps)\gs(0)L(T),+\infty)\big)\,\geq N^\gk\,.
		\]
		Recalling that $\mu_T(\R)\leq N(T)$ by assumption, one obtains that $\mu_T\prec \mu_\eps(\cdot-2\eps\gs(0)L(T))$. Therefore, applying Corollary~\ref{cor:coupling:N} to an appropriately shifted process yields,
		\begin{align*}
		&\bbP_{\mu_T}\left(\frac{1}{b^\anyrg_T}\left(\max(\cX_T^{N(T)}) - m^\anyrg_T\right)\geq \gl \right)\\
		&\qquad\leq\,\bbP_{\mu_\eps}\left(\frac{1}{b^\anyrg_T}\left(\max(\cX_T^{N(T)}) +2\eps\gs(0)L(T) - m^\anyrg_T\right)\geq \gl \right).
		\end{align*}
		Recall that $L(T) \le b^\anyrg_T$. Taking $\eps$ sufficiently small and letting $T$ be large, we deduce from Proposition~\ref{prop:main:UB} that the second term in~\eqref{eq:prf:thm:main:1} can be arbitrarily small, which concludes the proof of the theorem.
	\end{proof}
	
	\section{Preliminaries on the BBM with barriers}\label{sec:prelim}
	Let us put aside the $N$-BBM for now, and consider the branching Brownian motion \emph{between barriers}, a variant of the BBM which is the cornerstone of the proof of Theorem~\ref{thm:main}. 
	This section assembles all our notation on the BBM killed at certain barriers, as well as preliminary results. We first introduce some notation which is used throughout the remainder of this paper, 
	then we present the main ideas and tools for the proofs of Propositions~\ref{prop:main:LB} and~\ref{prop:main:UB}.
	
	\subsection{Preliminaries and notation}\label{sec:prelim:notation}
	Recall~(\ref{eq:defgsens}--\ref{eq:gsapprox}), where we fix $\gh>0$ sufficiently small so that $\gs\in\gsens$. In the following, for any function $\gth:\R_+\to\R^*_+$ such that $\gth(T)\to0$ as $T\to+\infty$, we write for $f,g:\R_+\to\R_+$, and $T\geq0$,
	\begin{equation}\label{eq:defllbis}
		f(T) \llbis g(T)\qquad\text{if}\qquad f(T)\leq \gth(T) g(T)\,,
	\end{equation}  
	and, symmetrically, $g(T)\ggbis f(T)$ if $g(T)\geq \gth(T)^{-1} f(T)$. In particular, having~\eqref{eq:defllbis} for some $\gth(\cdot)$ and all $T$ large implies $f(T)\ll g(T)$; and, conversely, having $f(T)\ll g(T)$ implies that there exists some $\gth(\cdot)$ such that $f(T)\llbis g(T)$ for $T$ sufficiently large.
	
	In the following we fix $1\ll L(T)\ll T$ such that $L(T)\ll T^{1/3}$, $L(T)\gg T^{1/3}$ or $L(T)\sim \ga T^{1/3}$, $\ga>0$ as $T\to+\infty$ ; and let $\anyrg\in\{\supercrit,\subcrit,\crit\}$ denote the matching regime. Then, let $\tilde\gth(\cdot)$ be an (arbitrary) function taking values in $(0,1]$ and vanishing at infinity, which may depend on $L(T)$, such that, for $T$ sufficiently large, one has
	\begin{equation*}\left\{\begin{aligned}
			1\le_{\tilde\gth} L(T)\le_{\tilde\gth} T^{1/3}\,,&\qquad\text{if}\quad\anyrg=\subcrit\,,\\
			1\le_{\tilde\gth} T^{1/3}\le_{\tilde\gth} T\,,&\qquad\text{if}\quad\anyrg=\crit\,,\\
			T^{1/3}\le_{\tilde\gth} L(T)\le_{\tilde\gth} T\,,&\qquad\text{if}\quad\anyrg=\supercrit\,.
		\end{aligned}\right.
	\end{equation*}
	We then set $\theta \coloneqq \sqrt{\tilde \gth}$. We have
		\begin{equation}\label{eq:gthasymp}\left\{\begin{aligned}
			\theta^{-1}(T)\llbis L(T)\llbis \theta(T)T^{1/3}\,,&\qquad\text{if}\quad\anyrg=\subcrit\,,\\
			\theta^{-1}(T)\llbis T^{1/3}\llbis \theta(T)T\,,&\qquad\text{if}\quad\anyrg=\crit\,,\\
			\theta^{-1}(T)T^{1/3}\le_{\tilde\gth} L(T)\llbis \theta(T)T\,,&\qquad\text{if}\quad\anyrg=\supercrit\,.
		\end{aligned}\right.
	\end{equation}
	A pair of \emph{barriers}, which we usually write $(\gga^\anyrg_T(\cdot), \ol\gga^\anyrg_T(\cdot))$ in the remainder of this paper, is a pair of (smooth) functions from $[0,T]$ to $\R$, depending on $L(T)$, which satisfy the following for some $h>x>0$:
	\begin{equation}\label{eq:defxh}
		\gga_T^\anyrg(0)\,:=\,-x\gs(0)L(T)\,<\,0\,,\quad
		\text{and}\quad \ol\gga_T^\anyrg(r) - \gga_T^\anyrg(r)\,:=\,h\gs(r/T)L(T)\,,\quad \forall\,r\in[0,T]\,.
	\end{equation}
	We refer to $\gga^\anyrg_T(\cdot)$ (resp. $\ol\gga^\anyrg_T(\cdot)$) as the \emph{lower} (resp. \emph{upper}) barrier. Therefore, throughout the article and all regimes, the parameter $h$ denotes (up to a scaling term) the gap in-between the two barriers, and $x$ denotes (up to a scaling term) the distance from the origin to the lower barrier at time $t=0$. When we want to explicit the parameters $h>x>0$ for which the barriers satisfy~\eqref{eq:defxh}, we shall add them as superscripts by writing $\gga^{\anyrg,h,x}_T$, $\ol\gga^{\anyrg,h,x}_T$ (when they are clear from context we shall not write them, to lighten formulae). 
	
	For $t\in[0,T]$ and an interval $I\subset[0,h]$, we denote the set of particles which remained between the barriers throughout $[0,t]$ and ended in $\gga_{T}^\anyrg(t)+\gs(t/T)L(T)\cdot I$ at time $t$ with
	\begin{equation}\label{eq:defA:allregimes}
		A_{T,I}^\anyrg(t)\,:=\,\Big\{u\in\cN_t\;\Big|\; \forall s\in[0,t],\,X_u(s)\in \big[\gga_{T}^\anyrg(s), \ol\gga_{T}^\anyrg(s)\big]\,;\, \tfrac{X_u(t)-\gga_{T}^\anyrg(t)}{\gs(t/T)L(T)}\in I \Big\}\,.
	\end{equation}
	To lighten notation, we shall also write $A_{T}^\anyrg(t):=A_{T,[0,h]}^\anyrg(t)$ and $A_{T,z}^\anyrg(t):=A_{T,[z,h]}^\anyrg(t)$ respectively for the specific cases $I=[0,h]$ (no constraint on the final height within the barriers) and $I=[z,h]$, for some $z\in[0,h)$ (lower constraint only). Furthermore, for $0\leq s\leq t\leq T$, we denote with $R^\anyrg_T(s,t)\in\N$ the number of particles which remain above $\gga_T^\anyrg$ until they get killed by $\ol\gga_T^\anyrg$ at some time $\tau\in[s,t]$: more precisely,\footnote{One can check with standard branching processes theory that $R^\anyrg_T(s,t)$ is a measurable, almost surely finite random variable; and that, with probability 1, two particles do not reach the upper barrier at the same time. For the sake of conciseness we do not develop on that in this paper.}
	\begin{equation}\label{eq:defR}
		R^\anyrg_T(s,t)\,:=\,\left|\bigcup_{\tau\in[s,t]}\Big\{u\in\cN_\tau\,\Big|\, \forall\,r<\tau,\,X_u(r)\in\big(\gga_T^\anyrg(r),\ol\gga_T^\anyrg(r)\big) \,;\, X_u(\tau)=\ol\gga_T^\anyrg(\tau) \Big\}\right|.
	\end{equation}
	
	Recall the definitions of $v(\cdot)$ and $\Psi(\cdot)$ from~\eqref{eq:defv} and~\eqref{def:Psi} respectively, and let $w_{h,T}\in\cC^1([0,1])$ be defined by\footnote{Let us point out that, compared to~\eqref{eq:thm:crit}, we added an $\ga$ in the denominator: this is because it will be more convenient in upcoming computations to express the second order of the critical regime~\eqref{def:gga:crit} in terms of $L(T)\sim\ga T^{1/3}$ instead of $T^{1/3}$.}
	\begin{equation}\label{def:w_T:crit} 
		w_{h,T}(r) \;:=\; -\int_0^{r} \frac{\gs(u)}{\ga^3h^2}\Psi\left(\ga^{3}h^3\frac{\gs'(u)}{\gs(u)}\right) \dd u\;\geq\;0\;,\qquad r\in[0,1]\;.
	\end{equation}
	Let $h>x>0$. Later, we will choose them both to be close to $1$. Then, depending on $L(T)$ and its regime $\anyrg\in\{\supercrit,\subcrit,\crit\}$, we define a pair of barriers by setting, for $t\in[0,T]$,
	\begin{align}\label{def:gga:supercrit}
		\gga_T^\supercrit(t) = \gga_T^{\supercrit,h,x}(t)\;&:=\;v(t/T)\,T + h\,L(T) \int_0^{t/T}(\gs')^-(u)\,\dd u  - x\,\gs(0)  L(T)\;,\\\label{def:gga:subcrit}
		\gga_T^\subcrit(t) = \gga_T^{\subcrit,h,x}(t)\;&:=\;v(t/T)\,T \sqrt{1-\frac{\pi^2}{h^2  L(T)^2}}  - x\,\gs(0) L(T) \;,\\\label{def:gga:crit}
		\gga_T^\crit(t)=
		\gga_T^{\crit,h,x}(t)\;&:=\;v(t/T)\,T - w_{h,T}(t/T)\, L(T) - x\,\gs(0) L(T)\;,
	\end{align}
	and we let $\ol\gga_T^\anyrg(t):=\gga_T^\anyrg(t) + h\gs(t/T) L(T)$ in each regime, so that~\eqref{eq:defxh} holds for $h>x>0$ fixed. One of the core ideas used in the remainder of this paper is that, when started from a single particle, the $N$-BBM is quite similar to a BBM whose particles are killed when reaching the barriers $\gga^\anyrg_T(\cdot)$, $\ol\gga^\anyrg_T(\cdot)$, as soon as their parameters $h>x>0$ are both close to $1$. Those processes are illustrated in each of the three regimes in Figure~\ref{fig:trajectories}.
	
	\begin{figure}[ht]\begin{center}
			\begin{subfigure}[t]{0.55\textwidth}
				\includegraphics[width=\textwidth]{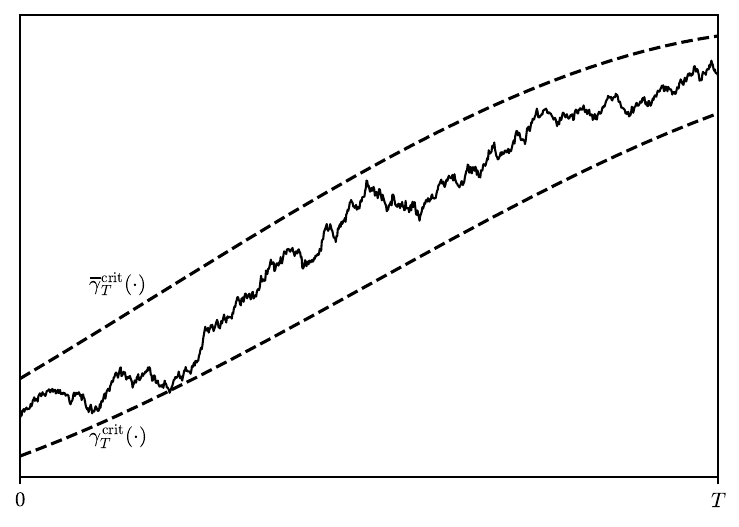}
				\vspace{-9mm}\caption{$L(T)\asymp T^{1/3}$ (crit)}
			\end{subfigure}\vspace{3mm}\\
			\begin{subfigure}[t]{0.45\textwidth}
				\includegraphics[width=\textwidth]{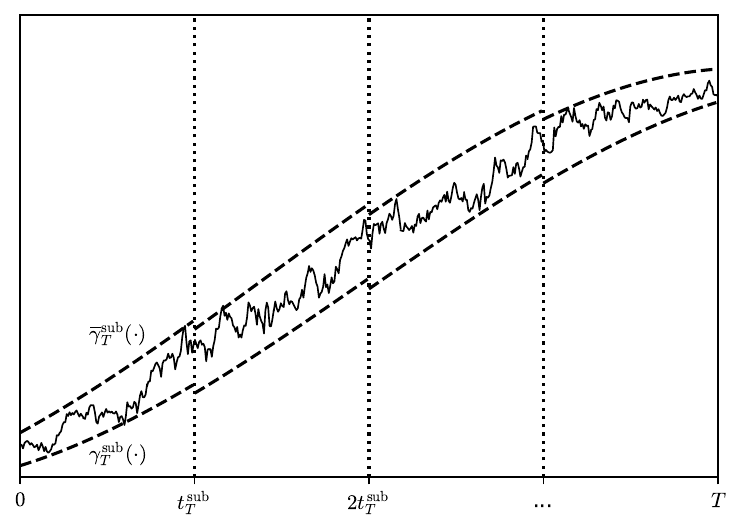}
				\vspace{-7mm}\caption{$L(T)\ll T^{1/3}$ (sub)}
			\end{subfigure}
			\begin{subfigure}[t]{0.45\textwidth}
				\includegraphics[width=\textwidth]{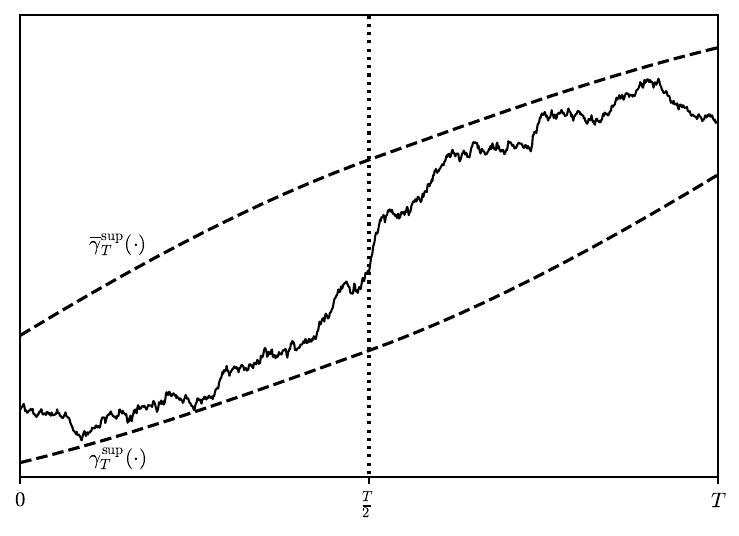}
				\vspace{-7mm}\caption{$L(T)\gg T^{1/3}$ (sup)}
			\end{subfigure}
			\caption{Illustration of the BBM between barriers, which approximates the $N$-BBM. In each regime, we draw the typical trajectory of a single particle that survives until time $T$ (we do not show this rigorously, but this is the heuristic guiding our calculations). (A) In the critical regime, the trajectory resembles that of a Brownian motion constrained to remain between the space-time barriers and, moreover, in a time-inhomogeneous potential of Airy-type. (B) In the sub-critical regime, we compare the $N$-BBM with the BBM between barriers directly only on a time interval of length $t^\subcrit_T$ chosen to be slightly larger than $L(T)^3$, using a block decomposition to recover the process on the full interval $[0,T]$. (C) In the super-critical regime, the surviving trajectories are localized in the vicinity of the lower barrier on each interval where $\gs(\cdot)$ is increasing ($[0,1/2]$ in this picture), and in the vicinity of the upper barrier on each interval where $\gs(\cdot)$ is decreasing ($[1/2,1]$ in this picture).}
			\label{fig:trajectories}
	\end{center}\end{figure}
	
	The following lemma shows that the quantity $m_T^*$, defined in \eqref{eq:defmanyrg}, is indeed well approximated by $\ol\gga^{\anyrg,h,x}_T(T)$ when $h$ and $x$ are close to $1$.
	\begin{lemma}\label{lem:comparison:m:gga}
		Let $\anyrg\in\{\supercrit,\subcrit,\crit\}$. Then,
		\begin{equation}\label{eq:lem:comparison:m:gga}
			\limsup_{\substack{(h,x)\to(1,1),\\ h>x>0}} \, \limsup_{T\to+\infty} \frac1{b^\anyrg_T}\left|m^\anyrg_T - \ol\gga^{\anyrg,h,x}_T(T)\right|\,=\, 0\,.
		\end{equation}
	\end{lemma}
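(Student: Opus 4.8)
The plan is to observe that every quantity in~\eqref{eq:lem:comparison:m:gga} is deterministic, so the statement splits into three elementary asymptotic computations, one per regime $\anyrg\in\{\supercrit,\subcrit,\crit\}$. In each case I would substitute $\ol\gga^{\anyrg,h,x}_T(T)=\gga^{\anyrg,h,x}_T(T)+h\gs(1)L(T)$ using the explicit formulae~\eqref{def:gga:supercrit}--\eqref{def:gga:crit}, note that the term $v(1)T$ cancels against the leading term of $m^\anyrg_T$ in~\eqref{eq:defmanyrg}, and then compare what remains --- a combination of terms of size $O(b^\anyrg_T+L(T))$ --- up to precision $o(b^\anyrg_T)$, before finally sending $(h,x)\to(1,1)$. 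Since $L(T)\le b^\anyrg_T$ in all three regimes (recall~\eqref{eq:gthasymp}), this suffices.

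Two of the regimes are routine. In the \emph{super-critical} case $b^\supercrit_T=L(T)$, and $\ol\gga^{\supercrit,h,x}_T(T)-m^\supercrit_T$ equals $L(T)$ times the $T$-independent constant $h\!\int_0^1(\gs')^-\dd u-x\gs(0)+h\gs(1)-\int_0^1(\gs')^+\dd u$; this vanishes at $(h,x)=(1,1)$ because $\gs(1)-\gs(0)=\int_0^1\gs'\,\dd u=\int_0^1(\gs')^+\dd u-\int_0^1(\gs')^-\dd u$, and it is jointly continuous in $(h,x)$, so the inner $\limsup_{T}$ is its absolute value and the outer $\limsup$ is zero. In the \emph{sub-critical} case $b^\subcrit_T=T/L(T)^2$: I would Taylor-expand $\sqrt{1-\pi^2/(h^2L(T)^2)}=1-\tfrac{\pi^2}{2h^2L(T)^2}+O(L(T)^{-4})$ (valid once $L(T)>\pi/h$, hence for $T$ large), compare with the factor $1-\tfrac{\pi^2}{2L(T)^2}$ of $m^\subcrit_T$, multiply by $v(1)T$ and divide by $b^\subcrit_T$ to obtain $\tfrac{v(1)\pi^2}{2}(1-h^{-2})+o(1)$; the leftover $h\gs(1)L(T)-x\gs(0)L(T)$ contributes only $O(L(T)^3/T)=o(1)$ since $L(T)\ll T^{1/3}$ here, and letting $h\to1$ concludes.

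The \emph{critical} regime is the delicate one, since there $L(T)\sim\ga T^{1/3}$ is of the same order as $b^\crit_T=T^{1/3}$, so the $O(L(T))$ terms genuinely contribute and must be reconciled with the $w_{h,T}$-term. The key (and only non-routine) ingredient is the functional identity $\Psi(-a)=\Psi(a)+a$, valid for all $a\in\R$ straight from the definition $\Psi(-q):=q+\Psi(q)$ in~\eqref{def:Psi}. Applying it with $a=\ga^3\gs'(u)/\gs(u)$ rewrites the integrand of $m^\crit_T$ as $\tfrac{\gs(u)}{\ga^2}\Psi\big(\ga^3\tfrac{\gs'(u)}{\gs(u)}\big)+\ga\gs'(u)$, so the $\Psi$-part of $m^\crit_T$ becomes $I_1^{(1)}T^{1/3}+\ga(\gs(1)-\gs(0))T^{1/3}$, where $I_1^{(h)}:=\int_0^1\tfrac{\gs(u)}{\ga^2h^2}\Psi\big(\ga^3h^3\tfrac{\gs'(u)}{\gs(u)}\big)\dd u$; meanwhile, by~\eqref{def:w_T:crit} and $L(T)/T^{1/3}\to\ga$, one has $-w_{h,T}(1)L(T)=I_1^{(h)}T^{1/3}(1+o(1))$ and $(h\gs(1)-x\gs(0))L(T)=\ga(h\gs(1)-x\gs(0))T^{1/3}(1+o(1))$. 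Dividing by $b^\crit_T$, the inner limit becomes $(I_1^{(h)}-I_1^{(1)})+\ga\big((1-x)\gs(0)-(1-h)\gs(1)\big)$, and I would finish by noting that $I_1^{(h)}\to I_1^{(1)}$ as $h\to1$, which follows from the continuity of $\Psi$ recalled after~\eqref{def:Psi} and the bounds $\gh\le\gs\le\gh^{-1}$, $|\gs'|\le\gh^{-1}$ on $[0,1]$ via dominated convergence; hence the double $\limsup$ vanishes here as well.

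I expect the main obstacle to be purely organizational: keeping straight, in each regime, which of the various $O(L(T))$-sized contributions are harmlessly $o(b^\anyrg_T)$ (as in the sub-critical case) and which must be cancelled, either in the limit $(h,x)\to(1,1)$ through $\int_0^1(\gs')^+\dd u-\int_0^1(\gs')^-\dd u=\gs(1)-\gs(0)$ (super-critical) or through the $\Psi$-functional equation (critical). That last identity is the only step that is not a direct computation.
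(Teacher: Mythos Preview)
Your proposal is correct and follows essentially the same route as the paper: the super-critical case via the cancellation $\int_0^1(\gs')^+\,\dd u-\int_0^1(\gs')^-\,\dd u=\gs(1)-\gs(0)$, the sub-critical case via the Taylor expansion of $\sqrt{1-y}$ (with the leftover $O(L(T))$ terms being $o(b^\subcrit_T)$), and the critical case via the functional equation $\Psi(-q)=q+\Psi(q)$ together with continuity of $\Psi$. The only cosmetic difference is that the paper applies the $\Psi$-identity to $w_{h,T}(1)$ rather than to the integrand of $m^\crit_T$, which amounts to the same computation on the other side of the comparison.
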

	
	\begin{proof}
		The proof is straightforward in all three regimes. In the super-critical case, one has
		\begin{align*}
			\ol\gga^{\supercrit,h,x}_T(T)\,&=\, v(1)T+hL(T)\int_0^1 \big[(\gs')^+-\gs'\big](u)\,\dd u - x\gs(0)L(T) + h\gs(1)L(T)\\
			&=\, v(1)T + hL(T) \int_0^1 (\gs')^+(u)\,\dd u + (h-x)\gs(0)L(T)\,,
		\end{align*}
		so $\frac1{L(T)}|m^\supercrit_T-\ol\gga^{\supercrit,h,x}_T(T)|\leq \gh^{-1}(|1-h|+|h-x|)$ for all $T\geq0$, which yields the expected result. In the sub-critical regime, one has
		\[
		\ol\gga^{\subcrit,h,x}_T(T)\,=\, v(1)T \sqrt{1-\frac{\pi^2}{h^2  L(T)^2}} + O\left(L(T)\right)\,,
		\]
		where $O(L(T))$ is locally uniform in $h>x>0$. Letting $h$ close to 1 and writing the Taylor expansion $\sqrt{1-y}=1-\frac y2 + o(y)$ as $y\to0$, this yields~\eqref{eq:lem:comparison:m:gga}. Regarding the critical regime, recall that $\Psi$ satisfies $\Psi(-q)=q+\Psi(q)$ for all $q\in\R$. Therefore, $w_{h,T}$ satisfies
		\begin{align*}
			w_{h,T}(1)\,&=\, -\int_0^{1} \frac{\gs(u)}{\ga^3h^2}\Psi\left(\ga^{3}h^3\frac{\gs'(u)}{\gs(u)}\right) \dd v\\
			&=\, -\int_0^{1} \frac{\gs(u)}{\ga^3h^2}\Psi\left(-\ga^{3}h^3\frac{\gs'(u)}{\gs(u)}\right) \dd v + \gs(1)-\gs(0)\,.
		\end{align*}
		Plugging this into~\eqref{def:gga:crit} and recalling that $L(T)\sim\ga T^{1/3}$ in this regime, this straightforwardly concludes the proof.
	\end{proof}
	
	We conclude this subsection by claiming the following useful fact: we may ``tighten'' the barriers on a short time interval (i.e. shorter than $L(T)^3$), by modifying the parameters $h>x>0$. Recall from~\eqref{eq:defllbis} that $\gth(\cdot)$ denotes a function vanishing at $+\infty$.
	
	\begin{lemma}\label{lem:ggachangeparam}
		Let $\anyrg\in\{\supercrit,\subcrit,\crit\}$ and let $t=t(T)$ such that $0\leq t(T)\llbis L(T)^3\wedge T$ for $T$ sufficiently large. Let $h>x>0$ and $h'>x'>0$ such that
		\begin{equation}\label{eq:lem:ggachangeparam:hyp}
			x'\,<\,x\,,\qquad\text{and}\qquad h'-x'\,<\,h-x\,.
		\end{equation}
		Then, there exists $T_0$ such that, for $T\geq T_0$ and $s\in[0,t(T)]$, one has
		\begin{equation}\label{eq:lem:ggachangeparam}
			\gga^{\anyrg,h,x}_T(s)\,\leq\,\gga^{\anyrg,h',x'}_T(s)\,\leq\,\ol\gga^{\anyrg,h',x'}_T(s)\,\leq\,\ol\gga^{\anyrg,h,x}_T(s)\,.
		\end{equation}
		Moreover, $T_0$ is uniform in $\gs\in\gsens$, and locally uniform in $h>x>0$, $h'>x'>0$ which satisfy~\eqref{eq:lem:ggachangeparam:hyp}.
	\end{lemma}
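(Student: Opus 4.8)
The strategy is to expand both barriers around $t=0$ and control the difference term by term, using the bound $t(T) \llbis L(T)^3 \wedge T$ to show that all error contributions are negligible compared to the dominant terms produced by the strict inequalities in~\eqref{eq:lem:ggachangeparam:hyp}. First I would note that it suffices to prove the two outer inequalities, namely $\gga^{\anyrg,h,x}_T(s)\leq \gga^{\anyrg,h',x'}_T(s)$ and $\ol\gga^{\anyrg,h',x'}_T(s)\leq \ol\gga^{\anyrg,h,x}_T(s)$, since the middle inequality $\gga^{\anyrg,h',x'}_T(s)\leq \ol\gga^{\anyrg,h',x'}_T(s)$ is immediate from~\eqref{eq:defxh} (as $h'>0$ and $\gs>0$). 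By the symmetry of the two outer inequalities under swapping the roles of lower/upper barriers and noting $\ol\gga^{\anyrg,h,x}_T - \ol\gga^{\anyrg,h',x'}_T = (\gga^{\anyrg,h,x}_T - \gga^{\anyrg,h',x'}_T) + (h-h')\gs(s/T)L(T)$, both reduce to estimating $D(s) := \gga^{\anyrg,h',x'}_T(s) - \gga^{\anyrg,h,x}_T(s)$ from below and from above respectively; since $(h-h') = (h-x) - (h'-x') + (x-x') > 0$ is bounded above by a constant times the separation, the two estimates are of the same nature.

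The next step is to compute $D(s)$ in each regime from the definitions~\eqref{def:gga:supercrit}--\eqref{def:gga:crit}. In the super-critical regime, $D(s) = (h'-h)L(T)\int_0^{s/T}(\gs')^-(u)\,\dd u + (x-x')\gs(0)L(T)$; since $\int_0^{s/T}(\gs')^-(u)\,\dd u \leq \gh^{-1}\cdot s/T \leq \gh^{-1}\gth(T)L(T)^3/T$, which is $o(1)$ when $L(T)^3 \llbis T$ — and more carefully, using $s \le t(T) \llbis L(T)^3 \wedge T$, this term is $o(L(T))$ uniformly — the dominant contribution is $(x-x')\gs(0)L(T) \ge (x-x')\gh L(T) > 0$, so $D(s) > 0$ for $T$ large. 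In the sub-critical regime, write $\gga_T^\subcrit(s) = v(s/T)T\sqrt{1 - \pi^2/(h^2 L(T)^2)} - x\gs(0)L(T)$; since $v(s/T)T \le \gh^{-1} s \llbis \gh^{-1}\gth(T)L(T)^3$ and $\sqrt{1-\pi^2/(h^2 L^2)} - \sqrt{1-\pi^2/(h'^2 L^2)} = O(L(T)^{-2})$ locally uniformly in $h,h'$, the discrepancy from the square-root terms is $O(\gth(T)L(T))= o(L(T))$, and again $(x-x')\gs(0)L(T)$ dominates. In the critical regime, the extra term is $(w_{h,T}(s/T) - w_{h',T}(s/T))L(T)$; since $w_{h,T}$ is $\cC^1$ with $w_{h,T}(0)=0$ and derivative bounded uniformly over $\gs\in\gsens$ and locally in $h$, one has $|w_{h,T}(s/T) - w_{h',T}(s/T)| \le C s/T \le C\gth(T)L(T)^2$, again giving an error $o(L(T))$ swamped by $(x-x')\gs(0)L(T)$.

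In all three cases, then, $D(s) = (x-x')\gs(0)L(T) + o(L(T))$ uniformly over $s \in [0,t(T)]$, $\gs \in \gsens$, and locally uniformly in the parameters; since $(x-x')\gs(0) \ge \gh(x-x') > 0$, there exists $T_0$ (depending only on $\gh$ and on a lower bound for $\min(x-x', (h-x)-(h'-x'))$ and an upper bound for the parameters) beyond which $D(s) \ge \tfrac12 (x-x')\gh L(T) > 0$, yielding the first outer inequality. The same estimate applied to $\ol\gga^{\anyrg,h,x}_T(s) - \ol\gga^{\anyrg,h',x'}_T(s) = D(s)\cdot(-1)$... wait — more precisely $\ol\gga^{\anyrg,h,x}_T(s) - \ol\gga^{\anyrg,h',x'}_T(s) = -D(s) + (h-h')\gs(s/T)L(T) = \big[(x-x') + (h-h')\big]\gs(0)L(T) + o(L(T))$, and $(x-x') + (h-h') = (h-x) - (h'-x') > 0$ by the second hypothesis in~\eqref{eq:lem:ggachangeparam:hyp}, so this is likewise $\ge$ a positive multiple of $L(T)$ for $T$ large, giving the second outer inequality. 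The main obstacle is purely bookkeeping: making sure that in each regime every ``leftover'' term coming from the $v(s/T)T$-type contributions is genuinely $o(L(T))$ — this is exactly where the hypothesis $t(T) \llbis L(T)^3 \wedge T$ is used (so that $v(s/T)T = O(s) = o(L(T)^3)$ but also $= O(T)$), and where the $\cC^2$ control $\gs \in \gsens$ enters to bound the derivatives of $v$, $w_{h,T}$ and $\int_0^\cdot (\gs')^-$ uniformly.
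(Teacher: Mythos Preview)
Your approach is essentially the same as the paper's: compute the difference of barriers in each regime, check that all ``drift'' terms are $o(L(T))$ on $[0,t(T)]$ using $t(T)\llbis L(T)^3\wedge T$ and the $\cC^2$ bounds from $\gsens$, and conclude from the strict inequalities~\eqref{eq:lem:ggachangeparam:hyp}. The paper does this regime by regime with explicit inequalities; you package it slightly more uniformly via $D(s)$, which is fine.

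There is one algebraic slip you should fix. In the upper-barrier step you write
\[
\ol\gga^{\anyrg,h,x}_T(s)-\ol\gga^{\anyrg,h',x'}_T(s)=-D(s)+(h-h')\gs(s/T)L(T)=\big[(x-x')+(h-h')\big]\gs(0)L(T)+o(L(T)),
\]
but since $D(s)=(x-x')\gs(0)L(T)+o(L(T))$, the bracket should read $(h-h')-(x-x')$, not $(x-x')+(h-h')$. Your subsequent claim ``$(x-x')+(h-h')=(h-x)-(h'-x')$'' is false as written; the correct identity is $(h-h')-(x-x')=(h-x)-(h'-x')$, which is indeed positive by~\eqref{eq:lem:ggachangeparam:hyp}. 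So the conclusion survives, but the displayed line needs the sign corrected. A smaller point: in the critical case your bound $s/T\le C\gth(T)L(T)^2$ is not the sharp one (and need not be $o(1)$); the clean bound is simply $s/T\le \gth(T)(L(T)^3\wedge T)/T\le C\gth(T)$ in that regime, which directly gives the $o(L(T))$ error you need.
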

	
	\begin{proof}
		Notice that the assumptions also imply $h'<h$. We prove this claim separately for each $\anyrg\in\{\supercrit,\subcrit,\crit\}$. In the super-critical case, 
		one has for all $s\in[0,T]$,
		\begin{align*}
			\gga^{\supercrit,h',x'}_T(s)-\gga^{\supercrit,h,x}_T(s)\,&=\, (h'-h)\,L(T) \int_0^{t/T}(\gs')^-(u)\,\dd u  - (x'-x)\gs(0)L(T) \\
			&\geq\, -(h-h')\, \gh^{-1} \gth(T)L(T) + (x-x')\gh L(T)\,\geq\,0\,,
		\end{align*}
		where the last inequality holds for $T$ larger than some $T_0$ locally uniform in $x,x',h,h'$. Moreover, one can easily check that
		\begin{equation}\label{eq:olggasup:alt}
			\ol\gga^{\supercrit,h,x}_T(s)\,=\, v(t/T)T + h\,L(T) \int_0^{t/T}(\gs')^+(u)\,\dd u  + (h-x)\,\gs(0)  L(T)\,,
		\end{equation}
		for all $s\in[0,T]$, $h>x>0$. Hence, one has for all $s\in[0,T]$.
		\begin{align*}
			&\ol\gga^{\supercrit,h,x}_T(s)-\ol\gga^{\supercrit,h',x'}_T(s)\\
			&\qquad=\, (h-h')\,L(T) \int_0^{t/T}(\gs')^+(u)\,\dd u  + [(h-x)-(h'-x')]\gs(0)L(T)\,\geq\,0\,,
		\end{align*}
		which concludes the proof in the super-critical regime. 
		
		Regarding the sub-critical case, we have for $s\in[0,T]$,
		\begin{align}\label{eq:lem:ggachangeparam:1}
			&\gga^{\subcrit,h',x'}_T(s)-\gga^{\subcrit,h,x}_T(s)\\
			\notag &\qquad=\, (x-x')\gs(0)L(T) +\left[\sqrt{1-\frac{\pi^2}{h'{}^2L(T)^2}}-\sqrt{1-\frac{\pi^2}{h^2L(T)^2}}\right]v(s/T)T\,,
		\end{align}
		and a direct Taylor expansion gives as $L(T)\to+\infty$,
		\begin{equation}\label{eq:lem:ggachangeparam:2}
			\sqrt{1-\frac{\pi^2}{h'{}^2L(T)^2}}-\sqrt{1-\frac{\pi^2}{h^2L(T)^2}}\,=\,-\frac{\pi^2(h^2-h'{}^2)}{2(hh'L(T))^2}+O(L(T)^{-4})\,.
		\end{equation}
		Recall that $t(T)\leq \gth(T)L(T)^3\leq T$ in the sub-critical regime: thus, one has for $s\in[0,t(T)]$,
		\[v(s/T)T \,=\, T\int_0^{s/T} \gs(u)\,\dd u \,\leq \, \gh^{-1}t(T)\,\leq\, \gh^{-1}\gth(T) L(T)^3\,. \]
		Since $x>x'$, we deduce that for $T$ sufficiently large, the second term in the r.h.s. of~\eqref{eq:lem:ggachangeparam:1} is larger than $-\frac12(x-x')\gh L(T)$, uniformly in $\gs\in\gsens$ and $s\in[0,t(T)]$, locally uniformly in $x,x',h,h'$; which is one of the expected results. On the other hand we have for all $s\in[0,T]$,
		\begin{align*}
			&\ol\gga^{\subcrit,h,x}_T(s)-\ol\gga^{\subcrit,h',x'}_T(s)\\
			&\quad=\, (h-h')\gs(s/T)L(T)+(x'-x)\gs(0)L(T) \\
			&\qquad\qquad\qquad\qquad\qquad\qquad\qquad\qquad +\left[\sqrt{1-\frac{\pi^2}{h^2L(T)^2}}-\sqrt{1-\frac{\pi^2}{h'{}^2L(T)^2}}\right]v(s/T)T\\
			&\quad=\, [(h-x)-(h'-x')]\gs(s/T)L(T)+(x-x')[\gs(s/T)-\gs(0)]L(T) -O(\gth(T)L(T))\,,
		\end{align*}
		where we used~\eqref{eq:lem:ggachangeparam:2}. Recalling~\eqref{eq:gsapprox} and that $t(T)\leq\gth(T)T$, the second term above is larger than $-O(\gth(T)L(T))$ for $T$ large. Therefore, the r.h.s. above is larger than $\tfrac12[(h-x)-(h'-x')]\gh L(T)$ for $T$ sufficiently large: this concludes the proof in the sub-critical regime.
		
		We finally turn to the critical case. Recall~\eqref{def:w_T:crit} and that $\Psi$ is continuous, hence
		\[
		\sup_{r\in[h',h]}\,\sup_{s\in[0,t(T)]} |w_{r,T}(s/T)|\,\leq\, \frac{t(T)}{T} \frac{\gh^{-1}}{\ga^3h'{}^2} \,\sup\Big\{|\Psi(q)|,\,q\in[-\ga^3h'{}^3\gh^{-2},\ga^3h^3\gh^{-2}]\Big\}\,.
		\]
		Since we assumed $t(T)\leq\gth(T) (L(T)^3\wedge T)$ for $T$ large, there exists $C>0$, uniform in $\gs\in\gsens$ and locally uniform in $h,h'$, such that for $T$ sufficiently large, one has $|w_{r,T}(s/T)|\leq C\gth(T)$ for all $r\in[h',h]$ and $s\in[0,t(T)]$. In particular, this implies
		\[
		\gga^{\crit,h',x'}_T(s)-\gga^{\crit,h,x}_T(s)\,\geq\, (x-x')\gs(0)L(T) -2C\gth(T)L(T)\,,
		\]
		and
		\[
		\ol\gga^{\crit,h,x}_T(s)-\ol\gga^{\crit,h',x'}_T(s)\,\geq\, (h-h')\gs(s/T)L(T)+(x'-x)\gs(0)L(T) -2C\gth(T)L(T)\,,
		\]
		for all $s\in[0,t(T)]$. Assuming $T$ is sufficiently large and reproducing the arguments from the sub-critical case (we do not write them again), this completes the proof of the lemma.
	\end{proof}

	\subsection{First and second moment formulae}\label{sec:prelim:toolbox} 
	We now introduce several exact formulae which will be used throughout Section~\ref{sec:moments} to estimate some moments of $|A_{T,I}^\anyrg(\cdot)|$ and $R_{T}^\anyrg(\cdot,\cdot)$.
	
	First, several results from Section~\ref{sec:moments} rely on the first moment formula for branching Markov processes, often called ``Many-to-one lemma'', as well as Girsanov's theorem: we condense them in the following statement. Recall from Section~\ref{sec:coupling:construct} that $\bbE_\mu$, $\bbP_\mu$ denote the expectation and law of the time-inhomogeneous BBM started from some configuration $\mu\in\meas$ and that $\bE_x$, $\bP_x$ denote expectation and law of a single time-inhomogeneous Brownian motion $(B_t)_{t\ge0}$.
	\begin{lemma}[First moment formula]\label{lem:MtOGirs}Let $\anyrg\in\{\supercrit,\subcrit,\crit\}$, $h>x>0$ and $I\subset [0,h]$ an interval.
		Then, one has
		\begin{align}\label{eq:MtOGirs:A}
			&\bbE_{\gd_0}\big[|A_{T,I}^{\anyrg}(t)|\big] = e^{\frac t2}\,
			\bE_{x\gs(0)L(T)}\Bigg[\ind_{\left\{\forall s\leq t,\, \frac{B_s}{\gs(s/T) L(T)}\in[0,h] ; \frac{B_t}{\gs(t/T)L(T)}\in I\right\}} \exp\Bigg(-\frac{\gga_T^\anyrg{}'(t)B_t}{\sigma^2(t/T)} \\
			\notag &\qquad\qquad\quad  
			+ \frac{\gga_T^\anyrg{}'(0)}{\sigma(0)}xL(T)  + \int_0^t \left.\frac{\partial}{\partial u}\left(\frac{\gga_T^\anyrg{}'(u)}{\sigma^2(u/T)}\right)\right|_{u=s} B_s \,\dd s - \int_0^t \frac{(\gga^\anyrg_T{}'(s))^2}{2\gs^2(s/T)}\dd s \Bigg)\Bigg].
		\end{align}
		Moreover, letting $H_0(Y):=\inf\{t\geq0,\,Y_t=0\}$ for $(Y_t)_{t\geq 0}\in\cC^0(\R_+)$, one has
		\begin{align}\label{eq:MtOGirs:R}
			\bbE_{\gd_0}\big[R^\anyrg_T(0,t)\big] &= 
			\bE_{(h-x)\gs(0)L(T)}\Bigg[e^{H_0(B)/2} \, \ind_{\{H_0(B)\leq t\}}\,  \ind_{\left\{\forall s\leq H_0(B),\, \frac{B_s}{\gs(s/T) L(T)}\in[0,h]  \right\}} \\
			\notag &\;\:\times \exp\Bigg(- \frac{\ol\gga_T^\anyrg{}'(0)}{\sigma(0)}(h-x)L(T)  - \int_0^{H_0(B)} \left.\frac{\partial}{\partial u}\left(\frac{\ol\gga_T^\anyrg{}'(u)}{\sigma^2(u/T)}\right)\right|_{u=s} B_s \,\dd s \\
			\notag &\qquad\qquad\qquad\qquad\qquad\qquad\qquad\qquad\qquad - \int_0^{H_0(B)} \frac{(\ol\gga^\anyrg_T{}'(s))^2}{2\gs^2(s/T)}\dd s \Bigg)\Bigg].
		\end{align}
	\end{lemma}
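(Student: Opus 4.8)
The plan is to derive both identities by combining the many-to-one lemma with Girsanov's theorem, treating $|A^\anyrg_{T,I}(t)|$ and $R^\anyrg_T(0,t)$ in parallel. First I would invoke the first-moment formula for branching Markov processes. With the normalization $\gb_0=(2(\bbE[\xi]-1))^{-1}$ one has $\bbE_{\gd_0}[|\cN_t|]=e^{t/2}$, and the many-to-one lemma gives, for any measurable path functional $F$, $\bbE_{\gd_0}[\sum_{u\in\cN_t}F((X_u(s))_{s\le t})]=e^{t/2}\,\bE_0[F((B_s)_{s\le t})]$, where $(B_s)_{s\le t}$ is a single time-inhomogeneous Brownian motion with infinitesimal variance $\gs^2(\cdot/T)$ started from $0$. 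Taking $F$ to be the indicator in \eqref{eq:defA:allregimes} yields
\[
\bbE_{\gd_0}\big[|A^\anyrg_{T,I}(t)|\big]=e^{t/2}\,\bE_0\Big[\ind_{\{\forall s\le t:\,B_s\in[\gga^\anyrg_T(s),\ol\gga^\anyrg_T(s)];\ (B_t-\gga^\anyrg_T(t))/(\gs(t/T)L(T))\in I\}}\Big].
\]
For $R^\anyrg_T(0,t)$ I would instead use the many-to-one lemma along an optional stopping line (see \cite{Big04,Chau91,Jag89} or \cite[Appendix~1]{Mai16}), applied to the line $\cL$ of particles that first touch $\ol\gga^\anyrg_T$ while having stayed strictly inside $(\gga^\anyrg_T,\ol\gga^\anyrg_T)$ up to that moment; one checks that $\cL$ is an optional stopping line, matching \eqref{eq:defR}. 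The spine is then $(B_s)_{s\le\tau_0}$, with $\tau_0$ its first hitting time of $\ol\gga^\anyrg_T$, carrying the multiplicative weight $e^{\tau_0/2}$, so that
\[
\bbE_{\gd_0}\big[R^\anyrg_T(0,t)\big]=\bE_0\Big[e^{\tau_0/2}\,\ind_{\{\tau_0\le t\}}\,\ind_{\{\forall s<\tau_0:\,\gga^\anyrg_T(s)<B_s<\ol\gga^\anyrg_T(s)\}}\Big].
\]

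Next I would flatten the barriers by a shift. In the first identity, substitute $\hat B_s:=B_s-\gga^\anyrg_T(s)$: by \eqref{eq:defxh}, $\hat B_0=x\gs(0)L(T)$, the running constraint becomes $\hat B_s/(\gs(s/T)L(T))\in[0,h]$, the terminal constraint becomes $\hat B_t/(\gs(t/T)L(T))\in I$, and $\hat B$ is a time-inhomogeneous Brownian motion with drift $-\gga^\anyrg_T{}'(s)$ and variance $\gs^2(s/T)$. In the second identity, substitute $\hat B_s:=\ol\gga^\anyrg_T(s)-B_s$: then $\hat B_0=(h-x)\gs(0)L(T)$, touching $\ol\gga^\anyrg_T$ amounts to $\{\hat B=0\}$ so $\tau_0=H_0(\hat B)$, the running constraint becomes $\hat B_s/(\gs(s/T)L(T))\in[0,h]$ for $s\le H_0(\hat B)$, and $\hat B$ has drift $+\ol\gga^\anyrg_T{}'(s)$ and variance $\gs^2(s/T)$. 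In each case I would then apply Girsanov's theorem to remove the drift of $\hat B$; since $\gga^\anyrg_T,\ol\gga^\anyrg_T$ are smooth and $\gs$ is bounded away from $0$ and $\infty$, the relevant exponential is a true martingale on $[0,t]$ (Novikov), and in the $R$-case it stays valid after stopping at $H_0(B)\wedge t$ by optional stopping. This turns $\hat B$ back into a driftless time-inhomogeneous Brownian motion with the same starting point — namely the process $B$ under $\bE_{x\gs(0)L(T)}$, resp.\ $\bE_{(h-x)\gs(0)L(T)}$, in the statement — at the cost of the density $\exp\big(-\int_0^t\gga^\anyrg_T{}'(s)\gs^{-2}(s/T)\,\dd B_s-\tfrac12\int_0^t(\gga^\anyrg_T{}'(s))^2\gs^{-2}(s/T)\,\dd s\big)$, resp.\ the analogous expression with $\ol\gga^\anyrg_T$ and upper limit $H_0(B)$.

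Finally I would simplify the stochastic integral in the density by Itô's integration-by-parts formula, which is licit because $s\mapsto\gga^\anyrg_T{}'(s)\gs^{-2}(s/T)$ is $\cC^1$, so no bracket term appears:
\[
\int_0^t\frac{\gga^\anyrg_T{}'(s)}{\gs^2(s/T)}\,\dd B_s=\frac{\gga^\anyrg_T{}'(t)}{\gs^2(t/T)}B_t-\frac{\gga^\anyrg_T{}'(0)}{\gs^2(0)}B_0-\int_0^t B_s\,\frac{\partial}{\partial u}\Big(\frac{\gga^\anyrg_T{}'(u)}{\gs^2(u/T)}\Big)\Big|_{u=s}\,\dd s,
\]
and likewise with $\ol\gga^\anyrg_T$ and $H_0(B)$ in place of $t$. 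Substituting $B_0=x\gs(0)L(T)$ (resp.\ $B_0=(h-x)\gs(0)L(T)$) produces the boundary term $\gga^\anyrg_T{}'(0)\gs(0)^{-1}xL(T)$ in \eqref{eq:MtOGirs:A} (resp.\ $-\ol\gga^\anyrg_T{}'(0)\gs(0)^{-1}(h-x)L(T)$ in \eqref{eq:MtOGirs:R}); in the $R$-case the terminal boundary term vanishes since $B_{H_0(B)}=0$, which explains the absence of a $B_t$-term in \eqref{eq:MtOGirs:R}. Collecting everything gives the two displayed formulas.

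\emph{The main obstacle} is the stopping-line version of the many-to-one lemma used for $R^\anyrg_T$: one must verify that $\cL$ is indeed an optional stopping line and cite the correct form of the identity (with the $e^{\tau_0/2}$ weight and the event $\{\tau_0\le t\}$), and then be careful to apply Girsanov up to the random time $H_0(B)\wedge t$ rather than the deterministic time $t$, and to argue (as in the footnote to \eqref{eq:defR}) that $R^\anyrg_T$ is a.s.\ finite and measurable. The remaining computations — the change of variable, Girsanov, and the integration by parts — are routine.
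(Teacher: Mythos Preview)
Your proof is correct and follows essentially the same approach as the paper: many-to-one, then Girsanov to straighten the barriers (the paper phrases this as a direct Girsanov change of measure rather than ``shift, then remove the drift'', but these are the same computation), then integration by parts on the stochastic integral. One small caveat: in the supercritical case the map $s\mapsto\gga^{\supercrit}_T{}'(s)/\gs^2(s/T)$ is only piecewise $\cC^1$ (since $(\gs')^-$ may have kinks where $\gs'$ vanishes), so your integration-by-parts step and the $\partial_u$-term should be interpreted Lebesgue-a.e., as the paper itself notes in a remark following the lemma.
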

	\begin{remark}
		$(i)$ The terms involving derivatives of the form $\frac{\partial}{\partial u}\left(\cdots\right)$ appearing in the r.h.s. of~(\ref{eq:MtOGirs:A}--\ref{eq:MtOGirs:R}) are to be interpreted as being defined Lebesgue-almost everywhere. This matters only in the super-critical case, where the definition of the barrier functions involve $\sigma^-(u)$, whose derivative may be discontinuous at a finite number of points.
		%
		
		$(ii)$ Let us mention that~\eqref{eq:MtOGirs:R} has an analogous formulation in terms of $\gga^\anyrg_T$ instead of $\ol\gga^\anyrg_T$, which involves the hitting time of the curve $(h\gs(s/T)L(T))_{s\in[0,T]}$. We decided to stick with the expression~\eqref{eq:MtOGirs:R} in the lemma, since it is used more often in this paper.\end{remark}
	
	\begin{proof}
		Let us start with~\eqref{eq:MtOGirs:A}. The Many-to-one lemma for branching Markov processes, see e.g.~\cite[Theorem~4.1]{INW69Part3} or \cite[Theorem 2.1]{Saw76} yields,
		\begin{align}
			\notag \bbE_{\gd_0}\big[|A_{T,I}^\anyrg(t)|\big] \,&=\, \bbE_{\gd_0}\Bigg[\sum_{u\in\cN_t} \ind_{\big\{\forall s\leq t,\,\frac{X_u(s)-\gga^\anyrg_T(s)}{\gs(s/T)L(T)} \in [0,h]\,;\, \frac{X_u(t)-\gga^\anyrg_T(t)}{\gs(t/T)L(T)} \in I\big\}}\Bigg]\\
			\notag  &=\, \bbE_{\gd_0}\big[|\cN_t|\big] \times \bP_0\bigg(\forall s\leq t,\, \frac{B_s-\gga^\anyrg_T(s)}{\gs(s/T)L(T)} \in [0,h]\,;\, \frac{B_t-\gga^\anyrg_T(t)}{\gs(t/T)L(T)} \in I\bigg)\,,
		\end{align}
		and, by Girsanov's theorem,
		\begin{align}\label{eq:MtOGirs:proof:1}
		&\bP_0\bigg(\forall s\leq t,\, \frac{B_s-\gga^\anyrg_T(s)}{\gs(s/T)L(T)} \in [0,h]\,;\, \frac{B_t-\gga^\anyrg_T(t)}{\gs(t/T)L(T)} \in I\bigg)\\
		\notag & = 
		\bE_{-\gga^\anyrg_T(0)}\Bigg[ \ind_{\big\{\forall s\leq t,\,\frac{B_s}{\gs(s/T)L(T)} \in [0,h]\,;\, \frac{B_t}{\gs(t/T)L(T)} \in I\big\}} \,e^{-\int_0^t \frac{\gga^\anyrg_T{}'(s)}{\gs^2(s/T)}\dd B_s-\int_0^t \frac{(\gga^\anyrg_T{}'(s))^2}{2\gs^2(s/T)}\dd s}\Bigg].
		\end{align}
		Recall that $\bbE_{\gd_0}\big[|\cN_t|\big]=e^{\frac t2}$ under our assumptions, and write with an integration by parts, for $t\in[0,T]$,
		\begin{equation}\label{eq:MtOGirs:proof:2}
			\int_0^t \frac{\gga_T^\anyrg{}'(s)}{\sigma^2(s/T)}\dd B_s = \frac{\gga_T^\anyrg{}'(t)B_t}{\sigma^2(t/T)} - \frac{\gga_T^\anyrg{}'(0)B_0}{\sigma^2(0)} - \int_0^t \left.\frac{\partial}{\partial u}\left(\frac{\gga_T^\anyrg{}'(u)}{\sigma^2(u/T)}\right)\right|_{u=s} B_s \,\dd s\,.
		\end{equation}
		Since~\eqref{eq:defxh} implies $\gga^\anyrg_T(0)=-x\gs(0)L(T)$, plugging this into~\eqref{eq:MtOGirs:proof:1} yields~\eqref{eq:MtOGirs:A}.
		
		Regarding~\eqref{eq:MtOGirs:R}, the Many-to-one lemma gives
		\[
		\bbE_{\gd_0}\big[R^\anyrg_T(0,t)\big]\,=\,\bE_{0}\left[e^{H_0(\ol\gga_T^\anyrg - B)/2} \ind_{\{H_0(\ol\gga_T^\anyrg - B)\leq t\}} \ind_{\big\{\forall s\leq H_0(\ol\gga_T^\anyrg - B),\,\frac{\ol\gga_T^\anyrg(s)-B_s}{\gs(s/T)L(T)}\leq h\big\}}\right].
		\]
		Then, applying Girsanov's theorem and recalling that $(B_s)_{s\leq t}$ under $\bbP_x$ as the same law as $(-B_s)_{s\leq t}$ under $\bbP_{-x}$, $x\in\R$, one obtains
		\begin{align*}
		&\bbE_{\gd_0}\big[R^\anyrg_T(0,t)\big] \,=\, \bE_{\ol\gga^\anyrg_T(0)}\bigg[e^{H_0(B)/2} \ind_{\{H_0(B)\leq t\}} \ind_{\big\{\forall s\leq H_0(B),\frac{B_s}{\gs(s/T)L(T)}\leq h\big\}}\\
		&\qquad\qquad\qquad\qquad\qquad\qquad\quad \times e^{\int_0^{H_0(B)} \frac{\ol\gga^\anyrg_T{}'(s)}{\gs^2(s/T)}\dd B_s-\int_0^{H_0(B)} \frac{(\ol\gga^\anyrg_T{}'(s))^2}{2\gs^2(s/T)}\dd s}\bigg].
		\end{align*}
		Recalling that $\ol\gga^\anyrg_T(0)=(h-x)\gs(0)L(T)$ by~\eqref{eq:defxh}, and replacing $\gga^\anyrg_T$ with $\ol\gga^\anyrg_T$ in~\eqref{eq:MtOGirs:proof:2}, this yields~\eqref{eq:MtOGirs:R} and finishes the proof of the lemma.
	\end{proof}
	
	
	We also present the ``Many-to-two lemma'' below, which will be used in upcoming second moment computations. For $y,w\in\R$, $0\leq s\leq t\leq T$, we let
	\begin{align}\label{eq:defG:allregimes}
		&G^\anyrg(y,w,s,t) \,\dd w\\\notag &\quad:=\; \bbE_{\gd_{(s,\gga_T^\anyrg(s)+y\gs(s/T)L(T))}}\left[\#\left\{u\in\cN_t\;\middle|\; \forall\,r\in[s,t]:\,\begin{aligned}&X_u(r)\in \big[\gga_{T}^\anyrg(r), \ol\gga_{T}^\anyrg(r)\big], \\ &\tfrac{X_u(t)-\gga_{T}^\anyrg(t)}{\gs(t/T)L(T)}\in \dd w \end{aligned}\right\}\right],
	\end{align}
	denote the expected number of descendants in the BBM of a single particle at time-space location $(s,\gga^\anyrg_T(s)+y\gs(s/T)L(T))$, whose path remain between the barriers $\gga^\anyrg_T(\cdot)$, $\ol\gga^\anyrg_T(\cdot)$ until time $t$, at which point it reaches an infinitesimal neighborhood of $\gga^\anyrg_T(t)+w\gs(t/T)L(T)$ (notice that it is zero unless $y,w\in[0,h]$). In particular, one has 
	\begin{align}
		\label{eq:A_G}
		\bbE_{\gd_0}[|A_{T,I}^\anyrg(t)|]=\int_I G^\anyrg(x,w,0,t) \dd w.
	\end{align}
	\begin{lemma}[``Many-to-two lemma'', see Theorem~4.15 in \cite{INW69Part3} or Theorem~2.2 in \cite{Saw76}]\label{lem:MtT}
		Let $\anyrg\in\{\supercrit,\subcrit,\crit\}$, $t\leq T$, and $z\in[0,h)$. 
		Then, one has
		\begin{align}
			&\bbE_{\gd_0}\big[|A_{T,z}^\anyrg(t)|^2\big] -  \bbE_{\gd_0}\big[|A_{T,z}^\anyrg(t)|\big] \\
			\notag &\quad= \gb_0\bbE[\xi(\xi-1)] \int_0^t \dd s \int_0^{h} G^\anyrg(x,y,0,s) \bigg(\int_{z}^{h} G^\anyrg(y,w,s,t) \, \dd w\bigg)^2 \dd y \;.
		\end{align}
	\end{lemma}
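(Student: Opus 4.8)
The plan is to expand the square into a diagonal and an off-diagonal part,
\begin{equation*}
|A_{T,z}^\anyrg(t)|^2 \;=\; |A_{T,z}^\anyrg(t)| \;+\; \sum_{\substack{u,v\in A_{T,z}^\anyrg(t)\\ u\neq v}} 1\,,
\end{equation*}
and take $\bbE_{\gd_0}$. The diagonal term produces $\bbE_{\gd_0}[|A_{T,z}^\anyrg(t)|]$, which is the first summand in the claimed identity, so it remains to evaluate the expectation of the off-diagonal sum over ordered pairs of distinct particles. The key observation is that $A_{T,[0,h]}^\anyrg(t)$ is exactly the population at time $t$ of the branching Brownian motion \emph{killed} upon exiting the time-space strip bounded by $\gga_T^\anyrg(\cdot)$ and $\ol\gga_T^\anyrg(\cdot)$, which is again a branching Markov process, and $A_{T,z}^\anyrg(t)$ is obtained from it by further restricting the positions at time $t$ to the rescaled window $[z,h]$.

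I would then apply the many-to-two lemma cited above (see \cite{INW69Part3,Saw76}) to this killed process, classifying the ordered pairs $(u,v)$, $u\neq v$, according to the time $s\in[0,t]$ of their most recent common ancestor (MRCA), which is almost surely well defined since there are a.s.\ finitely many branching epochs on $[0,T]$ --- because $\bbE[\xi^2]<+\infty$ --- and no two of them coincide. At such an event the MRCA splits into $\xi\geq 2$ offspring; selecting an ordered pair of \emph{distinct} offspring produces the combinatorial factor $\bbE[\xi(\xi-1)]$ and the branching rate $\gb_0$, and from each of the two chosen offspring a conditionally independent BBM evolves. For both $u$ and $v$ to lie in $A_{T,z}^\anyrg(t)$ one needs (a) the ancestral path from time $0$ to time $s$ to stay in the strip --- with no window constraint at time $s$, since the $[z,h]$ restriction only applies at the final time $t$ --- and (b) each of the two subtrees rooted at the MRCA to contain a descendant at time $t$ that stayed in the strip and ended in the window $[z,h]$; by the conditional independence of the two subtrees given the MRCA, the contribution of (b) factorizes into a square.

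It then remains to identify these pieces with $G^\anyrg$. Writing the position of the MRCA at time $s$ as $\gga_T^\anyrg(s)+y\gs(s/T)L(T)$, the first-moment (``many-to-one'') computation behind Lemma~\ref{lem:MtOGirs}, together with the definitions \eqref{eq:defG:allregimes}--\eqref{eq:A_G} and the fact that the process starts from a single particle at the origin, i.e.\ at rescaled position $x$ since $\gga_T^\anyrg(0)=-x\gs(0)L(T)$ by \eqref{eq:defxh}, shows that the expected total weight of requirement (a), summed over candidate MRCAs at time $s$ with rescaled height in $\dd y$, is $G^\anyrg(x,y,0,s)\,\dd y$, while the expected number of descendants qualifying for (b) in each subtree is $\int_z^h G^\anyrg(y,w,s,t)\,\dd w$. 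Integrating over $s\in[0,t]$ and $y\in[0,h]$ against the factor $\gb_0\bbE[\xi(\xi-1)]$ yields the stated formula. The main point requiring care is the rigorous justification of the many-to-two identity in this time-inhomogeneous and killed (hence sub-conservative) setting: one may either invoke the general branching-Markov results of \cite{INW69Part3,Saw76} applied to the killed process and feed in Lemma~\ref{lem:MtOGirs}, or, alternatively, re-derive the identity directly by induction over the branching epochs of the BBM, as in \cite{Mai16}; the measurability and a.s.\ finiteness of the objects involved follow from standard branching-process arguments as in the footnotes preceding the statement.
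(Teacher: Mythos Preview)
The paper does not actually prove this lemma: it is stated with a citation to \cite{INW69Part3,Saw76} and then used as a black box in the second-moment estimates (Propositions~\ref{prop:2ndmom:supercrit}, \ref{prop:2ndmom:subcrit}, \ref{prop:2ndmom:crit}). Your sketch is the standard MRCA decomposition underlying those references, and it is correct; there is nothing to compare against beyond noting that your outline is precisely the argument the citations encode.
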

	
	Finally, in order to use the formulae from the previous lemmas, we rely on a result on the density of a Brownian motion killed outside an interval. Recall that the standard, time-homogeneous Brownian motion is denoted by $(W_s)_{s\geq0}$. The following result can be found for example in e.g.~\cite[Part II.1, Eq.~1.15.8]{BS02} or~\cite[(7.8--7.10)]{Mai16}. 
	
	\begin{lemma}[(7.8--7.10) in~\cite{Mai16}]\label{lem:mai16}
		For $t>0$, $h>0$, and $x,z\in[0,h]$, one has,
		\begin{equation}\label{eq:mai16}\begin{aligned}
				\bP_{x}\big(\forall s\leq t,\,W_s\in[0,h] \,;\, W_{t}\in \dd z\big) \;&=\; \frac2h\sum_{n=1}^\infty \exp\Big(-\frac{\pi^2}{2h^2}n^2 t\Big)\sin\Big(\frac{\pi nx}{h}\Big)  \sin\Big(\frac{\pi nz}{h}\Big) \,\dd z \\
				&=\; \frac2h \exp\Big(-\frac{\pi^2}{2h^2}t\Big)\sin\Big(\frac{\pi x}{h}\Big)  \sin\Big(\frac{\pi z}{h}\Big) \big(1+o(1)\big) \,\dd z,
			\end{aligned}
		\end{equation}
		where $o(1)$ is a term vanishing as $t/h^2\to+\infty$, uniformly in $x,z$ (see~\cite[(7.9)]{Mai16} for an explicit expression).
	\end{lemma}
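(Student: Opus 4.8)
The plan is to recognise the left-hand side as the transition density of Brownian motion killed upon leaving $[0,h]$, i.e.\ the Dirichlet heat kernel for $\tfrac12\partial_x^2$ on $[0,h]$, and to derive its spectral representation. Write $p_t^{[0,h]}(x,z)$ for the density with respect to $\dd z$ of $\bP_x(\forall s\le t,\, W_s\in[0,h]\,;\, W_t\in\dd z)$, and let $\tau=\inf\{s:W_s\notin[0,h]\}$. By the strong Markov property, for bounded continuous $f$ the function $u(t,x):=\bE_x[f(W_t)\ind_{\{\tau>t\}}]$ solves $\partial_t u=\tfrac12\partial_x^2u$ on $(0,h)$ with Dirichlet boundary conditions $u(t,0)=u(t,h)=0$ and $u(0^+,\cdot)=f$; hence the killed semigroup has a symmetric smooth kernel which is the minimal Dirichlet heat kernel, characterised by these PDE and boundary properties.

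Next I would exhibit the candidate series $S_t(x,z):=\frac2h\sum_{n\ge1}e^{-\pi^2 n^2 t/(2h^2)}\sin(\pi nx/h)\sin(\pi nz/h)$ and check that it \emph{is} this kernel. The functions $e_n(x):=\sqrt{2/h}\,\sin(\pi nx/h)$, $n\ge1$, form an orthonormal basis of $L^2([0,h])$ consisting of eigenfunctions of $-\tfrac12\partial_x^2$ with Dirichlet boundary conditions and eigenvalues $\pi^2 n^2/(2h^2)$. For each fixed $t>0$ the series $\sum_n e^{-\pi^2n^2t/(2h^2)}e_n(x)e_n(z)$ converges absolutely and uniformly on $[0,h]^2$ and may be differentiated termwise once in $t$ and twice in $x$ (the polynomial factors produced by differentiation are absorbed by the Gaussian decay), so $S_t$ solves the heat equation with the required Dirichlet boundary conditions; and as $t\downarrow0$ it converges weakly to $\delta_x$ on $[0,h]$ by completeness of $(e_n)$ (convergence of Fourier sine series). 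By uniqueness for this initial--boundary value problem, $S_t(x,z)=p_t^{[0,h]}(x,z)$, which is the first equality in~\eqref{eq:mai16}. Equivalently, one can first obtain the alternating sum of Gaussians over the lattice of reflected starting points via the method of images, then convert it to $S_t$ by Poisson summation (the Jacobi theta identity); this is the route one finds in~\cite{BS02}.

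For the asymptotic assertion I would isolate the $n=1$ term and bound the tail uniformly in $x,z$. Using the elementary inequality $|\sin(n\theta)|\le n|\sin\theta|$ (valid for all real $\theta$ and all $n\in\N$, by induction on $n$), each summand with $n\ge2$ is at most $\tfrac2h\,n^2 e^{-\pi^2 n^2 t/(2h^2)}\,|\sin(\pi x/h)\sin(\pi z/h)|$, so
\[
\Big|\,p_t^{[0,h]}(x,z)-\tfrac2h e^{-\pi^2 t/(2h^2)}\sin\tfrac{\pi x}h\sin\tfrac{\pi z}h\,\Big|\ \le\ \tfrac2h e^{-\pi^2 t/(2h^2)}\Big|\sin\tfrac{\pi x}h\sin\tfrac{\pi z}h\Big|\sum_{n\ge2}n^2 e^{-\pi^2(n^2-1)t/(2h^2)},
\]
and $\sum_{n\ge2}n^2e^{-\pi^2(n^2-1)t/(2h^2)}\to0$ as $t/h^2\to\infty$ (dominated convergence in the series). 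This is precisely the factor $o(1)$, uniform in $x,z\in[0,h]$, appearing in the second line of~\eqref{eq:mai16}, and keeping the constants in this geometric-type bound gives the explicit remainder stated in~\cite[(7.9)]{Mai16}.

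As for the main obstacle: there is no genuine difficulty here, this being a classical computation. The only point requiring real care is the \emph{uniformity in $x,z$ up to the boundary} of the $o(1)$, where $\sin(\pi x/h)$ degenerates; this is exactly what the bound $|\sin(n\theta)|\le n|\sin\theta|$ delivers, whereas the cruder estimate $|\sin|\le1$ would not. A secondary, purely bookkeeping issue is justifying the termwise differentiation of the spectral series so as to legitimately identify $S_t$ with the Dirichlet heat kernel through the PDE uniqueness argument.
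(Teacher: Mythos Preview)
Your proof is correct. Note, however, that the paper does not prove this lemma at all: it is stated as a quotation of known results, with references to \cite[(7.8--7.10)]{Mai16} and \cite[Part~II.1, Eq.~1.15.8]{BS02}, and no argument is given beyond the citation. So there is no ``paper's own proof'' to compare against; you have supplied the details that the paper simply imports. Your handling of the uniformity in $x,z$ via $|\sin(n\theta)|\le n|\sin\theta|$ is exactly the point behind the explicit remainder in \cite[(7.9)]{Mai16}.
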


	
	\section{Moment estimates on the BBM with barriers}\label{sec:moments}
	Recall~\eqref{eq:defA:allregimes} and~\eqref{eq:defR}: in this section we compute first and second moment estimates of $|A_{T,I}^\anyrg(t)|$, as well as first moment estimates of $R^\anyrg_T(0,t)$, in all three regimes (super-critical, sub-critical and critical). These estimates will be used throughout Sections~\ref{sec:LB} and~\ref{sec:UB} below, in order to prove Propositions~\ref{prop:main:LB} and~\ref{prop:main:UB} respectively. Let us warn the reader that the upcoming proofs contain a lot of bookkeeping (especially for the first moment estimates), mainly due to the fact that we need to obtain bounds which are uniform in certain ranges of values for $t$, $\gs(\cdot)$ and other parameters.

	\subsection{Super-critical case}  \label{sec:moments:supercrit}
	In this section we assume $T^{1/3}\ll L(T)\ll T$, and $\gs\in\gsens^\supercrit$ for some $\gh>0$ (recall~(\ref{eq:defgsens}--\ref{eq:defgsens:anyrg})); in particular, there exist $n\ge 1$ and $0=u_0<u_1<\ldots<u_n$ such that $\gs$ is monotonic on $[u_{i-1},u_i]$ and $u_i-u_{i-1} \ge \gh$, $1\leq i\leq n$. Recall that $\gga^\supercrit_T$ is defined in~\eqref{def:gga:supercrit}, and that $\ol\gga^\supercrit_T$ is such that~\eqref{eq:defxh} holds. We begin this section by computing first moment estimates for $A_{T,I}^{\supercrit}(t)$, $t\in[0,T]$, $I\subset[0,h]$ an interval. Finally, recall~\eqref{eq:defllbis}, and that we fixed some arbitrary $\gth:\R_+\to\R^*_+$ that vanishes at $+\infty$ such that~\eqref{eq:gthasymp} holds. 
	
	\begin{proposition}[First moment, Super-critical]\label{prop:1stmom:supercrit}
		Let $h>0$. As $T\to+\infty$, one has
		\begin{equation}\label{eq:prop:1stmom:supercrit:UB} 
			\bbE\big[|A_{T,I}^{\supercrit}(t)|\big] \;\leq\; e^{(x-\inf I)  L(T) + o(L(T))}\,,
		\end{equation}
		uniformly in $\gs\in\gsens^\supercrit$, $x\in[0,h]$ and $I\subset[0,h]$ an interval.
		Moreover for any $h>0$, $T_0\geq0$ fixed, one has as $T\to+\infty$, for every interval $I\subset [0,h]$ with non-empty interior,
		\begin{equation}\label{eq:prop:1stmom:supercrit:LB} 
			\bbE\big[|A_{T,I}^{\supercrit}(t)|\big] \;\geq\; e^{(x-\inf I)  L(T) +o(L(T))}\,,
		\end{equation}
		locally uniformly in $x\in(0,h)$, $\inf I\in[0,h)$ and $\sup I-\inf I\in (0,h]$, uniformly in $\gs\in\gsens^\supercrit$, and uniformly in $t=t(T)\in[0,T]$ which satisfies $L(T)\llbis t(T)\leq T$ for $T\geq T_0$.
	\end{proposition}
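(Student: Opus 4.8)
\emph{Proof plan.} Everything rests on the first moment formula of Lemma~\ref{lem:MtOGirs}. The first step is to plug the barrier $\gga_T^\supercrit$ of \eqref{def:gga:supercrit} into \eqref{eq:MtOGirs:A} and simplify the exponent. Since $\gga_T^\supercrit{}'(s) = \gs(s/T) + \tfrac{hL(T)}{T}(\gs')^-(s/T)$, the correction to $\gs(s/T)$ has size $O(L(T)/T)$, and, expanding while using that on the event in \eqref{eq:MtOGirs:A} one has $0 \leq B_s \leq h\gs(s/T)L(T)$ for all $s \leq t$ together with $L(T) \ll T$, every term generated by this correction is $O(L(T)^2/T) = o(L(T))$, uniformly in $\gs \in \gsens^\supercrit$, $x \in [0,h]$, $I$ and $t \in [0,T]$. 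After cancelling the prefactor $e^{t/2}$ against the $-t/2$ coming from $\int_0^t (\gga_T^\supercrit{}')^2/(2\gs^2)$, one is left with
\begin{equation*}
\bbE\big[|A_{T,I}^{\supercrit}(t)|\big] \,=\, \bbE_{x\gs(0)L(T)}\!\left[\ind_{\{\forall s\le t,\ B_s/(\gs(s/T)L(T))\in[0,h];\ B_t/(\gs(t/T)L(T))\in I\}}\, e^{\mathcal{E}}\right],
\end{equation*}
where, on the indicated event,
\begin{equation*}
\mathcal{E} \,=\, xL(T) \,-\, \frac{B_t}{\gs(t/T)} \,-\, \int_0^{t/T}\frac{\gs'(u)}{\gs^2(u)}\,B_{uT}\,\dd u \,-\, hL(T)\int_0^{t/T}\frac{(\gs')^-(u)}{\gs(u)}\,\dd u \,+\, o(L(T))
\end{equation*}
(the first three terms also equal $-\int_0^t \gs(s/T)^{-1}\,\dd B_s$, by an integration by parts).

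For the upper bound \eqref{eq:prop:1stmom:supercrit:UB}: on the event, $0 \leq B_{uT} \leq h\gs(u)L(T)$ gives $-\gs'(u)\gs(u)^{-2}B_{uT} \leq h(\gs')^-(u)\gs(u)^{-1}L(T)$, so the last two integrals in $\mathcal{E}$ add up to a nonpositive quantity, while $B_t/(\gs(t/T)L(T)) \geq \inf I$ gives $-B_t/\gs(t/T) \leq -L(T)\inf I$. Hence $\mathcal{E} \leq (x-\inf I)L(T) + o(L(T))$ on the event, and bounding the probability of the event by $1$ yields \eqref{eq:prop:1stmom:supercrit:UB}; the $o(L(T))$ depends only on $\gh$ and $h$, so the bound is uniform.

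For the lower bound \eqref{eq:prop:1stmom:supercrit:LB} one restricts the expectation to a ``tube'' $E'$ of paths on which $\mathcal{E}$ is nearly maximal and whose probability is $e^{-o(L(T))}$. Let $0 = u_0 < \dots < u_n = 1$ be the monotonicity partition of $\gs$ from \eqref{eq:defgsens:anyrg}, and pick auxiliary scales $\gd(T), \eps'(T) \to 0$, vanishing slowly enough that $\gd(T)T \gg L(T)$ and $\eps'(T)^2 L(T)^3 \gg T$ --- possible precisely because $T^{1/3} \ll L(T) \ll T$. On $E'$ we require $B$ to: descend (over a window of duration $\gd(T)T$) to within $\eps'(T)L(T)$ of the lower barrier $\gga_T^\supercrit$, i.e.\ $B_s$ near $0$, or of the upper barrier $\ol\gga_T^\supercrit$, i.e.\ $B_s$ near $h\gs(s/T)L(T)$, according to whether $\gs$ is non-decreasing or non-increasing on $[0,u_1]$; then, on each piece $[u_{i-1}T,u_iT]\cap[0,t]$, stay within $\eps'(T)L(T)$ of the lower barrier if $\gs$ is non-decreasing there and of the upper barrier if $\gs$ is non-increasing there, crossing from one boundary to the other during windows of duration $\gd(T)T$ about the $u_jT$; and finally reach, over a last window of duration $\gd(T)T$, a height $\gs(t/T)L(T)(\inf I + \eps'(T))$. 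On $E'$ the upper bound estimate runs in reverse: the last two integrals in $\mathcal{E}$ cancel up to $O((\gd(T)+\eps'(T))L(T))$ --- the $\gd(T)$-part from the short transition and descent/ascent windows, the $\eps'(T)$-part from the gap between $B$ and the boundary --- while $-B_t/\gs(t/T) \geq -L(T)\inf I - o(L(T))$, whence $\mathcal{E} \geq (x-\inf I)L(T) - o(L(T))$ there. To bound $\bbP_{x\gs(0)L(T)}(E')$ below one factors over the windows by the Markov property and, on each factor, uses the time change $B_s = x\gs(0)L(T) + W_{J(s)}$, $J(s) = \int_0^s\gs^2(u/T)\,\dd u$, together with Lemma~\ref{lem:mai16}: staying in an interval of width $w$ for a time-changed duration $\tau$ (with suitable interior endpoints) has probability at least $c\,e^{-\pi^2\tau/(2w^2)}$ times explicit sine factors. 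A hugging factor is then $\geq e^{-O(T/(\eps'(T)L(T))^2)} = e^{-o(L(T))}$, by the choice of $\eps'(T)$ and $T\ll L(T)^3$; a transition/descent factor is $\geq e^{-O(L(T)^2/(\gd(T)T)\,+\,\gd(T)T/L(T)^2)}$ up to sine factors of size a fixed power of $\eps'(T)$, hence $\geq e^{-o(L(T))}$ since $L(T)\ll \gd(T)T\leq T\ll L(T)^3$ and $\log(1/\eps'(T)) = O(\log T) = o(L(T))$. Since the number of factors depends only on $\gh$, this gives $\bbP_{x\gs(0)L(T)}(E') \geq e^{-o(L(T))}$, and combining with the bound on $\mathcal{E}$ yields \eqref{eq:prop:1stmom:supercrit:LB}. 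The local uniformity in $x \in (0,h)$, $\inf I \in [0,h)$, $\sup I - \inf I \in (0,h]$ is what keeps the starting point, the target $\gs(t/T)L(T)\inf I$ and the target window at distance of order $L(T)$ from both barriers; when $t(T) = o(T)$ there are no interior monotonicity changes and the construction simplifies, but the above treats every $t(T)\in[0,T]$ at once.

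The main obstacle is the lower bound: the two scales $\gd(T)$ and $\eps'(T)$ must be chosen simultaneously small enough to keep the loss in $\mathcal{E}$ of order $o(L(T))$, yet large enough that each ``hugging'' and ``crossing'' event has probability $e^{-o(L(T))}$ --- this is exactly where the super-critical window $T^{1/3} \ll L(T) \ll T$ is essential ($T \ll L(T)^3$ makes it cheap to confine the path to a strip of width $\asymp L(T)$ for a time $\asymp T$, while $L(T) \ll T$ leaves enough time for the boundary-to-boundary transitions). The restriction $\gs \in \gsens^\supercrit$ keeps the number of transitions bounded and makes the ``hug the lower/upper barrier'' prescription unambiguous.
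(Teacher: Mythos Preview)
Your proof is correct and follows essentially the same strategy as the paper: after reducing via Lemma~\ref{lem:MtOGirs} to the Brownian expectation \eqref{eq:prf:1stmom:supercrit:gsblock:1}, you handle the upper bound identically, and for the lower bound you prescribe paths that hug the lower (resp.\ upper) barrier on each interval where $\gs$ is non-decreasing (resp.\ non-increasing), then cost this tube with Lemma~\ref{lem:mai16}. The paper packages the single-interval estimate as a separate Lemma~\ref{lem:1stmom:supercrit} and treats the non-increasing pieces via a Girsanov reflection about $h\gs(\cdot/T)L(T)$ rather than by hugging the upper barrier directly, but the path construction, the scale conditions (your $\eps'(T)L(T)$ plays the role of the paper's $h_T$, your $\gd(T)T$ the role of $\tau_T$), and the use of Lemma~\ref{lem:mai16} are the same.
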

	
	\begin{remark}\label{rem:1stmom}
		$(i)$ Here, ``locally uniformly'' means that for any $\eps>0$, there exists $o(L(T))\in\R$ such that $|o(L(T))|\ll L(T)$ as $T\to+\infty$ and~\eqref{eq:prop:1stmom:supercrit:LB} holds uniformly in $x\in[\eps,h-\eps]$, $\inf I\in[0,h-4\eps]$ and $\sup I-\inf I>4\eps$. Moreover, let us stress that ``uniformly'' in $\gs(\cdot)$ and $t(T)$ means that this error term does not depend on those, as long as $\gh$, $\gth(\cdot)$ and $T_0$ are fixed. In the remainder of the paper we will not write $T_0$ in similar statements, but rather ``$L(T)\llbis t(T)\leq T$ for $T$ sufficiently large'', to lighten the phrasing.
		
		$(ii)$ Finally, let us mention that, in~\eqref{eq:prop:1stmom:supercrit:LB}, the assumption $t(T)\ggbis L(T)$ is necessary since a branching process (without selection) started from one individual needs a time $\approx L(T)$ to grow to a population of size $e^{O(L(T))}$.
	\end{remark}
	
	Let us briefly present the idea of the proof: the main difficulty lies in showing the lower bound~\eqref{eq:prop:1stmom:supercrit:LB}. We obtain it by restricting $A_{T,I}^{\supercrit}(t)$ to the subset of particles that follow a specific strategy: when $\gs(\cdot)$ is increasing (resp. decreasing) on an interval $[u_{i-1},u_{i}]$, $i\leq n$, we only consider particles whose trajectories remain close to the lower (resp. upper) barrier on $[u_{i-1}T,u_{i}T]$. More precisely, for some $\tau_T$, $h_T$ which are defined below, we consider particles that stay within a distance $O(h_T)$ of the lower/upper barrier on the time interval $[u_{i-1}T+\tau_T,u_{i}T-\tau_T]$, see Figure~\ref{fig:supercrit_strategy}. The following lemma will be used to bound from below the probability that a single particle follows this strategy on one interval $[u_{i-1}T,u_{i}T]$, $i\leq n$. Its proof is delayed afterwards.
	\begin{figure}[ht]
		\centering
		\includegraphics[width=0.75 \textwidth]{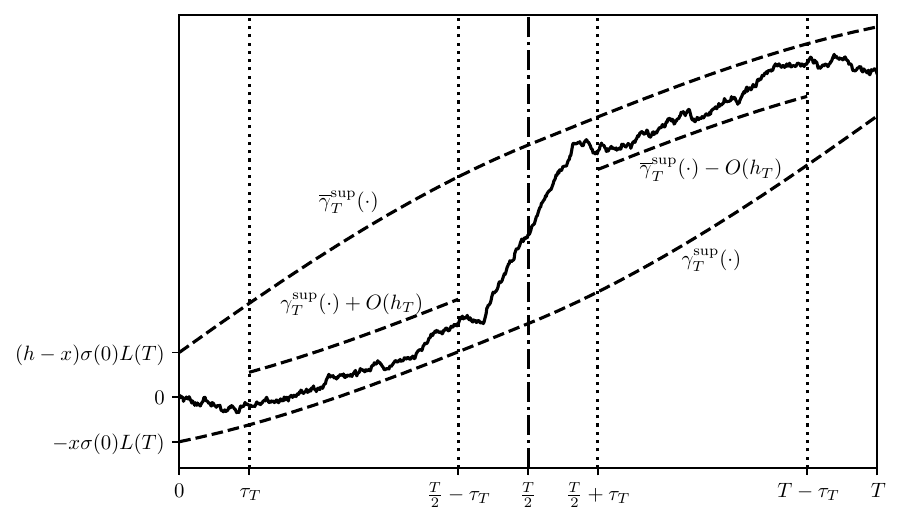}
		\caption{Illustration of the strategy of a particle contributing to the lower bound for the first moment in Proposition~\ref{prop:1stmom:supercrit} (super-critical case). In this figure we consider $t=T$ and a function $\gs(\cdot)$ increasing on $[0,1/2]$, decreasing on $[1/2,1]$. We consider separately each time interval on which $\gs(\cdot)$ is monotone. Let $\tau_T$ and $h_T$ be as in the proof of Lemma~\ref{lem:1stmom:supercrit}. On $[\tau_T,T/2-\tau_T]$, we introduce an additional killing barrier at distance $O(h_T)$ of $\gga_T^\supercrit(\cdot)$, and constrain particles to remain below this barrier. On $[T/2+\tau_T,T-\tau_T]$, the additional barrier is located at distance $O(h_T)$ of $\ol\gga_T^\supercrit(\cdot)$, and particles are constrained to remain above it. 
		}
		\label{fig:supercrit_strategy}
	\end{figure}

	\begin{lemma}\label{lem:1stmom:supercrit}
		Assume $T^{1/3}\ll L(T)\ll T$, and let $h,\gh>0$, $\eps\in(0,h/2)$. Define,
		\begin{align*}
		\gU:=\big\{(x,t,\gs,I)\,\big|\, x\in[\eps,h-\eps]\,;\, t\in[\gth^{-1}(T)L(T),T]\,;\, \gs\in\gsens\,;\,&\\
		I=[a,b]\subset[0,h],\, b-a\geq 2\eps\big\}.&
		\end{align*}
		 Then, as $T\to+\infty$, one has for every $0\le C<\infty$,
		\begin{align}\label{eq:lem:1stmom:supercrit}                    
			\frac1{L(T)}\inf_{(x,t,\gs,I)\in\gU} \log \bE_{x\gs(0)L(T)}\left[
			e^{-\frac C T \int_0^t B_s \dd s}
			\ind_{\big\{\forall s\in[0,t],\,\frac{B_s}{\gs(s/T)L(T)}\in[0,h]\,;\,\frac{B_t}{\gs(t/T)L(T)}\in I\big\}}\right]& \\
			\notag \underset{T\to+\infty}{\longrightarrow}\; 0\,.&
		\end{align}
	\end{lemma}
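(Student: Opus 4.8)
The plan is to reduce the claim to a \emph{uniform lower bound} on the expectation and then produce that bound by restricting the Brownian motion to an explicit ``corridor''. First, note the cheap half: on the event in \eqref{eq:lem:1stmom:supercrit} one has $B_s\geq 0$ for every $s\leq t$, so $\exp(-\tfrac CT\int_0^t B_s\,\dd s)\leq 1$ and, the indicator being $\leq 1$ too, the expectation is $\leq 1$; hence the quantity under the limit in \eqref{eq:lem:1stmom:supercrit} is $\leq 0$ for all $T$. It will therefore suffice to show that for every $\gl>0$ there is $T_0$ with the property that for $T\geq T_0$ and \emph{all} $(x,t,\gs,I)\in\gU$, the expectation in \eqref{eq:lem:1stmom:supercrit} is at least $e^{-\gl L(T)}$.

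To build the corridor, recall that $\gth^{-1}(T)L(T)\leq t$ and $\gth^{-1}(T)L(T)\leq T$ in this regime, and set $\tau_T:=\lfloor\gth^{-1/2}(T)L(T)\rfloor\wedge\lfloor t/3\rfloor$, so that $L(T)\ll\tau_T\leq t/3$ and $\tau_T\leq\sqrt{T L(T)}=o(T)$, all uniformly over $\gU$. I fix a small $\gd>0$ (depending only on $\gl,C,h,\gh$), with $\tfrac32\gd<h\gh$, and put $\gep_T:=\gh^{-2}\tau_T/T\to0$. Writing $I=[a,b]$, I restrict $B$ (started at $x\gs(0)L(T)$) to stay in $[0,h\gs(0)L(T)(1-\gep_T)]$ on $[0,\tau_T]$ with $B_{\tau_T}\in[\tfrac34\gd L(T),\tfrac54\gd L(T)]$; then in the band $[\tfrac12\gd L(T),\tfrac32\gd L(T)]$ on $[\tau_T,t-\tau_T]$ with $B_{t-\tau_T}$ again in $[\tfrac34\gd L(T),\tfrac54\gd L(T)]$; then in $[0,h\gs(t/T)L(T)(1-\gep_T)]$ on $[t-\tau_T,t]$ with $B_t\in[\tfrac{3a+b}4\gs(t/T)L(T),\tfrac{a+3b}4\gs(t/T)L(T)]$. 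By \eqref{eq:gsapprox} all these intervals lie inside the tube $[0,h\gs(s/T)L(T)]$ at the relevant times (for the middle band one uses $\tfrac32\gd<h\gh\leq h\gs(s/T)$), and $\tfrac{B_t}{\gs(t/T)L(T)}\in I$ since $b-a\geq2\eps$. On this corridor $\int_0^t B_s\,\dd s\leq 2h\gh^{-1}L(T)\tau_T+\tfrac32\gd\gh^{-1}L(T)t$, so $\tfrac CT\int_0^t B_s\,\dd s\leq\tfrac{3C\gd}{2\gh}L(T)+o(L(T))$ (the first term being $o(L(T))$ as $\tau_T=o(T)$); choosing $\gd$ with $\tfrac{3C\gd}{2\gh}<\tfrac\gl2$ gives $\exp(-\tfrac CT\int_0^t B_s\,\dd s)\geq e^{-\frac34\gl L(T)}$ on the corridor, uniformly over $\gU$ for $T$ large.

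It remains to lower-bound the probability of the corridor by $e^{-o(L(T))}$, uniformly over $\gU$; this is the technical heart. After the time change \eqref{eq:BM:timechange} each phase is a standard Brownian event, and by the Markov property the corridor probability factorizes over the three phases. For the middle phase the accumulated variance is $\leq\gh^{-2}T$, and Lemma~\ref{lem:mai16} (leading eigenterm plus a crude bound on the tail) shows that confinement in a band of width $\gd L(T)$ for such a time, exiting in its middle third, has probability $\geq c(\gd)\exp(-O(\gd^{-2}T/L(T)^2))$, and $T/L(T)^2=(T/L(T)^3)L(T)=o(L(T))$ \emph{because we are in the super-critical regime}. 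For the first and third phases I would write the probability as (reaching the target window) $\times$ (the conditioned bridge stays in the tube): the first factor is a Gaussian estimate, $\geq c\,\tau_T^{-1/2}L(T)\exp(-O(L(T)^2/\tau_T))\geq e^{-o(L(T))}$ since $\tau_T\gg L(T)$ and $\tau_T^{-1/2}L(T)\geq L(T)^{3/4}T^{-1/4}\to\infty$; the second factor is $\geq\tfrac12$ for $T$ large, because the chord joining the two endpoints stays at distance $\geq c(\eps,\gd,\gh,h)L(T)$ from the tube boundaries (here $x\in[\eps,h-\eps]$, $b-a\geq2\eps$, $\gs\in\gsens$ and $\gep_T\to0$ enter), while the bridge fluctuates by $O(\sqrt{\tau_T})=O((TL(T))^{1/4})=o(L(T))$, again because $T\ll L(T)^3$. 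Multiplying the three factors gives the corridor probability lower bound.

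Combining, the expectation in \eqref{eq:lem:1stmom:supercrit} is $\geq e^{-\frac34\gl L(T)}e^{-o(L(T))}\geq e^{-\gl L(T)}$ for $T$ large, uniformly over $(x,t,\gs,I)\in\gU$, which together with the trivial upper bound proves \eqref{eq:lem:1stmom:supercrit}. I expect the main obstacle to be making the corridor estimate genuinely uniform over $\gU$ — especially over all $t\in[\gth^{-1}(T)L(T),T]$ up to $t\asymp T$, and over all $\gs\in\gsens$ — and, relatedly, keeping careful track of exactly which quantities must remain of order $L(T)$; the hypothesis $T^{1/3}\ll L(T)$ is used nowhere else, only to make both the confinement cost $e^{-O(T/(\gd L(T))^2)}$ and the bridge fluctuation $O(\sqrt{\tau_T})$ negligible at scale $L(T)$.
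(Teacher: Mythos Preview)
Your proposal is correct and follows essentially the same three-phase corridor strategy as the paper's proof: a short boundary layer of duration $\tau_T$ at each end, and a long middle phase during which the trajectory is confined to a narrow band near the lower edge; the super-critical hypothesis $T\ll L(T)^3$ is invoked exactly where you say, to make both the confinement cost $e^{-O(T/(\text{band width})^2)}$ and the end-phase fluctuations negligible at scale $L(T)$.

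The differences are purely technical. The paper takes the band height to be some $h_T$ satisfying $h_T\ll L(T)$ and $T/h_T^2\ll L(T)$ simultaneously (e.g.\ $h_T=\sqrt{T^{1/3}L(T)}$), which makes the contribution of $\tfrac CT\int B_s\,\dd s$ over the middle phase genuinely $o(L(T))$ and so avoids your $\gd$--$\gl$ tradeoff; in exchange one must check that such an $h_T$ exists. Your choice of a band at height $\gd L(T)$ is simpler to set up but forces the exponential factor to be only $e^{-O(\gd)L(T)}$, which you then absorb by taking $\gd$ small depending on $\gl$. For the end phases, the paper uses the reflection principle directly (three explicit Gaussian densities) rather than your factorisation into ``hit the target'' $\times$ ``bridge stays in the tube''; both arguments need $\tau_T\gg L(T)$ for the Gaussian cost and $\sqrt{\tau_T}=o(L(T))$ (equivalently $\tau_T L(T)\ll (\text{band height})\cdot L(T)$, equivalently the super-critical regime) for the barrier not to be felt. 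Either route gives the same $e^{o(L(T))}$ lower bound, uniformly over $\gU$.
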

		
	\begin{proof}[Proof of Proposition~\ref{prop:1stmom:supercrit} assuming Lemma~\ref{lem:1stmom:supercrit}]
		Let $\gs\in\gsens^\supercrit$, with $u_0,\ldots,u_n$ as above. Let us assume $h>x>0$, otherwise the l.h.s. of~\eqref{eq:prop:1stmom:supercrit:UB} is zero and the proof is immediate. Recall Lemma~\ref{lem:MtOGirs}, in particular~\eqref{eq:MtOGirs:A}. Notice that~\eqref{def:gga:supercrit} implies,
		\[
		\gga^\supercrit_T{}'(s)\,=\,\gs(s/T) + h\,\frac{L(T)}{T}(\gs'(s/T))^-\,,\qquad \forall\, s\in[0,T]\,,
		\]
		so one has for $t\in[0,T]$,
		\begin{align}
			\notag \bbE\big[|A_{T,I}^{\supercrit}(t)|\big] &= e^{\frac t2}\,\bE_{x\gs(0)L(T)}\bigg[ 1_{\big\{\forall s\in[0,t],\,\frac{B_s}{\gs(s/T)L(T)} \in [0,h]\;;\; \frac{B_t}{\gs(t/T)L(T)} \in I\big\}} \\
			&\notag\;\;\times\exp\bigg( -\frac{\gga^\supercrit{}'(t) B_t}{\gs^2(t/T)}+\frac{\gga^\supercrit{}'(0)}{\gs(0)}xL(T) + \int_0^t \left.\frac{\partial}{\partial u}\left(\frac{\gga_T^\supercrit{}'(u)}{\sigma^2(u/T)}\right)\right|_{u=s} B_s \,\dd s \\
			&\notag\qquad\qquad\qquad\qquad\qquad\qquad\qquad\qquad\qquad\qquad\qquad- \int_0^t \frac{(\gga^\supercrit{}'(s))^2}{2\gs^2(s/T)}\dd s \bigg)  \bigg]\\
			\label{eq:prf:1stmom:supercrit:gsblock:1} 
			&= e^{xL(T)+O(L(T)^2/T)}\,\bE_{x\gs(0)L(T)}\bigg[ 1_{\big\{\forall s\in[0,t],\,\frac{B_s}{\gs(s/T)L(T)} \in [0,h]\;;\; \frac{B_t}{\gs(t/T)L(T)} \in I\big\}} \\
			&\notag\quad\times\exp\left( -\frac{B_t}{\gs(t/T)} -\frac1T \int_0^t \frac{\gs'(s/T)}{\gs^2(s/T)}B_s \, \dd s - h\frac{L(T)}T \int_0^t \frac{(\gs')^-(s/T)}{\gs(s/T)}\dd s \right)  \bigg].
		\end{align}
		
		\noindent\emph{Proof of~\eqref{eq:prop:1stmom:supercrit:UB} (upper bound).}  
		Using that $\gs'=(\gs')^+-(\gs')^-$, one observes that on the event $\{\forall s\in[0,t], B_s\le h\gs(s/t)L(T)\}$, one has almost surely,
		\[
		-\frac1T \int_0^t \frac{\gs'(s/T)}{\gs^2(s/T)}B_s \dd s \le -\frac1T \int_0^t \frac{(\gs')^+(s/T)}{\gs^2(s/T)}B_s \dd s + h\frac{L(T)}T \int_0^t \frac{(\gs')^-(s/T)}{\gs(s/T)} \dd s \,,
		\]
		so \eqref{eq:prf:1stmom:supercrit:gsblock:1} yields,
		\begin{align*}
			&\bbE\big[|A_{T,I}^{\supercrit}(t)|\big] \;\leq \; e^{(x-\inf(I))L(T) + O(L(T)^2/T)} \\
			&\quad\times\bE_{x\gs(0)L(T)}\bigg[ 1_{\big\{\forall s\in[0,t],\frac{B_s}{\gs(s/T)L(T)} \in [0,h]\,;\, \frac{B_t}{\gs(t/T)L(T)} \in I\big\}}\exp\left( -\frac1T \int_0^t \frac{(\gs')^+(s/T)}{\gs^2(s/T)}B_s \dd s  \right)  \bigg] \,,
		\end{align*}
		and the latter expectation is bounded by 1, which proves \eqref{eq:prop:1stmom:supercrit:UB}.\bigskip
		
		\noindent\emph{Proof of~\eqref{eq:prop:1stmom:supercrit:LB} (lower bound).} 
		Let $\eps>0$ be sufficiently small such that $4\eps \leq \sup(I)-\inf(I)$ and $x\in [\eps,h-\eps]$. Define
		\begin{equation}\label{eq:prf:1stmom:supercrit:gsblock:Igh}
			I_f\,:=\, [\inf(I),\inf(I)+4\eps]\,\subset\, I.
		\end{equation}
		By constraining $(B_s/\gs(s/T)L(T))_{s\geq0}$ to end in $I_{f}$ at time $t$, one deduces from~\eqref{eq:prf:1stmom:supercrit:gsblock:1} that,
		\begin{align}\label{eq:prf:1stmom:supercrit:gsblock:LBdecomp}
			\bbE\big[|A_{T,I}^{\supercrit}(t)|\big] &\geq e^{(x-\inf I +4\eps)L(T)+O(L(T)^2/T)}\\
			&\notag\quad\times\bE_{x\gs(0)L(T)}\bigg[ 1_{\big\{\forall s\in[0,t],\,\frac{B_s}{\gs(s/T)L(T)} \in [0,h]\;;\; \frac{B_t}{\gs(t/T)L(T)} \in I_f\big\}} \\
			&\notag\quad\times\exp\left( -\frac1T \int_0^t \frac{\gs'(s/T)}{\gs^2(s/T)}\,B_s \, \dd s - h\frac{L(T)}T \int_0^t \frac{(\gs'(s/T))^-}{\gs(s/T)}\dd s \right)  \bigg].
		\end{align}
		In order to bound the right-hand side of \eqref{eq:prf:1stmom:supercrit:gsblock:LBdecomp} from below, we wish to decompose the trajectory at the times $u_0T,\ldots,u_nT$ and apply Lemma~\ref{lem:1stmom:supercrit} to each part. We have to be a bit careful in doing this, in order to ensure that the final time $t$ is not too close to one of these times. Also, we have to constrain the trajectory to return to certain intervals at each time. More precisely, set $i_{\max}:=\max\{i\leq n:  u_iT\le t-\gth^{-1}(T)L(T)\}$ and define intervals and time steps as follows:
		\begin{align*}
			&I_i \,:=\, [\eps,h-\eps] \ni x,\ 0 \leq i \le i_{\max}  & &t_i \,:=\,u_iT\wedge (t-\gth^{-1}(T)L(T)),\ 0 \leq i \le i_{\max}\\
			&I_{i_{\max}+1} \,:=\, I_f & &t_{i_{\max}+1} \,:=\,t
		\end{align*}
		Note that $t_{i+1}-t_i \ge \gth^{-1}(T)L(T)$ by construction, for every $i=0,\ldots,i_{\max}$. 
		
		We now set for each $i=0,\ldots,i_{\max}$,
		\begin{align}\label{eq:prf:1stmom:supercrit:gsblock:2bis}
			E_i \,:=\, \inf_{y\in I_i}& \bE_{(t_i,y\gs(t_i/T)L(T))}\Bigg[ 1_{\big\{\forall s\in[t_i,t_{i+1}],\,\frac{B_s}{\gs(s/T)L(T)} \in [0,h]\;;\; \frac{B_{t_{i+1}}}{\gs(t_{i+1}/T)L(T)} \in I_{i+1}\big\}} \\
			&\notag\times \exp\left( -\frac1T \int_{t_i}^{t_{i+1}} \frac{\gs'(s/T)}{\gs^2(s/T)}\,B_s \, \dd s - h\frac{L(T)}T \int_{t_i}^{t_{i+1}} \frac{(\gs'(s/T))^-}{\gs(s/T)}\dd s \right) \Bigg].
		\end{align}
		Equation~\eqref{eq:prf:1stmom:supercrit:gsblock:LBdecomp}  yields
		\begin{align}
			\label{eq:prf:1stmom:supercrit:gsblock:prodEi}
			\bbE\big[|A_{T,I}^{\supercrit}(t)|\big] \geq e^{(x-\inf I +4\eps)L(T)+O(L(T)^2/T)} \times \prod_{i=0}^{i_{\max}} E_i.
		\end{align}
		It remains to show that, uniformly in $\gs\in\gsens^\supercrit$ and $t$ such that $L(T)\llbis t\leq T$, we have
		\begin{align}
			\label{eq:toshowEi}
			E_i = \exp(o(L(T))),\quad i=0,\ldots,i_{\max}.
		\end{align}
		We now want to distinguish between two cases, depending on $i=0,\ldots,i_{\max}$ through the sign of $\gs'(\cdot/T)$ on the interval $[t_i,t_{i+1}]$. Note that by the assumption on $\gs$ and by the definition of the $t_i$'s, $\gs'(\cdot/T)$ indeed does not change sign on $[t_i,t_{i+1}]$, unless $t_i = t - \gth^{-1}(T)L(T)$ (and therefore, $t_{i+1} = t$). In this case, the sign may change, leading to a third case to consider.
		
		\bigskip
		
		\emph{Case 1: $\gs'(\cdot/T)$ is non-negative on $[t_i,t_{i+1}]$}. In this case, we have $(\gs')^- = 0$ and $\gs' = (\gs')^+$. Hence, we get from \eqref{eq:prf:1stmom:supercrit:gsblock:2bis},
		\begin{align}
			\label{eq:case_1_Ei}
			E_i \,=\, \inf_{y\in I_i}& \bE_{(t_i,y\gs(t_i/T)L(T))}\Bigg[ 1_{\big\{\forall s\in[t_i,t_{i+1}],\,\frac{B_s}{\gs(s/T)L(T)} \in [0,h]\;;\; \frac{B_{t_{i+1}}}{\gs(t_{i+1}/T)L(T)} \in I_{i+1}\big\}} \\
			&\notag\times \exp\left( -\frac1T \int_{t_i}^{t_{i+1}} \frac{(\gs')^+(s/T)}{\gs^2(s/T)}\,B_s \, \dd s \right) \Bigg].
		\end{align}
		Note that we obtain a lower bound on $E_i$ by writing $(\gs')^+(s/T)/\gs^2(s/T) \le \eta^{-3}$ for each $s\in [0,T]$ (this follows from the assumptions on $\gs$). Then, replacing $(B_s)_{s\ge0}$ with the time-shifted process $(B_{t_i+s})_{s\ge 0}$ and applying Lemma~\ref{lem:1stmom:supercrit}, 
		this readily concludes the proof of \eqref{eq:toshowEi} in this case.
		\bigskip
		
		\emph{Case 2: $\gs'(\cdot/T)$ is non-positive on $[t_i,t_{i+1}]$}. 
		In this case, we have $\gs'= -(\gs')^-$.
		Hence, we get from \eqref{eq:prf:1stmom:supercrit:gsblock:2bis},
		\begin{align*}
			E_i \,=\, \inf_{y\in I_i}& \bE_{(t_i,y\gs(t_i/T)L(T))}\Bigg[ 1_{\big\{\forall s\in[t_i,t_{i+1}],\,\frac{B_s}{\gs(s/T)L(T)} \in [0,h]\;;\; \frac{B_{t_{i+1}}}{\gs(t_{i+1}/T)L(T)} \in I_{i+1}\big\}} \\
			&\notag\times \exp\left( -\frac1T \int_{t_i}^{t_{i+1}} \frac{(\gs')^-(s/T)}{\gs^2(s/T)}\,(h \gs(s/T)L(T) - B_s) \, \dd s \right) \Bigg].
		\end{align*}
	
	We apply Girsanov's theorem to the shifted process $(B_s-h\gs(s/T)L(T))_{s\leq u_1T}$. Letting $\rho(s):=h\gs(s/T)L(T)$, one has on the event $\{\forall s\in[t_i,t_{i+1}],\,|B_s|\leq h\gh^{-1}L(T)\}$ that,
	\begin{equation}\label{eq:supercrit:girsanov}
		\left|\int_{t_i}^{t_{i+1}} \frac{\rho'(s)}{\gs^2(s/T)} \,\dd B_s \right| \,\leq\, \frac{3h^2\gh^{-4} L(T)^2}{T}\,,\quad\text{and}\quad \left|\int_{t_i}^{t_{i+1}} \frac{(\rho'(s))^2}{2\gs^2(s/T)} \,\dd s \right| \,\leq\, \frac{h^2\gh^{-4} L(T)^2}{2T}\,,
	\end{equation}
	(this follows from an integration by parts and computations similar to those of~\eqref{eq:MtOGirs:A} and~\eqref{eq:prf:1stmom:supercrit:gsblock:1}---we do not detail them again). Both those terms are $o(L(T))$ uniformly in $\gs\in\gsens^\supercrit$; therefore, Girsanov's theorem and the symmetry of Brownian motion yield,
	\begin{align*}
		E_1&= e^{o(L(T))} \,\inf_{y\in I_i}\bE_{(h-y)\gs(t_i/T)L(T)}\Bigg[ 1_{\big\{\forall s\in[t_i,t_{i+1}],\,\frac{B_s}{\gs(s/T)L(T)} \in [0,h]\;;\; \frac{B_{t_{i+1}}}{\gs(t_{i+1}/T)L(T)} \in (h-I_{i+1})\}} \\
		&\hspace{40mm}\times
		\exp\Bigg( -\frac1T \int_{t_i}^{t_{i+1}} \frac{(\gs')^-(s/T)}{\gs^2(s/T)}B_s \, \dd s 
		\Bigg) \Bigg].
	\end{align*}
	We now conclude as in Case 1, proving \eqref{eq:toshowEi} in this case as well.\bigskip
				
	\emph{Case 3: $t_i = t - \gth^{-1}(T)L(T)$}. In this case $i = i_{\max}$ and $t_{i+1} = t$. We brutally bound the term inside the exponential in \eqref{eq:prf:1stmom:supercrit:gsblock:2bis} from below by
	\[
	-\frac1T \int_{t_i}^{t_{i+1}} \frac{|\gs'(s/T)|}{\gs(s/T)} 2hL(T) \, \dd s \ge -2h\eta^{-2} \frac{\gth^{-1}(T)L(T)}{T} L(T) = o(L(T)),
	\]
	since $\gth^{-1}(T)L(T) \ll T$ by \eqref{eq:gthasymp}. We conclude again by a use of Lemma~\ref{lem:1stmom:supercrit} as in Case 1.\bigskip
	
	Recollecting~\eqref{eq:prf:1stmom:supercrit:gsblock:prodEi} and \eqref{eq:toshowEi} and taking $\eps\to0$, this concludes the proof of \eqref{eq:prop:1stmom:supercrit:LB} and finishes the proof of the proposition.
\end{proof}

\begin{proof}[Proof of Lemma~\ref{lem:1stmom:supercrit}]
	Throughout the proof, all statement involving $t$ or $\gs$ are meant to hold uniformly in $t\ggbis L(T)$ and $\gs\in\gsens$.
	
	Since the expectation is bounded by 1, we only have to prove a lower bound. 
	Let $x\in[\eps,h-\eps]$, $t:=t(T)\in[0,T]$, $\gs\in\gsens$ and $I\subset[0,h]$ of length at least $2\eps$. Recall from~\eqref{eq:BM:timechange} the definition of the time-change $\cC^1$-diffeomorphism $J(\cdot)$: in particular, $(W_r)_{r\geq0}$ denotes the standard, time-homogeneous Brownian motion, and $(W_{J(s)})_{s\in[0,T]}$ has the same law as $(B_s)_{s\in[0,T]}$. Hence, for $t\in[0,T]$, one has
	\begin{align*}
		& \bE_{x\gs(0)L(T)}\bigg[ \exp\left(-\frac C T \int_0^t B_s \, \dd s \right) 1_{\big\{\forall s\in[0,t],\,\frac{B_s}{\gs(s/T)L(T)} \in [0,h]\,;\, \frac{B_t}{\gs(t/T)L(T)} \in I\big\}} \bigg]\\
		& = \bE_{x\gs(0)L(T)}\bigg[ \exp\left(-\frac C T \int_0^{J(t)} \frac{1}{\gs^2(J^{-1}(s)/T)} W_s \, \dd s \right) \\
		& \qquad\qquad\qquad\qquad\qquad\qquad\qquad\qquad\times 1_{\big\{\forall s\in[0,J(t)],\,\frac{W_{s}}{\gs(J^{-1}(s)/T)L(T)} \in [0,h]\,;\, \frac{W_{J(t)}}{\gs(t/T)L(T)} \in I\big\}} \bigg]\\
		& \geq  \bE_{x\gs(0)L(T)}\bigg[ \exp\left(-\frac{C'}{T} \int_0^{J(t)} W_s \, \dd s \right) 1_{\big\{\forall s\in[0,J(t)],\,\frac{W_{s}}{\gs(J^{-1}(s)/T)L(T)} \in [0,h]\,;\, \frac{W_{J(t)}}{\gs(t/T)L(T)} \in I\big\}} \bigg],
	\end{align*}
	with $C' = C\eta^{-2}$, using that $\gs\in\gsens$. Recall that $T^{1/3}\llbis L(T)\llbis t\leq T$ by assumption. Fix $(\tau_T)_{T>0}$ and $(h_T)_{T>0}$,  satisfying, as $T\to+\infty$,
	\begin{equation}\label{eq:prop:1stmom:supercrit:param}
	L(T) \ll \tau_T \ll \gth^{-1}(T)L(T),\quad h_T \vee \frac T {h_T^2} \ll L(T),\quad \tau_T \ll L(T)h_T.
	\end{equation}
	For example, we may set $\tau_T = \sqrt{\gth^{-1}(T)}L(T)$ and $h_T = \sqrt{T^{1/3}L(T)}$, as one quickly checks.
		
	Assume $T$ is sufficiently large so that $2\tau_T \le \gth^{-1}(T)L(T)$ and $h_T \le \eta L(T)/3$.
	In order to bound from below the last expectation, we constrain the trajectory $(W_s)_{s\in[0,J(t)]}$ to land in $[h_T,2h_T]$ at times $\tau_T$ and $J(t)-\tau_T$, and to remain below $3h_T$ in-between; then we apply Markov's property at times $\tau_T$ and $J(t)-\tau_T$. We obtain,
	\begin{align}\label{eq:prop:1stmom:supercrit:decomp}
		\bE_{x\gs(0)L(T)}\bigg[ \exp\left(-\frac{C'}{T} \int_0^{J(t)} W_s \, \dd s \right) 1_{\big\{\forall s\in[0,J(t)],\,\frac{W_{s}}{\gs(J^{-1}(s)/T)L(T)} \in [0,h]\,;\, \frac{W_{J(t)}}{\gs(t/T)L(T)} \in I\big\}} \bigg]& \\
		\notag \geq B_1\times B_2\times B_3\,,&
	\end{align}
	where
	\begin{align*}
		B_1&:= \bE_{x\gs(0)L(T)}\bigg[ \exp\left(-\frac{C'}{T} \int_0^{\tau_T} W_s \, \dd s \right) \\
		&\qquad\qquad\qquad\qquad\qquad\qquad\times 1_{\big\{\forall s\leq\tau_T,\,\frac{W_{s}}{\gs(J^{-1}(s)/T)L(T)} \in [0,h]\,;\, W_{\tau_T} \in [h_T,2h_T]\big\}} \bigg]\;,\\
		B_2&:= \inf_{y\in[h_T,2h_T]}\bE_{y}\bigg[ \exp\left(-\frac{C'}{T} \int_0^{J(t)-2\tau_T} W_s \, \dd s \right) \\
		&\qquad\qquad\qquad\qquad\qquad\qquad\times 1_{\big\{ \forall s\leq J(t)-2\tau_T,\,W_s \in [0,3h_T]\,;\, W_{J(t)-2\tau_T} \in [h_T,2h_T]\big\}} \bigg]\;,\\
		B_3&:= \inf_{y\in[h_T,2h_T]}\bE_{y}\bigg[ \exp\left(-\frac{C'}{T} \int_0^{\tau_T} W_s \, \dd s \right) \\
		&\qquad\qquad\qquad\qquad\qquad\qquad\times 1_{\big\{\forall s\leq \tau_T,\,\frac{W_{s}}{\gs(J^{-1}(s+J(t)-\tau_T)/T)L(T)} \in [0,h]\,;\, \frac{W_{\tau_T}}{\gs(t/T)L(T)} \in I\big\}} \bigg]\;.
	\end{align*}
Let us now bound from below $B_1$, $B_2$ and $B_3$ separately.
	
	We start with $B_2$. On the event $\{\forall s\in[0,J(t)-2\tau_T],\,W_s \in [0,3h_T]\}$,~\eqref{eq:BM:timechange} and~\eqref{eq:prop:1stmom:supercrit:param} imply that,
	\[ \frac1T\int_0^{J(t)-2\tau_T} W_s \, \dd s  \;\leq\; \frac{3h_T(J(t)-2\tau_T)}T\;\leq\; 3 \gh^{-2} h_T \;\ll\; L(T)\;, \]
	as $T\to+\infty$. Hence one has,
	\begin{equation}\label{eq:prop:1stmom:supercrit:B2}
		B_2\;\geq\; e^{o(L(T))} \inf_{y\in[h_T,2h_T]}\bP_{y}\Big(\forall s\leq J(t)-2\tau_T,\,W_s \in [0,3h_T]\,;\, W_{J(t)-2\tau_T} \in [h_T,2h_T] \Big),
	\end{equation}
	as $T\to+\infty$. Recalling Lemma~\ref{lem:mai16}, notice that there exists a constant $K>0$ such that,
	\[
	\forall\,t'\geq K\,,\qquad \inf_{y\in[1,2]}\bP_y\big(\forall s\leq t',\,W_s\in[0,3]\,;\, W_{t'}\in [1,2]\big)\;\geq\; \frac{\sin^2(\pi/3)}3 \exp\left(-\frac{\pi^2}{18}t'\right)\,.
	\]
	Furthermore, the left-hand side of the last display is positive and continuous in $t'$ for $t'\in(0,K]$ since the density of Brownian motion killed outside the interval $[0,3]$ solves the heat equation with Dirichlet boundary condition at $0$ and $3$. Furthermore, the limit as $t'\to 0$ of the left-hand side of the last display equals $1/2 > 0$, as one readily checks. Thus, we have for some $c_0>0$,
	\[
	\forall\,t'\geq 0\,,\qquad \inf_{y\in[1,2]}\bP_y\big(\forall s\leq t',\,W_s\in[0,3]\,;\, W_{t'}\in [1,2]\big)\;\geq\; c_0 \exp\left(-\frac{\pi^2}{18}t'\right)\,.
	\]
	In particular, 
	\eqref{eq:prop:1stmom:supercrit:B2} and the Brownian scaling property yield
	\begin{equation}\label{eq:prop:1stmom:supercrit:B2:1}
		B_2\;\geq\; e^{o(L(T))}  \exp\left(-\frac{\pi^2}{18}\frac{J(t)-2\tau_T}{h_T^2}\right)\;\geq\; e^{o(L(T))}\;, 
	\end{equation}
since $\frac{J(t)-2\tau_T}{h_T^2} =O(T/h_T^2) =o(L(T))$ by~\eqref{eq:BM:timechange} and~\eqref{eq:prop:1stmom:supercrit:param}.

	Let us turn to $B_1$. 
	One has on the event $\{\forall s\leq \tau_T,\,W_s\leq h\gs(J^{-1}(s)/T)L(T)\}$ that,
	\begin{equation}\label{eq:prop:1stmom:supercrit:B1:area}
		\frac1T\int_0^{\tau_T} W_s \, \dd s  \;\leq\; \frac{h\gh^{-1} L(T) \tau_T}T\;\ll\; L(T)\,,
	\end{equation}
	as $T\to+\infty$, since $\tau_T\ll T$. Moreover, since $\tau_T \ll T$, we have for $T$ large enough,
	\[
	\left|\gs\left(\frac{J^{-1}(s)}T\right) - \gs(0)\right| \le \frac{\eps}{2},\quad \forall s\le \tau_T.
	\]
	Hence, for $T$ large enough,
	\begin{equation}\label{eq:prop:1stmom:supercrit:B1}
		B_1\;\geq\; e^{o(L(T))} \bP_{x\gs(0)L(T)}\Big( \forall s\leq\tau_T,\,W_s \in [0,(h-\eps/2)\gs(0)L(T)]\,;\, W_{\tau_T} \in [h_T,2h_T] \Big),
	\end{equation}
	The Brownian reflection principle (see e.g.~\cite[Part~1, Ch.~4]{BS02}) yields, for every $y\in [h_T,2h_T]$,
	\begin{align*}
		&\bP_{x\gs(0)L(T)}\big( W_{\tau_T} \in dy \big) \,=\, \frac{1}{\sqrt{2\pi \tau_T}} \exp\left(-\tfrac{(x\gs(0)L(T)-y)^2}{2\tau_T}\right)\,dy,\\
		&\bP_{x\gs(0)L(T)}\big( W_{\tau_T} \in dy; \inf_{s\leq\tau_T} W_s \leq 0 \big) \,=\, \frac{1}{\sqrt{2\pi \tau_T}} \exp\left(-\tfrac{(x\gs(0)L(T)+y)^2}{2\tau_T}\right)\,dy,\\
		\text{and}\quad &\bP_{x\gs(0)L(T)}\big( W_{\tau_T} \in dy; \sup_{s\leq\tau_T} W_s \geq (h-\eps/2)\gs(0)L(T) \big) \\
		&\qquad\qquad\qquad\qquad\qquad\qquad=\, \frac{1}{\sqrt{2\pi \tau_T}} \exp\left(-\tfrac{((2h-x-\eps)\gs(0)L(T)-y)^2}{2\tau_T}\right)\,dy.
	\end{align*}
	Now note that for $y\in [h_T,2h_T],$ by \eqref{eq:prop:1stmom:supercrit:param},
	\[
	\frac{(x\gs(0)L(T)+y)^2}{2\tau_T} - \frac{(x\gs(0)L(T)-y)^2}{2\tau_T} \ge \frac{\eps\eta L(T) h_T}{\tau_T} \gg 1.
	\]
	Furthermore, using that $2h-x-\eps \ge x+\eps$, we have for $y\in [h_T,2h_T],$ again by \eqref{eq:prop:1stmom:supercrit:param},
	\[
	\frac{((2h-x-\eps)\gs(0)L(T)-y)^2}{2\tau_T} - 	\frac{(x\gs(0)L(T)-y)^2}{2\tau_T} \ge \frac{\eps\eta L(T)}{\tau_T} \gg 1.
	\]
	It follows from the above estimates, that for $T$ large enough,
	\begin{align}\label{eq:prop:1stmom:supercrit:B1:1}
		\notag&\bP_{x\gs(0)L(T)}\Big(\forall s\leq\tau_T,\, W_s \in [0,(h-\eps/2)\gs(0)L(t)]\,;\, W_{\tau_T} \in [h_T,2h_T] \Big)\\
		&\qquad\geq\; \frac{h_T}{\sqrt{2\pi \tau_T}} \exp\left(-\tfrac{(x\gs(0)L(T)-2h_T)^2}{2\tau_T}\right) (1-o(1))\;\geq\; e^{o(L(T))}\;,
	\end{align}
	 by~\eqref{eq:prop:1stmom:supercrit:param}. Equations~\eqref{eq:prop:1stmom:supercrit:B1} and \eqref{eq:prop:1stmom:supercrit:B1:1} imply that $B_1\geq e^{o(L(T))}$ as $T\to+\infty$. The term $B_3$ is handled similarly. Recollecting~\eqref{eq:prop:1stmom:supercrit:decomp} and \eqref{eq:prop:1stmom:supercrit:B2:1}, this finishes the proof of the lemma.
\end{proof}

We now provide an upper bound on the second moment of $|A^{\supercrit}_{T,I}(t)|$ when $I=[z,h]$ for some $z\in[0,h]$ and $t\in[0,T]$.

\begin{proposition}[Second moment, Super-critical]\label{prop:2ndmom:supercrit}
	Let $h>0$. One has as $T\to+\infty$,
	\begin{equation}\label{eq:prop:1stmom:supercrit:ULB} 
		\bbE\big[|A_{T,z}^{\supercrit}(t)|^2\big] \;\leq\; e^{(x+h-2z)L(T)+o(L(T))}\,,
	\end{equation}
	uniformly in $t\in[0,T]$, $\gs\in\gsens$, $x\in[0,h]$ and $z\in[0,h]$.
\end{proposition}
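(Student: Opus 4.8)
The plan is to combine the Many-to-two lemma (Lemma~\ref{lem:MtT}) with the first-moment upper bound \eqref{eq:prop:1stmom:supercrit:UB} of Proposition~\ref{prop:1stmom:supercrit} and the semigroup (Chapman--Kolmogorov) identity for the first-moment kernel $G^{\supercrit}$. First I would apply Lemma~\ref{lem:MtT} with $I=[z,h]$, which writes $\bbE[|A_{T,z}^{\supercrit}(t)|^2]$ as $\bbE[|A_{T,z}^{\supercrit}(t)|]$ plus $\gb_0\bbE[\xi(\xi-1)]$ times the double integral $\int_0^t\dd s\int_0^h G^{\supercrit}(x,y,0,s)\big(\int_z^h G^{\supercrit}(y,w,s,t)\,\dd w\big)^2\dd y$. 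The first term is at most $e^{(x-z)L(T)+o(L(T))}$ by \eqref{eq:prop:1stmom:supercrit:UB} applied to the interval $[z,h]$, and since $z\le h$ this is $\le e^{(x+h-2z)L(T)+o(L(T))}$; so everything reduces to bounding the double integral by $e^{(x+h-2z)L(T)+o(L(T))}$.

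The key input is a time-shifted version of \eqref{eq:prop:1stmom:supercrit:UB}: the Many-to-one plus Girsanov computation behind Lemma~\ref{lem:MtOGirs}, and the proof of \eqref{eq:prop:1stmom:supercrit:UB}, carry over verbatim to a BBM started at time $s$ from a single particle at scaled height $y$, giving
\[
\int_z^h G^{\supercrit}(y,w,s,t)\,\dd w \;\le\; e^{(y-z)L(T)+o(L(T))}
\]
uniformly in $0\le s\le t\le T$, $y\in[0,h]$, $z\in[0,h]$ and $\gs\in\gsens^{\supercrit}$ (the error stays $O(L(T)^2/T)=o(L(T))$, because the cancellation exploited in that proof --- between the $(\gs')^-$ boundary term and the matching drift term --- is insensitive to the start time). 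I would use this bound to linearize the square, and then use $y\le h$ on the support of $G^{\supercrit}(x,\cdot,0,s)$, so that for (almost) every $s\in[0,t]$
\[
\int_0^h G^{\supercrit}(x,y,0,s)\Big(\int_z^h G^{\supercrit}(y,w,s,t)\,\dd w\Big)^2\dd y \;\le\; e^{(h-z)L(T)+o(L(T))}\int_0^h G^{\supercrit}(x,y,0,s)\int_z^h G^{\supercrit}(y,w,s,t)\,\dd w\,\dd y.
\]
The semigroup property $\int_0^h G^{\supercrit}(x,y,0,s)G^{\supercrit}(y,w,s,t)\,\dd y=G^{\supercrit}(x,w,0,t)$ --- a direct consequence of the branching/Markov property for the first-moment measure --- combined with \eqref{eq:A_G} and \eqref{eq:prop:1stmom:supercrit:UB} then turns the right-hand side into $e^{(h-z)L(T)+o(L(T))}\,\bbE[|A_{T,z}^{\supercrit}(t)|]\le e^{(x+h-2z)L(T)+o(L(T))}$, uniformly in $s$.

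It then remains to integrate over $s\in[0,t]$, which only costs a factor $t\le T$; since $L(T)\gg T^{1/3}\gg\log T$ in the super-critical regime one has $\log T=o(L(T))$, so this factor --- like the constant $\gb_0\bbE[\xi(\xi-1)]$ --- is absorbed into $e^{o(L(T))}$, and adding back the first-moment term yields \eqref{eq:prop:1stmom:supercrit:ULB}. The only genuine point to watch (and the hardest, though it is light) is the uniformity bookkeeping: one must check that the $o(L(T))$ error in the time-shifted first-moment bound can be chosen as a single function of $T$ depending only on $h$ and $\gh$, valid simultaneously for all admissible $s,t,y,z$ and $\gs\in\gsens^{\supercrit}$, so that it genuinely factors out of the $\dd s$- and $\dd y$-integrals. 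This is the same flavour of uniformity already present in Proposition~\ref{prop:1stmom:supercrit}, whose proof of \eqref{eq:prop:1stmom:supercrit:UB} uses nothing about $t$ beyond $t\le T$.
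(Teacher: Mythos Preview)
Your proof is correct and takes a genuinely different route from the paper's. Both arguments start from the Many-to-two lemma and the time-shifted first-moment upper bound $\int_z^h G^{\supercrit}(y,w,s,t)\,\dd w \le e^{(y-z)L(T)+o(L(T))}$. The paper then bounds \emph{both} factors of the square this way, obtaining $e^{2(y-z)L(T)+o(L(T))}$, and is left with $\int_0^h G^{\supercrit}(x,y,0,s)\,e^{2yL(T)}\,\dd y$; since this is not directly a first moment, it discretizes $[0,h]$ into $K$ equal pieces, applies \eqref{eq:prop:1stmom:supercrit:UB} on each piece, sums, and lets $K\to\infty$. You instead bound only one of the two factors (using $y\le h$ to pull it out as $e^{(h-z)L(T)+o(L(T))}$) and then invoke the Chapman--Kolmogorov identity for the first-moment kernel to collapse the remaining bilinear form back to $\bbE[|A_{T,z}^{\supercrit}(t)|]$. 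Your route is a bit cleaner, as it avoids the artificial discretization and the $K\to\infty$ limit; the paper's route is slightly more self-contained in that it uses only the first-moment bound and not the semigroup identity, but both are short and valid here.
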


\begin{remark}\label{rem:2ndmom}
	The proof of Proposition~\ref{prop:2ndmom:supercrit} relies mostly on the Many-to-two lemma (Lemma~\ref{lem:MtT}) and the upper bound~\eqref{eq:prop:1stmom:supercrit:UB} from Proposition~\ref{prop:1stmom:supercrit}. Noticeably, the second moment estimates in the sub-critical and critical cases will be obtained very similarly, by using respectively the first moment upper bounds from Propositions~\ref{prop:1stmom:subcrit} and~\ref{prop:1stmom:crit} below.
\end{remark}

\begin{proof}
	The Many-to-two lemma (Lemma~\ref{lem:MtT}) states that
	\begin{align*}&\bbE\big[|A_{T,z}^\supercrit(t)|^2\big] - \bbE\big[|A_{T,z}^\supercrit(t)|\big] \\
	&\quad= \gb_0\bbE[\xi(\xi-1)]\int_0^t \dd s \int_0^{h} G^\supercrit(x,y,0,s) \bigg(\int_{z}^{h} G^\supercrit(y,w,s,t) \, \dd w\bigg)^2 \dd y \;.
	\end{align*}
	Similarly to~\eqref{eq:A_G}, one has $\int_z^h G^\supercrit(y,w,s,t) \dd w = \bbE[|\tilde A_{T,z}^\supercrit(t-s)|]$, where $\tilde A_{T,z}^\supercrit$ is defined similarly to $A_{T,z}^\supercrit$ by replacing $\gs(\cdot)$ with $\gs(\cdot-s/T)$. Recall also the upper bound~\eqref{eq:prop:1stmom:supercrit:UB} from Proposition~\ref{prop:1stmom:supercrit}, which is uniform in $x\in[0,h]$, $I\subset[0,h]$, $\gs\in\gsens$ and $t\in[0,T]$. Therefore, we have for any $s\in[0,t]$,
	\begin{align*}&\int_0^h G^\supercrit(x,y,0,s) \,\dd y\,  \bigg(\int_z^h G^\supercrit(y,w,s,t) \, \dd w\bigg)^2  \\
	&\quad\leq\; e^{o(L(T))} \int_{0}^{h} G^\supercrit(x,y,0,s) \,e^{2(y-z)L(T)} \,\dd y\,, 
	\end{align*}
	where we used that the error term from~\eqref{eq:prop:1stmom:supercrit:UB} is uniform in $y, z, s, t$. 
	Let $K>0$ a large constant, and split $[0,h]$ into $K$ intervals of length $\frac hK$. For any $0\leq i<K$, one deduces from Proposition~\ref{prop:1stmom:supercrit},
	\[\begin{aligned} \int_{i\frac hK}^{(i+1)\frac hK} G^\supercrit(x,y,0,s) \,e^{2(y-z)L(T)} \,\dd y \;&\leq\; e^{2((i+1)\frac hK-z)L(T)} \times e^{(x-i\frac hK)L(T)+o(L(T))} \\
		&\leq \; e^{((2+i)\frac hK +x -2z)L(T)+o(L(T))}\\
		&\leq \; e^{2\frac hK L(T) + (x+h-2z)L(T) + o(L(T))}.
	\end{aligned} \]
	Therefore, summing over $0\leq i<K$ and integrating over $s\in[0,t]$, one obtains 
	\[
	\bbE\big[|A_{T,z}^\supercrit(t)|^2\big] - \bbE\big[|A_{T,z}^\supercrit(t)|\big] \;\leq\; e^{o(L(T))}\times e^{2\frac hKL(T)}\times t\times K\times e^{(x+h-2z)L(T)}.
	\]
	Taking $K\to+\infty$, and recalling that $t\leq T\leq L(T)^3$ and $\bbE[|A_{T,z}^\supercrit(t)|]\leq e^{(x-z)L(T)+o(L(T))}$, this yields the expected upper bound uniformly in $t$, $\gs(\cdot)$, $x$ and $z$.
\end{proof}

We conclude this section with an estimate on the number of particles killed at the upper barrier. Recall the definition of $R^\supercrit_T(s,t)$, $0\leq s\leq t\leq T$ from~\eqref{eq:defR}.

\begin{proposition}[Killed particles, Super-critical] \label{prop:killedpart:supercrit} Let $h>0$. Then as $T\to+\infty$, one has
	\begin{equation}
		\bbE\left[R^\supercrit_T(0,T)\right]\,\leq\, e^{-(h-x)L(T) + o(L(T))}\;,
	\end{equation} 
	uniformly in $x\in[0,h]$ and $\gs\in\gsens$. 
\end{proposition}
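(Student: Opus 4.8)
The plan is to feed the killed-particle first-moment formula \eqref{eq:MtOGirs:R} of Lemma~\ref{lem:MtOGirs}, with $\anyrg=\supercrit$ and $t=T$, into a direct estimate: after the Girsanov reweighting, the only genuinely path-dependent factors should be boundable by $1$, leaving the deterministic prefactor $e^{-(h-x)L(T)}$ up to an $e^{o(L(T))}$ correction. First I would compute, from \eqref{def:gga:supercrit} (equivalently \eqref{eq:olggasup:alt}) and $\ol\gga_T^\supercrit=\gga_T^\supercrit+h\gs(\cdot/T)L(T)$, that $\ol\gga_T^\supercrit{}'(s)=\gs(s/T)+\tfrac{hL(T)}{T}(\gs')^+(s/T)$, so that
\[
\frac{(\ol\gga_T^\supercrit{}'(s))^2}{2\gs^2(s/T)}\;=\;\frac12\;+\;\frac{hL(T)}{T}\,\frac{(\gs')^+(s/T)}{\gs(s/T)}\;+\;\frac{h^2L(T)^2}{2T^2}\,\frac{((\gs')^+(s/T))^2}{\gs^2(s/T)}\,.
\]
The factor $e^{H_0(B)/2}$ in \eqref{eq:MtOGirs:R} then cancels exactly the contribution $\tfrac12\int_0^{H_0(B)}1\,\dd s=\tfrac{H_0(B)}{2}$ of the last integral, while the boundary term reads $-\tfrac{\ol\gga_T^\supercrit{}'(0)}{\gs(0)}(h-x)L(T)=-(h-x)L(T)+O(L(T)^2/T)$, which is the claimed leading term since $L(T)^2/T=o(L(T))$ in the super-critical regime.

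Next I would estimate the two remaining path-dependent pieces. On the survival event one has $0\le B_s\le h\gs(s/T)L(T)$, so for the integration-by-parts term $\int_0^{H_0(B)}\tfrac{\gs'(s/T)}{T\gs^2(s/T)}B_s\,\dd s$ I would split $\gs'=(\gs')^+-(\gs')^-$: the $(\gs')^-$ part is nonpositive, while the $(\gs')^+$ part is at most $\tfrac{hL(T)}{T}\int_0^{H_0(B)}\tfrac{(\gs')^+(s/T)}{\gs(s/T)}\,\dd s$, which is precisely cancelled by the linear term left over from the previous step. The leftover contributions — the quadratic term $\tfrac{h^2L(T)^2}{2T^2}\int((\gs')^+)^2/\gs^2$, the piece coming from differentiating $(\gs')^+/\gs^2$ (of order $L(T)^2/T$ after using $H_0(B)\le T$), and the $O(L(T)^2/T)$ in the boundary term — are all bounded in absolute value by $CL(T)^2/T$ with $C=C(h,\gh)$, using $\gs\in\gsens$. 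Since the exponent is thereby bounded above by $-(h-x)L(T)+CL(T)^2/T$ and the surviving indicator-weighted expectation is $\le1$, this yields $\bbE_{\gd_0}[R^\supercrit_T(0,T)]\le e^{-(h-x)L(T)+o(L(T))}$, with all constants depending only on $h$ and $\gh$, hence uniformly in $x\in[0,h]$ and $\gs\in\gsens$.

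The only delicate point — and what I expect to be the main obstacle — is precisely this bookkeeping: a priori the integration-by-parts term is of size $L(T)$, not $o(L(T))$, so one must notice that it is exactly compensated, once the one-sided constraint $B_s\le h\gs(s/T)L(T)$ is used, by the linear-in-$(\gs')^+$ correction hidden inside $\int(\ol\gga_T^\supercrit{}')^2/(2\gs^2)$. Once this cancellation is identified, everything else is a routine magnitude count, entirely parallel to the first-moment computations carried out in the proof of Proposition~\ref{prop:1stmom:supercrit}. An alternative, slightly less computational route would be to dominate $R^\supercrit_T(0,T)$ by a first-moment count of particles reaching a small neighbourhood of the top of the strip and invoke the upper bound \eqref{eq:prop:1stmom:supercrit:UB} of Proposition~\ref{prop:1stmom:supercrit} directly, but the change-of-measure computation above seems the most transparent.
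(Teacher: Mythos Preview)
Your proposal is correct and follows essentially the same route as the paper's proof: both start from \eqref{eq:MtOGirs:R}, compute $\ol\gga_T^\supercrit{}'(s)=\gs(s/T)+\tfrac{hL(T)}{T}(\gs')^+(s/T)$ via \eqref{eq:olggasup:alt}, cancel $e^{H_0(B)/2}$ against the leading $\tfrac12$ in the squared term, extract the boundary contribution $-(h-x)L(T)+O(L(T)^2/T)$, and then---exactly as you identify as the key point---split $\gs'=(\gs')^+-(\gs')^-$ so that the $(\gs')^+$ part of the integration-by-parts term, bounded using $B_s\le h\gs(s/T)L(T)$, cancels precisely against the linear-in-$L(T)$ piece of $\int(\ol\gga_T^\supercrit{}')^2/(2\gs^2)$, leaving only the nonpositive $(\gs')^-$ contribution and $O(L(T)^2/T)$ remainders. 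The paper writes out these steps a bit more explicitly but the argument is the same.
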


\begin{proof}
	Recall that we may write $\ol\gga^\supercrit_T$ as in~\eqref{eq:olggasup:alt}: in particular, one has for $s\in[0,T]$,
	\[
	(\ol\gga_T^\supercrit)'(s)=\gs(s/T)+h(\gs')^+(s/T)\frac{L(T)}T\,.
	\]
	Recollect~\eqref{eq:MtOGirs:R} from Lemma~\ref{lem:MtOGirs}. On the one hand, one has for $T$ sufficiently large, uniformly in $s\in[0,T]$ and $\gs\in\gsens$, that
	\[
	\left.\frac{\partial}{\partial u}\left(\frac{\ol\gga_T^\supercrit{}'(u)}{\sigma^2(u/T)}\right)\right|_{u=s} \;\geq\; -\frac1T\frac{\gs'(s/T)}{\gs^2(s/T)} - \frac{h\gh^{-7}L(T)}{T^2} \;,
	\]
	so, on the event $\{\forall s\leq H_0(B),\,B_s\leq h\gs(s/T)L(T)\}$, one obtains
	\[
	- \int_0^{H_0(B)} \left.\frac{\partial}{\partial u}\left(\frac{\ol\gga_T^\supercrit{}'(u)}{\sigma^2(u/T)}\right)\right|_{u=s} B_s \,\dd s\;\leq\;
	\frac1T \int_0^{H_0(B)} \frac{\gs'(s/T)}{\gs^2(s/T)} B_s \,\dd s + h^2\gh^{-8}\frac{L(T)^2}{T}.
	\]
	On the other hand, one has
	\[
	\int_0^{H_0(B)} \frac{(\ol\gga^\supercrit_T{}'(s))^2}{2\gs^2(s/T)}\dd s\;\geq\; \frac{H_0(B)}2 + h\frac{L(T)}T\int_0^{H_0(T)} \frac{(\gs')^+(s/T)}{\gs(s/T)}\,\dd s
	\,.\]
	Therefore,~\eqref{eq:MtOGirs:R} eventually yields
	\begin{align*}
		\bbE\left[R^\supercrit_T(0,T)\right]\;&\leq\; e^{-(h-x)L(T)+o(L(T))} \\
		&\quad\times \bE_{(h-x)\gs(0)L(T)}\bigg[ \ind_{\{H_0(B)\leq T\}}\ind_{\{\forall s\leq H_0(B),\,\frac{B_s}{\gs(s/T) L(T)}\in[0,h]\}}\\
		&\quad \times \exp\bigg( \frac1T \int_0^{H_0(B)} \frac{\gs'(s/T)}{\gs^2(s/T)} B_s \,\dd s - h\frac{L(T)}T\int_0^{H_0(T)} \frac{(\gs')^+(s/T)}{\gs(s/T)}\,\dd s \bigg) \bigg].
	\end{align*}
	Recall that $\gs'(u)=(\gs')^+(u)-(\gs')^-(u)$ for all $u\in[0,1]$. Thus one obtains on the event $\{\forall\, s\leq H_0(B),\,B_s\leq h\gs(s/T)L(T) \}$ that,
	\begin{align*}
		\bbE\left[R^\supercrit_T(0,T)\right]\;&\leq\; e^{-(h-x)L(T)+o(L(T))} \\
		&\qquad \times \bE_{(h-x)\gs(0)L(T)}\bigg[ \ind_{\{H_0(B)\leq T\}}\ind_{\{\forall s\leq H_0(B),\,\frac{B_s}{\gs(s/T) L(T)}\in[0,h]\}}\\
		&\qquad \times \exp\bigg( - \frac1T \int_0^{H_0(B)} \frac{(\gs')^-(s/T)}{\gs^2(s/T)} B_s \,\dd s \bigg) \bigg],
	\end{align*}
	and the latter expectation is bounded by 1, which concludes the proof.
\end{proof}

\subsection{Sub-critical case} \label{sec:moments:subcrit}
In this section we assume $\anyrg=\subcrit$, that is $1\ll L(T)\ll T^{1/3}$, and recall from~\eqref{def:gga:subcrit} and~\eqref{eq:defxh} that we defined the lower and upper barriers, for $t\in[0,T]$, by
\[
\gga_T^\subcrit(t)\;:=\;\sqrt{1-\frac{\pi^2}{h^2  L(T)^2}}\times v(t/T)\,T - x\,\gs(0) L(T)\;,
\]
and $\ol\gga_T^\subcrit(t):=\gga_T^\subcrit(t) +h\gs(t/T) L(T)$ for some $h>x>0$. Recall~\eqref{eq:defA:allregimes}: in particular, the set of descendants remaining between the barriers and reaching an interval $I\subset[0,h]$ at time $t$ is denoted by $A_{T,I}^\subcrit(t)$. 
Recall also~\eqref{eq:defgsens} and~\eqref{eq:defllbis}, where we fixed some $\gh>0$ and $\gth:\R_+\to\R^*_+$ such that $\gth(T)\to0$ as $T\to+\infty$ and~\eqref{eq:gthasymp} holds. 

\begin{proposition}[First moment, Sub-critical] \label{prop:1stmom:subcrit} 
	Let $h>0$. As $T\to+\infty$, one has
	\begin{equation}\label{eq:prop:1stmom:subcrit:general}
		\bbE\big[|A_{T,I}^\subcrit(t)|\big] \,\leq\, e^{(x-\inf I) L(T)+o(L(T))}\,,
	\end{equation}
	uniformly in $x\in[0,h]$, $I\subset[0,h]$ an interval, $\gs\in\gsens$, and $t=t(T)$ such that $0\leq t(T)\llbis \sqrt{TL(T)^3}$ for $T$ sufficiently large. Moreover, as $T\to+\infty$ one also has for every interval $I\subset [0,h]$ with non-empty interior,
	\begin{equation}\label{eq:prop:1stmom:subcrit:general:LB}
		\bbE\big[|A_{T,I}^\subcrit(t)|\big] \,\geq\, e^{(x-\inf I) L(T)+o(L(T))}\,,
	\end{equation}
	locally uniformly in $x\in(0,h)$, $\inf I\in[0,h)$ and $\sup I-\inf I\in(0,h]$; and uniformly in $\gs\in\gsens$ and $t=t(T)$ such that $L(T) \llbis t(T)\llbis \sqrt{TL(T)^3}$ for $T$ sufficiently large.
\end{proposition}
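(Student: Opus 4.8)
The plan is to reduce both inequalities, via the first‑moment identity~\eqref{eq:MtOGirs:A} applied to the sub‑critical barrier~\eqref{def:gga:subcrit}, to a survival/landing estimate for a \emph{killed Brownian motion}, and then to exploit the fact that $t\llbis\sqrt{TL(T)^3}$ (together with $L(T)^3\ll T$ in the sub‑critical regime) makes the barriers essentially straight lines and the variance essentially frozen on $[0,t]$.

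First I would specialize Lemma~\ref{lem:MtOGirs}. Writing $c=c(T):=\sqrt{1-\pi^2/(h^2L(T)^2)}$, one has $\gga_T^\subcrit{}'\equiv c\,\gs(\cdot/T)$, so~\eqref{eq:MtOGirs:A} collapses to
\[
\bbE\big[|A_{T,I}^\subcrit(t)|\big]= e^{\frac{\pi^2 t}{2h^2 L(T)^2}}\,e^{cxL(T)}\,\bE_{x\gs(0)L(T)}\Big[\ind_{E}\,\exp\Big(-\tfrac{cB_t}{\gs(t/T)}-\tfrac{c}{T}\int_0^t\tfrac{\gs'(s/T)}{\gs^2(s/T)}B_s\,\dd s\Big)\Big],
\]
where $E=\{\forall s\le t,\ B_s/(\gs(s/T)L(T))\in[0,h];\ B_t/(\gs(t/T)L(T))\in I\}$ (the prefactor $e^{t/2}$ cancels against $-\int_0^t(\gga_T^\subcrit{}')^2/(2\gs^2)=-c^2t/2$). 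On $E$ the integrand in the last exponential is $O(L(T))$, so that term is $O\big(tL(T)/T\big)=o(L(T))$ uniformly in $\gs\in\gsens$, $t$ and $I$ by~\eqref{eq:gthasymp}; moreover $cL(T)=L(T)+O(1/L(T))$, so $c$ may be replaced by $1$ in all exponents at a cost $o(1)$. Next, since $t\llbis\sqrt{TL(T)^3}$ and $L(T)^3\ll T$, one has $s/T\le\rho_T\to0$ for all $s\le t$, so by~\eqref{eq:gsapprox} the time‑inhomogeneous strip $[0,h\gs(s/T)L(T)]$ lies between two constant‑width strips of widths $h\gs(0)(1\mp\delta_T)L(T)$ with $\delta_T=O(t/T)\to0$, and $J(t)=\int_0^t\gs^2(s/T)\,\dd s=\gs(0)^2t\,(1+O(\delta_T))$.

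For the \emph{upper bound} I would bound $e^{-cB_t/\gs(t/T)}\le e^{-c\,\inf I\,L(T)}$ on $E$, drop the endpoint constraint, enlarge the strip to the homogeneous one of width $h\gs(0)(1+\delta_T)L(T)$, pass to the time‑changed Brownian motion via~\eqref{eq:BM:timechange}, use $J(t)\ge\gs(0)^2(1-\delta_T)^2t$, rescale to a Brownian motion in $[0,1]$, and invoke (from Lemma~\ref{lem:mai16}, by a straightforward summation of the series) that such a motion stays in $[0,1]$ up to time $\tau$ with probability at most $C_0\,e^{-\pi^2\tau/2}$. The surviving exponent is then $\tfrac{\pi^2 t}{2h^2L(T)^2}\big(1-\tfrac{(1-\delta_T)^2}{(1+\delta_T)^2}\big)=O\big(t^2/(L(T)^2 T)\big)=o(L(T))$ by the hypothesis on $t$, giving $\bbE[|A_{T,I}^\subcrit(t)|]\le e^{(x-\inf I)L(T)+o(L(T))}$, with all errors depending only on $\gh$ and $h$. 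For the \emph{lower bound} I would fix a small $\eps>0$ compatible with the local‑uniformity ranges, restrict the endpoint to $I_f:=[\inf I,\inf I+\eps]\cap I$ (which, using $\inf I\le h-\eps_0$, fits inside the shrunken strip for $T$ large), bound $e^{-cB_t/\gs(t/T)}\ge e^{-c(\inf I+\eps)L(T)}$, shrink the strip to the homogeneous one of width $h\gs(0)(1-\delta_T)L(T)$, and reduce as above to a Brownian motion in $[0,1]$ started and ended at points bounded away from $\{0,1\}$, run for time $t'=\tfrac{t}{h^2L(T)^2}(1+O(\delta_T))$. When $t'$ is bounded below one uses the landing estimate of Lemma~\ref{lem:mai16} ($\ge c\,e^{-\pi^2 t'/2}$); when $t'\to0$ one uses instead an elementary small‑time lower bound on the density of Brownian motion killed outside $[0,1]$. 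In both cases, after multiplying by $e^{\pi^2 t/(2h^2L(T)^2)}$ the lower bound is $e^{-O(t^2/(L(T)^2T))-O(L(T)^2/t)-O(\log L(T))}=e^{-o(L(T))}$, the term $O(L(T)^2/t)$ — the cost of the net displacement of order $L(T)$ from $x$ to $\inf I$ — being $o(L(T))$ precisely because $L(T)\llbis t$. Since $\eps$ is arbitrary this yields $\bbE[|A_{T,I}^\subcrit(t)|]\ge e^{(x-\inf I)L(T)+o(L(T))}$, locally uniformly as claimed.

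The main obstacle is the matched treatment of the factor $e^{+\pi^2 t/(2h^2L(T)^2)}$: by itself it grows faster than $e^{o(L(T))}$ as soon as $t\gg L(T)^3$, so it must be cancelled against the strip‑survival probability $e^{-\pi^2 t/(2h^2L(T)^2)(1+o(1))}$, and keeping the residual discrepancy at $o(L(T))$ is exactly what pins the admissible window down to $t\llbis\sqrt{TL(T)^3}$ (the Girsanov contribution $O(tL(T)/T)$ being of strictly smaller order). A secondary technical point is that the landing probability must be estimated uniformly while $t'\asymp t/L(T)^2$ ranges over both $o(1)$ and $\omega(1)$, forcing one to patch Lemma~\ref{lem:mai16} (whose quantitative form is for $t'\to\infty$) with a small‑time heat‑kernel bound.
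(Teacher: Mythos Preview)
Your proposal is correct and follows essentially the same route as the paper: specialize the first-moment formula~\eqref{eq:MtOGirs:A} using $\gga_T^\subcrit{}'/\gs=c$ constant, absorb the Girsanov drift term as $o(L(T))$, and reduce to survival/landing estimates for Brownian motion killed outside a strip of width $h\gs(0)L(T)(1\pm\delta_T)$ via time-change, then apply Lemma~\ref{lem:mai16} and check that the residual mismatch in the exponent is $O(t^2/(L(T)^2T))=o(L(T))$. The paper's lower bound splits according to $t\gtrless K L(T)^2$ and uses a shrinking landing window $\eps_T$ together with an explicit Brownian-bridge argument for the short-time case, whereas you use a fixed $\eps$ and invoke a small-time heat-kernel bound; these are equivalent implementations of the same idea.
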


This proposition should be compared with Proposition~\ref{prop:1stmom:supercrit} for the super-critical regime. The definitions of ``uniformly'' and ``locally uniformly'' are the same as in Remark~\ref{rem:1stmom}. 
\begin{remark} The upper bound on $t$ in the hypotheses of Proposition~\ref{prop:1stmom:subcrit} can be improved. However, in contrast to the analogous results in the super-critical and critical regimes, in general we do not expect the estimates from Proposition~\ref{prop:1stmom:subcrit} to hold for all $t\le T$ anyway, at least not if $L(T)$ is sufficiently small. For this reason, the proofs of Propositions~\ref{prop:main:LB} and~\ref{prop:main:UB} in Sections~\ref{sec:LB} and~\ref{sec:UB} below will require additional arguments in the sub-critical regime.
\end{remark}

\begin{proof} 
	Recall Lemma~\ref{lem:MtOGirs}, in particular~\eqref{eq:MtOGirs:A}. Notice that~\eqref{def:gga:subcrit} implies that for all $s\in[0,T]$, $T\geq0$,
	\[
	\frac{\gga^\subcrit_T{}'(s)}{\gs(s/T)}\,=\, \sqrt{1-\frac{\pi^2}{h^2  L(T)^2}}\,,
	\]
	which does not depend on $s$. On the one hand, this yields for all $s\in[0,T]$,
	\begin{equation}\label{eq:subcrit:1stmom:gga1}
		\frac{\big(\gga_T^\subcrit{}'(s)\big)^2}{2\,\gs^2(s/T)}
		= \frac{1}{2} - \frac{\pi^2}{2h^2L(T)^2} \,.
	\end{equation}
	On the other hand, on the event $\{\frac{B_s}{\gs(s/T)L(T)}\in[0,h],\forall s\leq t\}$, one has
	\begin{equation}\label{eq:subcrit:1stmom:gga2}
		\int_0^t \left.\frac{\partial}{\partial u}\left(\frac{\gamma_T^\subcrit{}'(u)}{\sigma^2(u/T)}\right)\right|_{u=s} B_s\, \mathrm{d}s\,\leq\, \frac{\gh^{-3}}T\times  h\gh^{-1}L(T) \times \gth(T) \sqrt{TL(T)^3}\,=\,o(L(T))\,,
	\end{equation}
	uniformly in $t\llbis\sqrt{TL(T)^3}$ and $\gs\in\gsens$. Therefore,~\eqref{eq:MtOGirs:A} becomes
	\begin{align}\label{eq:subcrit:1stmom:2}
		&\bbE\big[|A_{T,I}^\subcrit(t)|\big] = \exp\left(\frac{\pi^2}{2h^2}\frac{t}{L(T)^2}\,+\,o(L(T))\right) \\
		\notag &\qquad\qquad \times \bE_{x\gs(0) L(T)}\Bigg[\exp\left(-\frac{B_t}{\gs(t/T)}+x L(T)\right)\ind_{\left\{ \forall s\leq t,\, \frac{B_s}{\gs(s/T) L(T)}\in[0,h] \,;\, \frac{B_t}{\gs(t/T)L(T)}\in I\right\}}\Bigg].
	\end{align}
	We now focus on the latter expectation. It can be bounded from above by
	\begin{equation}\label{eq:subcrit:1stmom:2:UB}
		e^{(x-\inf I) L(T)} \, \bP_{x\gs(0) L(T)}\bigg(\forall s\leq t,\, \tfrac{B_s}{\gs(s/T) L(T)}\in[0,h]\bigg) ;
	\end{equation}
	and, letting any $\eps_T\to0$ as $T\to+\infty$ and adding the constraint $\frac{B_t}{\gs(t/T)L(T)}\leq \inf I+\eps_T$, it can be bounded from below by
	\begin{equation}\label{eq:subcrit:1stmom:2:LB}
		e^{(x-\eps_T-\inf I) L(T)} \, \bP_{x\gs(0) L(T)}\bigg( \forall s\leq t,\,\tfrac{B_s}{\gs(s/T) L(T)}\in[0,h] \,;\, \tfrac{B_t}{\gs(t/T)L(T)}\in (\inf I,\inf I+\eps_T)\bigg) .
	\end{equation}
	Therefore, writing $z:=\inf I$ to lighten notation, both statements of the proposition are obtained by showing that~(\ref{eq:subcrit:1stmom:2:UB}--\ref{eq:subcrit:1stmom:2:LB}) are of order $\exp((x-z)L(T)-\frac{\pi^2}{2h^2}\frac t{L(T)^2}+o(L(T)))$. Once again, we achieve this via a comparison with the standard, time-homogeneous Brownian motion $(W_s)_{s\geq0}$. In the following, recall Lemma~\ref{lem:mai16}.
	\smallskip
	
	\noindent\emph{Upper bound.} 
	Using~\eqref{eq:gsapprox}, the Brownian scaling property and a time change, we have
	\[\begin{aligned}&\bP_{x\gs(0)L(T)}\Big(B_s\in[0,h\,\gs(s/T) L(T)], \,\forall s\leq t\Big)\\&\qquad\leq \bP_{x\gs(0)L(T)}\Big(B_s\in[0,h\,(\gs(0)+\gh^{-2}t/T) L(T)], \,\forall s\leq t\Big) \\
		&\qquad= \bP_{x\frac{\gs(0)}{\gs(0)+\gh^{-2}t/T}}\Big(B_s\in[0,h], \,\forall s\leq \tfrac{t}{L(T)^2(\gs(0)+\gh^{-2}t/T)^2}\Big) \\
		&\qquad= \bP_{x\frac{\gs(0)}{\gs(0)+\gh^{-2}t/T}}\Big(W_s\in[0,h], \,\forall s\leq \tfrac{t}{L(T)^2(\gs(0)+\gh^{-2}t/T)^2} \smallint_0^{t} \gs^2(u/T)\dd u\Big)\\
		&\qquad\leq \bP_{x\frac{\gs(0)}{\gs(0)+\gh^{-2}t/T}}\Big(W_s\in[0,h], \,\forall s\leq \tfrac{t}{L(T)^2}\tfrac{(\gs(0)-\gh^{-2}t/T)^2}{(\gs(0)+\gh^{-2}t/T)^2}\Big)\,. \end{aligned}\]
	Then,~\eqref{eq:gsapprox} implies that, for $T$ sufficiently large, $\tfrac{(\gs(0)-\gh^{-2}t/T)^2}{(\gs(0)+\gh^{-2}t/T)^2}\geq\tfrac12$ uniformly in $\gs\in\gsens$ and $t\llbis\sqrt{TL(T)^3}$. Moreover
	Lemma~\ref{lem:mai16} implies that there exists a constant $K>0$ such that,
	\[
	\forall \, t'\geq K,\qquad \sup_{y\in[x/2,x]} \bP_y(\forall s\leq t',\,W_s\in[0,h])\,\leq\, \frac8\pi \exp\left( -\frac{\pi^2}{2h^2} t' \right)\,,
	\]
	where we also used that $\int_0^h \sin(\pi z/h)\dd z = 2h/\pi$.
	For any $t$ such that $t\geq 2KL(T)^2$, this yields
	\begin{align}\label{eq:subcrit:1stmom:3}
		\notag & \bP_{x}\Big(\forall s\leq t /L(T)^2,\,W_s\in[0,h\,\gs(t/T)/\gs(0)]\Big)\\
		&\qquad \leq\frac{8}{\pi} \exp\bigg(-\frac{\pi^2}{2h^2}\frac{t}{L(T)^2}\tfrac{(\gs(0)-\gh^{-2}t/T)^2}{(\gs(0)+\gh^{-2}t/T)^2}\bigg)\\
		&\qquad \leq \frac{8}{\pi} \exp\bigg(-\frac{\pi^2}{2h^2}\frac{t}{L(T)^2} +\frac{2\pi^2\gh^{-3}}{h^2}\frac{t^2}{L(T)^2T} \bigg)
		\,,\end{align}
	and, by assumption, we have $\frac{t^2}{L(T)^2T}\leq \gth(T)^{2}L(T)=o(L(T))$ for $T$ large. On the other hand if $t\leq 2KL(T)^2$, then one may write $|\frac{\pi^2}{2h^2}\frac{t}{L(T)^2}|\leq \frac{K\pi^2}{h^2}$ in~\eqref{eq:subcrit:1stmom:2}, and the probability in~\eqref{eq:subcrit:1stmom:2:UB} is bounded by 1. Finally, taking the maximum of this and~\eqref{eq:subcrit:1stmom:3}, we obtain an upper bound which is uniform in $\gs\in\gsens$ and $t(T)$ such that $0\leq t\llbis \sqrt{TL(T)^3}$.
	\smallskip
	
	\noindent\emph{Lower bound.}  
	Similarly to the upper bound, we have to distinguish the cases $t$ smaller or larger than $KL(T)^2$, for some constant $K$ which is determined in~\eqref{eq:subcrit:1stmom:4quad} below. 
	\smallskip
	
	\noindent\emph{Case $t(T)\geq KL(T)^2$.} 
	Recall that we want to bound from below the expression in \eqref{eq:subcrit:1stmom:2:LB}.
	Let $\eps_T$ such that $1\gg \eps_T\gg \max(\gth(T)\sqrt{L(T)^3T^{-1}},L(T)^{-1})$ as $T\to+\infty$. For $T$ sufficiently large, this and~\eqref{eq:gsapprox} imply $z+\eps_T<h$ and $[z+\eps_T/3, z+2\eps_T/3]\gs(0) \subset [z,z+\eps_T]\gs(t/T)$ uniformly in $\gs\in\gsens$ and locally uniformly in $z$. Thus we have the lower bound
	\begin{equation}\label{eq:subcrit:1stmom:4}
		\begin{aligned}
			&\bP_{x\gs(0)L(T)}\bigg(\forall s\leq t,\,B_s\in[0,h\,\gs(s/T)L(T)]\,;\, B_t\in [z,z+\eps_T]\gs(t/T)L(T)\bigg)\\
			&\quad\geq\; \bP_{x\gs(0)L(T)}\bigg(\forall s\leq t,\,B_s\in[0,h\,\gs(0)L(T)]\,;\, B_t\in [z+\eps_T/3,z+2\eps_T/3]\gs(0)L(T)\bigg).\\
		\end{aligned}
	\end{equation}
	Define
	\begin{equation}\label{eq:subcrit:1stmom:4bis}
		\tau=\tau(t):=\frac1{\gs^2(0)L(T)^2}\int_0^{t}\gs^2(u/T)\,\dd u\,,
	\end{equation}
	in particular~\eqref{eq:gsapprox} and $t\llbis\sqrt{TL(T)^3}$ imply that
	\begin{align*}
		\Big|\tau-\frac{t}{L(T)^2}\Big|
		\,=\, \frac{T}{L(T)^2}\left|\int_0^{t/T}\left(\frac{\gs(u)^2}{\gs(0)^2}-1\right)\,\dd u\right| \,\le \, \frac{T}{L(T)^2} \int_0^{t/T} u\times  2\sigma'(u)\sigma(u)\,du \,&\\
		\,\le\, \frac{T}{L(T)^2}\eta^{-2}\left(\frac{t}{T}\right)^2=\, O\left(\gth(T)^2L(T)\right),&
	\end{align*}
	so that we have
	\begin{equation}
	    \label{eq:subcrit:1stmom:4bis:ineq}
	   	\Big|\tau-\frac{t}{L(T)^2}\Big| = o(L(T)), 
	\end{equation}
	uniformly in $\gs\in\gsens$ and $t$. Using the Brownian scaling property and a time change, we have
	\begin{align}\label{eq:subcrit:1stmom:4ter}
		\notag &\bP_{x\gs(0)L(T)}\bigg(\forall s\leq t,\,B_s\in[0,h\,\gs(0)L(T)]\,;\, B_t\in [z+\eps_T/2,z+\eps_T]\gs(0)L(T)\bigg)\\
		&\qquad=\;\bP_{x}\Big(\forall s\leq \tau,\,W_s\in[0,h]\,;\, W_{\tau}\in [z+\eps_T/3,z+2\eps_T/3]\Big).
	\end{align}
	By Lemma~\ref{lem:mai16}, there exists a large constant $K>0$ such that, for $T$ sufficiently large, if $t\geq KL(T)^2$ then $\tau\geq K/2$, and,
	\begin{align}\label{eq:subcrit:1stmom:4quad}
		\notag&\bP_{x}\Big(\forall s\leq \tau,\,W_s\in[0,h]\,;\, W_{\tau}\in [z+\eps_T/3,z+2\eps_T/3]\Big) \\
		&\qquad\geq\; \frac1h \exp\Big(-\frac{\pi^2}{2h^2}\tau\Big)\sin\Big(\frac{\pi x}{h}\Big) \int_{z+\eps_T/3}^{z+2\eps_T/3} \sin\Big(\frac{\pi y}{h}\Big) \dd y\,.
	\end{align}
	Recalling $\eps_T\gg L(T)^{-1}$ and that $\sin(\pi w)\geq \frac12 (w\wedge 1-w)$, $\forall\,w\in[0,1]$, we have as soon as $T$ is sufficiently large,
	\begin{equation}\label{eq:subcrit:1stmom:4quad:sinus}
		\int_{z+\eps_T/3}^{z+2\eps_T/3} \sin\Big(\frac{\pi y}{h}\Big) \dd y \geq \frac{\eps_T}6\left(\frac{z+\eps_T/3}h\wedge \Big(1-\frac{z+2\eps_T/3}h\Big)\right) \geq \frac{\eps_T^2}{18h} = \exp(o(L(T))).
	\end{equation}
	Plugging this and~\eqref{eq:subcrit:1stmom:4bis:ineq} into~\eqref{eq:subcrit:1stmom:4quad}, this finally yields the lower bound uniformly in $t\geq KL(T)^2$ and $\gs\in\gsens$.
	\smallskip
	
	\noindent\emph{Case $t(T)\leq KL(T)^2$.} 
	Let $\eps_T$ which satisfies $L(T)^{-1}\ll\eps_T\ll L(T)^{-\frac12}$. Reproducing the computation~(\ref{eq:subcrit:1stmom:4}--\ref{eq:subcrit:1stmom:4ter}) with that choice of $\eps_T$,
	we then only need to prove that \eqref{eq:subcrit:1stmom:4ter} is larger than $\exp(o(L(T)))$ (since one has $\frac{\pi^2}{2h^2}\frac{t}{L(T)^2}\geq0$ in~\eqref{eq:subcrit:1stmom:2}). Using standard computations on the Gaussian density, one has
	\begin{equation}\label{eq:subcrit:1stmom:6}
		\bP_x\big(W_{\tau}\in [z+\eps_T/3,z+2\eps_T/3]\big) \,\geq\, \frac{\eps_T}{3}\frac{1}{\sqrt{2\pi \tau}} \exp\left(-\frac{(|z-x|+\eps_T)^2}{2\tau}\right)\geq \exp(o(L(T)))\,,
	\end{equation}
	where the second inequality follows from the observation that~\eqref{eq:gsapprox},~\eqref{eq:subcrit:1stmom:4bis} and $t\ggbis L(T)$ imply $\tau^{-1}\leq 2\gth(T)L(T)=o(L(T))$ for $T$ sufficiently large. Moreover, we claim that
	\begin{equation}\label{eq:subcrit:1stmom:6:claim}
		\liminf_{T\to+\infty} \bP_x\Big(\forall s\leq \tau,\,W_s\in[0,h]\,\Big|\, W_{\tau}\in [z+\eps_T/3,z+2\eps_T/3]\Big) \;>\;0\,,
	\end{equation}
	locally uniformly in $x,z$. With this, one only needs to take the minimum between~\eqref{eq:subcrit:1stmom:4quad} and~(\ref{eq:subcrit:1stmom:6}--\ref{eq:subcrit:1stmom:6:claim}), then plug it into~\eqref{eq:subcrit:1stmom:2:LB}, to obtain the announced lower bound uniformly in $t(T)$ and $\gs(\cdot)$.
	
	Let us now prove~\eqref{eq:subcrit:1stmom:6:claim}. We write that $t\leq KL(T)^2$ implies $\tau\leq 2K$ for $T$ large, and
	\begin{equation}\label{eq:subcrit:1stmom:7}\begin{aligned}
			&\bP_x\Big(\forall s\leq \tau,\,W_s\in[0,h]\,\Big|\, W_{\tau}\in [z+\tfrac{\eps_T}3,z+\tfrac{2\eps_T}3]\Big)\\
			&\;\geq \bP_x\Big(\forall u\leq 1,\,\tau^{-\frac12} \big(W_{u\tau}-uz-(1-u)x\big)\in[-\tfrac{x\wedge z}{\sqrt{2K}},\tfrac{h-x\vee z}{\sqrt{2K}}]\,\Big|\, W_{\tau}\in [z+\tfrac{\eps_T}3,z+\tfrac{2\eps_T}3]\Big)\,.
		\end{aligned}
	\end{equation}
	Moreover,~\eqref{eq:subcrit:1stmom:4bis} implies $\tau\geq (2\gth(T)L(T))^{-1}$ for $T$ large, so $\tau^{-\frac12}\eps_T \to0$ as $T\to+\infty$. Therefore, the Gaussian process $(\tau^{-\frac12} (W_{u\tau}-uz-(1-u)x))_{u\in[0,1]}$ conditioned to $W_0-x=0$, $W_{\tau}-z\in [\eps_T/3,2\eps_T/3]$ converges in law to a standard Brownian bridge $(X^{0,0}_u)_{u\in[0,1]}$ as $T\to+\infty$ (see~\cite[Sect. 9]{Bil99}); more precisely, the r.h.s of~\eqref{eq:subcrit:1stmom:7} converges to $\bP(X^{0,0}_u\in[-\tfrac{x\wedge z}{\sqrt{2K}},\tfrac{h-x\vee z}{\sqrt{2K}}], \forall u\leq 1)>0$ as $T\to+\infty$, and this convergence is uniform in $L(T)\llbis t\leq KL(T)^2$. Since the latter probability is positive, this concludes the proof.
\end{proof}

We now provide an upper bound on the second moment of $|A_{T,I}^\subcrit(t)|$ when $I=[z,h]$ for some $z\in[0,h]$. 
\begin{remark}Let us point out that, in contrast to the super-critical case (recall Proposition~\ref{prop:2ndmom:supercrit}), the statement below involves an error factor $O(t)$: in general one cannot guarantee for all $t\in[0,\sqrt{TL(T)^3}]$ that $t\leq e^{o(L(T))}$ in the sub-critical regime, especially when $L(T)$ grows very slowly in $T$. However this will not be an issue in this paper, since in Sections~\ref{sec:LB}--\ref{sec:UB} we shall consider values $t=t(T)$ which are at most polynomial in $L(T)$.
\end{remark}

\begin{proposition}[Second moment, Sub-critical] \label{prop:2ndmom:subcrit} Let $h>0$. As $T\to+\infty$, one has
	\begin{equation}
		\bbE\big[|A_{T,z}^\subcrit(t)|^2\big] \;\leq\;e^{(x+h-2z) L(T) +o(L(T)) } \times O(t)\,,
	\end{equation}
	uniformly in $x\in[0,h]$, $z\in[0,h]$, $\gs\in\gsens$ and $t=t(T)$ such that $0\leq t(T)\llbis \sqrt{TL(T)^3}$ for $T$ sufficiently large.
\end{proposition}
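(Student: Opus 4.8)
The plan is to follow the proof of Proposition~\ref{prop:2ndmom:supercrit} almost verbatim, simply replacing the super-critical first-moment bound by its sub-critical counterpart~\eqref{eq:prop:1stmom:subcrit:general}. The one structural difference is that in the sub-critical regime the factor $t$ coming from the time integration cannot be absorbed into $e^{o(L(T))}$ (since $t$ may be polynomially large in $T$ while $L(T)$ grows only slowly), which is precisely why the statement carries an explicit $O(t)$.

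First I would apply the Many-to-two lemma (Lemma~\ref{lem:MtT}), which gives
\[
\bbE\big[|A_{T,z}^\subcrit(t)|^2\big] - \bbE\big[|A_{T,z}^\subcrit(t)|\big] = \gb_0\bbE[\xi(\xi-1)] \int_0^t \dd s \int_0^{h} G^\subcrit(x,y,0,s) \bigg(\int_{z}^{h} G^\subcrit(y,w,s,t) \, \dd w\bigg)^2 \dd y \,.
\]
As in the super-critical case, the branching Markov property identifies $\int_z^h G^\subcrit(y,w,s,t)\,\dd w$ with $\bbE_{\gd_0}[|\tilde A_{T,z}^\subcrit(t-s)|]$, where $\tilde A_{T,z}^\subcrit$ is defined like $A_{T,z}^\subcrit$ but with starting parameter $y$ in place of $x$ and with a suitably shifted variance profile (which, after extension to $[0,1]$, still lies in $\gsens$ up to enlarging $\gh$); likewise $\int_{I}G^\subcrit(x,y,0,s)\,\dd y=\bbE_{\gd_0}[|A_{T,I}^\subcrit(s)|]$ for any interval $I\subset[0,h]$, recall~\eqref{eq:A_G}. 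Since $0\le s\le t$ and $0\le t-s\le t$ with $t\llbis\sqrt{TL(T)^3}$, the first-moment upper bound~\eqref{eq:prop:1stmom:subcrit:general} applies to all of these uniformly in $\gs\in\gsens$, $x,z\in[0,h]$ and $t$, yielding $\int_z^h G^\subcrit(y,w,s,t)\,\dd w\le e^{(y-z)L(T)+o(L(T))}$.

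Next I would carry out the Riemann-type estimate. Split $[0,h]$ into $K$ intervals of length $h/K$; on the $i$-th one bound $e^{2(y-z)L(T)}\le e^{2((i+1)h/K-z)L(T)}$ and, by~\eqref{eq:prop:1stmom:subcrit:general}, $\int_{ih/K}^{(i+1)h/K}G^\subcrit(x,y,0,s)\,\dd y\le e^{(x-ih/K)L(T)+o(L(T))}$, so that, using $ih/K\le h$, the $i$-th contribution to $\int_0^h G^\subcrit(x,y,0,s)e^{2(y-z)L(T)}\,\dd y$ is at most $e^{2h/K\,L(T)}\,e^{(x+h-2z)L(T)+o(L(T))}$. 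Summing over the $K$ intervals, integrating over $s\in[0,t]$, and adding the contribution of $\bbE[|A_{T,z}^\subcrit(t)|]\le e^{(x-z)L(T)+o(L(T))}\le e^{(x+h-2z)L(T)+o(L(T))}$ (valid since $z\le h$), one obtains
\[
\bbE\big[|A_{T,z}^\subcrit(t)|^2\big] \le O(t)\,K\,e^{2h/K\,L(T)}\,e^{(x+h-2z)L(T)+o(L(T))}\,.
\]
Finally, for every $\eps>0$ one picks $K=K_\eps>2h/\eps$, so that $K_\eps\,e^{2h/K_\eps\,L(T)}=e^{\eps L(T)+o(L(T))}$, and lets $\eps\to0$; this gives the claimed bound $\bbE[|A_{T,z}^\subcrit(t)|^2]\le e^{(x+h-2z)L(T)+o(L(T))}\,O(t)$, uniformly in $\gs\in\gsens$, $x,z\in[0,h]$ and $t\llbis\sqrt{TL(T)^3}$.

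I expect the only delicate point to be bookkeeping rather than analysis: all the analytic content is already in Proposition~\ref{prop:1stmom:subcrit}, and the task here is to check that every invocation of~\eqref{eq:prop:1stmom:subcrit:general} stays within its range of validity with the stated uniformity --- in particular that the shifted (and extended) variance profile still belongs to some $\gsens$ with a controlled constant, and that the shifted time $t-s\le t$ still satisfies $t-s\llbis\sqrt{TL(T)^3}$ with the same vanishing function $\gth(\cdot)$, so that all the $o(L(T))$ terms combine into a single error term independent of $s$, $\gs$, $x$ and $z$.
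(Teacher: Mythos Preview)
Your proposal is correct and follows essentially the same approach as the paper: reproduce the proof of Proposition~\ref{prop:2ndmom:supercrit} using the sub-critical first-moment upper bound~\eqref{eq:prop:1stmom:subcrit:general} in place of~\eqref{eq:prop:1stmom:supercrit:UB}, split $[0,h]$ into $K$ pieces, and let $K\to\infty$ while keeping the $t$ factor explicit. The paper's own proof is a terse two-line version of exactly this argument, and your identification of the bookkeeping caveats (shifted variance profile remaining in some $\gsens$, $t-s\le t\llbis\sqrt{TL(T)^3}$) is accurate.
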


\begin{proof}
	This proposition is very similar to Proposition~\ref{prop:2ndmom:supercrit}. Reproducing all arguments from its proof but using~\eqref{eq:prop:1stmom:subcrit:general} instead of~\eqref{eq:prop:1stmom:supercrit:UB}, one obtains for $K\in\N$ and $T$ sufficiently large,
	\[
	\bbE\big[|A_{T,z}^\subcrit(t)|^2\big] - \bbE\big[|A_{T,z}^\subcrit(t)|\big] \;\leq\; e^{2\frac hKL(T) + o(L(T))}\times t\times K\times e^{(x+h-2z)L(T)},
	\]
	uniformly in $x,z\in[0,h]$, $\gs\in\gsens$ and $0\leq t\llbis \sqrt{TL(T)^3}$. Taking $K$ large, this yields the expected result.
\end{proof}

We conclude this section with an estimate on the number of particles killed at the upper barrier. Recall the definition of $R^\subcrit_T(s,t)$, $0\leq s\leq t\leq T$ from~\eqref{eq:defR}.

\begin{proposition}[Killed particles, Sub-critical] \label{prop:killedpart:subcrit} Let $h>0$. Then as $T\to+\infty$, one has
	\begin{equation}
		\bbE\left[R^\subcrit_T(0,t)\right]\,\leq\, e^{-(h-x)L(T) + o(L(T))}\times O\big(tL(T)^{-2}\vee 1\big)\;,
	\end{equation} 
	uniformly in $x\in[0,h]$, $\gs\in\gsens$, and $t=t(T)$ such that $0\leq t\llbis \sqrt{TL(T)^3}$ for $T$ sufficiently large. 
\end{proposition}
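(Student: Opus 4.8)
The plan is to mirror the proof of Proposition~\ref{prop:killedpart:supercrit}, paying attention to the one extra term produced by the sub-critical geometry.

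First I would apply the first moment formula~\eqref{eq:MtOGirs:R} of Lemma~\ref{lem:MtOGirs} with $\anyrg=\subcrit$. From~\eqref{def:gga:subcrit}--\eqref{eq:defxh} one computes $\ol\gga_T^\subcrit{}'(s)=\sqrt{1-\pi^2/(h^2L(T)^2)}\,\gs(s/T)+h\gs'(s/T)L(T)/T$, so that $\ol\gga_T^\subcrit{}'(s)/\gs(s/T)=\sqrt{1-\pi^2/(h^2L(T)^2)}+O(L(T)/T)$, and therefore
\[
\frac{(\ol\gga_T^\subcrit{}'(s))^2}{2\gs^2(s/T)}=\frac12-\frac{\pi^2}{2h^2L(T)^2}+O(L(T)/T),\qquad \frac{\ol\gga_T^\subcrit{}'(0)}{\gs(0)}(h-x)L(T)=(h-x)L(T)+o(1).
\]
As in~\eqref{eq:subcrit:1stmom:gga2}, on the event $\{\forall s\le H_0(B):\,B_s\in[0,h\gs(s/T)L(T)]\}\cap\{H_0(B)\le t\}$ with $t\llbis\sqrt{TL(T)^3}$, the $B_s$-integral term appearing in~\eqref{eq:MtOGirs:R}, as well as the remaining $\gs'$-contributions above, are $o(L(T))$, uniformly in $\gs\in\gsens$ (this is where $L(T)\ll T^{1/3}$ enters). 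Collecting these estimates, the factor $e^{H_0(B)/2}$ in~\eqref{eq:MtOGirs:R} is almost entirely cancelled and only the residual growth $\exp(\pi^2H_0(B)/(2h^2L(T)^2))$ survives, so that
\[
\bbE[R^\subcrit_T(0,t)]\ \le\ e^{-(h-x)L(T)+o(L(T))}\,\bE_{(h-x)\gs(0)L(T)}\Big[e^{\frac{\pi^2H_0(B)}{2h^2L(T)^2}}\,\ind_{\{H_0(B)\le t\}}\,\ind_{\{\forall s\le H_0(B):\,B_s\in[0,h\gs(s/T)L(T)]\}}\Big].
\]

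Next I would bound the remaining expectation by a block decomposition over time windows of length $L(T)^2$. Split the event $\{H_0(B)\le t\}$ according to $H_0(B)\in((k-1)L(T)^2,\,kL(T)^2\wedge t]$ for $k=1,\dots,\lceil t/L(T)^2\rceil$. On the $k$-th block one has $e^{\pi^2H_0(B)/(2h^2L(T)^2)}\le e^{\pi^2k/(2h^2)}$, and, after dropping the upper cutoff on $H_0(B)$, its contribution is at most $e^{\pi^2k/(2h^2)}\,\bP_{(h-x)\gs(0)L(T)}\big(\forall s\le(k-1)L(T)^2:\,B_s\in[0,h\gs(s/T)L(T)]\big)$. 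For the $O(1)$ smallest values of $k$ (for which $(k-1)L(T)^2$ is not large compared with the squared band width $\asymp h^2L(T)^2$) I bound this probability by $1$, giving an $O(1)$ contribution. For the remaining $k$, a Brownian time change together with~\eqref{eq:gsapprox} (so that both the band width and the clock $J(\cdot)$ at times $s\le(k-1)L(T)^2$ are within a factor $1+O(t/T)$ of their values at $s=0$) and Lemma~\ref{lem:mai16}, exactly as in~\eqref{eq:subcrit:1stmom:3}, give that the probability is at most $\tfrac{8}{\pi}\exp\big(-\pi^2(1-O(t/T))(k-1)/(2h^2)\big)$, whence the $k$-th block contributes at most $\tfrac{8}{\pi}\exp\big(\pi^2/(2h^2)+O(tk/T)\big)$. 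Since $k\le\lceil t/L(T)^2\rceil$ and $t\llbis\sqrt{TL(T)^3}$ with $L(T)\ll T^{1/3}$, one checks $tk/T=o(L(T))$, so each block contributes $e^{o(L(T))}$ uniformly in $k$. Summing the $\lceil t/L(T)^2\rceil$ blocks yields $\bE[\cdots]\le O(tL(T)^{-2}\vee1)\,e^{o(L(T))}$, and combining with the prefactor gives the statement, all bounds being uniform in $x\in[0,h]$, $\gs\in\gsens$ and $t\llbis\sqrt{TL(T)^3}$ as required.

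The main obstacle---and what makes this statement genuinely different from Proposition~\ref{prop:killedpart:supercrit}---is the residual growth factor $e^{\pi^2H_0(B)/(2h^2L(T)^2)}$: it does not appear in the super-critical case, where the barrier advances at the natural speed $\gs(\cdot/T)$ to leading order, whereas the sub-critical barrier advances at the slightly slower speed $\sqrt{1-\pi^2/(h^2L(T)^2)}\,\gs(\cdot/T)$. The crucial point is that this growth is \emph{exactly} compensated, at the level of exponential rates, by the confinement probability of a Brownian motion in a band of width $\asymp h\gs(\cdot/T)L(T)$ (this is precisely where the matching of the two constants $\pi^2/(2h^2)$ is essential), so that one only loses a factor of order $e^{\pi^2/(2h^2)}$ per confinement window of length $\asymp L(T)^2$; this balance is what forces the error factor $O(tL(T)^{-2}\vee1)$ rather than an exponential blow-up in $t$. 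Keeping all the accumulated errors---the variation of $\gs$ over $[0,t]$, which affects both the band width and the clock and gets multiplied by the number $\lceil t/L(T)^2\rceil$ of windows---inside $e^{o(L(T))}$ is the delicate technical check, and it is exactly there that the hypothesis $t\llbis\sqrt{TL(T)^3}$ is used.
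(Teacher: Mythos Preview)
Your argument is correct and arrives at the same intermediate expression as the paper: after applying~\eqref{eq:MtOGirs:R} and the derivative computations~(\ref{eq:subcrit:1stmom:gga1}--\ref{eq:subcrit:1stmom:gga2}), both you and the paper reduce the problem to bounding
\[
\bE_{(h-x)\gs(0)L(T)}\Big[e^{\frac{\pi^2H_0(B)}{2h^2L(T)^2}}\,\ind_{\{H_0(B)\le t\}}\,\ind_{\{\forall s\le H_0(B):\,B_s\in[0,h\gs(s/T)L(T)]\}}\Big].
\]
From here the two proofs diverge. The paper performs the time change~\eqref{eq:BM:timechange} and a Brownian scaling to bring this to a standard Brownian motion in a fixed band, and then \emph{quotes} an existing estimate \cite[Lemma~2.2.1]{MaiPhD} (equivalently \cite[Lemma~7.1]{Mai16}) which bounds exactly this type of expectation by $\pi\,t/(h^2L(T)^2)\sin(\cdot)+C(\cdot)=O(tL(T)^{-2}\vee1)$.

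You instead prove this bound from scratch, by slicing $\{H_0(B)\le t\}$ into windows of length $L(T)^2$ and, on the $k$-th window, balancing the growth $e^{\pi^2k/(2h^2)}$ against the confinement decay $e^{-\pi^2(k-1)/(2h^2)}$ coming from Lemma~\ref{lem:mai16} (via the argument of~\eqref{eq:subcrit:1stmom:3}). Your route is more self-contained---it stays entirely within the tools already developed in the paper and makes transparent the exact cancellation mechanism that you correctly identify as the heart of the matter. The paper's route is shorter but outsources precisely this step. Both are valid; the error-tracking you do ($tk/T=o(L(T))$ uniformly in $k\le\lceil t/L(T)^2\rceil$ under $t\llbis\sqrt{TL(T)^3}$) is the same restriction the paper needs for the time change to be harmless.
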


Notice that, similarly to Proposition~\ref{prop:2ndmom:subcrit}, we have an additional error term $O(tL(T)^{-2}\vee 1)$, which vanishes as soon as $t(T)$ is at most polynomial in $L(T)$.

\begin{proof} 
	This proof relies on a comparison with the time-homogeneous case, which has already been studied in~\cite{MaiPhD, Mai16}. 
	Recollect~\eqref{eq:MtOGirs:R} from Lemma~\ref{lem:MtOGirs}. Notice that $(\ol\gga_T^\subcrit)'(s)=(\gga_T^\subcrit)'(s) + O(L(T)/T)$ uniformly in $s\in[0,T]$, $\gs\in\gsens$. Combining this with (\ref{eq:subcrit:1stmom:gga1}--\ref{eq:subcrit:1stmom:gga2}), we deduce from~\eqref{eq:MtOGirs:R} that,
	\begin{align*}
	\bbE[R^\subcrit_T(0,t)]&= e^{-(h-x)L(T) + o(L(T))} \\
	&\;\;\;\times \bE_{(h-x)\gs(0)L(T)}\Bigg[ \exp\left( \frac{\pi^2}{2h^2}\frac{H_0(B)}{L(T)^2} \right) \ind_{\{H_0(B)\leq t\}} \ind_{\{\forall s\leq H_0(B),\,\frac{B_s}{\gs(s/T)L(T)}\leq h\}}\Bigg].
	\end{align*}
	Therefore, it only remains to bound from above the latter expectation. Using the Brownian scaling property, we have
	\[\begin{aligned} &\bE_{(h-x)\gs(0)L(T)}\Bigg[ \exp\left( \frac{\pi^2}{2h^2}\frac{H_0(B)}{L(T)^2} \right) \ind_{\{H_0(B)\leq t\}} \ind_{\{\forall s\leq H_0(B),\,\frac{B_s}{\gs(s/T)L(T)}\in[0,h]\}}\Bigg] \\
		&\, = \bE_{(h-x)}\Bigg[ \exp\left( \frac{\pi^2\gs^2(0)}{2h^2}H_0(B) \right) \ind_{\{H_0(B)\leq t/(L(T)^2\gs^2(0))\}} \ind_{\{\forall s\leq H_0(B),B_s\in[0,h\gs(s/T)/\gs(0)]\}}\Bigg].
	\end{aligned}\]
	Let $(W_s)_{s\geq0}$ denote the standard, time-homogeneous Brownian motion, recall the definition of $J(s)$ from~\eqref{eq:BM:timechange}, and recall~\eqref{eq:gsapprox}. In particular, we have for $s\in[0,T]$,
	\[
	s\,\gs^2(0)\left(1-\gh^{-2}\tfrac sT\right)^2\,\leq\, J(s)\,\leq\, s\,\gs^2(0)\left(1+\gh^{-2}\tfrac sT\right)^2.
	\]
	Thus, applying the time change~\eqref{eq:BM:timechange} gives,
	\[\begin{aligned} &\bE_{(h-x)}\Bigg[ \exp\left( \frac{\pi^2}{2h^2}\gs^2(0)H_0(B) \right) \ind_{\{H_0(B)\leq t/(L(T)^2\gs^2(0))\}} \ind_{\{\forall s\leq H_0(B),\,B_s\in[0,h\,\gs(s/T)/\gs(0)]\}}\Bigg] \\
		&\; =\; \bE_{(h-x)}\Bigg[ \exp\left( \frac{\pi^2}{2h^2}\gs^2(0)J^{-1}(H_0(W)) \right) \ind_{\{H_0(W)\leq J(t/[L(T)^2\gs^2(0)])\}} \\
		&\qquad\qquad\qquad\qquad\qquad\qquad\qquad\qquad\qquad\qquad\qquad \times \ind_{\{\forall s\leq H_0(W),\,W_s\in[0,h\,\gs(s/T)/\gs(0)]\}}\Bigg] \\
		&\; \leq\; \bE_{(h-x)}\Bigg[ \exp\left( \frac{\pi^2}{2h^2}\big(1-\gh^{-2}\tfrac tT\big)^{-2} H_0(W) \right) \ind_{\big\{H_0(W)\leq \frac{t}{L(T)^2}(1+\gh^{-2}t/T)^2\big\}} \\
		&\qquad\qquad\qquad\qquad\qquad\qquad\qquad\qquad\qquad\qquad\qquad \times \ind_{\{\forall s\leq H_0(W),\,W_s\in[0,h\,(1+\gh^{-2}t/T)]\}}\Bigg].
	\end{aligned}\]
	Moreover, on the event $H_0(W)\leq \frac{2t}{L(T)^2}$, one has,
	\[\big(1-\gh^{-2}\tfrac tT\big)^{-2} H_0(W)=\big(1+\gh^{-2}\tfrac tT\big)^{-2} H_0(W) + o(L(T)),\]
(recall that $t\llbis\sqrt{TL(T)^3}$). Then, the expectation above has already been estimated in~\cite[Lemma~2.2.1]{MaiPhD} (see also~\cite[Lemma~7.1]{Mai16}): therefore, we have for some universal $C>0$,
	\[\begin{aligned} &\bE_{(h-x)}\Bigg[ \exp\left( \frac{\pi^2}{2h^2}\big(1+\gh^{-2}\tfrac tT\big)^{-2} H_0(W) \right) \ind_{\big\{H_0(W)\leq \frac{t}{L(T)^2}(1+\gh^{-2}t/T)^2\big\}} \\
		&\qquad\qquad\qquad\qquad\qquad\qquad\qquad\qquad\qquad\qquad\qquad \times \ind_{\{\forall s\leq H_0(W),\,W_s\in[0,h\,(1+\gh^{-2}t/T)]\}}\Bigg]\\
		&\qquad \leq \; \pi \tfrac{t}{h^2L(T)^2} \sin\Big(\pi \tfrac{h-x}h \big(1+\gh^{-2}\tfrac tT\big)^{-1}\Big) +C\tfrac{h-x}h\big(1+\gh^{-2}\tfrac tT\big)^{-1} =O\big( tL(T)^{-2}\vee 1\big),
	\end{aligned}\]
	uniformly in $\gs\in\gsens$ and $0\leq t(T)\llbis\sqrt{TL(T)^3}$, which concludes the proof.
\end{proof}

\subsection{Critical case} \label{sec:moments:crit}
    In this section we assume there exists $\ga>0$ such that $L(T)\sim\ga T^{1/3}$ as $T\to+\infty$. Recall the definition and properties of $\Psi$ from~\eqref{def:Psi}. We recollect the following result from \cite{Mal15}.
\begin{lemma}[{\cite[Lemma 2.4]{Mal15}}]\label{lem:mallein15}
	Let $W$ a standard time-homogeneous Brownian motion, $q\in\R$, $0<a<b<1$ and $0<a'<b'<1$. Then,
	\begin{equation}\begin{aligned}
			&\lim_{t\to\infty}\frac1t\sup_{x\in[0,1]}\log\bE_x\bigg[e^{-q\int_0^t W_s\dd s}\ind_{\{\forall s\leq t,\,W_s\in[0,1]\}}\bigg]\\
			&\qquad=\; \lim_{t\to\infty}\frac1t\inf_{x\in[a,b]}\log\bE_x\bigg[e^{-q\int_0^t W_s\dd s}\ind_{\{\forall s\leq t,\,W_s\in[0,1]\,;\, \,W_t\in[a',b']\}}\bigg]\;=\; \Psi(q)\;.
	\end{aligned}\end{equation}
\end{lemma}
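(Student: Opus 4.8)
The plan is to recognise the expectations inside the two limits as a Feynman--Kac semigroup and then to use spectral theory. Assume first $q>0$ and let $\mathcal L_q\coloneqq\tfrac12\partial_x^2-qx$ act on functions on $[0,1]$ with Dirichlet boundary conditions at both endpoints. The Feynman--Kac formula gives, for every $g$ on $[0,1]$,
\[
\bE_x\Big[e^{-q\int_0^t W_s\,\dd s}\,\ind_{\{\forall s\le t,\,W_s\in[0,1]\}}\,g(W_t)\Big]=\big(e^{t\mathcal L_q}g\big)(x)\,,\qquad x\in[0,1]\,,
\]
so the left-hand side of the identity in the lemma equals $\big(e^{t\mathcal L_q}\ind\big)(x)$ and the quantity inside the infimum equals $\big(e^{t\mathcal L_q}\ind_{[a',b']}\big)(x)$. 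The operator $\mathcal L_q$ is a regular Sturm--Liouville operator, hence self-adjoint on $L^2([0,1])$, bounded from above, with compact resolvent, so its spectrum is a sequence $\gl_1(q)>\gl_2(q)>\cdots\to-\infty$ with $L^2$-orthonormal eigenfunctions $(\phi_k)_{k\ge1}$; and since the kernel of $e^{t\mathcal L_q}$ is strictly positive on $(0,1)^2$, Perron--Frobenius/Krein--Rutman theory shows that $\gl_1(q)$ is simple and that $\phi_1$ can be taken strictly positive on $(0,1)$.

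The first step is to extract the sharp asymptotics. Heat-kernel (Mercer-type) bounds for the killed semigroup give, for any fixed $t_0>0$, that $\sum_k e^{\gl_k(q)t_0}\|\phi_k\|_\infty^2<\infty$, whence the expansion $\big(e^{t\mathcal L_q}g\big)(x)=\sum_{k\ge1}e^{\gl_k(q)t}\langle\phi_k,g\rangle\,\phi_k(x)$ converges uniformly in $x\in[0,1]$ for $t\ge t_0$, with a remainder $O(e^{\gl_2(q)t})$ past the first term. Hence, as $t\to\infty$ and uniformly in $x\in[0,1]$,
\[
\big(e^{t\mathcal L_q}\ind\big)(x)=e^{\gl_1(q)t}\big(\langle\phi_1,\ind\rangle\,\phi_1(x)+o(1)\big),\quad\big(e^{t\mathcal L_q}\ind_{[a',b']}\big)(x)=e^{\gl_1(q)t}\big(\langle\phi_1,\ind_{[a',b']}\rangle\,\phi_1(x)+o(1)\big).
\]
As $\phi_1$ is continuous and strictly positive on $(0,1)$, one has $\langle\phi_1,\ind\rangle>0$, $\langle\phi_1,\ind_{[a',b']}\rangle>0$, $\|\phi_1\|_\infty<\infty$ and $\inf_{[a,b]}\phi_1>0$; applying $\tfrac1t\log$ to the supremum over $x\in[0,1]$ in the first display (bounding it below by the value at $x=\tfrac12$) and to the infimum over $x\in[a,b]$ in the second then produces $\gl_1(q)$ in the limit, for both quantities. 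It remains to prove $\gl_1(q)=\Psi(q)$.

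Here the Airy functions enter. Writing the eigenvalue equation as $\phi''=(2qx+2\gl)\phi$ and substituting $y=(2q)^{1/3}x+2\gl(2q)^{-2/3}$ turns it into Airy's equation $\psi''=y\psi$, so every eigenfunction is of the form $\phi(x)=A\,\Ai\big((2q)^{1/3}x+2\gl(2q)^{-2/3}\big)+B\,\Bi\big((2q)^{1/3}x+2\gl(2q)^{-2/3}\big)$; the Dirichlet conditions at $x=0,1$ force $\Ai(\nu)\Bi(\nu+(2q)^{1/3})=\Ai(\nu+(2q)^{1/3})\Bi(\nu)$ with $\nu\coloneqq2\gl(2q)^{-2/3}$, and a Sturm oscillation argument identifies the value of $\nu$ attached to the sign-definite eigenfunction $\phi_1$ as the largest root $\nu\le0$ of this transcendental equation. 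Since $\gl=\tfrac{q^{2/3}}{2^{1/3}}\nu$, comparison with \eqref{def:Psi} yields $\gl_1(q)=\Psi(q)$. The case $q=0$ is the classical $\gl_1(0)=-\tfrac{\pi^2}{2}=\Psi(0)$ (also visible from Lemma~\ref{lem:mai16}), and the case $q<0$ follows from the reflection $W_s\mapsto1-W_s$, which preserves $[0,1]$ and replaces $\int_0^t W_s\,\dd s$ by $t-\int_0^t W_s\,\dd s$, reducing to $-q>0$ up to a factor $e^{-qt}$ --- this is exactly the relation $\Psi(-|q|)=|q|+\Psi(|q|)$ that defines $\Psi$ on $(-\infty,0)$. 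The step I expect to be most delicate is the uniformity of the eigenfunction expansion in $x$ together with the two-sided control needed to commute $\tfrac1t\log$ with $\sup_x$ and $\inf_x$; alternatively, existence of the two limits could be obtained more cheaply by Fekete subadditivity applied to $t\mapsto\log\sup_{x\in[0,1]}\big(e^{t\mathcal L_q}\ind\big)(x)$, leaving only the identification with $\Psi$.
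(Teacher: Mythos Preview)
The paper does not prove this lemma: it is quoted directly from Mallein~\cite[Lemma~2.4]{Mal15} and used as a black box, so there is no proof in the paper to compare against.

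Your spectral-theoretic argument is correct and is essentially the natural proof. The Feynman--Kac identification with the semigroup of $\mathcal L_q=\tfrac12\partial_x^2-qx$ under Dirichlet conditions, the Sturm--Liouville spectral decomposition, the Krein--Rutman simplicity and positivity of the top eigenfunction, and the Airy substitution (together with the boundary determinant $\Ai(\nu)\Bi(\nu+(2q)^{1/3})=\Ai(\nu+(2q)^{1/3})\Bi(\nu)$ and the relation $\lambda=\tfrac{q^{2/3}}{2^{1/3}}\nu$) all match the definition~\eqref{def:Psi} exactly; the reflection $W_s\mapsto 1-W_s$ recovers the relation $\Psi(-q)=q+\Psi(q)$ used in the paper for $q<0$. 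The one point you rightly flag---uniformity in $x$ of the expansion so that $\tfrac1t\log$ commutes with $\sup_x$ and $\inf_{x\in[a,b]}$---is genuine but routine: after any fixed $t_0>0$ the killed heat kernel is continuous and strictly positive on $(0,1)^2$, which gives both the uniform Mercer expansion and, for the lower bound, $\inf_{[a,b]}\phi_1>0$. Your Fekete alternative also works for the existence of the $\sup$-limit, but you would still need the positive-eigenfunction lower bound to match the $\inf$-limit to the same value.
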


In the critical regime, recall from~\eqref{def:gga:crit} and~\eqref{eq:defxh} that the lower and upper barriers are defined, for $t\in[0,T]$, by
\[
\gga_T^\crit(t)=v(t/T)\,T - w_{h,T}(t/T)\, L(T) - x\,\gs(0)\, L(T)\;,
\]
and $\ol\gga_T^\crit(t)=\gga_T^\crit(t) +h\gs(t/T) L(T)$ for some $h>x>0$, and where $w_{h,T}\in\cC^1([0,1])$ is defined in~\eqref{def:w_T:crit}. 

\begin{remark}
	Let us point out that $w_{h,T}(u)\sim \frac{\pi^2}{2\ga^3h^2}v(u)$ as $\ga\to0$, so that choice of upper barrier matches the sub-critical asymptotics of the $N$-BBM when $\ga$ is small (recall~\eqref{eq:defmanyrg}, Lemma~\ref{lem:comparison:m:gga} and that $L(T)=o(T/L(T)^2)$ in that case). Moreover, the relation $\Psi(-q)=q+\Psi(q)$, $q\in\R$ yields \[
	\ga\left(w_{h,T}(u)-h\int_0^u(\gs')^-(s)\,\dd s\right)\,\underset{\ga\to+\infty}{\longrightarrow}\, \frac{\mathrm{a}_1}{2^{1/3}}\int_0^u \gs(v)^{1/3}|\gs'(v)|^{2/3}\dd v\,,
	\] for all $u\in(0,1]$. Thus, multiplying these terms by $T^{1/3}\sim \ga^{-1} L(T)$, we see that the upper barrier matches the super-critical asymptotics of the $N$-BBM when $\ga$ is large; and when $\gs$ is decreasing, one also recovers the asymptotics of Proposition~\ref{prop:main:gsdecreasing}.
\end{remark}

Recalling~\eqref{eq:defA:allregimes}, the set of descendants remaining between the barriers and ending in a (rescaled) interval $I\subset[0,h]$ at time $t$ is denoted by $A_{T,I}^\crit(t)$.

\begin{proposition}[First moment, Critical] \label{prop:1stmom:crit} 
	Let $h>0$. One has, as $T\to+\infty$,
	\begin{equation}\label{eq:prop:1stmom:crit:UB}
		\bbE\big[|A_{T,I}^\crit(t)|\big] \,\leq\, e^{(x-\inf I)L(T)+o(T^{1/3})}\,,
	\end{equation}
	uniformly in $t\in[0,T]$, $\gs\in\gsens$, $x\in[0,h]$, and $I\subset[0,h]$ an interval. Moreover, one also has, as $T\to+\infty$, for every interval $I\subset [0,h]$ with non-empty interior,
	\begin{equation}\label{eq:prop:1stmom:crit:LB}
		\bbE\big[|A_{T,I}^\crit(t)|\big] \,\geq\, e^{(x-\inf I)L(T)+o(T^{1/3})}\,,
	\end{equation}
	locally uniformly in $x\in(0,h)$, $\inf I\in[0,h)$ and $\sup I-\inf I\in(0,h]$; and uniformly in $\gs\in\gsens$ and $t=t(T)$ such that $L(T)\llbis t(T)\leq T$ for $T$ sufficiently large.
\end{proposition}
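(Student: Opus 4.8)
The plan is to follow the same route as in the super-critical case (proof of Proposition~\ref{prop:1stmom:supercrit}), replacing the elementary Brownian estimates of Lemma~\ref{lem:1stmom:supercrit} by the Airy-type estimate of Lemma~\ref{lem:mallein15}; the latter is exactly what produces the function $\Psi$.

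\emph{Step 1: first-moment formula and extraction of the main term.} Starting from~\eqref{eq:MtOGirs:A} with the barrier~\eqref{def:gga:crit} and using $w_{h,T}'(r)=-\frac{\gs(r)}{\ga^3h^2}\Psi\big(\ga^3h^3\frac{\gs'(r)}{\gs(r)}\big)$ from~\eqref{def:w_T:crit}, one has $\gga_T^\crit{}'(s)=\gs(s/T)\big(1+\frac{L(T)}{T}\cdot\frac1{\ga^3h^2}\Psi(\ga^3h^3\frac{\gs'(s/T)}{\gs(s/T)})\big)$. Since $L(T)^2/T=o(1)$ in the critical regime, on the event that all $B_s/(\gs(s/T)L(T))$ lie in $[0,h]$ the terms $-\gga_T^\crit{}'(t)B_t/\gs^2(t/T)$ and $\gga_T^\crit{}'(0)xL(T)/\gs(0)$ in~\eqref{eq:MtOGirs:A} reduce to $-B_t/\gs(t/T)+o(1)$ and $xL(T)+o(1)$; the cross term $\int_0^t\partial_u(\gga_T^\crit{}'(u)/\gs^2(u/T))B_s\,\dd s$ reduces to $-\frac1T\int_0^t\frac{\gs'(s/T)}{\gs^2(s/T)}B_s\,\dd s+o(1)$; and $e^{t/2}$ cancels the constant $\tfrac12$ inside $\int_0^t(\gga_T^\crit{}')^2/(2\gs^2)$, leaving the residual factor $\exp\big(-L(T)\int_0^{t/T}\frac1{\ga^3h^2}\Psi(\ga^3h^3\frac{\gs'(u)}{\gs(u)})\,\dd u\big)$. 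Bounding $-B_t/\gs(t/T)\le-\inf(I)L(T)$ on the event (and, for the lower bound, restricting $B_t/(\gs(t/T)L(T))$ to a shrinking interval $[\inf I,\inf I+\eps_T]$ with $\eps_T\to0$, exactly as in the proof of Proposition~\ref{prop:1stmom:subcrit}) factors out $e^{(x-\inf I)L(T)}$, and both~\eqref{eq:prop:1stmom:crit:UB} and~\eqref{eq:prop:1stmom:crit:LB} reduce to showing that
\[
\mathcal E_T:=e^{-L(T)\int_0^{t/T}\frac1{\ga^3h^2}\Psi(\ga^3h^3\frac{\gs'(u)}{\gs(u)})\dd u}\;\bE_{x\gs(0)L(T)}\Big[\ind_{\{\forall s\le t,\,\frac{B_s}{\gs(s/T)L(T)}\in[0,h]\}}\,e^{-\frac1T\int_0^t\frac{\gs'(s/T)}{\gs^2(s/T)}B_s\,\dd s}\Big]
\]
is $e^{o(T^{1/3})}$ (with the extra landing constraint inserted for the lower bound, and for $L(T)\llbis t\le T$ in that case).

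\emph{Step 2: block decomposition.} If $t/T\le\delta$ then the area integral is at most $h\gh^{-2}\delta L(T)$ in absolute value, so $\mathcal E_T=e^{O(\delta L(T))}$, negligible after $\delta\to0$. Otherwise, split $[0,t]$ into $\lfloor (t/T)/\delta\rfloor$ blocks of length $\delta T$ (plus a short remainder), on each of which $\gs$ and $\gs'$ are nearly constant by~\eqref{eq:gsapprox}, and apply the Markov property at the block boundaries. On a block around time $u_jT$, a time change together with a rescaling of space by $\gs(u_j)L(T)$ and then by $h$ turns the killed motion into a standard Brownian motion confined to $[0,1]$ over a time horizon $\sim\delta T/(h^2L(T)^2)\asymp T^{1/3}$ and, because $L(T)^3/T\to\ga^3$, turns the area functional into $-q_j\int_0^{T'}W_r\,\dd r$ with $q_j=\ga^3h^3\gs'(u_j)/\gs(u_j)$. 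By Lemma~\ref{lem:mallein15}, the logarithm of the $j$-th block factor is $(\Psi(q_j)+o(1))\cdot\delta T/(h^2L(T)^2)$, the $o(1)$ being uniform in $j$ since the $q_j$ lie in the compact set $\ga^3h^3[-\gh^{-2},\gh^{-2}]$ on which $\Psi$ is continuous. For the upper bound one uses the $\sup_{x\in[0,1]}$ half of Lemma~\ref{lem:mallein15} and chains blocks by taking a supremum over the starting point (no endpoint constraints needed); for the lower bound one uses the $[a,b]$--$[a',b']$ half, forcing each block to end in a fixed interval $[\eps,h-\eps]$ (total cost $e^{-O(1/\delta)}$) and inserting short ``corridors'' of length $\gg L(T)$ but $\ll\delta T$ near the end (and at the start if needed, though $x\in[\eps,h-\eps]$ makes this unnecessary) to join the starting point and the landing interval $[\inf I,\inf I+\eps_T]$ to the bulk, at cost $e^{o(T^{1/3})}$ --- precisely as in the proof of Lemma~\ref{lem:1stmom:supercrit}, with final times lying inside a block handled by the crude bound of Case~3 there. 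Summing over blocks, using $T/(h^2L(T)^2)=(1+o(1))T^{1/3}/(h^2\ga^2)=(1+o(1))L(T)/(h^2\ga^3)$ and a Riemann-sum approximation, gives $\log\bE_{x\gs(0)L(T)}[\,\cdots\,]=\frac{L(T)}{\ga^3h^2}\int_0^{t/T}\Psi(\ga^3h^3\frac{\gs'(u)}{\gs(u)})\,\dd u+o(T^{1/3})+O(\delta)T^{1/3}$, which exactly cancels the residual factor of $\mathcal E_T$; hence $\mathcal E_T=e^{o(T^{1/3})}$ upon letting $T\to\infty$ and then $\delta\to0$.

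\emph{Main obstacle.} As in the super-critical case, the delicate point is the bookkeeping of the block decomposition: making all the per-block errors --- from replacing $\gs$ by constants, from the endpoint and corridor constraints, and from the fact that Lemma~\ref{lem:mallein15} supplies only an asymptotic exponential rate rather than a finite-horizon estimate --- sum to $o(T^{1/3})$ \emph{uniformly} in $\gs\in\gsens$, in $x$ and $I$ over the stated (locally uniform) ranges, and in all admissible $t=t(T)$, with the limits taken in the order $T\to\infty$ and then $\delta\to0$. The only genuinely new feature compared with Proposition~\ref{prop:1stmom:supercrit} is that the rate $\Psi(q_j)$ must be controlled uniformly over the compact range of the $q_j$, which follows from the continuity (indeed convexity) of $\Psi$ by a routine compactness argument.
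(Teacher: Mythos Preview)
Your approach is correct and follows essentially the same route as the paper: first-moment formula with Girsanov, block decomposition, application of Lemma~\ref{lem:mallein15} on each block, and a Riemann sum that cancels the $w_{h,T}$ integral coming from the barrier. The only differences are cosmetic. First, the paper takes blocks of length $KL(T)^2$ (so the rescaled per-block horizon is a large constant $K$, and one takes $K$ large to make the rate within $\eps$ of $\Psi$) rather than your macroscopic blocks of length $\delta T$ (rescaled horizon $\asymp\delta T^{1/3}\to\infty$, so Lemma~\ref{lem:mallein15} applies directly); both choices work, and your uniformity-in-$q$ argument via convexity of $\Psi$ is exactly what is needed in either case. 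Second, for the lower bound at small $t$ the paper handles the landing constraint by an explicit appeal to the sub-critical argument (Gaussian density for $t\lesssim L(T)^2$, Lemma~\ref{lem:mai16} for $L(T)^2\lesssim t\le T^{5/6}$, as in~(\ref{eq:subcrit:1stmom:4quad}) and~(\ref{eq:subcrit:1stmom:6})--(\ref{eq:subcrit:1stmom:7})) rather than folding it into your single crude/corridor bound; your treatment is terser but valid, since for $t\llbis\delta T$ both the area integral and the $w_{h,T}$ prefactor are $O(\delta L(T))$ and the tube-with-landing probability is at least $e^{-O((\delta\vee\gth(T))T^{1/3})}$.
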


Here again, this proposition should be compared with Propositions~\ref{prop:1stmom:supercrit} and~\ref{prop:1stmom:subcrit} for the super- and sub-critical regimes; and the definitions of ``locally uniformly'' and ``uniformly'' are the same as in Remark~\ref{rem:1stmom}.

\begin{proof}
We follow a similar strategy as in \cite{Mal15}, approximating the time-inhomogeneous variance by constants on suitable time intervals.
	Recall~\eqref{eq:MtOGirs:A} from Lemma~\ref{lem:MtOGirs}. Notice that~\eqref{def:gga:crit} implies
	\begin{equation}\label{eq:crit:1stmom:0}
		\gga^\crit_T{}'(s)\,=\, \gs(s/T) + w'_{h,T}(s/T)\frac{L(T)}{T}\,=\, \gs(s/T) + O(T^{-2/3})\,,
	\end{equation}
	uniformly in $s\in[0,T]$ and $\gs\in\gsens$; more precisely, one can check that the function $\Psi(\cdot)$ is bounded on $[-\ga^3h^3\gh^{-2},\ga^3h^3\gh^{-2}]$, as well as $w_{h,T}(\cdot)$ and $w_{h,T}'(\cdot)$ on $[0,1]$, uniformly in $\gs\in\gsens$. In the following, we let
	\begin{align*}
	&\ol\Psi:=\sup\{|\Psi(q)|,q\in[-\ga^3h^3\gh^{-2},\ga^3h^3\gh^{-2}]\}\,,\\
	\text{and}\qquad &\ol\Psi':=\sup\{|\Psi'(q)|,q\in[-\ga^3h^3\gh^{-2},\ga^3h^3\gh^{-2}]\}\,,
	\end{align*}
	which are well defined since $\Psi$ is convex on $\R$. Therefore, on the event $\{\frac{B_s}{\gs(s/T)L(T)}\in[0,h],\forall s\leq t\}$, one deduces from~\eqref{eq:MtOGirs:A} and a straightforward computation that,
	\begin{align}\label{eq:crit:1stmom:1}
			\bbE\big[|A_{T,I}^\crit(t)|\big]\,&=\, \exp\left(xL(T)+\frac{L(T)}T\int_0^t\frac{w_{h,T}'(s/T)}{\gs(s/T)}\dd s +O(T^{-1/3})\right) \\
			\notag &\quad \times\bE_{x\gs(0)L(T)}\bigg[\exp\left(-\frac{B_t}{\gs(t/T)}-\frac1T\int_0^t\frac{\gs'(s/T)}{\gs^2(s/T)}B_s\dd s\right) \\
			\notag &\qquad\qquad\qquad\qquad\quad \times \ind_{\left\{\forall s\leq t,\, \frac{B_s}{\gs(s/T) L(T)}\in[0,h] \,;\, \frac{B_t}{\gs(t/T)L(T)}\in I\right\}}\bigg].
	\end{align}
	
	\noindent\emph{Upper bound.} Let $z:=\inf I$. Let $t\in[0,T]$, and assume first that $(t,T)$ satisfies $t\leq T^{5/6}$. Then,~\eqref{eq:crit:1stmom:1} gives
	\begin{align}\label{eq:crit:1stmom:1:smallt}
		\notag\bbE\big[|A_{T,I}^\crit(t)|\big]\,&\leq\, e^{(x-z)L(T)+O(T^{1/6})} \bP_{x\gs(0)L(T)}\left(\forall s\leq t,\,\tfrac{B_s}{\gs(s/T)L(T)}\in[0,h]\;; \tfrac{B_t}{\gs(t/T)L(T)}\in I\right) \\&\leq\, e^{(x-z)L(T)+O(T^{1/6})}\,,
	\end{align}
	uniformly in $\gs\in\gsens$ and $t\in[0,T^{5/6}]$.
	
	For $(t,T)$ such that $t\geq T^{5/6}$, let us split $[0,t]$ into intervals of length $KL(T)^{2}$, where $K$ is a large constant which is determined below. Writing $i_{\max}:=\lfloor t/(KL(T)^{2})\rfloor$, we let $t_i:=iKL(T)^{2}$ for $0\leq i< i_{\max}$, and $t_{i_{\max}}:=t$ (notice that $(t_{i_{\max}}-t_{i_{\max}-1})\in[KL(T)^2,2KL(T)^2]$). Define for $1\leq i\leq i_{\max}$,
	\begin{equation}\label{eq:crit:1stmom:1:timesplit}\begin{aligned}
			&\ol\gs_i:=\sup_{[t_{i-1},t_i]} \gs\,,\quad\ul\gs_i:=\inf_{[t_{i-1},t_i]} \gs\,,\\\text{and}\qquad &\ol\gs'_i:=\sup_{[t_{i-1},t_i]} \gs'\,,\quad\ul\gs'_i:=\inf_{[t_{i-1},t_i]} \gs',
		\end{aligned}
	\end{equation}
	and $\ol\gs_0=\ul\gs_0:=\gs(0)$. Using the Markov property at times $t_i$, $1\leq i<i_{\max}$, one has
	\begin{align}\label{eq:crit:1stmom:2}
		&\bE_{x\gs(0)L(T)}\left[\exp\left(-\frac{B_t}{\gs(t/T)}-\frac1T\int_0^tB_s\frac{\gs'(s/T)}{\gs^2(s/T)}\dd s\right)\ind_{\left\{ \forall s\leq t,\,\frac{B_s}{\gs(s/T) L(T)}\in[0,h] \,;\, \frac{B_t}{\gs(t/T)L(T)}\in I\right\}}\right]\\
		\notag &\leq e^{-zL(T)}\prod_{i=1}^{i_{\max}}\!\sup_{y\in[0,h\ol\gs_iL(T)]}\!\bE_{(t_{i-1},y)}\!\left[ 
		\exp\left(-\frac1T\int_{t_{i-1}}^{t_i} B_s\frac{\ul\gs'_i}{\ol\gs_i^2}\dd s\right)
		\ind_{\{\forall s\in[t_{i-1},t_i],\,B_s\in[0,h\ol\gs_iL(T)]\}} \right]\!\!.
	\end{align}
	To lighten notation, let us focus on the factor $i=1$, but the following proof holds for other blocks as well (including $[t_{i_{\max}-1},t_{i_{\max}}]$). Let $\eps>0$. Recalling the time-change $J(s):=\int_0^s \gs^2(r/T)\dd r$, $s\leq T$ from~\eqref{eq:BM:timechange}, and using the Brownian scaling property, we have for $y\in[0,h\ol\gs_1L(T)]$,
	\begin{align}\label{eq:crit:1stmom:2.5}
		&\bE_{y}\left[ \exp\left(-\frac1T \frac{\ul\gs'_1}{\ol\gs_1^2}\int_{0}^{t_1} B_s\dd s\right)\ind_{\{B_s\in[0,h\ol\gs_1L(T)],\,\forall s\leq t_1\}} \right]\\
		\notag &\quad=\;\bE_{y}\left[ \exp\left(-\frac1T \frac{\ul\gs'_1}{\ol\gs_1^2}\int_{0}^{J(t_1)} W_s \,(J'(J^{-1}(s)))^{-1} \dd s\right)\ind_{\{W_s\in[0,h\ol\gs_1L(T)],\,\forall s\leq J(t_1)\}} \right]\\
		\notag &\quad\leq\; \bE_{y}\left[ \exp\left(- \frac1T\frac{\ul\gs'_1}{\ol\gs_1^4}\int_{0}^{\ul\gs_1^2 t_1} W_s  \dd s\right)\ind_{\{W_s\in[0,h\ol\gs_1L(T)],\,\forall s\leq \ul\gs_1^2 t_1\}} \right] \\
		\notag &\quad\leq\; \bE_{y/(h\ol\gs_1L(T))}\bigg[ \exp\left(-\frac{1}{T}\frac{\ul\gs'_1}{\ol\gs_1^4}(h\ol\gs_1L(T))^3\int_{0}^{\ul\gs_1^2 t_1/(h\ol\gs_1L(T))^2} W_s  \dd s\right) \\
		\notag &\qquad\qquad\qquad\qquad\qquad\qquad\qquad\qquad\qquad\qquad\times \ind_{\{W_s\in[0,1],\,\forall s\leq \ul\gs_1^2 t_1/(h\ol\gs_1L(T))^2\}} \bigg] \\
		\notag &\quad\leq\; \bE_{y/(h\ol\gs_1L(T))}\bigg[ \exp\left(-\frac{\ul\gs'_1}{\ol\gs_1}\ga^3h^3(1-\eps)\int_{0}^{\ul\gs_1^2 t_1/(h\ol\gs_1L(T))^2} W_s  \dd s\right) \\
		\notag &\qquad\qquad\qquad\qquad\qquad\qquad\qquad\qquad\qquad\qquad\times \ind_{\{W_s\in[0,1],\,\forall s\leq \ul\gs_1^2 t_1/(h\ol\gs_1L(T))^2\}} \bigg] ,
	\end{align}
	where the last inequality holds for $T$ sufficiently large, by recalling~\eqref{eq:gsapprox} and that $L(T)\sim\ga T^{1/3}$. Let $\eps>0$, and recall that $t_i-t_{i-1}\geq KL(T)^2$ for all $i$. Assume w.l.o.g that $L(T)>0$ for all $T$: then $t_1\to+\infty$ as $K\to+\infty$ uniformly in $T$: applying Lemma~\ref{lem:mallein15}, there exists $K_\eps>0$ such that for $K>K_\eps$, one has
	\begin{align*}
		&\sup_{y\in[0,1]} \bE_{y}\left[ \exp\left(-\frac{\ul\gs'_1}{\ol\gs_1}\ga^3h^3(1-\eps)\int_{0}^{\ul\gs_1^2 t_1/(h\ol\gs_1L(T))^2} W_s  \dd s\right)\ind_{\{W_s\in[0,1],\,\forall s\leq \ul\gs_1^2 t_1/(h\ol\gs_1L(T))^2\}} \right]\\
		&\qquad\leq\; \exp\left(\frac{t_1}{h^2L(T)^2}\frac{\ul\gs_1^2}{\ol\gs_1^2} \left[\Psi\left(\frac{\ul\gs'_1}{\ol\gs_1}\ga^3h^3(1-\eps)\right)+\eps\right] \right),
	\end{align*}
	for all $T\geq0$. Then, for $T$ sufficiently large,~\eqref{eq:gsapprox} and the definitions of $\ol\Psi$, $\ol\Psi'$ yield,
	\[
	\frac{\ul\gs_1^2}{\ol\gs_1^2} \left[\Psi\left(\frac{\ul\gs'_1}{\ol\gs_1}\ga^3h^3(1-\eps)\right)+\eps\right]\,\leq\, \Psi\left(\frac{\ul\gs'_1}{\ol\gs_1}\ga^3h^3\right) + c_1\ol\Psi \frac{KL(T)^2}{T} + \eps c_1(\Psi'+1)  \,,
	\]
	for some constant $c_1>0$. Therefore, there exists $T_0(K,\eps)>0$ such that, for $K$ sufficiently large and $T\geq T_0(K,\eps)$,~\eqref{eq:crit:1stmom:2} becomes
	\begin{align*}
		\notag &\bE_{x\gs(0)L(T)}\left[\exp\left(-\frac{B_t}{\gs(t/T)}-\frac1T\int_0^tB_s\frac{\gs'(s/T)}{\gs^2(s/T)}\dd s\right)\ind_{\left\{\forall s\leq t,\, \frac{B_s}{\gs(s/T) L(T)}\in[0,h] \,;\, \frac{B_t}{\gs(t/T)L(T)}\in I\right\}}\right]\\
		&\qquad\leq\; e^{-zL(T)}\exp\left(\sum_{i=1}^{i_{\max}}\frac{(t_i-t_{i-1})}{h^2L(T)^2} \left[\Psi\left(\frac{\ul\gs'_i}{\ol\gs_i}\ga^3h^3\right) + c_1\ol\Psi \frac{KL(T)^2}{T} + \eps c_1(\Psi'+1)\right] \right).
	\end{align*}
	Using a Riemann sum approximation, there exists $T_0(K,\eps)>0$ such that, for $K$ large and $T\geq T_0(K,\eps)$, and for $t\geq T^{5/6}$, one has
	\begin{equation}\label{eq:crit:1stmom:2.5:Riemann}
		\sum_{i=1}^{i_{\max}} \frac{(t_i-t_{i-1})}{L(T)^2}\,\Psi\left(\frac{\ul\gs'_i}{\ol\gs_i}\ga^3h^3\right) \leq \frac1{L(T)^{2}} \int_0^t \Psi\left(\frac{\gs'(r/T)}{\gs(r/T)} \ga^3 h^3\right) \dd r + \eps\frac t{L(T)^2}\;.
	\end{equation}
	Moreover, one has
	\begin{equation}
		\sum_{i=1}^{i_{\max}}  \frac{(t_i-t_{i-1})}{h^2L(T)^2} \left[\ol\Psi \frac{KL(T)^2}{T} + \eps (\Psi'+1)\right]
		\,=\, O(1) + \eps \, O(T^{1/3})\,.
	\end{equation}
	Recollect~\eqref{eq:crit:1stmom:1} and recall that $L(T)\sim\ga T^{1/3}$. 
	Recalling the definition of $w_{h,T}$ from~\eqref{def:w_T:crit} and that $\frac1{L(T)^2}\sim \ga^{-3}\frac{L(T)}{T}$, we finally obtain that there exist some $C,C'>0$ (depending only on $\gh, \eps$ and $\ga$) such that for $T$ large enough, one has
	\begin{equation}\label{eq:crit:1stmom:2.5:ccl}
		\bbE\big[|A_{T,z}^\crit(t)|\big]\,\leq\,\exp\left((x-z)L(T) +C+C'\eps T^{1/3}\right),
	\end{equation}
	for all $t\geq T^{5/6}$. Taking the maximum of~\eqref{eq:crit:1stmom:1:smallt} and~\eqref{eq:crit:1stmom:2.5:ccl}, and letting $\eps\to0$, this finally yields the expected upper bound uniformly in $\gs(\cdot)$ and $t\in[0,T]$.\smallskip
	
	\noindent\emph{Lower bound.} 
	Similarly to the upper bound, we distinguish the cases $t$ smaller or larger than $T^{5/6}$. We first consider the case $t\geq T^{5/6}$, using the same time split with intervals of length $KL(T)^{2}$ and notation as above (recall~\eqref{eq:crit:1stmom:1:timesplit}). 
	Let $0<a<b<h$ such that $x\in(a,b)$, and $\eps>0$ such that $z+\eps<h$. We bound the expectation in~\eqref{eq:crit:1stmom:1} from below by constraining the trajectory to pass through the intervals $[a,b]\cdot \ul\gs_iL(T)$ at times $t_i$, $1\leq i\leq i_{\max}-1$. Hence, the Markov property gives
	\begin{align}\label{eq:crit:1stmom:3}
		&\bE_{x\gs(0)T^{1/3}}\left[\exp\left(-\frac{B_t}{\gs(t/T)}-\frac1T\int_0^tB_s\frac{\gs'(s/T)}{\gs^2(s/T)}\dd s\right)\ind_{\left\{\substack{\forall s\leq t,\,B_s\in[0,h\gs(s/T)L(T)]\,;\\ B_t\geq z\gs(t/T)L(T)}\right\}}\right]\\
		\notag &\geq\; e^{-(z+\eps)L(T)} \\
		\notag &\;\;\;\times \prod_{i=1}^{i_{\max}-1}\!\inf_{y\in [a,b]\cdot\ul\gs_{i-1}L(T)}\bE_{(t_{i-1},y)}\left[ \exp\left(-\frac1T\frac{\ol\gs'_i}{\ul\gs_i^2}\int_{t_{i-1}}^{t_i} \! B_s\dd s\right)\ind_{\left\{\substack{\forall s\in[t_{i-1},t_i],\,B_s\in[0,h\ul\gs_iL(T)]\,;\\ B_{t_i}\in [a\ul\gs_iL(T),b\ul\gs_iL(T)]}\right\}} \right]\\
		\notag &\;\;\;\times \inf_{y\in[a,b]\cdot\ul\gs_{i_{\max}-1}L(T)}\bE_{(t_{i_{\max}-1},y)}\bigg[ \exp\bigg(-\frac1T\frac{\ol\gs'_{i_{\max}}}{\ul\gs_{i_{\max}}^2}\int_{t_{i_{\max}-1}}^{t_{i_{\max}}} \! B_s\dd s\bigg)\\
		\notag &\qquad\qquad\qquad\qquad\qquad\qquad\qquad\qquad\qquad\qquad\quad\times \ind_{\left\{\substack{\forall s\in[t_{i_{\max}-1},t],\,B_s\in[0,h\ul\gs_{i_{\max}}L(T)]\,;\\ B_{t}\in [z\gs(t/T)L(T),(z+\eps)\gs(t/T)L(T)]}\right\}} \bigg].
	\end{align}
	Let us focus on the factor $i=1$ here again (others are handled similarly, including the last one). Reproducing the time-change argument from~\eqref{eq:crit:1stmom:2.5}, one has for $y\in[a,b]\cdot \gs(0)L(T)$,
	\begin{align*}
		&\bE_{y}\left[ \exp\left(-\frac1T\frac{\ol\gs'_1}{\ul\gs_1^2}\int_{0}^{t_1}  B_s\dd s\right)\ind_{\left\{\substack{\forall s\leq t_1,\,B_s\in[0,h\ul\gs_1L(T)]\,;\\ B_{t_1}\in [a\ul\gs_1L(T),b\ul\gs_1L(T)]}\right\}} \right]\\
		&\quad\geq\bE_{y}\left[ \exp\left(-\frac1T\frac{\ol\gs'_1}{\ul\gs_1^4}\int_{0}^{J(t_1)}  W_s\dd s\right)\ind_{\left\{\substack{\forall s\leq J(t_1),\,W_s\in[0,h\ul\gs_1L(T)]\,;\\ W_{J(t_1)}\in [a\ul\gs_1L(T),b\ul\gs_1L(T)]}\right\}} \right].
	\end{align*}
	Recall~\eqref{eq:gsapprox}. Applying Lemma~\ref{lem:mallein15} and the Brownian scaling property, one obtains similarly to the upper bound (we do not write the details again),
	\[\begin{aligned}
		&\inf_{y\in[a,b]\cdot \gs(0)L(T)} \bE_{y}\left[ \exp\left(-\frac1T\frac{\ol\gs'_1}{\ul\gs_1^4}\int_{0}^{J(t_1)}  W_s\dd s\right)\ind_{\left\{\substack{\forall s\leq J(t_1),\,W_s\in[0,h\ul\gs_1L(T)]\,;\\ W_{J(t_1)}\in [a\ul\gs_1L(T),b\ul\gs_1L(T)]}\right\}} \right]\\
		&\qquad\geq\; \exp\left[\frac{t_1}{h^2L(T)^2}\Psi\left(\frac{\ol\gs'_1}{\ul\gs_1}\ga^3h^3\right)  - O(L(T)^2/T) - \eps\, O(1)\right].\end{aligned}\]
	for some $K_\eps>0$, $K\geq K_\eps$ and $T_0(K,\eps)>0$, $T\geq T_0(K,\eps)$. Reproducing this lower bound for all factors of~\eqref{eq:crit:1stmom:3}, using a Riemann sum approximation similar to~\eqref{eq:crit:1stmom:2.5:Riemann} and recollecting~\eqref{eq:crit:1stmom:1}, we may conclude similarly to the upper bound in the case $t\geq T^{5/6}$. 
	
	Let us now consider the case $t\leq T^{5/6}$. Recall the proof of the first moment lower bound in the sub-critical case (Proposition~\ref{prop:1stmom:subcrit}), that is (\ref{eq:subcrit:1stmom:2:LB}, \ref{eq:subcrit:1stmom:4}--\ref{eq:subcrit:1stmom:4ter}). We define as in \eqref{eq:subcrit:1stmom:4bis},
	\[
	\tau=\tau(T)\,:=\, \frac1{\gs^2(0)L(T)^2}\int_0^t\gs^2(u/T)\,\dd u\,,
	\]
	and $(\eps_T)_{T\geq0}$ such that $T^{-1/3}\ll \eps_T\ll T^{-1/6}$ as $T\to+\infty$. Then, the Brownian scaling property and a time change yield, similarly to the sub-critical regime,
	\begin{align}\label{eq:crit:1stmom:4:smallt}
		&\bbE\big[|A_{T,I}^\crit(t)|\big]\\
		\notag &\qquad\geq\, e^{(x-z - \eps_T)L(T) + o(T^{1/3})} \,\bP_{x}\Big(\forall\, s\leq \tau,\,W_s\in[0,h]\,;\, W_{\tau}\in [z+\eps_T/3,z+2\eps_T/3]\Big)\,.
	\end{align}
	It remains to prove that the probability above is larger than $e^{o(T^{1/3})}$. Recall Lemma~\ref{lem:mai16}: proceeding similarly to~(\ref{eq:subcrit:1stmom:4quad}--\ref{eq:subcrit:1stmom:4quad:sinus}), there exists $R>0$ such that, for all $t'\ge R$,
	\[
	\bP_x\big(\forall s\leq t',\,W_s\in[0,h]\,;\, W_{t'}\in [z+\eps_T/3,z+2\eps_T/3]\big)\;\geq\; \frac1{h} \frac{\eps_T^2}{18h} \sin\left(\frac{\pi x}{h}\right) \exp\left(-\frac{\pi^2}{2h^2}t'\right).
	\]
	If $t,T$ satisfy $2RL(T)^2\leq t\leq T^{5/6}$, then~\eqref{eq:gsapprox} implies $R\leq\tau\leq 2\ga^{-2}T^{1/6}$ for $T$ sufficiently large, uniformly in $\gs\in\gsens$. In particular the r.h.s. above, evaluated at $t'=\tau$, is larger than $e^{O(T^{1/6})}$, uniformly in $\gs\in\gsens$ and locally uniformly in $x,z$. On the other hand, if $t,T$ satisfy $t\leq 2RL(T)^2$, then $\tau\leq 4R$ for $T$ sufficiently large, uniformly in $\gs\in\gsens$. Moreover, one notices that $t\ggbis L(T)$ and $\eps_T\ll T^{-1/6}$ imply that $\tau^{-1/2}\eps_T\to0$ as $T\to+\infty$. Recalling~(\ref{eq:subcrit:1stmom:6}--\ref{eq:subcrit:1stmom:7}) from the sub-critical case, we have already proven by introducing a Brownian bridge that, under this assumption, the probability in~\eqref{eq:crit:1stmom:4:smallt} is larger than $e^{o(L(T))}$ for $T$ large: we do not replicate the details of the proof here.
	
	Therefore, we derived three lower bounds which hold respectively in the cases $t\geq T^{5/6}$, $2RL(T)^2\leq t\leq T^{5/6}$ and $L(T)\llbis t\leq 2RL(T)^2$: taking the minimum of these, we obtain a lower bound that holds uniformly in $L(T)\llbis t\leq T$ and $\gs\in\gsens$, which fully concludes the proof of the proposition.
\end{proof}

\begin{proposition}[Second moment, Critical] \label{prop:2ndmom:crit} Let $h>0$. As $T\to+\infty$, one has
	\begin{equation}
		\bbE\big[|A_{T,z}^\crit(t)|^2\big] \;\leq\; e^{(x+h-2z) L(T) +o(T^{1/3})}\,.
	\end{equation}
	uniformly in $x\in[0,h]$, $z\in[0,h]$, $\gs\in\gsens$ and $t\in[0,T]$.
\end{proposition}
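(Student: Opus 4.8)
The plan is to follow exactly the strategy used in the super-critical case (Proposition~\ref{prop:2ndmom:supercrit}), as advertised in Remark~\ref{rem:2ndmom}: combine the Many-to-two lemma (Lemma~\ref{lem:MtT}) with the first moment upper bound from the critical regime (Proposition~\ref{prop:1stmom:crit}, specifically~\eqref{eq:prop:1stmom:crit:UB}). Concretely, Lemma~\ref{lem:MtT} gives
\[
\bbE\big[|A_{T,z}^\crit(t)|^2\big] - \bbE\big[|A_{T,z}^\crit(t)|\big] = \gb_0\bbE[\xi(\xi-1)]\int_0^t \dd s \int_0^{h} G^\crit(x,y,0,s) \bigg(\int_{z}^{h} G^\crit(y,w,s,t) \, \dd w\bigg)^2 \dd y\,.
\]
First I would observe, as in the super-critical proof, that $\int_z^h G^\crit(y,w,s,t)\,\dd w = \bbE[|\tilde A_{T,z}^\crit(t-s)|]$, where $\tilde A_{T,z}^\crit$ is the analogue of $A_{T,z}^\crit$ with variance function $\gs(\cdot - s/T)$ (still lying in $\gsens$, with the same $\gh$); hence~\eqref{eq:prop:1stmom:crit:UB} applies and gives $\int_z^h G^\crit(y,w,s,t)\,\dd w \le e^{(y-z)L(T)+o(T^{1/3})}$, uniformly in $y,z,s,t$. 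Likewise $\int_0^h G^\crit(x,y,0,s)\,\dd y = \bbE[|A_{T,[0,h]}^\crit(s)|] \le e^{xL(T)+o(T^{1/3})}$.

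Next I would carry out the same dyadic-type decomposition of the $y$-integral: split $[0,h]$ into $K$ subintervals $[i\tfrac hK,(i+1)\tfrac hK]$, $0\le i<K$, and on each one bound $\int_{i h/K}^{(i+1)h/K} G^\crit(x,y,0,s)\,e^{2(y-z)L(T)}\,\dd y$ by $e^{2((i+1)h/K - z)L(T)}\cdot e^{(x-ih/K)L(T)+o(T^{1/3})}$, using the finer first-moment upper bound $\bbE[|A_{T,[ih/K,h]}^\crit(s)|]\le e^{(x-ih/K)L(T)+o(T^{1/3})}$ from~\eqref{eq:prop:1stmom:crit:UB}. This is at most $e^{2\frac hK L(T)}\cdot e^{(x+h-2z)L(T)+o(T^{1/3})}$. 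Summing over $i$, integrating over $s\in[0,t]$ and using $t\le T$, and then crucially using $L(T)\sim\ga T^{1/3}$ so that $t\le T = O(L(T)^3) \le e^{o(T^{1/3})}$ (this is where the critical scaling is used, and is cleaner than the sub-critical case where an explicit $O(t)$ error had to be retained, cf.\ Proposition~\ref{prop:2ndmom:subcrit}), one obtains
\[
\bbE\big[|A_{T,z}^\crit(t)|^2\big] - \bbE\big[|A_{T,z}^\crit(t)|\big] \le e^{o(T^{1/3})}\cdot e^{2\frac hK L(T)}\cdot K\cdot e^{(x+h-2z)L(T)}\,.
\]
Letting $K\to\infty$ absorbs the factor $K e^{2hL(T)/K}$ into $e^{o(T^{1/3})}$ (since $e^{2hL(T)/K}=e^{o(T^{1/3})}$ once $K$ is fixed large, and then $K$ itself is a constant), and recalling $\bbE[|A_{T,z}^\crit(t)|]\le e^{(x-z)L(T)+o(T^{1/3})}\le e^{(x+h-2z)L(T)+o(T^{1/3})}$ since $z\le h$, gives the claimed bound $\bbE[|A_{T,z}^\crit(t)|^2]\le e^{(x+h-2z)L(T)+o(T^{1/3})}$, uniformly in the stated parameters.

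The argument is essentially a verbatim transcription of the super-critical proof with $o(L(T))$ replaced by $o(T^{1/3})$ throughout, so there is no genuinely new obstacle; the one point requiring a line of care is the uniformity bookkeeping — I must make sure that the $o(T^{1/3})$ error terms coming from~\eqref{eq:prop:1stmom:crit:UB} are uniform in $x\in[0,h]$, $z\in[0,h]$, $\gs\in\gsens$ and \emph{all} $t\in[0,T]$ (including the shifted variance functions $\gs(\cdot - s/T)$ and the shifted times $t-s$), which is exactly the content of the uniformity clause in Proposition~\ref{prop:1stmom:crit}, and that the finitely many ($K$) pieces and the single integration in $s$ do not spoil this uniformity. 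The only mildly delicate exchange of limits is sending $K\to\infty$ \emph{after} $T\to\infty$; since $K$ is chosen independently of $T$ and the bound holds for every fixed $K$, this is legitimate and the dependence on $K$ disappears in the final $o(T^{1/3})$.
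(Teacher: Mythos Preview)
Your proposal is correct and is precisely the approach the paper takes: the paper states that the proof is identical to that of Proposition~\ref{prop:2ndmom:supercrit}, using the upper bound~\eqref{eq:prop:1stmom:crit:UB} in place of~\eqref{eq:prop:1stmom:supercrit:UB} and observing that $t\le T=e^{o(T^{1/3})}$ in the critical regime, and does not reproduce the details. Your write-up matches this exactly, including the $K$-block decomposition and the handling of the $t$ factor.
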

This statement is analogous to Propositions~\ref{prop:2ndmom:supercrit} and~\ref{prop:2ndmom:subcrit} in the super- and sub-critical cases respectively. Since its proof is identical (using the upper bound~\eqref{eq:prop:1stmom:crit:UB}, and observing that $t\leq T=e^{o(T^{1/3})}$ in the critical regime), we do not reproduce it here.

We now state the analogue of Proposition~\ref{prop:killedpart:subcrit} regarding the number of particles killed by the upper barrier. Recall from~\eqref{eq:defR} that $R^\crit_T(s,t)$, $0\leq s\leq t\leq T$ denotes the expected number of particles killed by the upper barrier on the time interval $[s,t]$.
\begin{proposition}[Killed particles, Critical] \label{prop:killedpart:crit}
	Let $h>0$. Then as $T\to+\infty$, one has
	\begin{equation}
		\bbE[R^\crit_T(0,T)]\,\leq\, e^{-(h-x)L(T) + o(T^{1/3})}\;,
	\end{equation}
	uniformly in $x\in[0,h]$ and $\gs\in\gsens$.
\end{proposition}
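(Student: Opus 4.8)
The proof follows the scheme of Propositions~\ref{prop:killedpart:supercrit} and~\ref{prop:killedpart:subcrit}, starting from the exact formula \eqref{eq:MtOGirs:R} of Lemma~\ref{lem:MtOGirs} with $\anyrg=\crit$ and $t=T$, but---because of the time-dependent drift correction $w_{h,T}$---it also requires the block decomposition used in the proof of Proposition~\ref{prop:1stmom:crit}. First I would record, using that $\Psi$ is continuous on $[-\ga^3h^3\gh^{-2},\ga^3h^3\gh^{-2}]$ and $|\gs'|/\gs\le\gh^{-2}$ for $\gs\in\gsens$, that $w_{h,T},w_{h,T}',w_{h,T}''$ are bounded uniformly in $\gs\in\gsens$; hence, by \eqref{def:gga:crit} and $L(T)/T=O(T^{-2/3})$, one has $(\ol\gga_T^\crit)'(s)=\gs(s/T)+(h\gs'(s/T)-w_{h,T}'(s/T))\frac{L(T)}{T}=\gs(s/T)+O(T^{-2/3})$, with similar control on $u$-derivatives. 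Plugging this into \eqref{eq:MtOGirs:R}: the factor $e^{H_0(B)/2}$ cancels the $H_0(B)/2$ coming out of $\int_0^{H_0(B)}\frac{((\ol\gga_T^\crit)'(s))^2}{2\gs^2(s/T)}\dd s$, the boundary term equals $-(h-x)L(T)+o(1)$, and on the event that $B$ stays in $[0,h\gs(\cdot/T)L(T)]$ the $u$-derivative term reduces to $\frac1T\int_0^{H_0(B)}\frac{\gs'(s/T)}{\gs^2(s/T)}B_s\dd s+o(1)$. Inserting $w_{h,T}'(u)=-\frac{\gs(u)}{\ga^3h^2}\Psi\bigl(\ga^3h^3\frac{\gs'(u)}{\gs(u)}\bigr)$ and using the identity $\Psi(-q)=q+\Psi(q)$, the remaining deterministic piece of the exponent becomes $-\frac{L(T)}{\ga^3h^2}\int_0^{H_0(B)/T}\Psi(-q(r))\dd r$ with $q(r):=\ga^3h^3\gs'(r)/\gs(r)$. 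Setting $E:=\{H_0(B)\le T;\ B_s\in[0,h\gs(s/T)L(T)]\ \forall s\le H_0(B)\}$, it thus suffices to prove
\[
\bE_{(h-x)\gs(0)L(T)}\!\left[\ind_E\exp\!\Bigl(\tfrac1T\!\int_0^{H_0(B)}\!\tfrac{\gs'(s/T)}{\gs^2(s/T)}B_s\,\dd s-\tfrac{L(T)}{\ga^3h^2}\!\int_0^{H_0(B)/T}\!\Psi(-q(r))\,\dd r\Bigr)\right]\le e^{o(T^{1/3})},
\]
uniformly in $x\in[0,h]$ and $\gs\in\gsens$.

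To bound this expectation I would fix $\eps>0$, split $[0,T]$ into $i_{\max}=\lfloor T/(KL(T)^2)\rfloor$ blocks $[t_{i-1},t_i]$ of length $\asymp KL(T)^2\asymp T^{2/3}$ (the last of length at most $2KL(T)^2$) for a large constant $K=K(\eps)$, and decompose the expectation according to the block $j$ with $H_0(B)\in[t_{j-1},t_j]$. On each full block $i<j$ the trajectory stays in the strip $[0,h\gs(\cdot/T)L(T)]$ without having hit $0$, so by the Markov property and a Brownian time-change and scaling (freezing $\gs$ and $\gs'$ to suitable constants on the block to produce an upper bound, exactly as in Proposition~\ref{prop:1stmom:crit}) the contribution of block $i$ to the area exponential is, in expectation, at most $\sup_{y\in[0,1]}\bE_y\bigl[e^{\bar q_i\int_0^{\hat t_i}W_s\dd s}\ind_{\{W_s\in[0,1]\,\forall s\le\hat t_i\}}\bigr]$, with $\hat t_i\asymp K/h^2$ and $\bar q_i$ within $O(KL(T)^2/T)$ of $q(r)$ for $r\in[t_{i-1}/T,t_i/T]$; by Lemma~\ref{lem:mallein15} (in the form uniform over $q$ in a compact set, used in Proposition~\ref{prop:1stmom:crit}) this is $\le e^{\hat t_i(\Psi(-\bar q_i)+\eps)}$ provided $K$ is large. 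On the other hand, using $\frac{L(T)}{\ga^3T}=\frac{1+o(1)}{L(T)^2}$ and bounding $\int_{[t_{i-1}/T,t_i/T]}\Psi(-q(r))\dd r$ below by the interval length times the minimum of $\Psi(-q(\cdot))$ there, the deterministic piece over block $i$ is at most $-\frac{t_i-t_{i-1}}{h^2L(T)^2}\Psi(-\bar q_i)+o\bigl(\frac{t_i-t_{i-1}}{L(T)^2}\bigr)$. The two $\Psi(-\bar q_i)$-terms cancel; since the oscillation of $q(\cdot)$ over a block of $r$-length $KL(T)^2/T$ is $O(KL(T)^2/T)$ and $\Psi$ is Lipschitz on compacts, the residual per block is $O\bigl(\frac{t_i-t_{i-1}}{L(T)^2}(\eps+KL(T)^2/T)\bigr)$, and summing over $i<j$ yields $O_K(1)+\eps\,O(T^{1/3})+o(T^{1/3})$.

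It remains to treat the block $j$ containing $H_0(B)$ (blocks after it contribute nothing since the exponent integrals stop at $H_0(B)$): on $[t_{j-1},H_0(B)]\subset[t_{j-1},t_j]$, an interval of length $\le 2KL(T)^2$, both the area term and the $\Psi$-term are deterministically $O_K(L(T)^3/T)=O_K(1)$ on $E$, and the case $j=1$ is the same crude bound on $[0,t_1]$. Hence each of the $i_{\max}=O(T^{1/3})$ summands is at most $e^{O_K(1)+\eps\,O(T^{1/3})+o(T^{1/3})}$, and since $\log i_{\max}=O(\log T)=o(T^{1/3})$ we get $\limsup_{T\to\infty}T^{-1/3}\log\bE[\,\cdots\,]\le C\eps$ for some $C=C(h,\ga)$ not depending on $\eps$; letting $\eps\to0$ gives $\bE[\,\cdots\,]=e^{o(T^{1/3})}$, and substituting back into \eqref{eq:MtOGirs:R} yields $\bbE[R_T^\crit(0,T)]\le e^{-(h-x)L(T)+o(T^{1/3})}$. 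Every error term above is uniform in $x\in[0,h]$ and $\gs\in\gsens$, since all the constants depend only on $\gh,h,\ga$, precisely as in Proposition~\ref{prop:1stmom:crit}.

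The crux of the argument is the cancellation in the second paragraph: the Airy-type barrier correction $w_{h,T}$ of \eqref{def:gga:crit}--\eqref{def:w_T:crit} is tuned exactly so that the eigenvalue $\Psi(-\bar q_i)$ produced block by block by Lemma~\ref{lem:mallein15} is compensated by the deterministic drift term $-\frac{L(T)}{\ga^3h^2}\int\Psi(-q(r))\dd r$, just as in the proof of Proposition~\ref{prop:1stmom:crit}. The delicate points---which I expect to be the only real work---are to verify (i) that the residual after this cancellation is genuinely of order $(\text{block length})\times(\eps+KL(T)^2/T)$, so that it sums to $\eps\,O(T^{1/3})+O_K(1)$ rather than $O(T^{1/3})$, which forces one to control the constant approximation of $\gs'/\gs$ and the uniformity of Lemma~\ref{lem:mallein15} over the relevant compact range of $q$; and (ii) that the extra summation over the block containing $H_0(B)$ costs only the harmless factor $i_{\max}=O(T^{1/3})$. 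The reduction of \eqref{eq:MtOGirs:R} to the displayed expectation and the block bookkeeping are routine and run parallel to Propositions~\ref{prop:killedpart:subcrit} and~\ref{prop:1stmom:crit}.
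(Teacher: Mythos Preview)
Your proposal is correct and follows the same underlying strategy as the paper: reduce via \eqref{eq:MtOGirs:R}, then use a block decomposition and Lemma~\ref{lem:mallein15} so that the $\Psi$-eigenvalues produced on each block cancel against the deterministic drift correction coming from $w_{h,T}$. The paper's proof differs in two implementation details. First, before decomposing it applies Girsanov and the reflection $B\mapsto h\gs(\cdot/T)L(T)-B$ (cf.~\eqref{eq:supercrit:girsanov}); this absorbs the $-h\gs'$ piece and flips the sign of the area integral, so that the remaining expectation has \emph{exactly} the form already bounded in the upper-bound half of Proposition~\ref{prop:1stmom:crit}, and the paper can simply cite \eqref{eq:crit:1stmom:2}--\eqref{eq:crit:1stmom:2.5:Riemann} rather than redo the block bookkeeping. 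Your direct route---no reflection, instead invoking $\Psi(-q)=q+\Psi(q)$ to rewrite the deterministic piece---works just as well but requires re-running the Lemma~\ref{lem:mallein15} argument with the opposite sign on the area term. Second, the paper splits the hitting time into coarser blocks of length $T^{5/6}$ (hence $O(T^{1/6})$ summands, with a crude $e^{O(T^{1/6})}$ bound on the partial block containing the hitting time), whereas you go straight to $KL(T)^2$-blocks ($O(T^{1/3})$ summands, with a sharper $e^{O_K(1)}$ bound on the terminal block); either choice yields $e^{o(T^{1/3})}$.
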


\begin{proof} 
	Recollect~\eqref{eq:MtOGirs:R} from Lemma~\ref{lem:MtOGirs}. Notice that we have $(\ol\gga_T^\crit)'(\cdot)=(\gga_T^\crit)'(\cdot)+h\gs'(\cdot/T)\frac{L(T)}T$, and recall that $\ol\gga_T^\crit(0)=(h-x)\gs(0)L(T)$. Combining this with \eqref{eq:crit:1stmom:0}, we deduce from~\eqref{eq:MtOGirs:R} with a straightforward computation that
	\begin{align*}
		&\bbE[R^\crit_T(0,T)]\,=\, e^{-(h-x)L(T) +O(T^{-1/3})} \\
		&\qquad\times \bE_{(h-x)\gs(0)L(T)} \Bigg[ \ind_{\{H_0(B)\leq T\}} \ind_{\left\{\forall s\leq H_0(B),\,\frac{B_s}{\gs(s/T)L(T)}\in[0,h]\right\}} \\
		&\qquad\times \exp\left( \frac{L(T)}{T}\int_{0}^{H_0(B)}\frac{w'_{h,T}(s/T)-h\gs'(s/T)}{\gs(s/T)}\dd s+\frac1T \int_0^{H_0(B)}\frac{\gs'(s/T)}{\gs^2(s/T)}B_s\dd s\right) \Bigg],
	\end{align*}
	and it remains to show that the latter expectation is of order $\exp(o(T^{1/3}))$. Let us apply Girsanov's theorem and the Brownian symmetry property to the process $(h\gs(s/T)L(T)-B_s)_{s\geq0}$: recalling estimates from~\eqref{eq:supercrit:girsanov} and setting $\tilde H(B):=H_0(B-h\gs(\cdot/T)L(T))$ to lighten notation, this yields,
	\begin{align*}
		&\bE_{(h-x)\gs(0)L(T)} \Bigg[ \ind_{\{H_0(B)\leq T\}} \ind_{\left\{\forall s\leq H_0(B),\,\frac{B_s}{\gs(s/T)L(T)}\in[0,h]\right\}} \\
		&\qquad\qquad\qquad\qquad\qquad\qquad\;\times e^{\frac{L(T)}{T}\int_{0}^{H_0(B)}\frac{w'_{h,T}(s/T)-h\gs'(s/T)}{\gs(s/T)}\dd s+\frac1T \int_0^{H_0(B)}\frac{\gs'(s/T)}{\gs^2(s/T)}B_s\dd s} \Bigg]\\
		&\quad = e^{O(T^{-1/3})}\,\bE_{x\gs(0)L(T)} \Bigg[ \ind_{\{\tilde H(B)\leq T\}} \ind_{\left\{\forall s\leq \tilde H(B),\,\frac{B_s}{\gs(s/T)L(T)}\in[0,h]\right\}} \\
		&\qquad\qquad\qquad\qquad\qquad\qquad\qquad\qquad\times e^{\frac{L(T)}{T}\int_{0}^{\tilde H(B)}\frac{w'_{h,T}(s/T)}{\gs(s/T)}\dd s-\frac1T \int_0^{\tilde H(B)}\frac{\gs'(s/T)}{\gs^2(s/T)}B_s\dd s} \Bigg].
	\end{align*}
	Notice that the latter formula is equivalent to rewriting~\eqref{eq:MtOGirs:R} in terms of $\gga^\anyrg_T$ instead of $\ol\gga^\anyrg_T$ (details are left to the reader). Then we split $[0,T]$ into intervals of length $T^{5/6}$, setting $t_i:=iT^{5/6}$ for $0\leq i< i_{\max}:=\lfloor T^{1/6}\rfloor$ and $t_{i_{\max}}:=T$. Thus we have,
	\[\begin{aligned}
		&\bE_{x\gs(0)L(T)} \! \Bigg[ \ind_{\{\tilde H(B)\leq T\}} \ind_{\left\{\forall s\leq \tilde H(B),\,\frac{B_s}{\gs(s/T)L(T)}\in[0,h]\right\}} e^{\frac{L(T)}{T}\int_{0}^{\tilde H(B)}\frac{w'_{h,T}(s/T)}{\gs(s/T)}\dd s-\frac1T \int_0^{\tilde H(B)}\frac{\gs'(s/T)}{\gs^2(s/T)}B_s\dd s} \Bigg]\\
		&\;\;\leq \sum_{i=0}^{i_{\max}-1} \bE_{x\gs(0)L(T)}\bigg[\ind_{\{t_i<\tilde H(B)\leq t_{i+1}\}} \ind_{\big\{\forall s\leq \tilde H(B), \,\frac{B_s}{\gs(s/T)L(T)}\in [0,h]\big\}} \\
		&\qquad\qquad\qquad\qquad\qquad\qquad\qquad\qquad\qquad\;\times e^{\frac{L(T)}{T}\int_{0}^{\tilde H(B)}\frac{w'_{h,T}(s/T)}{\gs(s/T)}\dd s-\frac1T \int_0^{\tilde H(B)}\frac{\gs'(s/T)}{\gs^2(s/T)}B_s\dd s} \bigg]\\
		&\;\;\leq \sum_{i=0}^{i_{\max}-1} e^{O(T^{1/6})} \,\bE_{x\gs(0)L(T)}\bigg[\ind_{\big\{\forall s\leq iT^{5/6},\,\frac{B_s}{\gs(s/T)L(T)}\in[0,h]\big\}} \\
		&\qquad\qquad\qquad\qquad\qquad\qquad\qquad\qquad\qquad\times e^{\frac{L(T)}{T}\int_{0}^{iT^{5/6}}\frac{w'_{h,T}(s/T)}{\gs(s/T)}\dd s-\frac1T \int_0^{iT^{5/6}}\frac{\gs'(s/T)}{\gs^2(s/T)}B_s\dd s} \bigg],
	\end{aligned}\]
	uniformly in $\gs\in\gsens$. Recalling~(\ref{eq:crit:1stmom:2}--\ref{eq:crit:1stmom:2.5:Riemann}) from the proof of Proposition~\ref{prop:1stmom:crit}, we have already proven that the latter expectation is of order $e^{o(T^{1/3})}$ as $T\to+\infty$ uniformly in $1\leq i\leq i_{\max} =O(T^{1/6})$ (we do not reproduce the details). This concludes the proof of the proposition.
\end{proof}

\section{Lower bound on the maximum of the \texorpdfstring{$N$}{N}-BBM}\label{sec:LB}
In this section we prove Proposition~\ref{prop:main:LB} by considering a BBM between well chosen barriers and showing that it is equal to an $N^-$-BBM with large probability. Then, the lower bound is obtained by showing that the maximum of the BBM between barriers is close to the upper barrier via a moment method. The following results hold for all three regimes $\anyrg\in\{\subcrit, \supercrit, \crit\}$, where we use our notation from Section~\ref{sec:prelim}. 

\subsection{Estimates for the BBM between barriers}
We first focus on the case $\gk\in(0,1)$, and consider a BBM between barriers starting with $N^\gk$ particles. In order to lighten upcoming formulae, we assume that they are initially located at the origin: then, results on the process started from the initial measure $N^\gk\gd_{-\gk\gs(0)L(T)}$ are obtained through a direct shift of upcoming estimates. Let us recall that we omit the integer parts when writing $N^\gk$, $\gk\ge0$ (i.e. we assume that $N^\gk\in\N$).

Since the moment estimates from Section~\ref{sec:moments:subcrit} do not hold on the whole interval $[0,T]$ in the sub-critical case, let us define
\begin{equation}\label{eq:defta}
	t^\anyrg_T\;:=\; 
	\left\{
	\begin{aligned}
		\min\big(L(T)^4,L(T)^2T^{1/3}\big) & \quad\text{if}\quad \anyrg=\subcrit\,,\\
		T & \quad\text{if}\quad \anyrg\in\{\supercrit,\crit\}\,,
	\end{aligned}
	\right.
\end{equation}
which is the time horizon we consider below: we compute a lower bound on the $N$-BBM at time $t^\anyrg_T$, $\anyrg\in\{\subcrit, \supercrit, \crit\}$; then, in the sub-critical regime, we extend the lower bound up to time $T$ with additional arguments.

Let $\gh>0$, and consider a vanishing function $\gth(T)\to0$ as $T\to+\infty$ (depending on $L(T)$), such that for $T$ sufficiently large, one has~\eqref{eq:gthasymp}, as well as
\begin{equation}\label{eq:defta:gthapprox}
	L(T)^3 \llbis t^\subcrit_T \leq (L(T)^3)^{2/3}T^{1/3}  \llbis \sqrt{TL(T)^3}\,,\qquad\text{if }\anyrg=\subcrit\,.
\end{equation}
Let $0<\eps<1$ be small, and fix
\begin{equation}\label{eq:LB:parameters}
	h=1-\eps\,, \qquad x=(1-\gk)(1-\eps) \;\in\;(0,h)\;.
\end{equation}
Then, define the barriers $\gga^\anyrg_T$, $\ol\gga^\anyrg_T$ for each regime $\anyrg\in\{\supercrit,\subcrit,\crit\}$ according to \eqref{def:gga:supercrit},~\eqref{def:gga:subcrit} and~\eqref{def:gga:crit} respectively, with these parameters $h>x>0$; in particular they satisfy~\eqref{eq:defxh}. 

Recall the definitions of $A^\anyrg_{T,I}$, $R^\anyrg_T(s,t)$ from~\eqref{eq:defA:allregimes},~\eqref{eq:defR}, and that $N=N(T)=e^{L(T)}$. Then, moment estimates from Propositions~\ref{prop:1stmom:supercrit}, \ref{prop:2ndmom:supercrit}, \ref{prop:1stmom:subcrit}, \ref{prop:2ndmom:subcrit}, \ref{prop:1stmom:crit} and~\ref{prop:2ndmom:crit} yield for all regimes $\anyrg\in\{\supercrit, \subcrit, \crit\}$, 
\begin{align}\label{eq:allmoments:1stUB}
	\bbE_{\gd_0}\big[|A^\anyrg_{T,I}(t)|\big]\,&\leq\, N^{x-\inf I+o(1)}\,, 
	\\\label{eq:allmoments:1st}
	\bbE_{\gd_0}\big[|A^\anyrg_{T,I}(t)|\big]\,&=\, N^{x-\inf I+o(1)}  \qquad\text{if, additionally,}\qquad t\ggbis L(T)\,,
	\\\label{eq:allmoments:2nd}
	\bbE_{\gd_0}\big[|A^\anyrg_{T,z}(t)|^2\big]\,&\leq\, N^{x+h-2z+o(1)} \,, 
\end{align}
for some vanishing terms $o(1)$ as $T\to+\infty$: in~(\ref{eq:allmoments:1stUB},~\ref{eq:allmoments:2nd}), these error terms are uniform in $\gs\in\gsensrg$, $0\leq t\leq t^\anyrg_T$ and $x,z\in[0,h]$, $I\subset[0,h]$ a non-trivial sub-interval; and in~\eqref{eq:allmoments:1st}, it is uniform in $\gs\in\gsensrg$, $L(T)\llbis t\leq t^\anyrg_T$ and locally uniform in $x,I$. Let us point out that the additional error factors from Propositions~\ref{prop:2ndmom:subcrit} and~\ref{prop:killedpart:subcrit} in the sub-critical case are absorbed into the $N^{o(1)}$, since $t^\subcrit_T\leq L(T)^4=e^{o(L(T))}$.

With those parameters and initial condition, let us first prove that the BBM between the barriers $\gga^\anyrg_T$, $\ol\gga^\anyrg_T$ does not contain more than $N$ particles at any time in $[0,t^\anyrg_T]$ with high probability.
\begin{proposition}\label{prop:allregimes:LB:npart}
	Let $\eps$, $h$, $x$ as in~\eqref{eq:LB:parameters}, and $\gk\in(0,1)$. Then there exist constants $c_1, c_2>0$ such that, for $T$ sufficiently large, one has
	\begin{equation}
		\bbP_{N^{\gk}\gd_0}\big(\exists s\leq t^\anyrg_T:|A_{T}^\anyrg(s)| > N\big) \;\leq\; c_1 N^{-c_2}  \,,
	\end{equation}
	where $c_1,c_2$ are uniformly bounded away from $0$ and $\infty$ in $\gs\in\gsensrg$, and locally uniformly in $\eps,\gk\in(0,1)$.
\end{proposition}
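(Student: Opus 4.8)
The strategy is a first-moment (union bound) argument over the lifetime of the BBM between the barriers, tracking separately the particles that are still alive between the barriers and those that have been killed by the upper barrier. Since the process starts with $N^\gk$ particles at the origin, by the branching property and linearity of expectation it suffices to control a single-ancestor BBM and multiply by $N^\gk$. The key observation is that the total number of particles ever present between the barriers on $[0,t^\anyrg_T]$ is controlled by $|A_T^\anyrg(\cdot)|$ together with $R_T^\anyrg(0,t^\anyrg_T)$: a particle alive at time $s$ between the barriers either has always been strictly between them, or descends from one that hit the upper barrier — but the latter are removed, so in fact the living population between barriers at time $s$ is exactly $|A_T^\anyrg(s)|$. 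Hence $\{\exists s\le t^\anyrg_T:|A_T^\anyrg(s)|>N\}$ can be bounded by controlling the maximum of $|A_T^\anyrg(s)|$ over $s$.

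First I would reduce to a union bound over a discrete time grid. Using the moment estimates \eqref{eq:allmoments:1stUB}, for each fixed $s$ we have $\bbE_{N^\gk\gd_0}[|A_T^\anyrg(s)|] = N^\gk \bbE_{\gd_0}[|A_{T,[0,h]}^\anyrg(s)|] \le N^{\gk+x+o(1)}$. With the choice \eqref{eq:LB:parameters}, $\gk+x = \gk+(1-\gk)(1-\eps) = 1-(1-\gk)\eps < 1$, so $\bbE_{N^\gk\gd_0}[|A_T^\anyrg(s)|]\le N^{1-(1-\gk)\eps+o(1)}$, and by Markov's inequality $\bbP(|A_T^\anyrg(s)|>N)\le N^{-(1-\gk)\eps+o(1)}$ for each fixed $s$. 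To upgrade ``for each fixed $s$'' to ``for all $s\le t^\anyrg_T$'' I would pick a grid of mesh size $\delta_T$ (something like $\delta_T = N^{-2}$ or a small negative power of $N$, which is polynomially short so the grid has $\mathrm{poly}(N)\cdot t^\anyrg_T$ points — note $t^\anyrg_T\le e^{o(L(T))}$ in all regimes by \eqref{eq:defta} and \eqref{eq:defta:gthapprox}, hence $\le N^{o(1)}$), apply the union bound over grid points, and then handle the fluctuation between consecutive grid points. Between two grid times $s<s'$ with $s'-s\le\delta_T$, any particle alive between the barriers at time $s'$ either had an ancestor alive between the barriers at time $s$ (and then the number of such descendants is controlled by a short-time branching estimate: the expected offspring of one particle after time $\delta_T$ is $e^{\delta_T/2}=1+o(1)$, and a crude Markov bound shows no particle has more than, say, $N^{o(1)}$ descendants in time $\delta_T$ with probability $1-N^{-c}$), or was born from the initial configuration — but there are no such particles after time $0$. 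A cleaner alternative, which I would prefer, is to note that $s\mapsto|A_T^\anyrg(s)|$ only increases at branch times, and between branch times particles can only leave (by crossing a barrier), so it suffices to check the grid times plus a control that no single branch event plus the ensuing $\delta_T$-window pushes the count across $N$; this is again a short-time branching estimate. Either way, combining the union bound over $N^{o(1)}$ grid points, each contributing $N^{-(1-\gk)\eps+o(1)}$, with the between-grid control, yields $\bbP_{N^\gk\gd_0}(\exists s\le t^\anyrg_T:|A_T^\anyrg(s)|>N)\le N^{-c_2}$ for some $c_2>0$ (one may take $c_2$ slightly below $(1-\gk)\eps$), with $c_1$ an absolute constant, and the uniformity in $\gs\in\gsensrg$ and local uniformity in $\eps,\gk$ follow directly from the corresponding uniformity of the moment estimates \eqref{eq:allmoments:1stUB} and of the (model-independent) short-time branching bounds.

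I expect the main obstacle — really the only nontrivial point — to be the passage from the discrete grid to the continuum, i.e. ensuring that the supremum over $s\le t^\anyrg_T$ genuinely costs only a $\mathrm{poly}(N)$ factor in the union bound rather than something worse, and that the short-time branching fluctuation estimate is uniform in $\gs$ and in the regime. This requires care because $t^\anyrg_T$ can be a large power of $L(T)$ in the critical and super-critical regimes; however, since $t^\anyrg_T = e^{o(L(T))} = N^{o(1)}$ in every regime by construction, any $N^{-c}$-type tail is robust to multiplication by $t^\anyrg_T\cdot\delta_T^{-1}$, so the argument closes. The monotonicity remark about $|A_T^\anyrg(\cdot)|$ between branch events is what makes this clean, avoiding the need for a full maximal inequality for the branching process.
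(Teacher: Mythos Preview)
Your first-moment bound at each fixed time is correct: $\bbE_{N^\gk\gd_0}[|A_T^\anyrg(s)|]\le N^{\gk+x+o(1)}=N^{1-(1-\gk)\eps+o(1)}$, so Markov gives $\bbP(|A_T^\anyrg(s)|>N)\le N^{-(1-\gk)\eps+o(1)}$. The gap is in the passage from fixed times to the supremum. With mesh $\delta_T=N^{-2}$ the number of grid points is $t^\anyrg_T/\delta_T=N^{2+o(1)}$, and multiplying by the per-point bound $N^{-(1-\gk)\eps}$ gives $N^{2-(1-\gk)\eps+o(1)}\to\infty$ (since $(1-\gk)\eps<1$); your later sentence ``union bound over $N^{o(1)}$ grid points'' is inconsistent with that mesh. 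The between-grid control has the same issue in disguise: conditioning only on $|A_T^\anyrg(k\delta_T)|\le N$, the worst case is exactly $N$ particles, and then a \emph{single} branching event already pushes the count above $N$. The probability of at least one branch among $N$ particles in time $\delta_T$ is $\Theta(N\delta_T\wedge 1)$, and summing over all intervals gives $\Theta(Nt^\anyrg_T)$, which diverges regardless of $\delta_T$. Showing that each particle has at most $N^{o(1)}$ descendants is not the relevant control.

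The fix---and this is exactly what the paper does---is a \emph{two-level threshold}. Take the coarse integer grid ($t^\anyrg_T\le L(T)^4=N^{o(1)}$ points in all regimes), pick $\gl\in(\gk+x,1)$ (the paper uses $\gl=1-\tfrac{\eps}{2}(1-\gk)$), and write
\[
\bbP\big(\exists s\in[k,k+1]:|A_T^\anyrg(s)|>N\big)\le\bbP\big(|A_T^\anyrg(k)|>N^\gl\big)+\sup_{\mu(\R)\le N^\gl}\bbP_\mu\big(Z_1>N\big),
\]
where $Z$ is the BBM population with no killing. The first term is $\le N^{\gk+x-\gl+o(1)}$ by Markov with threshold $N^\gl$; the second is $\le e^{1/2}N^{\gl-1}$ by Markov on $\bbE_\mu[Z_1]=e^{1/2}\mu(\R)$. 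Both exponents equal $-\tfrac{\eps}{2}(1-\gk)<0$, and the $N^{o(1)}$-sized union bound over $k$ closes. The intermediate threshold $N^\gl$ is the essential idea you are missing: it separates ``the count at grid times is far below $N$'' from ``the count cannot grow from $N^\gl$ to $N$ in unit time'', whereas a single threshold at $N$ gives no room for the second step.
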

\begin{proof}
	Define $\gl:=1-\frac\eps2(1-\gk)\in(x+\gk,1)$. With a union bound, we write
	\begin{align}\label{eq:prop:allregimes:LB:npart}
		\notag&\bbP_{N^\gk\gd_0}\big(\exists s\leq t^\anyrg_T:|A_{T}^\anyrg(s)| > N\big) \;\leq\; \sum_{k=0}^{t^\anyrg_T-1} \bbP_{N^\gk\gd_0}\big(\exists s\in[k,k+1]:|A_{T}^\anyrg(s)| > N\big) \\&\;\leq\, \sum_{k=0}^{t^\anyrg_T-1} \bbP_{N^\gk\gd_0}\big(|A_{T}^\anyrg(k)| > N^{\gl}\big) + \bbP_{N^\gk\gd_0}\big(|A_{T}^\anyrg(k)| \leq N^{\gl}\,;\, \exists\,s\in[k,k+1]:|A_{T}^\anyrg(s)| > N\big)\;,
	\end{align}
	where we wrote $t^\anyrg_T-1$ instead of $\lceil t^\anyrg_T-1\rceil$ to lighten notation. On the one hand, the additivity of $|A_{T}^\anyrg(k)|$ in the initial measure implies that
	\[\bbE_{N^\gk\gd_0}[|A_{T}^\anyrg(s)|]\,=\,N^\gk\,\bbE_{\gd_0}[|A_{T}^\anyrg(s)|].\]
	One deduces from Markov's inequality and \eqref{eq:allmoments:1stUB} that for all $0\leq k\leq t^\anyrg_T-1$,
	\begin{equation}\label{eq:prop:allregimes:LB:npart:2}
		\bbP_{N^\gk\gd_0}\big(|A_{T}^\anyrg(k)| > N^{\gl}\big) \,\leq\, N^\gk N^{-\gl}\, \bbE_{\gd_0} \big[|A_{T}^\anyrg(k)|\big]\,=\, N^{-(\gl-x-\gk)+o(1)}\;,
	\end{equation}
	for $T$ large, where $o(1)$ does not depend on $k$ or $\gs\in\gsensrg$.
	
	On the other hand, let $(Z_t)_{t\geq0}$ denote the population size of a BBM without selection (in particular its population size is non-decreasing in time), and let $\cA$ denote the set of counting measures on $\R$ with mass at most $N^{\gl}$. Using Markov's property and a straightforward coupling argument, one has for $0\leq k\leq t^\anyrg_T-1$,
	\[
	\bbP_{N^\gk\gd_0}\big(|A_{T}^\anyrg(k)| \leq N^{\gl}\,;\, \exists\,s\in[k,k+1]:|A_{T}^\anyrg(s)| > N\big)\,\leq\, \sup_{\mu\in\cA}\,\bbP_{\mu}(Z_1 >N)\,.
	\]
	Since $\bbE_{\mu}[Z_1]=e^{1/2}\mu(\R)$ for any $\mu\in\cA$, one has by Markov's inequality,
	\[
	\bbP_{\mu}(Z_1 >N)\,\leq\, N^{-(1-\gl)} e^{1/2}\,.
	\]
	Recollecting~(\ref{eq:prop:allregimes:LB:npart},~\ref{eq:prop:allregimes:LB:npart:2}) and~\eqref{eq:defta}, we finally obtain in all regimes,
	\[
	\bbP_{N^\gk\gd_0}\big(\exists s\leq t^\anyrg_T:|A_{T}^\anyrg(s)| > N\big)\,\leq\, L(T)^4\times N^{-\frac\eps2(1-\gk)+o(1)}\,=\, N^{-\frac\eps2(1-\gk)+o(1)}` \,,
	\]
	which concludes the proof.
\end{proof}

We now bound from below the number of particles located at a given height, at a time $t$ not too small.
\begin{proposition}\label{prop:allregimes:LB:endpoint}
	There exist $c_1,c_2>0$ such that, for $T$ sufficiently large, one has
	\begin{equation}
		\bbP_{N^\gk\gd_0}\big(|A_{T,[z,z+\eps]}^\anyrg(t)|\geq N^{1-\eps-z}\big)\;\geq\; 1-c_1 N^{-c_2} \,,
	\end{equation}
	where $c_1,c_2$ are uniformly bounded away from $0$ and $\infty$ in $\gs\in\gsensrg$, $t\in[\gth(T)^{-1}L(T),t^\anyrg_T]$ and $z\in[0,1-2\eps]$, and locally uniformly in $\eps,\gk\in(0,1)$.
\end{proposition}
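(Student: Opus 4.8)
The plan is to prove the lower bound on $|A_{T,[z,z+\eps]}^\anyrg(t)|$ via a truncated second moment method, essentially a Paley--Zygmund argument combined with the bound from Proposition~\ref{prop:allregimes:LB:npart} that the population stays below $N$ (so that the BBM between barriers coincides with the relevant $N^-$-BBM, though for \emph{this} proposition we really only need the moment estimates). First I would work with the process started from $N^\gk\gd_0$ and use additivity in the initial configuration to reduce to the process started from $\gd_0$, scaling all moments by the factor $N^\gk$. Writing $I_z:=[z,z+\eps]$, from~\eqref{eq:allmoments:1st} (applicable since $t\ggbis L(T)$, and with $x,z$ in the required compact ranges by~\eqref{eq:LB:parameters} and $z\in[0,1-2\eps]$) we get
\[
\bbE_{N^\gk\gd_0}\big[|A_{T,I_z}^\anyrg(t)|\big]\;=\;N^{\gk}\,\bbE_{\gd_0}\big[|A_{T,I_z}^\anyrg(t)|\big]\;=\;N^{\gk+x-z+o(1)}\;=\;N^{1-\eps-z+o(1)}\,,
\]
using $\gk+x=\gk+(1-\gk)(1-\eps)=1-\eps(1-\gk)$; wait---here one must be slightly careful, since $\gk+x=1-\eps(1-\gk)$, not $1-\eps$. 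So the first moment is $N^{1-\eps(1-\gk)+o(1)}$, which is \emph{larger} than the target $N^{1-\eps-z}$ by a factor $N^{\eps\gk-z+o(1)}$. This is fine for a lower bound: I want to show $|A_{T,I_z}^\anyrg(t)|$ is, with high probability, at least $N^{1-\eps-z}$, and its mean already exceeds this; the point is to control fluctuations.

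The key step is the second moment bound. From~\eqref{eq:allmoments:2nd} applied with the interval $[z,h]=[z,1-\eps]$ (note $A_{T,I_z}^\anyrg(t)\le A_{T,[z,1-\eps]}^\anyrg(t)$, so its second moment is bounded by that of $A_{T,[z,1-\eps]}^\anyrg(t)$; actually one wants to be a touch more careful and instead use that $|A_{T,I_z}^\anyrg|\le|A_{T,[z,1-\eps]}^\anyrg|$ only gives an upper bound on the count, which is the wrong direction for a \emph{lower} bound on the count but the \emph{right} direction for bounding the second moment of a quantity dominating it---so I will apply Paley--Zygmund directly to $Y:=|A_{T,[z,1-\eps]}^\anyrg(t)|$ and observe $|A_{T,I_z}^\anyrg(t)|$ need not equal $Y$). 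Let me restructure: better to apply the second-moment estimate directly to $|A_{T,[z',1-\eps]}^\anyrg(t)|$ for the right $z'$ and use that the particles in $[z,z+\eps]$ are exactly $A_{T,[z,1-\eps]}^\anyrg\setminus A_{T,[z+\eps,1-\eps]}^\anyrg$, so $|A_{T,I_z}^\anyrg(t)|=|A_{T,[z,1-\eps]}^\anyrg(t)|-|A_{T,[z+\eps,1-\eps]}^\anyrg(t)|$. By the first-moment upper bound~\eqref{eq:allmoments:1stUB}, $\bbE[|A_{T,[z+\eps,1-\eps]}^\anyrg(t)|]=N^{\gk+x-z-\eps+o(1)}=N^{1-\eps(1-\gk)-\eps-z+o(1)}$, which is smaller than $\bbE[|A_{T,[z,1-\eps]}^\anyrg(t)|]=N^{1-\eps(1-\gk)-z+o(1)}$ by a factor $N^{-\eps}$; so by Markov's inequality $|A_{T,[z+\eps,1-\eps]}^\anyrg(t)|\le N^{1-\eps(1-\gk)-z-\eps/2+o(1)}$ except on an event of probability $N^{-\eps/2+o(1)}$. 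Meanwhile, Paley--Zygmund applied to $Y:=|A_{T,[z,1-\eps]}^\anyrg(t)|$ with its mean $\mu_Y=N^{1-\eps(1-\gk)-z+o(1)}$ and second moment $\bbE[Y^2]\le N^{\gk+x+h-2z+o(1)}\cdot$ --- here I need to be careful: the second moment estimate~\eqref{eq:allmoments:2nd} is for the process from $\gd_0$; when starting from $N^\gk\gd_0=\sum_{i=1}^{N^\gk}\gd_0$ one has $\Var_{N^\gk\gd_0}(Y)=N^\gk\Var_{\gd_0}(Y)\le N^\gk\bbE_{\gd_0}[Y^2]\le N^{\gk}N^{x+h-2z+o(1)}=N^{\gk+x+h-2z+o(1)}=N^{1-\eps(1-\gk)+1-\eps-2z+o(1)}$, while $\mu_Y^2=N^{2(1-\eps(1-\gk)-z)+o(1)}=N^{2-2\eps(1-\gk)-2z+o(1)}$. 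The ratio $\Var(Y)/\mu_Y^2=N^{1-\eps-(1-\eps(1-\gk))+o(1)}=N^{-\eps+\eps(1-\gk)+o(1)}=N^{-\eps\gk+o(1)}\to0$. Hence by Chebyshev, $Y\ge\tfrac12\mu_Y=N^{1-\eps(1-\gk)-z+o(1)}$ except on an event of probability $N^{-\eps\gk+o(1)}$.

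Putting these together: on the complement of the two bad events (total probability $\le c_1 N^{-c_2}$ with $c_2$ of order $\eps\gk/2$, uniform in the stated ranges), $|A_{T,I_z}^\anyrg(t)|=Y-|A_{T,[z+\eps,1-\eps]}^\anyrg(t)|\ge N^{1-\eps(1-\gk)-z+o(1)}-N^{1-\eps(1-\gk)-z-\eps/2+o(1)}\ge\tfrac12 N^{1-\eps(1-\gk)-z+o(1)}\ge N^{1-\eps-z}$, the last step because $1-\eps(1-\gk)\ge 1-\eps$ (as $\gk>0$, the exponent $1-\eps(1-\gk)$ exceeds $1-\eps$ by $\eps\gk>0$, which dominates the $o(1)$ for $T$ large). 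The uniformity in $\gs\in\gsensrg$, in $t\in[\gth(T)^{-1}L(T),t^\anyrg_T]$ and in $z\in[0,1-2\eps]$ is inherited directly from the uniformity of the moment estimates~(\ref{eq:allmoments:1stUB}--\ref{eq:allmoments:2nd}), and local uniformity in $\eps,\gk$ is likewise inherited, with the exponents $c_1,c_2$ expressed explicitly in $\eps,\gk$. The main obstacle---really the only delicate point---is bookkeeping the various $o(1)$ exponents so that the gap $\eps\gk$ between $1-\eps(1-\gk)$ and $1-\eps$ genuinely swallows them uniformly, and making sure the $t\ggbis L(T)$ hypothesis needed for the matching first-moment lower bound~\eqref{eq:allmoments:1st} is in force, which it is since $t\ge\gth(T)^{-1}L(T)$; the rest is a routine truncated second moment / Markov argument, so I will keep the write-up terse and refer back to~(\ref{eq:allmoments:1stUB}--\ref{eq:allmoments:2nd}) for all estimates.
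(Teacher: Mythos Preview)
Your argument is correct and is essentially the same second-moment method as the paper's, but you take a slightly longer route. The paper applies the Paley--Zygmund inequality directly to $|A_{T,[z,z+\eps]}^\anyrg(t)|$: for the first-moment lower bound it uses~\eqref{eq:allmoments:1st} with $I=[z,z+\eps]$, and for the second moment it simply bounds $\bbE_{\gd_0}[|A_{T,[z,z+\eps]}^\anyrg(t)|^2]\le\bbE_{\gd_0}[|A_{T,z}^\anyrg(t)|^2]$ by monotonicity, then uses~\eqref{eq:PZ:2ndmoment} and~\eqref{eq:allmoments:2nd}. This gives the same exponent $c_2\sim\eps\gk$ in one step. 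Your decomposition $|A_{T,[z,z+\eps]}^\anyrg|=|A_{T,z}^\anyrg|-|A_{T,z+\eps}^\anyrg|$, followed by Chebyshev on the first piece and Markov on the second, is a valid workaround for not having noticed that monotonicity shortcut, and it lands in the same place; it just costs you an extra bad event of probability $N^{-\eps/2+o(1)}$ that the paper avoids. Either way the bookkeeping of the $o(1)$'s against the gap $\eps\gk$ is exactly as you describe.
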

Notice that, taking $z=1-2\eps$, this proposition implies that, with large probability, there are at least $N^\eps$ particles located in the vicinity of the upper barrier $\ol\gga^\anyrg_T$ at time $t^\anyrg_T$.

\begin{proof}
	Applying Paley-Zygmund's inequality, we have
	\begin{equation}\label{eq:PZ:allregimes:LB}
		\bbP_{N^\gk\gd_0}\big(|A_{T,[z,z+\eps]}^\anyrg(t)| \geq N^{1-\eps-z}\big) \geq 
		\bigg(1-\frac{N^{1-\eps-z}}{\bbE_{N^\gk\gd_0}[|A_{T,[z,z+\eps]}^\anyrg(t)|]}\bigg)^{\!\!2} \; \frac{\bbE_{N^\gk\gd_0}[|A_{T,[z,z+\eps]}^\anyrg(t)|]^2}{\bbE_{N^\gk\gd_0}[|A_{T,[z,z+\eps]}^\anyrg(t)|^2]}\,.
	\end{equation}
	For all $M\in\N$, $I\subset[0,h]$ and $t\geq0$, recall that $\bbE_{M\gd_0}[|A_{T,I}^\anyrg(t)|]=M\bbE_{\gd_0}[|A_{T,I}^\anyrg(t)|]$. Regarding the second moment, by splitting the sum over pairs of particles $u,v\in\cN_t$ depending on whether they come from the same ancestor or two distinct ancestors at time 0, we obtain for any $I\subset[0,h]$, $t\geq0$,
	\begin{equation}\label{eq:PZ:2ndmoment}
		\bbE_{M\gd_0}\big[|A_{T,I}^\anyrg(t)|^2\big] = M \bbE_{\gd_0}\big[|A_{T,I}^\anyrg(t)|^2\big] + M(M-1) \bbE_{\gd_0}[|A_{T,I}^\anyrg(t)|]^2.
	\end{equation}
	Recalling~(\ref{eq:allmoments:1stUB}--\ref{eq:allmoments:2nd}) and the definitions of $x,h$ from~\eqref{eq:LB:parameters}, we obtain
	\begin{align*}
		&\bbP_{N^\gk\gd_0}\big(|A_{T,[z,z+\eps]}^\anyrg(t)| \geq 1\big) \\
		&\qquad\geq\,\left(1-N^{(1-\eps-z)-\gk-(x-z)+o(1)}\right)^2\left(1+N^{x+h-2z-\gk-2(x-z)+o(1)}\right)^{-1} \\
		&\qquad=\, \left(1-N^{-\eps\gk+o(1)}\right)^2\left(1+N^{-\eps\gk+o(1)}\right)^{-1}\, \geq 1-N^{-\eps\gk+o(1)}\,,
	\end{align*}
	as $T\to+\infty$, uniformly in $\gs\in\gsensrg$ and $t\in[\gth(T)^{-1}L(T),t^\anyrg_T]$, which concludes the proof.
\end{proof}

\subsection{Proof of Proposition~\ref{prop:main:LB}}\label{sec:LB:coreprf}
We finally prove Proposition~\ref{prop:main:LB}. This is achieved by coupling the BBM between barriers with an $N$-BBM, through the introduction of an $N^-$-BBM and the application of Lemma~\ref{lem:N-N+couping}. Recall that the point measure of the $N$-BBM throughout time is denoted by $(\cX_{t}^{N})_{t\geq0}$. We first claim the following, which is a consequence of Propositions~\ref{prop:allregimes:LB:npart} and~\ref{prop:allregimes:LB:endpoint}.
\begin{lemma}\label{lem:main:LB:coupling}
	Let $\anyrg\in\{\subcrit,\supercrit,\crit\}$. There exists $c_1,c_2>0$ such that for $T$ sufficiently large, one has
	\begin{equation}\label{eq:LB:mainineq:subcrit}
		\bbP_{N^\gk\gd_0}\Big( \cX_{t}^{N}\big[ \gga^\anyrg_T(t) + (1-2\eps)\gs(t/T)L(T)\,,\,+\infty\big) < N^\eps \Big) \;\leq\; c_1 N^{-c_2}\,,
	\end{equation}
	and
	\begin{equation}\label{eq:LB:mainineq}
		\bbP_{N^\gk\gd_{-\gk\gs(0)L(T)}}\Big(\max(\cX_{t^\anyrg_T}^{N}) \leq \gga^\anyrg_T(t^\anyrg_T) + (1-2\eps)\gs(t^\anyrg_T/T)L(T)-\gk\gs(0)L(T)\Big) \;\leq\; c_1 N^{-c_2}\,,
	\end{equation}
	where $c_1,c_2$ are uniformly bounded away from $0$ and $\infty$ in $\gs\in\gsensrg$, $t\in[\gth(T)^{-1}L(T),t^\anyrg_T]$, and locally uniformly in $\eps,\gk\in(0,1)$.
\end{lemma}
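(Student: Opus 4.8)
The plan is to couple the BBM between the barriers $\gga_T^\anyrg$, $\ol\gga_T^\anyrg$ with an $N^-$-BBM started from $N^\gk\gd_0$, and then pass from the $N^-$-BBM to the genuine $N$-BBM via Lemma~\ref{lem:N-N+couping}. First I would observe that on the event $\{\forall s\le t_T^\anyrg:\, |A_T^\anyrg(s)|\le N\}$, the BBM killed at the two barriers $\gga_T^\anyrg$, $\ol\gga_T^\anyrg$ is itself an $N^-$-BBM (in the sense of Definition~\ref{def:N-N+BBM}$(i)$): indeed, whenever $N$ particles lie above a given particle, that particle is certainly below the upper barrier has at least $N$ particles above it only if... — more carefully, the point is that the lower barrier $\gga_T^\anyrg$ only kills particles that have at least $N$ particles strictly above them, precisely because on this event the total number of surviving particles never exceeds $N$, so a particle sitting at the lower barrier has all the other (at most $N-1 < N$)... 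I need to be a little careful here: the correct statement is that the process obtained from the BBM by killing at \emph{both} barriers, \emph{restricted to the event that it never exceeds $N$ particles}, is dominated by an $N^-$-BBM; killing at the upper barrier only removes particles, and killing at the lower barrier removes a particle only when $N$ particles are above it (on the good event). Thus, invoking Lemma~\ref{lem:N-N+couping} with $\mu_0^-=\mu_0^N=N^\gk\gd_0$ (and $\mu_0^+$ arbitrary $\succ$), we get a coupling in which $\cX^N_s \succ A_T^\anyrg(s)$ for all $s$, on the event from Proposition~\ref{prop:allregimes:LB:npart}.

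Next, combining this domination with Proposition~\ref{prop:allregimes:LB:endpoint} applied with $z = 1-2\eps$, for any $t\in[\gth(T)^{-1}L(T), t_T^\anyrg]$ we have, outside an event of probability $c_1 N^{-c_2}$ (the union of the bad event of Proposition~\ref{prop:allregimes:LB:npart} and the complement of the event of Proposition~\ref{prop:allregimes:LB:endpoint}),
\[
\cX^N_t\big[\gga_T^\anyrg(t) + (1-2\eps)\gs(t/T)L(T),\,+\infty\big)\;\ge\; |A^\anyrg_{T,[1-2\eps,1-\eps]}(t)|\;\ge\; N^{1-\eps-(1-2\eps)} = N^{\eps},
\]
since $A^\anyrg_{T,[1-2\eps,h]}(t)\subset A^\anyrg_{T}(t)$ and the particles counted in $A^\anyrg_{T,[1-2\eps,h]}(t)$ lie above $\gga_T^\anyrg(t)+(1-2\eps)\gs(t/T)L(T)$ and are alive in the coupled $N$-BBM. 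This is exactly \eqref{eq:LB:mainineq:subcrit}. For \eqref{eq:LB:mainineq}, we apply the shift invariance of the $N$-BBM: starting from $N^\gk\gd_{-\gk\gs(0)L(T)}$ is the same as starting from $N^\gk\gd_0$ and translating space by $-\gk\gs(0)L(T)$. Taking $t = t_T^\anyrg$ in \eqref{eq:LB:mainineq:subcrit} — which is legitimate since $t_T^\anyrg\ggbis L(T)$ in every regime by \eqref{eq:gthasymp} and \eqref{eq:defta:gthapprox}, so $t_T^\anyrg\ge \gth(T)^{-1}L(T)$ for $T$ large — the event $\{\cX^N_{t_T^\anyrg}[\gga_T^\anyrg(t_T^\anyrg)+(1-2\eps)\gs(t_T^\anyrg/T)L(T),+\infty)\ge N^\eps\}$ in particular forces $\max(\cX^N_{t_T^\anyrg})\ge \gga_T^\anyrg(t_T^\anyrg)+(1-2\eps)\gs(t_T^\anyrg/T)L(T)$ (as $N^\eps\ge 1$), and translating by $-\gk\gs(0)L(T)$ gives \eqref{eq:LB:mainineq}.

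The uniformity statements are inherited directly: the constants $c_1,c_2$ in Propositions~\ref{prop:allregimes:LB:npart} and~\ref{prop:allregimes:LB:endpoint} are already uniform in $\gs\in\gsensrg$, uniform in $t$ in the stated range, and locally uniform in $\eps,\gk$, and the union bound over the two bad events only changes $c_1,c_2$ by universal factors. The main obstacle I anticipate is the first step: verifying rigorously that the BBM between the two barriers, on the event that it stays below $N$ particles, really is (dominated by) an $N^-$-BBM in the precise sense of Definition~\ref{def:N-N+BBM} — one must check that the lower killing barrier never removes a particle that has fewer than $N$ particles above it, which uses the "at most $N$ particles" constraint in an essential way, and that the resulting process is genuinely obtained by an optional stopping line so that Lemma~\ref{lem:N-N+couping} applies. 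The rest is bookkeeping with the quantitative bounds already established in Section~\ref{sec:LB}.
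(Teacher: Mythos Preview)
Your approach is essentially the same as the paper's, and your final obstacle is exactly where the only subtlety lies. The paper resolves it cleanly by introducing an \emph{intermediate} process $(\cX_t^{N-})_{t\ge 0}$: the BBM with the selection mechanism that kills a particle whenever it is not among the $N$ highest \emph{or} it hits one of the barriers $\gga_T^\anyrg$, $\ol\gga_T^\anyrg$. This process is an $N^-$-BBM in the sense of Definition~\ref{def:N-N+BBM}$(i)$ \emph{unconditionally} (it kills at least as much as the $N$-BBM does), so Lemma~\ref{lem:N-N+couping} applies directly to give $\cX_t^{N-}\prec\cX_t^N$ for all $t$. Then one observes that on the good event from Proposition~\ref{prop:allregimes:LB:npart} (namely $\{\forall s\le t_T^\anyrg:\,|A_T^\anyrg(s)|\le N\}$), the top-$N$ selection in $\cX^{N-}$ never acts, so $\cX^{N-}$ coincides pathwise with the BBM between barriers $\cX^B$, and hence $\cX_t^B\prec\cX_t^N$ on that event. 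This sidesteps the need to argue that the BBM between barriers is ``an $N^-$-BBM on an event'', which, as you correctly sensed, is not a well-posed statement since the $N^-$-property is a property of the stopping-line selection, not of a random event. The rest of your argument (applying Proposition~\ref{prop:allregimes:LB:endpoint} with $z=1-2\eps$, using $N^\eps\ge 1$ for the maximum, and shifting by $-\gk\gs(0)L(T)$) matches the paper exactly.
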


\begin{proof}[Proof of Lemma~\ref{lem:main:LB:coupling}]
	Let us first prove~\eqref{eq:LB:mainineq:subcrit}. 
	Denote by $(\cX_{t}^{N-})_{t\geq0}$ the point process of a BBM with the following selection mechanism: particles are killed whenever they are not among the $N$ highest \emph{or} when they hit one of the barriers $\gga^\anyrg_T$, $\ol\gga^\anyrg_T$. In particular it is an $N^-$-BBM (recall Definition~\ref{def:N-N+BBM}), therefore Lemma~\ref{lem:N-N+couping} yields that for any $y\in\R$,
		\begin{equation}\label{eq:LBmainineq:coupl1}
			\bbP_{N^\gk\gd_0}\big(\cX_{t^\anyrg_T}^{N}([y,+\infty))< N^\eps \big) \;\leq\; \bbP_{N^\gk\gd_0}\big(\cX_{t^\anyrg_T}^{N-}([y,+\infty))< N^\eps \big) \,.
		\end{equation}
	
	Let $(\cX_{t}^{B})_{t\geq0}$ be the point process of a BBM killed at the barriers $\gga^\anyrg_T$, $\ol\gga^\anyrg_T$ but without any other selection. By Proposition~\ref{prop:allregimes:LB:npart}, it has a large probability under $\bbP_{N^\gk\gd_0}$ to contain fewer than $N$ particles at all time on $[0,t^\anyrg_T]$, in which case its trajectory is equal to that of the process $(\cX_{t}^{N-})_{t\in[0,t^\anyrg_T]}$. Therefore, for $T$ sufficiently large and $y\in\R$,
	\begin{align}\label{eq:LBmainineq:coupl2}
		&\bbP_{N^\gk\gd_0}\big(\cX_{t^\anyrg_T}^{N-}([y,+\infty))< N^\eps\big) \\
		\notag &\quad \leq\, \bbP_{N^\gk\gd_0}\big(\cX_{t^\anyrg_T}^{B}([y,+\infty))< N^\eps\big) + c_1 N^{-c_2}\,.
	\end{align}
	Finally, letting $t\in[\gth(T)^{-1}L(T),t^\anyrg_T]$ and $y=\gga^\anyrg_T(t) + (1-2\eps)\gs(t/T)L(T)$, and applying Proposition~\ref{prop:allregimes:LB:endpoint} with $z=1-2\eps$, this yields
	\begin{align*}
	&\bbP_{N^\gk\gd_0}\Big(\cX_{t}^{N}\big[ \gga^\anyrg_T(t) + (1-2\eps)\gs(t/T)L(T)\,,\,+\infty\big) < N^\eps \Big) \\
	&\quad \leq \bbP_{N^\gk\gd_0}\big(|A_{T,1-2\eps}^\anyrg(t^\anyrg_T)|<N^\eps\big)
	+ c_1 N^{-c_2}\leq 2c_1 N^{-c_2},
	\end{align*}
	uniformly in $\gs$, $t$, from which~\eqref{eq:LB:mainineq:subcrit} follows.
	
	Regarding~\eqref{eq:LB:mainineq}, one deduces straightforwardly from~\eqref{eq:LB:mainineq:subcrit} that
	\[
	\bbP_{N^\gk\gd_{0}}\Big(\max(\cX_{t^\anyrg_T}^{N}) \leq \gga^\anyrg_T(t^\anyrg_T) + (1-2\eps)\gs(t^\anyrg_T/T)L(T)\Big) \;\leq\; c_1 N^{-c_2}\,,
	\]
	and shifting this estimate by $-\gk\gs(0)L(T)$ concludes the proof.
\end{proof}

With this at hand, we resume the proof of Proposition~\ref{prop:main:LB}.
To that end, we first claim that it suffices to prove the following, slightly weaker statement in which the uniformity in $\gk\in[0,1]$ is replaced by local uniformity in $\gk\in(0,1)$. Recall~(\ref{eq:defbanyrg},~\ref{eq:defmanyrg}).
\begin{lemma}\label{lem:main:LB} 
	Let $\anyrg\in\{\supercrit,\subcrit,\crit\}$. 
	Let $\gl>0$, $\gk\in(0,1)$ and $\gh>0$. Then as $T\to+\infty$, one has
	\begin{equation}\label{eq:lem:main:LB}
		\bbP_{N^{\gk}\gd_{-\gk\gs(0)L(T)}}\left(\frac{1}{b^\anyrg_T}\left(\max(\cX_T^{N}) - m^\anyrg_T\right)\leq -\gl \right) \;\longrightarrow\; 0\,,
	\end{equation}
	and the convergence is uniform in $\gs\in\gsensrg$, and locally uniform in $\gk\in(0,1)$.
\end{lemma}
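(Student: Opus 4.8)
The plan is to fix $\gk\in(0,1)$, $\gl>0$ and $\gh>0$, to choose a small $\eps=\eps(\gl,\gh,\gk)\in(0,1)$ to be specified, and to work with the barriers $\gga^\anyrg_T,\ol\gga^\anyrg_T$ built from the parameters $h=1-\eps$ and $x=(1-\gk)(1-\eps)$ as in~\eqref{eq:LB:parameters}; all estimates below will be uniform in $\gs\in\gsensrg$. The cases $\anyrg\in\{\supercrit,\crit\}$ and $\anyrg=\subcrit$ must be treated separately, because the horizon $t^\anyrg_T$ equals $T$ in the former and is $\ll T$ in the latter. In the super-critical and critical regimes, Lemma~\ref{lem:main:LB:coupling}~\eqref{eq:LB:mainineq} directly gives $\max(\cX^N_T)\ge \gga^\anyrg_T(T)+(1-2\eps)\gs(1)L(T)-\gk\gs(0)L(T)$ with probability $\ge 1-c_1N^{-c_2}\to0$. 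Since $\ol\gga^\anyrg_T(T)=\gga^\anyrg_T(T)+h\gs(1)L(T)$, the right-hand side equals $\ol\gga^\anyrg_T(T)-\eps\gs(1)L(T)-\gk\gs(0)L(T)$, and a computation in the spirit of Lemma~\ref{lem:comparison:m:gga} (using $h-x=\gk(1-\eps)$, the identity $\int_0^1\gs'=\gs(1)-\gs(0)$ in the super-critical case, and $\Psi(-q)=q+\Psi(q)$ together with $L(T)\sim\ga T^{1/3}$ and the uniform continuity of $\Psi$ on bounded sets in the critical case) shows this quantity equals $m^\anyrg_T$ up to an error of at most $C(\gh)\,\eps\,b^\anyrg_T+o(b^\anyrg_T)$; taking $\eps\le \gl/(2C(\gh))$ closes these two regimes.

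For the sub-critical regime, I would cut $[0,T]$ into $K_T=\lceil T/t^\subcrit_T\rceil$ blocks, setting $s_k:=kt^\subcrit_T\wedge T$, and propagate a ``good configuration'' through them. On the first block, \eqref{eq:LB:mainineq:subcrit} and Proposition~\ref{prop:allregimes:LB:endpoint} (applied to the coupled BBM between barriers, which is dominated by the $N$-BBM via Lemma~\ref{lem:N-N+couping}) give, with probability $\ge1-c_1N^{-c_2}$, at least $N^\eps$ particles of $\cX^N_{t^\subcrit_T}$ above $y_1:=\gga^\subcrit_T(t^\subcrit_T)+(1-2\eps)\gs(t^\subcrit_T/T)L(T)-\gk\gs(0)L(T)$. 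By Corollary~\ref{cor:coupling:N} I may then restart an $N$-BBM from $N^\eps\gd_{y_1}$, equip it with a fresh pair of barriers anchored at $y_1$ (with $x$-parameter $(1-\eps)^2$ and gap parameter $1-\eps$), and re-apply Lemma~\ref{lem:main:LB:coupling} on the next block, advancing the configuration by a further $\approx\gs\sqrt{1-\pi^2/(h^2L(T)^2)}\,t^\subcrit_T$ up to an additive loss $O_\gh(\eps L(T))$ absorbing the $(1-2\eps)$-margins and the $\max$-versus-$\shft$ discrepancy. Iterating and summing the increments (a Riemann sum giving $\sum_k\gs(s_k/T)\,t^\subcrit_T\approx v(1)T$, and using $t^\subcrit_T\ge L(T)^3$ so that $K_T\,O_\gh(\eps L(T))=\eps\,O_\gh(b^\subcrit_T)$), one obtains \emph{on the event that every block is good} that $\max(\cX^N_T)\ge v(1)T\sqrt{1-\pi^2/(h^2L(T)^2)}-\eps\,O_\gh(b^\subcrit_T)$, which after expanding the square root is $\ge m^\subcrit_T-\tfrac{\gl}{2}b^\subcrit_T$ for $\eps$ small and $T$ large, exactly as in Lemma~\ref{lem:comparison:m:gga}.

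The hard part is controlling the failure probability over $K_T$ blocks, because $K_T=T/t^\subcrit_T$ can be super-polynomial in $L(T)$ in the sub-critical regime (e.g.\ $T=e^{L(T)}$), so the crude union bound $K_Tc_1N^{-c_2}$ need not vanish. I would remedy this with a \emph{self-correction} estimate: from \emph{any} configuration of $N$ particles, within one block of length $t^\subcrit_T$ the $N$-BBM reaches a good configuration (one dominating $N^\eps\gd_y$ with $y$ at least the current $\shft$ plus $\approx\gs\sqrt{1-\pi^2/(h^2L(T)^2)}\,t^\subcrit_T-O_\gh(L(T)^3)$) with probability at least some $\gd=\gd(\eps,\gh)>0$. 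This should follow from the moment method of Section~\ref{sec:moments} run from the single topmost particle, using that between the (slightly sub-natural) barriers the sub-population survives the mixing time $\asymp L(T)^3\ll t^\subcrit_T$ with probability $\Theta(1)$ and is then spread across the window as in Lemma~\ref{lem:mai16}. Writing $p_k$ for the probability that $\cX^N_{kt^\subcrit_T}$ fails to be good, the block estimate and the self-correction estimate yield $p_{k+1}\le c_1N^{-c_2}+(1-\gd)p_k$, hence $p_{K_T}\le c_1N^{-c_2}/\gd+(1-\gd)^{K_T}\to0$ regardless of the size of $K_T$.

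Finally, the accumulated loss must be accounted for: the number $B$ of bad blocks satisfies $\bbE[B]\le K_Tc_1N^{-c_2}/\gd$ by the same recursion, so $B\le N^{-c_2/2}K_T$ with probability $\to1$ by Markov's inequality; since one bad block costs at most $O_\gh(L(T)^3)$ of displacement, the total loss is $\le N^{-c_2/2}K_T\,O_\gh(L(T)^3)=N^{-c_2/2}O_\gh(T)=o(b^\subcrit_T)$, which is absorbed into the $\tfrac\gl2 b^\subcrit_T$ budget. This upgrades the ``all blocks good'' bound to~\eqref{eq:lem:main:LB}, uniformly in $\gs\in\gsens$ and locally uniformly in $\gk\in(0,1)$. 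I expect the self-correction estimate (and, to a lesser extent, the bookkeeping of the losses) to be the only genuinely technical steps; everything else is a reshuffling of Lemma~\ref{lem:main:LB:coupling} and of the elementary computations already carried out in Lemma~\ref{lem:comparison:m:gga}.
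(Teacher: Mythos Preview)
For the super-critical and critical regimes your argument is the same as the paper's: apply \eqref{eq:LB:mainineq} at $t^\anyrg_T=T$, rewrite the barrier endpoint as in~\eqref{eq:prfmainLB:rewriting:gga}, and invoke Lemma~\ref{lem:comparison:m:gga}.

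For the sub-critical regime you take a genuinely different route. The paper does \emph{not} use a self-correction/renewal argument; it splits into two overlapping sub-cases. When $L(T)\gg\log T$ the crude union bound over the $K\le 2T/t^\subcrit_T$ blocks already gives $K\,c_1N^{-c_2}\to0$. When $L(T)\ll T^{1/8}$ the paper runs a second-moment method on the block increments $X_k:=q_\eps(\ol\cX^N_{t_k})-q_\eps(\ol\cX^N_{t_{k-1}})$: it bounds $\bbE[X_k]$ from below via~\eqref{eq:hard_work} together with a crude Gaussian-maximum estimate (Lemma~\ref{lem:gaussmax}), bounds $\Var(X_k)\le c\,(t^\subcrit_T)^2$ by embedding into the BBM without selection, and applies Bienaym\'e--Chebyshev to $\sum_k X_k$. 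Your unified renewal approach would be more elegant than this case split if it worked.

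However, your loss bookkeeping has a genuine gap. After a bad block at step $k$, the self-correction you describe restarts from $\shft(\cX^N_{s_k})$, not from the target $y_k=y_{k-1}+\Delta_{k-1}$; so the cost of a bad episode (bad block followed by a successful correction) is $y_{k-1}+\Delta_{k-1}-\shft(\cX^N_{s_k})+O(L(T)^3)$, and you have provided no control on $y_{k-1}+\Delta_{k-1}-\shft(\cX^N_{s_k})$. Nothing in your argument prevents $\shft(\cX^N_{s_k})$ from sitting well below $y_{k-1}$ on the bad event, so the assertion ``one bad block costs at most $O_\gh(L(T)^3)$'' is unjustified. A repair is possible: couple with a $1$-BBM via Corollary~\ref{cor:coupling:N} (this is just a single Brownian motion) to get $\max(\cX^N_{s_k})\ge y_{k-1}-O(\sqrt{t^\subcrit_T\log K_T})$ with high probability, hence $\shft\ge\max\ge y_{k-1}-o(t^\subcrit_T)$; the cost per bad block then becomes $O(t^\subcrit_T)$ rather than $O(L(T)^3)$, and your final estimate $B\cdot O(t^\subcrit_T)\le N^{-c_2/2}T=o(b^\subcrit_T)$ still closes. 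Note also that the $\gk$ realising $\shft$ for an arbitrary configuration may lie at $0$ or $1$, so your self-correction step needs the same boundary treatment (via Lemma~\ref{lem:LB:standardresults}) that the paper uses to pass from Lemma~\ref{lem:main:LB} to Proposition~\ref{prop:main:LB}.
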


\begin{proof}[Proof of Proposition~\ref{prop:main:LB} subject to Lemma~\ref{lem:main:LB}]
	We start with the case $\gk$ close to $1$. Let $\anyrg\in\{\supercrit,\subcrit,\crit\}$ and $\gl>0$. Notice that there exists $\eps>0$ such that
	\begin{equation}\label{eq:prf:corol:main:1}
		\limsup_{T\to+\infty} \frac{\eps\gs(0)L(T)}{b^\anyrg_T}\,\leq\, \frac\gl3\,.
	\end{equation}
	Indeed, in the sub-critical regime this follows from the fact that $L(T)\ll b^\subcrit_T$, and in the other cases this holds as soon as $\eps\leq (\gl\gh/3) \lim_{T\to+\infty}(b^\anyrg_T/L(T))$ (the latter limit being equal to $1$, resp. $1/\ga$, in the super-critical, resp. critical regime). Moreover, one has $N^\gk\gd_{-\gk\gs(0)L(T)}\succ N^{1-\eps}\gd_{-\gs(0)L(T)}$ for $\gk\in[1-\eps,1]$, so Corollary~\ref{cor:coupling:N} implies that,
	\begin{align*}
		&\bbP_{N^{\gk}\gd_{-\gk\gs(0)L(T)}}\left(\frac{1}{b^\anyrg_T}\left(\max(\cX_T^{N}) - m^\anyrg_T\right)\leq -\gl \right) \\
		&\quad\leq\,\bbP_{N^{1-\eps}\gd_{-\gs(0)L(T)}}\left(\frac{1}{b^\anyrg_T}\left(\max(\cX_T^{N}) - m^\anyrg_T\right)\leq -\gl \right)\\
		&\quad\leq\,\bbP_{N^{1-\eps}\gd_{-(1-\eps)\gs(0)L(T)}}\left(\frac{1}{b^\anyrg_T}\left(\max(\cX_T^{N}) - m^\anyrg_T\right)\leq -\frac\gl3 \right)\,,
	\end{align*}
	where the last inequality is obtained for $T$ sufficiently large by shifting the process upward by $\eps\gs(0)L(T)$, and by recalling~\eqref{eq:prf:corol:main:1}. Applying Lemma~\ref{lem:main:LB} with $\gk = 1-\eps$, this proves that~\eqref{eq:lem:main:LB} holds uniformly in $\gk\in[1-\eps,1]$.
	
	Regarding the case $\gk$ small, let $\eps>0$, and recall that $(\cN_t)_{t\geq0}$ denotes the set of particles in the BBM throughout time; and let $Z_t:=|\cN_T|$, $t\geq0$. We claim the following.
	\begin{lemma}\label{lem:LB:standardresults} Let $\eps,\eps'>0$. One has for $T$ sufficiently large:
		
		$(i)$ $\bbP_{\gd_0}(Z_{\eps L(T)} \leq N^{\eps/4})\leq \eps'$,
		
		$(ii)$ $\bbP_{\gd_0}(\exists\,s\leq \eps L(T): Z_s \geq N)\leq \eps'$,
		
		$(iii)$ $\bbP_{\gd_0}(\exists\,u\in\cN_{\eps L(T)}: X_u(\eps L(T)) \leq -2\eps\gh^{-1}L(T))\,\leq\, \eps'$.
	\end{lemma}
	Those statements follow from classical results on the BBM and birth processes, we postpone their proof for now. For $\eps>0$ and $\gk\in[0,\eps]$, notice that $N^{\gk}\gd_{-\gk\gs(0)L(T)} \succ \gd_{-\eps\gs(0)L(T)}$; so we deduce from Corollary~\ref{cor:coupling:N} and a shift that, for $\eps>0$ and $\gk\in[0,\eps]$,
	\begin{align*}
	&\bbP_{N^{\gk}\gd_{-\gk\gs(0)L(T)}}\left(\frac{1}{b^\anyrg_T}\left(\max(\cX_T^{N}) - m^\anyrg_T\right)\leq -\gl \right) \\
	&\quad \leq\, \bbP_{\gd_{0}}\left(\frac{1}{b^\anyrg_T}\left(\max(\cX_T^{N}) -\eps\gs(0)L(T) - m^\anyrg_T\right)\leq -\gl \right).
	\end{align*}
	Let $\eps'>0$. We apply the Markov property at time $\eps L(T)$, noticing that Lemma~\ref{lem:LB:standardresults} implies for $T$ sufficiently large,
	\[
	\bbP_{\gd_0}\left( \begin{aligned}
		&\cX_{\eps L(T)}^{N} \Big(\big(-\infty, -2\eps\gh^{-1}L(T)\big]\Big) = 0\,;\\
		&\cX_{\eps L(T)}^{N} \Big(\big[-2\eps\gh^{-1}L(T),+\infty\big)\Big) \geq N^{\eps/4}
	\end{aligned} \right)\;\geq\; 1-3\eps'\,,
	\]
	(indeed, on the event $\{\forall s\leq \eps L(T),\,Z_s<N\}$, the particle configurations of the BBM and $N$-BBM at time $\eps L(T)$ are the same). 
	In particular, on that event we have $\cX_{\eps L(T)}^{N} \succ N^{\eps/4}\gd_{-2\eps\gh^{-1}L(T)}$. Applying Markov's property at time $\eps L(T)$, then Corollary~\ref{cor:coupling:N} again and a shift, we obtain
	\begin{align*}
		&\bbP_{N^{\gk}\gd_{-\gk\gs(0)L(T)}}\left(\frac{1}{b^\anyrg_T}\left(\max(\cX_T^{N}) - m^\anyrg_T\right)\leq -\gl \right)\\
		&\leq 3\eps'+ \bbP_{N^{\eps/4}\gd_{(\eps L(T),-\frac\eps4\gs(0)L(T))}}\!\bigg(\frac{1}{b^\anyrg_T}\left(\max(\cX_T^{N}) -\eps\big(3\gs(0)/4+2\gh^{-1}\big)L(T) - m^\anyrg_T\right)\leq -\gl \bigg).
	\end{align*}
	Recall that $(3\gs(0)/4+2\gh^{-1})L(T)=O(b^\anyrg_T)$; hence, choosing $\eps'$ and $\eps=\eps(\gl,\gh)$ small enough, we conclude the proof of Proposition~\ref{prop:main:LB} by applying Lemma~\ref{lem:main:LB} at time $T-\eps L(T)$ with $N^{\eps/4}$ initial particles.
\end{proof}

We now prove Lemma~\ref{lem:LB:standardresults}. Let us first recall the following classical result on pure birth processes (see e.g. \cite[Ch. III]{AN72}). In the following we consider a pure birth process $(Z_t)_{t\geq0}$ with same rate $\gb_0$ and offspring distribution $\xi$ as our time-inhomogeneous BBM $(\cX_t)_{t\in[0,T]}$, and with $Z_0:=1$; in particular, when restricted to $[0,T]$, it has the same distribution as the BBM's population size $(|\cN_t|)_{t\in[0,T]}$ under $\bbP_{\gd_0}$.
\begin{proposition}\label{prop:LB:standardresults:quote}
	\cite[Theorems III.7.1--2]{AN72} 
	Let $\cF_t:=\gs(Z_s,s\leq t)$, $t\geq0$. Then, under $\bbP_{\gd_0}$, the process $(e^{-t/2}Z_t)_{t\geq0}$ is a $(\cF_t)_{t\geq0}$-martingale, is non-negative and converges a.s. and in $L^1$ to some random variable $W$. Moreover, $\bbE[W]=1$ and $\bbP_{\gd_0}(W>0)=1$.
\end{proposition}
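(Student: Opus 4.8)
Since the statement is quoted verbatim from \cite[Theorems III.7.1--2]{AN72}, the proof would amount to specialising the classical theory of supercritical continuous-time Markov branching processes to the present normalisation; the plan is to verify the assertions in the order: martingale property, a.s.\ convergence, $L^2$-boundedness (hence $L^1$-convergence and $\bbE[W]=1$), and finally non-degeneracy.

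First I would establish the martingale property. By the branching property, conditionally on $\cF_t$ the population $(Z_{t+s})_{s\ge0}$ is a sum of $Z_t$ independent copies of $(Z_s)_{s\ge0}$, so $\bbE_{\gd_0}[Z_{t+s}\mid\cF_t]=Z_t\,\bbE_{\gd_0}[Z_s]$. A one-line ODE for $s\mapsto\bbE_{\gd_0}[Z_s]$ (a particle branches at rate $\gb_0$ into a mean-$\bbE[\xi]$ number of offspring) gives $\bbE_{\gd_0}[Z_s]=e^{\gb_0(\bbE[\xi]-1)s}=e^{s/2}$ under the normalisation $\gb_0=(2(\bbE[\xi]-1))^{-1}$. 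Hence $(e^{-t/2}Z_t)_{t\ge0}$ is an $(\cF_t)$-martingale, and it is non-negative since $Z_t\ge1$; by Doob's convergence theorem it converges a.s.\ to a finite limit $W$.

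Next, for the $L^1$-convergence and $\bbE[W]=1$ I would show the martingale is $L^2$-bounded, which is exactly where $\bbE[\xi^2]<\infty$ enters. Acting with the generator on $n\mapsto n^2$ (a jump $n\mapsto n-1+\xi$ at rate $\gb_0 n$, with $\bbE[(n-1+\xi)^2-n^2]=\bbE[(\xi-1)^2]+2n(\bbE[\xi]-1)$ and $\bbE[(\xi-1)^2]<\infty$) yields the linear ODE $M_2'(t)=M_2(t)+\gb_0\bbE[(\xi-1)^2]e^{t/2}$ for $M_2(t):=\bbE_{\gd_0}[Z_t^2]$, whence $M_2(t)=O(e^t)$ and $\sup_{t\ge0}\bbE_{\gd_0}[(e^{-t/2}Z_t)^2]<\infty$. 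Uniform integrability then gives $L^1$- (in fact $L^2$-) convergence and $\bbE_{\gd_0}[W]=\lim_t e^{-t/2}\bbE_{\gd_0}[Z_t]=1$.

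The main obstacle is the non-degeneracy $\bbP_{\gd_0}(W>0)=1$. I would first note that $Z_t$ is non-decreasing and, by the second-moment bound, a.s.\ finite for every $t$, and since at level $n$ the next branching occurs after an $\mathrm{Exp}(n\gb_0)$ waiting time the population cannot stabilise, so $Z_t\to\infty$ a.s. Then, letting $s\to\infty$ inside each sub-population of the branching decomposition gives the self-similar identity $W=e^{-t/2}\sum_{i=1}^{Z_t}W^{(i)}$, where conditionally on $\cF_t$ the $W^{(i)}$ are i.i.d.\ copies of $W$. Writing $q:=\bbP_{\gd_0}(W=0)$, this forces $q=\bbE_{\gd_0}[q^{Z_t}]$ for all $t$; passing to $t\to\infty$ with $Z_t\to\infty$ and dominated convergence gives $q\in\{0,1\}$, and $q=1$ is ruled out by $\bbE_{\gd_0}[W]=1$, so $q=0$. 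I expect the $L^2$ ODE bookkeeping to be the most calculation-heavy part, while the self-similarity argument for positivity is the conceptual crux.
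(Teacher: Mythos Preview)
The paper does not actually prove this proposition: it is stated as a direct citation of \cite[Theorems III.7.1--2]{AN72} and no proof is given. Your sketch is correct and follows precisely the classical argument from that reference, specialised to the present normalisation $\gb_0=(2(\bbE[\xi]-1))^{-1}$ and exploiting $\xi\ge2$, $\bbE[\xi^2]<\infty$: the martingale property via the branching decomposition, $L^2$-boundedness from the second-moment ODE (which yields $L^1$-convergence and $\bbE[W]=1$), and non-degeneracy from the fixed-point equation $q=\bbE[q^{Z_t}]$ combined with $Z_t\to\infty$ a.s.
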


\begin{proof}[Proof of Lemma~\ref{lem:LB:standardresults}]
	Let us start with Lemma~\ref{lem:LB:standardresults}.(i--ii). We prove them for a birth process $(Z_t)_{t\geq0}$ by using Proposition~\ref{prop:LB:standardresults:quote}; then these statements also hold for $(|\cN_t|)_{t\in[0,T]}$, assuming $T$ was taken sufficiently large. On the one hand, for any $x>0$, one has for $t$ sufficiently large
	\[
	\bbP(Z_t\leq e^{t/4})\leq \bbP(e^{-t/2}Z_t \leq x)\leq \bbP(W\leq 2x)+o(1)\,,
	\]
	where the second inequality comes from the $L^1$ convergence of the martingale, i.e. $\bbP(|W-e^{-t/2}Z_t|>x)\to0$ as $t\to+\infty$. Assuming $x$ was chosen sufficiently small and replacing $t$ with $\eps L(T)$, this proves Lemma~\ref{lem:LB:standardresults}.(i) for $T$ sufficiently large. On the other hand, we have by Doob's martingale inequality that, for $t\geq0$,
	\[
	\bbP(\exists \, s\leq t: e^{-s/2}Z_s \geq e^{t/2}) \,\leq\, e^{-t/2}\,.
	\]
	Again, letting $t=\eps L(T)$ and assuming $T$ large enough, this implies Lemma~\ref{lem:LB:standardresults}.(ii).\footnote{Actually this proves a much stronger result, but we do not need it in this paper.}
	
	We now turn to Lemma~\ref{lem:LB:standardresults}.(iii), where we let $Z_t:=|\cN_t|$ for $t\in[0,T]$. Let $(Y^i_t)_{t\in[0,T]}$, $i\in\N$ denote a sequence of i.i.d. time-inhomogeneous Brownian motions with infinitesimal variance $\gs(\cdot/T)$. As a consequence of Slepian's lemma~\cite{SLep62}, one has for $t\in[0,T]$ and $k\in\N$,
	\begin{align*}
	&\bbP_{\gd_0}\big(\exists\,u\in\cN_{t}: X_u(t) \leq -2\gh^{-1}t\,\big|\,Z_t=k\big)\\
	&\quad \leq\, \bP_0\big(\exists i\leq k: Y^i_t \geq 2\gh^{-1}t\big)\,=\, 1-\left[1-\bP_0(Y^1_t\geq 2\gh^{-1}t)\right]^{k},
	\end{align*}
	where we also used the Brownian symmetry property. Recall that for $i\in\N$,
	\[\bE_0\big[(Y^i_t)^2\big]\,=\,\int_0^{t}\gs^2(s/T)\,\dd s\,\leq\, \gh^{-2}t,\]
	and recall the standard Gaussian tail estimate, $\bP_0(W_1\geq x)\leq \frac1{x\sqrt{2\pi}} e^{-x^2/2}$ for $x>1$. Thus, for $t$ sufficiently large, one has
	\[
	\bP_0(Y^1_t\geq 2\gh^{-1}t)\,\leq\, \bP_0(W_1\geq 2\sqrt{t})\,\leq\, \frac1{2\sqrt{2\pi t}} e^{-2t}\,.
	\]
	Let $t=\eps L(T)$ in the above, and recall from Proposition~\ref{prop:LB:standardresults:quote} that $\bbP_{\gd_0}(Z_{\eps L(T)}\geq N(T)^{3\eps/2})\to0$ as $T\to+\infty$ (this is similar to Lemma~\ref{lem:LB:standardresults}, we do not write the details again). Therefore, we deduce for $T$ large,
	\begin{align*}
	&\bbP_{\gd_0}\Big(\exists\,u\in\cN_{\eps L(T)}: X_u(\eps L(T)) \leq -2\eps\gh^{-1}L(T)\Big)\\
	&\quad \leq\, o(1) + 1-\left[1-\frac1{2\sqrt{2\pi \eps L(T)}}{N(T)}^{-2\eps}\right]^{N(T)^{3\eps/2}}\,=\, o(1)\,,
	\end{align*}
	which concludes the proof.
\end{proof}

It only remains to prove Lemma~\ref{lem:main:LB}. We proceed differently depending on the regime satisfied by $L(T)$.

\begin{proof}[Proof of Lemma~\ref{lem:main:LB}, super-critical and critical regimes]
	The result follows immediately from Lemmata~\ref{lem:main:LB:coupling} and~\ref{lem:comparison:m:gga} in the super-critical and critical regimes. Indeed, in both cases one has $t^\anyrg_T=T$; recalling \eqref{eq:LB:parameters} and the notation $\gga^{\anyrg,h,x}_T(\cdot)$ from Section~\ref{sec:prelim:notation}, one has
	\begin{align}\label{eq:prfmainLB:rewriting:gga}
		\notag&\gga^{\anyrg,1-\eps,(1-\eps)(1-\gk)}_T(T) + (1-2\eps)\gs(1)L(T)-\gk\gs(0)L(T)\\
		&\quad=\, \ol\gga^{\anyrg,1-\eps,1-\eps(1-\gk)}_T(T)-\eps\gs(1)L(T)\,.
	\end{align}
	Letting $\eps$ be arbitrarily small and applying Lemmata~\ref{lem:comparison:m:gga} and~\ref{lem:main:LB:coupling} (more precisely~\eqref{eq:LB:mainineq}), this gives exactly Lemma~\ref{lem:main:LB} in both regimes.
\end{proof}

We now turn to the proof of Lemma~\ref{lem:main:LB} in the sub-critical regime ($L(T)\ll T^{1/3}$), which requires more care. Since our estimates do not directly hold at time $T$ in that regime, we split the interval $[0,T]$ into blocks whose lengths are of order $t^\subcrit_T$, apply our estimates on each of those, then use them to reconstruct a process on $[0,T]$ which is dominated by the $N$-BBM.

First, we may always bound the $N$-BBM from below by having it start with fewer particles, recall Corollary~\ref{cor:coupling:N}: hence, in~\eqref{eq:LB:mainineq:subcrit} and in the following, we assume that $\eps$ is small and that $\gk=\eps$ without loss of generality. Moreover, it will be very convenient to work with quantiles instead of the maximal displacement: recalling~\eqref{def:quantile:M}, write for $\mu\in\meas$,
\begin{equation}
	q_{\kappa}(\mu)\,:=\,q_{N^\kappa}(\mu)\,=\, \inf\{x\in\R,\, \mu([x,+\infty))< N^{\kappa}\}\,,
\end{equation}
and recall that for any $\mu\prec\nu$, one has by definition $q_{\kappa}(\mu)\leq q_{\kappa}(\nu)$.
%
We now introduce an auxiliary process $(\ol \cX^{N}_t)_{0\le t\le T}$. Let us define $K:=\lfloor 2T/t^\subcrit_T\rfloor$, and for $0\leq k\leq K-1$, let $t_k:= \frac k2t^\subcrit_T$, and $t_{K}=T$ (so $t_{K}-t_{K-1}\in[\frac12t^\subcrit_T,t^\subcrit_T]$). The process $(\ol \cX^{N}_t)_{0\le t\le T}$ is defined as follows: starting from $\lfloor N^\kappa\rfloor \delta_0 $ (we omit the integer part in the following), it evolves between times $t_k$ and $t_{k+1}$ as the process $\cX^N$. Then, at every time $t_k$, $1\le k< K$, all but the $N^\kappa$ top-most particles are removed, and the remaining particles are all set to the lowest among their positions. In other words, a configuration $\mu\in \meas_N$ is replaced by the configuration $N^\kappa \delta_{q_\kappa(\mu)}$. 
Applying Corollary~\ref{cor:coupling:N} inductively, 
one obtains straightforwardly a coupling such that $\ol \cX^{N}_T\,\prec\,\cX^{N}_T$ with probability 1. In particular the law of $q_\kappa(\ol \cX^{N}_T)$ stochastically bounds from below the law of $q_\kappa(\cX^N_T)$, which is lower than $\max(\cX^N_T)$; hence it suffices to prove~\eqref{eq:lem:main:LB} with $\max(\cX^N_T)$ replaced by $q_\kappa(\ol \cX^{N}_T)$.

For $1\le k\le K$, write
\[
X_k = q_\kappa\big(\ol \cX^{N}_{t_k}\big) - q_\kappa\big(\ol \cX^{N}_{t_{k-1}}\big),
\]
so that $q_\kappa(\ol \cX^{N}_T) = X_1+\cdots+X_{K}$ (recall that $\ol \cX^{N}_0=N^\kappa\gd_0$). By the definition of the process $\ol \cX^{N}$ and the fact that a translation of the $N$-BBM is again an $N$-BBM started from a translated initial configuration, the random variables $X_1,\ldots,X_{K}$ are independent.

Using that notation, we may finally prove Lemma~\ref{lem:main:LB} in the sub-critical regime. However, the proof relies on two different methods, depending on the speed at which $N(T)$ diverges.

\begin{proof}[Proof of Lemma~\ref{lem:main:LB}, sub-critical regime, $N(T)$ growing fast]
	Assume that $N(T)$ grows super-polynomial in $T$, i.e. $L(T)\gg \log(T)$. Recall~\eqref{eq:LB:mainineq:subcrit}, which implies for $1\leq k\leq K$ that
	\begin{equation}\label{eq:LB:main:subcrit:goodblock}
		\bbP\big(X_k\geq \gga^\subcrit_T(t_{k})-\gga^\subcrit_T(t_{k-1})-\gh^{-1}L(T)\big)\;\geq\; 1-c_1N^{-c_2}\,,
	\end{equation}
	for some $c_1,c_2$ locally uniform in $\eps=\gk\in(0,1)$. Recalling the definition of the process $(\ol \cX^{N}_t)_{0\le t\le T}$, that the $X_k$, $1\leq k\leq K$ are independent and that $q_\kappa(\ol \cX^{N}_T) = X_1+\cdots+X_{K}$, one deduces through a direct induction that
	\[
	\bbP_{N^\kappa\gd_{u_0}}\Big(q_\kappa(\ol \cX^{N}_T) \geq \gga^\subcrit_T(T) - K\gh^{-1}L(T) \Big)\;\geq\; (1-c_1N^{-c_2})^{K}\,.
	\]
	Notice that $K N^{-c_2}\leq \frac{\gth(T)T}{L(T)^3}e^{-c_2L(T)}$; so, under the assumption $L(T)\gg\log(T)$, the latter probability goes to 1 as $T\to+\infty$, locally uniformly in $\kappa\in(0,1)$. Recalling that $q_\kappa(\ol \cX^{N}_T)$ stochastically bounds from below $\max(\cX^{N}_{T})$, this finally implies
	\[
	\bbP_{N^\kappa\gd_{u_0}}\Big(\max(\cX^{N}_{T})\leq \gga^\subcrit_T(T) - K\gh^{-1}L(T) \Big)\,\longrightarrow\,0
	\]
	as $T\to+\infty$, locally uniformly in $\kappa$. Moreover, $K\gh^{-1}L(T)\leq \gh^{-1}\frac{\gth(T)T}{L(T)^2} =o(T/L(T)^2)$ and $L(T)=o(T/L(T)^2)$. Hence, applying a shift $-\gk\gs(0)L(T)$ to the estimate above, we deduce from Lemma~\ref{lem:comparison:m:gga} and the same computation as~\eqref{eq:prfmainLB:rewriting:gga} that, with $\eps$ arbitrarily small, the lower bound~\eqref{eq:lem:main:LB} holds in the case $N(T)$ sub-critical, super-polynomial.
\end{proof}

If $L(T)$ is too small, the decomposition above involves so many blocks that, with large probability, the auxiliary process $\ol \cX^{N}$ does not satisfy the event~\eqref{eq:LB:main:subcrit:goodblock} on some of them. However, having a small $L(T)$ allows us to use a second moment method, relying only on very crude bounds on the second moments of $X_k$, $1\leq k\leq K$.

\begin{proof}[Proof of Lemma~\ref{lem:main:LB}, sub-critical regime, $N(T)$ growing slowly]
	In the following, we assume that $L(T)$ grows very slowly with $T$, in fact, $L(T)\ll T^{1/8}$ is enough. In particular, we are still in the sub-critical regime, and~\eqref{eq:defta} yields $t^\subcrit_T=L(T)^4$. We claim the following: there exist $c_1,c_2>0$ such that for $T$ sufficiently large, for every $k\in\{1,\ldots,K\}$, we have
	\begin{align}
		\label{X_k_expectation}
		\bbE[X_k]\,&\geq\, (\gga^\subcrit_T(t_{k}) - \gga^\subcrit_T(t_{k-1})) (1-c_1N^{-c_2}) - c_1 L(T),\\
		\label{X_k_variance}
		\Var\big(X_k\big) \,&\leq\, c_1 (t^\subcrit_T)^2.
	\end{align}
	Let us see how equations \eqref{X_k_expectation} and \eqref{X_k_variance}  imply the lemma. First note that using \eqref{X_k_expectation}, we have
	\[
	\bbE[X_1+\ldots+X_{K}] \,\ge\, \gga^\subcrit_T(T)(1-c_1N^{-c_2}) - c_1 L(T)\times K,
	\]
	and, using that $K \le 2T/t^\subcrit_T=2T/L(T)^4$, we get that
	\[
	\bbE[q_\kappa(\ol \cX^{N}_T)] = \bbE[X_1+\ldots+X_K] \,\ge\, \gga^\subcrit_T(T) + o\left(\frac T {L(T)^2}\right).
	\]
	Furthermore, by the independence of the random variables $X_1,\ldots,X_k$, we get using $\eqref{X_k_variance}$ that
	\[
	\Var(q_\kappa(\ol \cX^{N}_T)) = \sum_{k=1}^K \Var(X_k) \,\le\, 2\,c_1 t^\subcrit_T\, T \,=\, o\left(\left(\frac T {L(T)^2}\right)^2\right),
	\]
	where for the last equality we used that $t^\subcrit_T = L(T)^4$ and $L(T) \ll T^{1/8}$. Using the Bienaymé-Chebychev inequality, this yields that
	\[
	q_\kappa(\ol \cX^{N}_T) \,\geq\, \gga^\subcrit_T(T) + o_{\bbP}\left(\frac T {L(T)^2}\right),
	\]
	with large probability, where we recall the notation $o_{\bbP}(\cdot)$ from Theorem~\ref{thm:main}.. 
	Here again, applying a shift $-\gk\gs(0)L(T)=o(T/L(T)^2)$ to the estimate above and letting $\gk,\eps$ be small, we deduce from Lemma~\ref{lem:comparison:m:gga} and the same computation as~\eqref{eq:prfmainLB:rewriting:gga} that the lower bound~\eqref{eq:lem:main:LB} holds in the case $L(T)\ll T^{1/8}$.
	
	It remains to prove \eqref{X_k_expectation} and \eqref{X_k_variance}. We start with \eqref{X_k_expectation}. Noting that the process evolves between times $t_k$ and $t_{k+1}$ as a $N$-BBM with variance profile $\sigma(t_k/T + \cdot)$, it is enough to show that 
	\begin{equation}\label{eq:lem:LB:smallL:1st:1}
		\bbE_{N^\kappa\gd_0}\big[q_\kappa(\cX^N_{t})\big]\,\geq\, \gga^\subcrit_T(t)\,(1-c_1N^{-c_2}).
	\end{equation}
	where $c_1,c_2$ are uniform in $\gs\in\gsensrg$ and $t\in[\frac12 t^\subcrit_T,t^\subcrit_T]$. Most hard work has been done in Lemma~\ref{lem:main:LB:coupling}, which yields that, with the same notation,
	\begin{equation}
		\label{eq:hard_work}
		\bbP_{N^\kappa\gd_0}\big(q_\kappa(\cX^N_{t}) < \gga^\subcrit_T(t)\big) \le c_1 N^{-c_2}.
	\end{equation}
	Assuming from now on that $T$ is large enough, so that $\gga^\subcrit_T(t) \ge c t^\subcrit_T \ge 0$ for some $c>0$ and all $t\in[\frac12 t^\subcrit_T,t^\subcrit_T]$, we deduce from \eqref{eq:hard_work} that
	\begin{align}\label{eq:lem:LB:smallL:1st:2a}
		&\bbE_{N^\kappa\gd_0}\Big[q_\kappa(\cX^N_{t})\,\ind_{\{q_\kappa(\cX^N_{t})\geq 0\}}\Big] \\
		\notag &\qquad \geq\,\gga^\subcrit_T(t)\,\bbP_{N^\kappa\gd_0}\big(q_\kappa(\cX^N_{t})\geq \gga^\subcrit_T(t)\big)\,\geq\, \gga^\subcrit_T(t)\,(1-c_1N^{-c_2})\,,
	\end{align}
	uniformly in $t\in[\frac12 t^\subcrit_T,t^\subcrit_T]$. On the other hand, recalling from Proposition~\ref{prop:coupling:BBM}.$(i)$ that we can couple $\cX^N$ and a BBM $\cX$ in such a way that $\cX_t^N\subset\cX_t$ for all $t$, we have
	\begin{equation*}
		\bbE_{N^\kappa\gd_0}\Big[q_\kappa(\cX^N_{t})\,\ind_{\{q_\kappa(\cX^N_{t}) \leq 0\}}\Big] \ge - \bbE_{N^\kappa\gd_0}\Big[(\min(\cX_t))_-\,\ind_{\{q_\kappa(\cX^N_{t}) \leq 0\}}\Big],
	\end{equation*}
	and, using the Cauchy-Schwarz inequality and the symmetry of the Gaussian distribution, we get
	\begin{equation}
		\label{eq:lem:LB:smallL:1st:2b}
		\bbE_{N^\kappa\gd_0}\Big[q_\kappa(\cX^N_{t})\,\ind_{\{q_\kappa(\cX^N_{t}) \leq 0\}}\Big] \ge - \sqrt{\bbE_{N^\kappa\gd_0}\Big[(\max(\cX_t))_+^2\Big]\times \bbP_{N^\kappa\gd_0}\left(q_\kappa(\cX^N_{t}) \leq 0\right)}.
	\end{equation}

	Let us recall the following standard result on Gaussian random variables. Let us mention that we are not aiming for optimal constants or bounds in this statement. The proof is postponed to the end of this section.
	\begin{lemma}\label{lem:gaussmax}
		Let $t\in(0,T]$, $M\ge 1$ and $(g_i)_{1\le i\le M}$ a centered Gaussian vector, such that each $g_i$ has variance $\rho^2\ge 0$. Then, for $M\geq 2$, one has
		\begin{equation}
			\bE\big[(\max\{g_i,\,1\leq i\leq M\})_+^2\big]\leq 4 \rho^2 \log M\,.
		\end{equation}
	\end{lemma}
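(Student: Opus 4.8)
\textbf{Proof plan for Lemma~\ref{lem:gaussmax}.}

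The plan is to use a classical Gaussian concentration / maximal inequality argument, combined with a careful treatment of the positive-part truncation, keeping in mind that we only need a crude bound with the explicit factor $4\rho^2\log M$. If $\rho = 0$ the statement is trivial, so assume $\rho>0$; by scaling (replacing $g_i$ by $g_i/\rho$) it suffices to treat the case $\rho = 1$, and then prove $\bE[(\max_i g_i)_+^2]\le 4\log M$.

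First I would recall the standard sub-Gaussian tail estimate for the maximum of (not necessarily independent) centered Gaussians with unit variance: for every $\lambda > 0$, by a union bound and the Gaussian moment generating function,
\begin{equation}\label{eq:gaussmax:mgf}
\bE\big[e^{\lambda \max_i g_i}\big] \le \sum_{i=1}^M \bE[e^{\lambda g_i}] = M e^{\lambda^2/2},
\end{equation}
and hence, by Markov's inequality, $\bP(\max_i g_i \ge u) \le M e^{-u^2/2}$ for all $u \ge 0$ (optimizing $\lambda = u$). The remaining step is to convert this tail bound into a bound on the second moment of the positive part. Write $S := (\max_i g_i)_+ \ge 0$ and use the layer-cake formula
\begin{equation}\label{eq:gaussmax:layercake}
\bE[S^2] = \int_0^\infty 2u\, \bP(S \ge u)\, \dd u = \int_0^\infty 2u\, \bP(\max_i g_i \ge u)\, \dd u.
\end{equation}
Now split the integral at $u_0 := \sqrt{2\log M}$ (note $\log M \ge \log 2 > 0$ since $M \ge 2$, so $u_0 > 0$ is well-defined): on $[0,u_0]$ bound $\bP(\max_i g_i \ge u) \le 1$, contributing $\int_0^{u_0} 2u\,\dd u = u_0^2 = 2\log M$; on $[u_0,\infty)$ use the tail bound $\bP(\max_i g_i \ge u)\le M e^{-u^2/2}$, giving $\int_{u_0}^\infty 2u\, M e^{-u^2/2}\,\dd u = M e^{-u_0^2/2} \cdot 2 = 2 M \cdot M^{-1} = 2$. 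Adding the two pieces yields $\bE[S^2] \le 2\log M + 2 \le 4\log M$, where the last inequality uses $2 \le 2\log M$, i.e. $\log M \ge 1$.

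There is one gap to patch: the final inequality $2 \le 2\log M$ requires $M \ge e$, i.e.\ $M \ge 3$ for integers, whereas the statement allows $M = 2$. I would handle $M = 2$ separately by a direct estimate: $(\max\{g_1,g_2\})_+^2 \le \max\{g_1^2, g_2^2\} \le g_1^2 + g_2^2$, so $\bE[(\max\{g_1,g_2\})_+^2] \le 2\rho^2 \le 4\rho^2\log 2 \cdot (\log 2)^{-1}$; more simply, $2\rho^2 \le (2/\log 2)\,\rho^2\log 2$ and since $2/\log 2 \approx 2.885 < 4$ we get $\bE[(\max\{g_1,g_2\})_+^2]\le 4\rho^2\log 2$, as required. (Alternatively, one absorbs the additive $2$ differently: $2\log M + 2 \le 4\log M$ fails only at $M=2$, and there the crude bound above suffices.) The only mild subtlety — hardly an obstacle — is making sure the tail bound \eqref{eq:gaussmax:mgf} does not require independence, which it does not, since the union bound and the marginal MGF computation are all that is used. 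No step here is genuinely hard; the proof is a routine exercise in Gaussian tail estimates, and the explicit constant $4$ is generous enough to absorb the boundary case $M=2$.
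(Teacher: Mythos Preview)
Your proof is correct and follows essentially the same route as the paper: layer-cake formula for $\bE[S^2]$, union bound plus a Gaussian tail estimate, and a split at $u_0=\rho\sqrt{2\log M}$. The only cosmetic difference is that the paper uses the Mill's-ratio bound $\bP(g\ge u)\le \sqrt{2/\pi}\,(\rho/u)e^{-u^2/2\rho^2}$ for $u>\rho$ rather than your Chernoff bound $\bP(g\ge u)\le e^{-u^2/2\rho^2}$; your explicit treatment of the boundary case $M=2$ is in fact slightly more careful than the paper's.
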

	
	We wish to apply Lemma~\ref{lem:gaussmax} to bound $\bbE_{N^\kappa\gd_0}\Big[(\max(\cX_t))_+^2\Big]$. To do this, condition on the branching times and denote by $Z_t$ the number of particles at time $t$. Then apply the lemma with $M= Z_t$ and $\rho^2$ = $\sigma^2(t/T)\times t$ and recall that  $\gs\in\gsens$. This gives
	\[
	\bbE_{N^\kappa\gd_0}\Big[(\max(\cX_t))_+^2\Big] \le 4 \eta^{-2}\times  t  \times \bbE_{N^\kappa\gd_0}[\log Z_t].
	\]
	But we have $\bbE_{N^\kappa\gd_0}[Z_t] = N^\kappa e^{t/2}$ and $Z_t \ne 0$ almost surely, so that by Jensen's inequality,
	\[
	\sqrt{\bbE_{N^\kappa\gd_0}\Big[(\max(\cX_t))_+^2\Big]} \le \sqrt{ 4 \eta^{-2}\times  t\times (\kappa \log N + t/2)} \le C t^\subcrit_T,
	\]
	for some $C>0$, using that
	$t\leq t^\subcrit_T$ and $\log N = L(T) = o(t^\subcrit_T)$. Plugging this into \eqref{eq:lem:LB:smallL:1st:2b} and using \eqref{eq:hard_work}, we get, possibly modifying the values of $c_1$ and $c_2$,
	\begin{equation}
		\label{almost_done}
		\bbE_{N^\kappa\gd_0}\Big[q_\kappa(\cX^N_{t})\,\ind_{\{q_\kappa(\cX^N_{t}) \leq 0\}}\Big] \ge -\gga^\subcrit_T(t)\,c_1N^{-c_2}.
	\end{equation}
	Combining \eqref{eq:lem:LB:smallL:1st:2a} and \eqref{almost_done} yields \eqref{eq:lem:LB:smallL:1st:1} and therefore \eqref{X_k_expectation}.
	
	The proof of \eqref{X_k_variance} is much simpler: it suffices to prove $\bbE[X_k^2]\leq c_1 (t^\subcrit_T)^2$ for some $c_1>0$, which is obtained via the embedding of the $N$-BBM into a BBM without selection used above, together with Lemma~\ref{lem:gaussmax}. Details are omitted.
\end{proof}

\begin{proof}[Proof of Lemma~\ref{lem:gaussmax}]
	This result is standard, but we provide a proof for completeness. For $u_0>0$, bound
	\begin{align*}
		\bE\big[(\max\{g_i,\,1\leq i\leq M\})_+^2\big]\,&=\, \int_0^{+\infty} 2u\,\bP(\max\{g_i,\,1\leq i\leq M\}\geq u)\,\dd u\\
		&\leq u_0^2 + \int_{u_0}^{+\infty} 2u\,\bP(\max\{g_i,\,1\leq i\leq M\}\geq u)\,\dd u\,.
	\end{align*}
	A union bound and a standard estimation on the Gaussian tail imply that, for $u>\rho$,
	\begin{equation}\label{eq:lem:gaussmax:tailmax}
		\bP(\max\{g_i,\,1\leq i\leq M\}\geq u)\,\leq\, M\sqrt{\frac{2}{\pi}} \frac{\rho}{u}\, \exp(-u^2/2\rho^2)\,\leq\, M\sqrt{\frac{2}{\pi}}\, \exp(-u^2/2\rho^2)\,.
	\end{equation}
	Using \eqref{eq:lem:gaussmax:tailmax}, one has for $u_0>\rho$,
	\begin{align*}
		\bE\big[\max\{g_i,\,1\leq i\leq M\}^2\big]\,&\leq\, u_0^2 + 2\sqrt{\frac2\pi}M\rho^2 \exp(-u_0^2/2\rho^2)\,.
	\end{align*}
	Letting $u_0=\rho\sqrt{2\log M}$, and $M \ge 2$, this concludes the proof.
\end{proof}

\begin{remark}
	With this, the proof of Lemma~\ref{lem:main:LB} is finally completed in every regime for $L(T)$. Indeed, the only case we have not treated is when the sub-critical regime alternates between the sub-cases $N(T)$ super-polynomial and $L(T)$ smaller than $T^{1/8}$. This situation can be handled e.g. by letting $L_1(T):=L(T)\wedge (\log T)^2$, $L_2(T):=L(T)\vee (\log T)^2$, then applying~\eqref{eq:lem:main:LB} to both cases (for which we have displayed complete proofs). Then the l.h.s. in~\eqref{eq:lem:main:LB} for the ``oscillating'' $L(T)$ is bounded from above by the maximum of two vanishing sequences: we leave the details to the reader.
\end{remark}

\section{Upper bound on the maximum of the \texorpdfstring{$N$}{N}-BBM}\label{sec:UB}
Let $\anyrg\in\{\subcrit,\supercrit,\crit\}$. In this section we present the proof of Proposition~\ref{prop:main:UB}, which is analogous to that of Proposition~\ref{prop:main:LB}: we show that the trajectory of a BBM between well-chosen barriers is equal to that of an $N^+$-BBM with large probability, and deduce an upper bound on the $N$-BBM with a coupling argument (recall Lemma~\ref{lem:N-N+couping}). In particular, for the comparison to hold, we must tune the barrier parameters $(h,x)$ so that the BBM between barriers counts approximately $N^{1+\gd}$ particles at time $t^\anyrg_T$ for some $\gd>0$ (recall~\eqref{eq:defta}): this implies with large probability that there are at least $N$ living particles at \emph{all} time $t\in[0,t^\anyrg_T]$. But then, we face the additional complication that whenever a particle is killed by the upper barrier $\ol\gga^\anyrg_T$, this breaks the monotone coupling between the BBM with barriers and the $N$-BBM (this was not an issue in Section~\ref{sec:LB}). Furthermore, shifting the upper barrier upward \emph{does not diminish} the number of particles it kills. We will see below (in the sub-critical and critical regimes) that there exists no choice of parameters $(h,x)$ which ensures us both that $N$ particles are alive at all time, and that no particle hits the upper barrier (see Remark~\ref{rem:coloring} for the details).

This issue is related to the following crucial idea: in a branching process with a lower killing barrier, the evolution of the population size throughout time is closely related to the behavior of \emph{``peaking''} particles. More precisely, whenever an individual rises very far from the lower barrier, it has the opportunity to generate a very large number of offspring in the time span before it gets close to the lower barrier again. This was observed in particular in~\cite{BBS13} when studying a (homogeneous) BBM with absorption at 0 and near-critical drift: the authors tune the drift so that the population size remains roughly $e^L$ for a time $L^3$, $L>0$; and in that regime, they observe that removing peaking particles (i.e. introducing a second killing barrier around height $L$) strongly affects the survival probability of the whole process on the time interval $[0,L^3]$ ---even though that second barrier actually kills much fewer than $e^L$ particles. In our setting, this translates into the following heuristics: if there are at least $N(T)$ living particles at all time $t\in[0,t^\anyrg_T]$, and if $t^\anyrg_T$ is too large (that is $t^\anyrg_T\gtrsim L(T)^3$, which happens in the regimes $\anyrg\in\{\subcrit,\crit\}$), then with large probability some particles rise to a height of order $L(T)$ above the lower barrier before time $t^\anyrg_T$; and, if they are not killed, they reproduce a lot, resulting in a large increase in the total population size and allowing many descendants to rise even higher.

Therefore, we shall modify the BBM between barriers by \emph{coloring} instead of killing all peaking particles (i.e. individuals that reach the upper barrier $\ol\gga^\anyrg_T$) and controlling their offspring with a \emph{slightly stronger} selection of their descendants (i.e. we kill the descendants of a colored particle at a shifted-upward lower barrier). This defines a \emph{multi-type} branching process, and we shall tailor it so that, with large probability, $(i)$ there are still $N$ living particles at all time $t\in[0,t^\anyrg_T]$, and $(ii)$ the total population of the process does not blow up due to peaking particles. In particular, with large probability, no particle of this multi-type branching process will be able to overtake the upper barrier $\ol\gga^\anyrg_T$ by too much. An illustration of this multi-type process is given in Figure~\ref{fig:upperbound_multitype}.

\begin{figure}[ht]
		\centering
		\includegraphics[width=0.65 \textwidth]{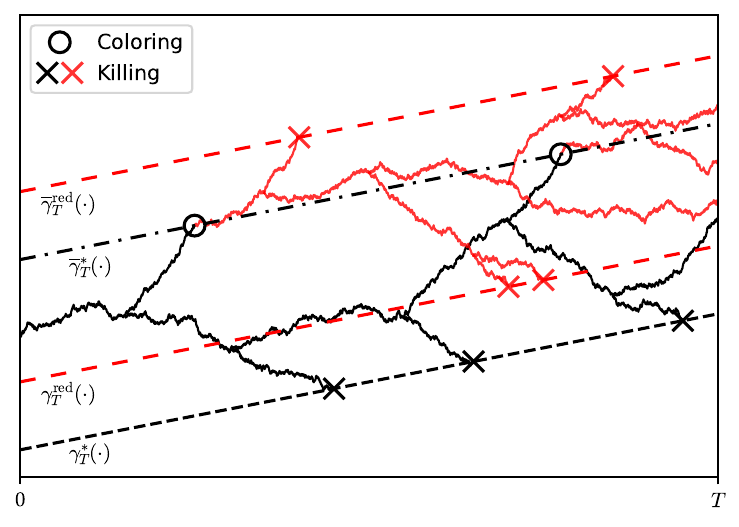}
		\caption{Illustration of the multi-type process. As in the original BBM between barriers, an uncolored individual is killed whenever it reaches $\gga^\anyrg_T(\cdot)$. However, when it reaches $\overline\gga^\anyrg_T(\cdot)$ it is not killed but colored in red instead. Thereafter all its descendants are also colored in red, and they are killed whenever they reach one of two new barriers, $\gga^\shifted_T(\cdot)$ and $\overline\gga^\shifted_T(\cdot)$, which are shifted-upward versions of $\gga^\anyrg_T(\cdot)$ and $\overline\gga^\anyrg_T(\cdot)$. The barriers will be chosen so that, with high probability, no particle reaches $\overline\gga^\shifted_T(\cdot)$, and there are at least $N$ particles above $\gga^\shifted_T(\cdot)$ at all time.
		\label{fig:upperbound_multitype}}
	\end{figure}

\begin{remark}
    In their study of the homogeneous $N$-BRW, Bérard and Gouéré~\cite{BG10} use another argument for the upper bound of the speed (Section 5 in their article). Their argument relies on a deterministic lemma, which ensures that a walk with bounded steps and which reaches a certain height has to have a portion of its trajectory that stays above a certain linear barrier for a certain amount of time. Then they use a result by Gantert, Hu and Shi~\cite{GHS11} on the survival probability of a branching random walk killed below a linear barrier. This argument requires to consider a time horizon which is significantly larger than $L(T)^3$. It could be adapted to our setting only in a certain part of the subcritical regime, but not if $L(T)$ is only slightly smaller than $T^{1/3}$ and certainly not in the critical regime. Our argument circumvents this issue by relying instead on a one-step iteration of more straightforward barrier estimates.
\end{remark}

Let $\eps\in(0,1)$ a small parameter, and fix
\begin{equation}\label{eq:UB:parameters}
	h=1+\eps\,, \qquad x=1+\frac{2}3\eps\;.
\end{equation}
Then, define once again the barriers $\gga^\anyrg_T$, $\ol\gga^\anyrg_T$ for each regime $\anyrg\in\{\supercrit,\subcrit,\crit\}$ according to \eqref{def:gga:supercrit}, \eqref{def:gga:subcrit} and~\eqref{def:gga:crit} respectively, with those $h,x$; in particular they satisfy~\eqref{eq:defxh}. 

We define the ``red'' barriers for $s\in[0,T]$ by
\begin{equation}\label{eq:def:ggashifted}
	\gga^\shifted_T(s):=\gga^\anyrg_T(s)+\frac{\eps}3\gs(s/T)L(T)\;,\quad\text{and}\quad \ol\gga^\shifted_T(s):=\ol\gga^\anyrg_T(s)+\frac{\eps}3\gs(s/T)L(T)\;,
\end{equation}
which are slightly shifted versions of $\gga^\anyrg_T$, $\ol\gga^\anyrg_T$, so that $\gga^\anyrg_T(s)\leq \gga^\shifted_T(s)\leq \ol\gga^\anyrg_T(s)\leq \ol\gga^\shifted_T(s)$ for all $s\in[0,T]$. We also define the sets of ``white'' and ``red'' particles by
\begin{equation}\label{eq:defNwhite}
	\cN^\twhite_s:=\big\{u\in\cN_s\,\big|\, \forall\, r\leq s,\,X_u(r)\in\big(\gga^\anyrg_T(r),\ol\gga^\anyrg_T(r)\big)\big\},
\end{equation}
and
\begin{equation}\label{eq:defNred}
	\cN^\tred_s:=\left\{u\in\cN_s\,\middle|\, \exists\, \tau\leq s:\begin{aligned}& X_u(\tau)=\ol\gga^\anyrg_T(\tau),\\
		&\forall\, r\leq \tau,\,X_u(\tau)\in\big(\gga^\anyrg_T(r),\ol\gga^\anyrg_T(r)\big),\\
		&\forall\, r\in[\tau,s],\, X_u(r)\in\big(\gga^\shifted_T(r),\ol\gga^\shifted_T(r)\big]\end{aligned}\right\}.
\end{equation}
In other words, we start the process with only white particles, which are killed at $\gga^\anyrg_T$. When they reach $\ol\gga^\anyrg_T$, they are ``colored'' red (instead of being killed) and keep evolving; however, red particles and their offspring are thereafter killed at the barriers $\gga^\shifted_T$ and $\ol\gga^\shifted_T$.

Recall $t^\anyrg_T$ and $\gth(\cdot)$ from~(\ref{eq:defta}--\ref{eq:defta:gthapprox}). 
Recall the definitions of $A^\anyrg_{T,I}$, $R^\anyrg_T(s,t)$ from~\eqref{eq:defA:allregimes},~\eqref{eq:defR}, which are expressed in terms of $\gga^\anyrg_T$ and $\ol\gga^\anyrg_T$. Let us rewrite all estimates from Propositions~\ref{prop:1stmom:supercrit}, \ref{prop:2ndmom:supercrit}, \ref{prop:killedpart:supercrit}, \ref{prop:1stmom:subcrit}, \ref{prop:2ndmom:subcrit}, \ref{prop:killedpart:subcrit}, \ref{prop:1stmom:crit}, \ref{prop:2ndmom:crit} and~\ref{prop:killedpart:crit} for an initial condition (or, equivalently, both barriers) shifted by $-y\gs(0)L(T)$, $y\in[0,x)$ (respectively shifted by $+y\gs(0)L(T)$): this formulation is more convenient to handle all atoms from the initial distribution $\mu_\eps$ (recall~\eqref{eq:defmuK:sec2}). For all regimes $\anyrg\in\{\supercrit, \subcrit, \crit\}$, one has
\begin{align}\label{eq:UB:allmoments:1stUB}
	\bbE_{\gd_{-y\gs(0)L(T)}}\big[|A^\anyrg_{T,I}(t)|\big]\,&\leq\, N^{x-y-\inf I+o(1)}\,, 
	\\\label{eq:UB:allmoments:1st}
	\bbE_{\gd_{-y\gs(0)L(T)}}\big[|A^\anyrg_{T,I}(t)|\big]\,&=\, N^{x-y-\inf I+o(1)}\,  \qquad\text{if, additionally,}\qquad t\ggbis L(T)\,,
	\\\label{eq:UB:allmoments:2nd}
	\bbE_{\gd_{-y\gs(0)L(T)}}\big[|A^\anyrg_{T,z}(t)|^2\big]\,&\leq\, N^{x+h-y-2z+o(1)} \,, 
	\\\label{eq:UB:allmoments:R}
	\bbE_{\gd_{-y\gs(0)L(T)}}\big[R^\anyrg_T(0,t^\anyrg_T)\big]\,&\leq\, N^{-(h+y-x)+o(1)} \,,
\end{align}
for some vanishing terms $o(1)$ as $T\to+\infty$: more precisely, in~(\ref{eq:UB:allmoments:1stUB},~\ref{eq:UB:allmoments:2nd},~\ref{eq:UB:allmoments:R}), these error terms are uniform in $\gs\in\gsensrg$, $0\leq t\leq t^\anyrg_T$ and $x,z\in[0,h]$, $I\subset[0,h]$ a non-trivial sub-interval; and in~\eqref{eq:UB:allmoments:1st}, it is uniform in $\gs\in\gsensrg$, $L(T)\llbis t\leq t^\anyrg_T$ and locally uniform in $x,I$.

For $\eps>0$, $s\in[0,T]$, let us generalize the counting measure $\mu_\eps$ defined in~\eqref{eq:defmuK:sec2}, by letting
\begin{equation}\label{eq:defmuK}
	\mu_{\eps,s}\;:=\; \sum_{k=0}^{\lceil\eps^{-1}\rceil} \Big\lceil N^{k\eps+\frac\eps2} \Big\rceil \gd_{-k\eps\gs(s/T)L(T)}\;,\qquad\text{and}\quad \mu_\eps\;:=\; \mu_{\eps,0}\;.
\end{equation}
Recall that $\mu_\eps$ is supported on $[-\gs(0)L(T),0]\subset (\gga^\shifted_T(0),\ol\gga^\anyrg_T(0))$. Moreover, notice that $\mu_\eps([-\gs(0)L(T),0])\geq N^{1+\eps}$ and $\gd_0\prec\mu_\eps$. In the remainder of this section we will assume that $\eps^{-1}\in\N$ and omit all integer parts, in order to lighten all formulae.

\subsection{Estimates for the BBM between barriers}
We start the multi-type branching-selection process with only white particles distributed according to $\mueps$, and prove that, with large probability, the following three claims hold:

(C-1) there are at least $N$ (white) particles above $\gga^\shifted_T$ at all time $t\in[0,t^\anyrg_T]$,

(C-2) no particle reaches $\ol\gga^\shifted_T$ throughout $[0,t^\anyrg_T]$,

(C-3) the distribution of particles between the barriers at time $t\lesssim t^\anyrg_T$ is close to $\mu_{\eps,t}$.

\begin{remark}
	Let us mention that the third claim (C-3) is actually not needed to prove Theorem~\ref{thm:main}; however it is needed for Proposition~\ref{prop:endtimedistribution}, and it relies on moment methods similar to (C-1) and (C-2), so we included its proof in this section.\end{remark}

\subsubsection*{Lower bound on the number of living particles}
Let us first prove (C-1), which ensures us that no particle from $(\cN^\twhite_s\cup\cN^\tred_s)_{s\in[0,t^\anyrg_T]}$ killed either by $\gga_T^\anyrg$ or $\gga_T^\shifted$ is among the $N$ highest of the process at any time. 
\begin{proposition}\label{prop:allregimes:UB:npart}
	Let $\eps\in(0,1)$ and $h$, $x$ as in~\eqref{eq:UB:parameters}. Then there exist constants $c_1,c_2>0$ such that, for $T$ sufficiently large, one has
	\begin{equation}\label{eq:prop:allregimes:UB:npart}
		\bbP_{\mueps}\big(\exists s\leq t^\anyrg_T:|A_{T,\frac\eps3}^\anyrg(s)| < N\big) \;\leq\;  c_1N^{-c_2}  \,,
	\end{equation}
	where $c_1,c_2$ are uniformly bounded from $0$ and $\infty$ in $\gs\in\gsensrg$, and locally uniformly in $\eps\in(0,1)$.
\end{proposition}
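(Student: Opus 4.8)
\textbf{Proof strategy for Proposition~\ref{prop:allregimes:UB:npart}.}
The plan is to show that, with high probability, the number of particles that \emph{survive} between the two shifted barriers $\gga_T^\shifted$ and $\ol\gga_T^\anyrg$ --- equivalently, the quantity $|A_{T,\frac\eps3}^\anyrg(s)|$ for $s\le t^\anyrg_T$ --- never drops below $N$. The starting configuration $\mu_\eps$ is designed precisely so that the first-moment lower bound~\eqref{eq:UB:allmoments:1st} gives, for a suitable atom, an expectation of order $N^{1+\gd}$ with $\gd = \gd(\eps)>0$: indeed the atom $\lceil N^{k\eps+\eps/2}\rceil\gd_{-k\eps\gs(0)L(T)}$ evolves like a BBM started at $y=k\eps$, whose descendants remaining above $\gga_T^\shifted$ (i.e.\ in the final interval $[\eps/3,h]$) have expectation $N^{(k\eps+\eps/2)+(x-k\eps-\eps/3)+o(1)} = N^{1+\eps/2+\eps/6+o(1)}$, using $x=1+\tfrac23\eps$. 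So the whole configuration yields $\bbE_{\mu_\eps}[|A_{T,\eps/3}^\anyrg(s)|]\ge N^{1+\gd}$ for $L(T)\llbis s\le t^\anyrg_T$, with $\gd$ of order $\eps$.

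First I would discretize time: by a union bound it suffices to control $\bbP_{\mu_\eps}(\exists s\in[k,k+1]: |A_{T,\eps/3}^\anyrg(s)|<N)$ for $k=0,\dots,\lceil t^\anyrg_T\rceil-1$, and the number of such intervals is $t^\anyrg_T\le L(T)^4 = e^{o(L(T))}$ in the subcritical regime and $T=e^{o(T^{1/3})}$ in the other two, hence absorbed into an $N^{o(1)}$ factor. On each integer time $k$ I would apply the Paley--Zygmund inequality to $|A_{T,\eps/3}^\anyrg(k)|$ exactly as in the proof of Proposition~\ref{prop:allregimes:LB:endpoint}: combining the first-moment lower bound~\eqref{eq:UB:allmoments:1st} (of order $N^{1+\gd}$) with the second-moment upper bound obtained by summing~\eqref{eq:UB:allmoments:2nd} over the atoms of $\mu_\eps$ (together with the ``distinct ancestor'' decomposition~\eqref{eq:PZ:2ndmoment}), one gets that $|A_{T,\eps/3}^\anyrg(k)| \ge \tfrac12 N^{1+\gd}$, say, with probability at least $1-N^{-c}$ for some $c=c(\eps)>0$. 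One must be a little careful that the second-moment term does not dominate; here the gain comes from the fact that the dominant atom has mass $N^{1+\gd}$ but the leading contribution to the second moment involving a single ancestor is of order $N^{1+\gd}\cdot N^{x+h-2\cdot\eps/3+o(1)}$ against a squared mean of order $N^{2(1+\gd)}$, and the exponents work out with room to spare because $h+x$ exceeds $2$ only by $O(\eps)$ while the initial mass exceeds $N$ by a comparable amount --- a short bookkeeping check as in Section~\ref{sec:LB:coreprf}.

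The only remaining point is to pass from control at integer times to control on the intervals $[k,k+1]$: on the event $\{|A_{T,\eps/3}^\anyrg(k)|\ge \tfrac12 N^{1+\gd}\}$, the number of these particles that are killed (by $\gga_T^\anyrg$ or by going below $\gga_T^\shifted$) or whose descendants drop the count below $N$ during one unit of time is negligible. One way to see this cleanly: couple downward with a branching process \emph{without} the barriers but started from $\tfrac12 N^{1+\gd}$ particles at the lower barrier's height at time $k$; the probability that such a process has fewer than $N$ descendants one time unit later inside the relevant window is exponentially small in $N^{\gd}$ by a standard large-deviation/martingale estimate on the population of a (time-inhomogeneous) BBM --- in fact since the population is a supermartingale-type object, $\bbP(Z_1 < e^{-1}Z_0)$ is bounded by $c\,\bbE[Z_0]/\text{(threshold)}$, which is $\le N^{-\gd+o(1)}$ here; combined with the Gaussian displacement staying within the window with probability bounded below, this gives the bound. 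I expect \textbf{the main obstacle} to be precisely this last interpolation step --- ensuring uniformity in $\gs\in\gsensrg$ and in the time block $k$ while controlling both the loss of particles at the lower barrier and the spread of displacements over a unit time interval --- rather than the moment computations, which are essentially those already carried out for Proposition~\ref{prop:allregimes:LB:endpoint}. Collecting the two bad events (Paley--Zygmund failure at some integer $k$, and a one-step collapse on some $[k,k+1]$) over the $e^{o(L(T))}$ (resp.\ $e^{o(T^{1/3})}$) blocks and choosing $c_1,c_2$ accordingly then yields~\eqref{eq:prop:allregimes:UB:npart}, uniformly in $\gs$ and locally uniformly in $\eps$.
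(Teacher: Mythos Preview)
Your overall strategy (discretize time, Paley--Zygmund at integer times, interpolate over unit intervals) is the paper's approach for \emph{large} times, but there is a genuine gap at the beginning. The first-moment lower bound~\eqref{eq:UB:allmoments:1st} that you invoke is only valid for $t\ggbis L(T)$; hence your Paley--Zygmund step is unavailable for the integer times $k$ in the initial stretch $[0,\gth^{-1}(T)L(T)]$. There are unboundedly many such $k$ as $T\to\infty$, and neither the moment method nor a one-step interpolation can be iterated across them (your interpolation produces $\ge N$ at time $k+1$, not $\ge N^{1+\gd}$, so it does not bootstrap). The paper handles this separately: it sets aside an initial block $[0,t'(T)]$ with $t'(T)=\gth(T)(L(T)^3\wedge T)$ and argues via a \emph{survival count}. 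Starting from $N^{1+\eps/2}\gd_{-\gs(0)L(T)}\prec\mu_\eps$, let $M$ be the number of initial particles having at least one descendant that stays between $\gga_T^\shifted$ and $\ol\gga_T^\anyrg$ throughout $[0,t'(T)]$. A separate lemma (tighten the barriers via Lemma~\ref{lem:ggachangeparam} to parameters $(\eps/K,\eps/(2K))$, then apply Paley--Zygmund to the single-particle count) shows the one-particle survival probability is $p_T\ge N^{o(1)}$; thus $M$ is Binomial$(N^{1+\eps/2},p_T)$ with mean $N^{1+\eps/2+o(1)}$, and Paley--Zygmund on $M$ gives $\bbP(M<N)\le N^{-\eps/2+o(1)}$. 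Since $M$ lower-bounds $|A^\anyrg_{T,\eps/3}(s)|$ for every $s\le t'(T)$, this covers the whole initial block without any moment lower bound at intermediate times.

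A second, smaller issue: your interpolation step is not set up correctly. If you apply Paley--Zygmund to $|A_{T,\eps/3}^\anyrg(k)|$ itself, the particles constituting this set may sit arbitrarily close to height $\eps/3$ and can exit below $\gga_T^\shifted$ in time $o(1)$; your large-deviation sketch does not control this. The paper resolves it by running Paley--Zygmund not on $|A_{T,\eps/3}^\anyrg(k)|$ but on $D(k):=A^\anyrg_{T,[5\eps/6,\,h-\eps/2]}(k)$, the particles in a \emph{strictly smaller} window. The same moment computation gives $|D(k)|\ge N^{1+\eps/4}$ with probability $\ge 1-N^{-\eps/12+o(1)}$; and any particle in $D(k)$ must have its Brownian displacement exceed a distance of order $\eps L(T)$ in one unit of time in order to leave $[\gga_T^\shifted,\ol\gga_T^\anyrg]$, which has probability $\le Ce^{-cL(T)^2}$. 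A binomial bound then gives the desired control on $\{\exists s\in[k,k+1]:|A_{T,\eps/3}^\anyrg(s)|<N\}$. So the ``main obstacle'' you flag has a clean resolution---shrink the window before applying Paley--Zygmund---while the real missing ingredient is the treatment of the initial block.
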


Let us discuss the idea of the proof. Similarly to Proposition~\ref{prop:allregimes:LB:npart} for the lower bound, we write a union bound on the probability by splitting $[0,t^\anyrg_T]$ into intervals of length 1; then we use a moment method to control the size of the population at each integer $k$, and a coupling argument to compare the BBM with a simpler process on each time interval $[k,k+1]$. However in this case the required moment method is a Paley-Zygmund inequality (as opposed to Markov's inequality in Proposition~\ref{prop:allregimes:LB:npart}) that follows from~\eqref{eq:UB:allmoments:1st}, in particular it is not valid for $k$ smaller than $L(T)$. To circumvent this we use the following lemma, which bounds from below the survival probability of a single particle's offspring up to a time $t'(T)\llbis L(T)^3$.
Recall that $\gth(T)$ is defined in~(\ref{eq:defta}--\ref{eq:defta:gthapprox}).
\begin{lemma}\label{lem:prop:allregimes:UB:npart}
	Let $t'(T):=\gth(T)(L(T)^3\wedge T)$. Then
	\[
	\bbP_{\gd_{-\gs(0)L(T)}}\Big(\exists\, u\in\cN_{t'}:\forall s\leq t',\, X_u(s)\in[\gga^\shifted_T(s),\ol\gga^\anyrg_T(s)] \Big)\,\geq\,N^{o(1)}\,,
	\]
	as $T\to+\infty$, uniformly in $\gs\in\gsensrg$ and locally uniformly in $\eps\in(0,1)$.
\end{lemma}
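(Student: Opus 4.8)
The strategy is a first‑ and second‑moment (Paley--Zygmund) argument, but applied to particles confined to a \emph{thin} sub‑strip rather than to all of $[\gga^\shifted_T(s),\ol\gga^\anyrg_T(s)]$. The naive choice --- counting every particle that stays between $\gga^\shifted_T$ and $\ol\gga^\anyrg_T$ --- runs into the usual ``peaking ancestor'' obstruction: the exponential‑order moment bounds of Propositions~\ref{prop:1stmom:supercrit}--\ref{prop:2ndmom:crit} only yield a Paley--Zygmund lower bound of order $N^{-c\eps}$ for some $c>0$, which is not $N^{o(1)}$ for fixed $\eps$. So instead, pick $\gd=\gd(T)$ with $\gd(T)\to0$, $\gd(T)L(T)\to+\infty$ and $t'(T)=o\big(\gd(T)^2L(T)^3\big)$ as $T\to+\infty$; using~\eqref{eq:gthasymp} and~\eqref{eq:defta:gthapprox} one checks that $\gd(T):=\gth(T)^{1/3}$ works in all three regimes. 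Set
\[
S(s)\;:=\;\Big[\,\gga^\shifted_T(s)+\big(\tfrac{\eps}3-\gd(T)\big)\gs(s/T)L(T)\;,\;\gga^\shifted_T(s)+\big(\tfrac{\eps}3+\gd(T)\big)\gs(s/T)L(T)\,\Big],
\]
which for $T$ large (depending only on $\eps$) has its two boundaries strictly inside $\big(\gga^\shifted_T(s),\ol\gga^\anyrg_T(s)\big)$, contains the starting height $-\gs(0)L(T)$ at time $0$ (indeed as its mid‑height), and has width $o(L(T))$. Let $\hat Z$ denote the number of $u\in\cN_{t'}$ with $X_u(s)\in S(s)$ for all $s\le t'$; then $\{\hat Z\ge1\}$ is contained in the event of the statement, so it suffices to prove $\bbP_{\gd_{-\gs(0)L(T)}}(\hat Z\ge1)\ge N^{o(1)}$.

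For the first moment I would apply the Many‑to‑one formula~\eqref{eq:MtOGirs:A} to the pair of barriers bounding $S$, together with the Brownian‑in‑a‑strip estimate of Lemma~\ref{lem:mai16}. The key point is that, because $S$ has width $o(L(T))$, \emph{every} term of the Girsanov weight in~\eqref{eq:MtOGirs:A} is $o(L(T))$, uniformly in $\gs\in\gsensrg$: the lower boundary of $S$ has slope $\gs(s/T)+O(L(T)/T)$ in all three regimes, so the quadratic term $-\int_0^{t'}(\cdot)^2/(2\gs^2)$ equals $-t'/2+o(L(T))$ and cancels the branching factor $e^{t'/2}$, while the boundary terms and the term $\int_0^{t'}\partial_u(\cdot/\gs^2)B_s\,\dd s$ only involve positions lying in a strip of width $o(L(T))$ and a horizon $t'=o(T)$, hence are $o(L(T))$ too. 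Finally, by Lemma~\ref{lem:mai16} the probability that the time‑changed Brownian motion stays in a strip of width of order $\gd(T)L(T)$ up to time $J(t')\asymp t'$ is $\exp\!\big(-O\big(t'/(\gd(T)L(T))^2\big)\big)=\exp(-o(L(T)))$ by the choice of $\gd$. Altogether $\bbE_{\gd_{-\gs(0)L(T)}}[\hat Z]=N^{o(1)}$.

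For the second moment I would use the Many‑to‑two formula (Lemma~\ref{lem:MtT}) adapted to the barriers bounding $S$: it writes $\bbE[\hat Z^2]$ as $\bbE[\hat Z]$ plus a constant times $\int_0^{t'}\dd s\int G(x,y,0,s)\big(\int G(y,w,s,t')\,\dd w\big)^2\dd y$, with all heights $y,w$ restricted to the thin range associated with $S$. Since the common ancestor is itself confined to $S$, the same bookkeeping as for the first moment shows each factor $G$ is $N^{o(1)}$, uniformly, so the whole double integral is at most $N^{o(1)}$ times $t'\cdot\big(\text{width of }S\big)$; and $\log\!\big(t'\,\gd(T)L(T)\big)=O(\log t')+O(\log L(T))=O(\log L(T))=o(L(T))$ --- this is the one spot where one must use $\log t'=o(L(T))$ instead of the cruder ``$t'=N^{o(1)}$'', which can fail when $L(T)$ grows slower than $\log T$. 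Hence $\bbE[\hat Z^2]=N^{o(1)}$, and Paley--Zygmund gives $\bbP(\hat Z\ge1)\ge(\bbE\hat Z)^2/\bbE[\hat Z^2]\ge N^{o(1)}$, all error exponents being uniform in $\gs\in\gsensrg$ and locally uniform in $\eps$.

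The conceptual point --- and the one thing one really has to get right --- is that confining the count to a strip of width \emph{merely} $o(L(T))$, rather than the strips of width $\asymp L(T)$ for which the Section~\ref{sec:moments} estimates are designed, is what suppresses the peaking‑ancestor contribution to the second moment and makes Paley--Zygmund sharp enough; once this is set up, the moment computations are routine variants of those in Section~\ref{sec:moments}, the only genuine subtlety being the systematic replacement of ``$t'=N^{o(1)}$'' by $\log t'=o(L(T))$.
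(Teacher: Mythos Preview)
Your proposal is correct and the key insight --- narrowing the strip to suppress the peaking-ancestor contribution in the second moment --- is exactly right. The paper, however, packages this differently and more economically.

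Rather than letting the strip width shrink with $T$ via a diagonal sequence $\gd(T)\to0$, the paper fixes a large constant $K$, invokes Lemma~\ref{lem:ggachangeparam} to show that (after a vertical shift) the tightened barriers $\gga^{\anyrg,\eps/K,\eps/(2K)}_T,\ \ol\gga^{\anyrg,\eps/K,\eps/(2K)}_T$ lie inside $[\gga^\shifted_T,\ol\gga^\anyrg_T]$ on the short interval $[0,t']$, and then applies the first- and second-moment estimates of Section~\ref{sec:moments} verbatim with parameters $(h',x')=(\eps/K,\eps/(2K))$. Paley--Zygmund gives $\ge N^{-\eps/K+o(1)}$, and letting $K\to\infty$ yields $N^{o(1)}$. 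The advantage is modularity: the Section~\ref{sec:moments} estimates are already proven locally uniformly in $(h,x)$, so they apply directly for each fixed $K$, and no moment computation needs to be redone. Your route, taking $h\asymp\gd(T)\to0$ with $T$, falls outside the ``locally uniform'' range of those propositions and therefore forces you to re-derive the Many-to-one and Many-to-two bounds from scratch (which you correctly outline). Both arguments ultimately rely on the same facts --- the Girsanov weight is $o(L(T))$ on a strip of small width, the strip survival cost is $\exp(-O(t'/h^2))$, and $\log t'=o(L(T))$ --- but the paper trades your diagonal limit for a fixed-parameter-plus-final-limit structure that lets it quote existing results.
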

With this lemma at hand, we shall prove that most of the initial particles in the process started from $\mueps$ have surviving descendants at time $t'(T)$. Notice that the latter claim should not hold beyond $L(T)^3$, at which point many of the initial particles' offspring should have gone extinct, and the living population is expected to largely come from the few particles that have peaked.
\begin{proof}[Proof of Lemma~\ref{lem:prop:allregimes:UB:npart}]
	Recall Lemma~\ref{lem:ggachangeparam}, which ensures us that we may tighten the barriers on a short time interval. More precisely let $K>3$ a large constant, and recall Lemma~\ref{lem:ggachangeparam} and the notation $\gga^{\anyrg,h',x'}_T$, $\ol\gga^{\anyrg,h',x'}_T$ for $h'>x'>0$. Then, the lemma and a vertical shift of the process yield
	\begin{align*}
		&\bbP_{\gd_{-\gs(0)L(T)}}\Big(\exists\, u\in\cN_{t'}:\forall s\leq t' ,\, X_u(s)\in[\gga^\shifted_T(s),\ol\gga^\anyrg_T(s)] \Big)\\
		&\qquad \geq\; \bbP_{\gd_0}\Big(\exists\, u\in\cN_{t'}:\forall s\leq t' ,\, X_u(s)\in\Big[\gga^{\anyrg,\frac\eps K,\frac\eps{2K}}_T(s),\ol\gga^{\anyrg,\frac\eps K,\frac\eps{2K}}_T(s)\Big] \Big)\,,
	\end{align*}
	Recall the definition of $A^\anyrg_T(\cdot)$ from~\eqref{eq:defA:allregimes}, and let us extend it to a generic pair of barriers $\gga^{\anyrg,h',x'}_T$, $\ol\gga^{\anyrg,h',x'}_T$, for $h'>x'>0$, by writing for $s\in[0,T]$,
	\[
	A^{\anyrg,h',x'}_T(s)\,:=\,\Big\{u\in\cN_t\;\Big|\; \forall s\in[0,t],\, X_u(s)\in \Big[\gga^{\anyrg,h',x'}_{T}(s), \ol\gga^{\anyrg,h',x'}_{T}(s)\Big]\Big\}\,.
	\]
	Then, Paley-Zygmund's inequality gives
	\begin{align*}
		&\bbP_{\gd_0}\Big(\exists\, u\in\cN_{t'}:\forall s\leq t',\, X_u(s)\in\Big[\gga^{\anyrg,\frac\eps K,\frac\eps{2K}}_T(s),\ol\gga^{\anyrg,\frac\eps K,\frac\eps{2K}}_T(s)\Big] \Big)\\
		&\qquad=\; \bbP_{\gd_0}\big(\big|A^{\anyrg,\frac\eps K,\frac\eps{2K}}_T(t'(T))\big|\geq1\big)\;\geq\; \frac{\bbE_{\gd_0}\big[\big|A^{\anyrg,\frac\eps K,\frac\eps{2K}}_T(t'(T))\big|\big]^2}{\bbE_{\gd_0}\big[\big|A^{\anyrg,\frac\eps K,\frac\eps{2K}}_T(t'(T))\big|^2\big]}
		\,.
	\end{align*}
	One easily checks that~\eqref{eq:gthasymp} implies that $L(T)\llbis t'(T)$. Recalling the moment estimates~(\ref{eq:UB:allmoments:1stUB}--\ref{eq:UB:allmoments:2nd}) for the triplet of parameters $(h',x',y)=(\frac\eps K,\frac\eps{2K},0)$, one obtains as $T\to+\infty$,
	\[ \bbE_{\gd_0}\big[\big|A^{\anyrg,\frac\eps K,\frac\eps{2K}}_T(t'(T))\big|\big]\,=\, N^{\frac{\eps}{2K}+o(1)}\,, \qquad\text{and}\qquad \bbE_{\gd_0}\big[\big|A^{\anyrg,\frac\eps K,\frac\eps{2K}}_T(t'(T))\big|^2\big]\,\leq\, N^{\frac{3\eps}{2K}+o(1)}\,. \]
	Therefore, one finally deduces that
	\[
	\bbP_{\gd_{-\gs(0)L(T)}}\Big(\exists\, u\in\cN_{t'}:\forall s\leq t',\, X_u(s)\in[\gga^\shifted_T(s),\ol\gga^\anyrg_T(s)] \Big)\;\geq\; N^{-\frac\eps K + o(1)}\,,
	\]
	as $T\to+\infty$; letting $K\to+\infty$, this finishes the proof of the lemma.
\end{proof}

\begin{proof}[Proof of Proposition~\ref{prop:allregimes:UB:npart}]
	Since one has $\mueps \succ N^{1+\frac\eps2}\gd_{-\gs(0)L(T)}$, we only have to prove~\eqref{eq:prop:allregimes:UB:npart} for the latter initial measure; then the proposition follows from a direct monotonic coupling argument.
	
	We prove this proposition with a union bound, splitting the time interval $[0,t^\anyrg_T]$ into a first part of length $t'(T):= \gth(T)(L(T)^3\wedge T)$, and the remainder into intervals of length 1. Thus,
	\begin{align}\label{eq:allregimes:UB:npart:1}
		&\bbP_{N^{1+\frac\eps2}\gd_{-\gs(0)L(T)}}\big(\exists s\leq t^\anyrg_T:|A_{T,\frac\eps3}^\anyrg(s)| < N\big) \\
		\notag &\quad\leq\; \bbP_{N^{1+\frac\eps2}\gd_{-\gs(0)L(T)}}\big(\exists s\leq t'(T):|A_{T,\frac\eps3}^\anyrg(s)| < N\big) \\
		\notag &\qquad + \sum_{k=t'(T)}^{t^\anyrg_T-1} \bbP_{N^{1+\frac\eps2}\gd_{-\gs(0)L(T)}}\big(\exists s\in[k,k+1]:|A_{T,\frac\eps3}^\anyrg(s)| < N\big)\,,
	\end{align}
	where, again, we respectively wrote $t'(T)$, $t^\anyrg_T-1$ instead of $\lfloor t'(T)\rfloor$, $\lceil t^\anyrg_T-1\rceil$ to lighten notation. Notice that the sum may be empty in the super-critical regime.
	
	We start with the first term in~\eqref{eq:allregimes:UB:npart:1}. Let $M$ denote the number of individuals from the \emph{initial} population which have at least one descendant surviving between $\gga^\shifted_T$ and $\ol\gga^\anyrg_T$ until time $t':=t'(T)$. For a single initial particle in ${-\gs(0)L(T)}$, we write,
	\[p_T \;:=\; \bbP_{\gd_{-\gs(0)L(T)}}\left(\exists u\in\cN_{t'}:\forall s\leq t' ,\, X_u(s)\in[\gga^\shifted_T(s),\ol\gga^\anyrg_T(s)] \right)\;\geq\; N^{o(1)}\,,\]
	where the last inequality is the content of Lemma~\ref{lem:prop:allregimes:UB:npart}. In particular, one has,
	\[p_T\,=\,  \bbP_{\gd_{-\gs(0)L(T)}}(M=1)\,=\,1-\bbP_{\gd_{-\gs(0)L(T)}}(M=0)\,.
	\] 
	Starting from an initial population of $N^{1+\frac\eps2}$ particles, recall that they have independent offspring. Therefore, under $\bbP_{N^{1+\frac\eps2}\gd_{-\gs(0)L(T)}}$, one has that $M$ is a binomial random variable with parameters $(N^{1+\frac\eps2}, p_T)$. Moreover, bounding the first term in~\eqref{eq:allregimes:UB:npart:1} from above by killing particles at $\gga^\shifted_T$, we have,
	\[
	\bbP_{N^{1+\frac\eps2}\gd_{-\gs(0)L(T)}}\big(\exists s\leq t'(T):|A_{T,\frac\eps3}^\anyrg(s)| < N\big)\;\leq\; \bbP_{N^{1+\frac\eps2}\gd_{-\gs(0)L(T)}}(M<N)\;.
	\]
	Then, Paley-Zygmund's inequality yields,
	\begin{align*}
		\bbP_{N^{1+\frac\eps2}\gd_{-\gs(0)L(T)}}(M\geq N)\;&\geq\;  \left(1-\frac{N}{\bbE_{N^{1+\frac\eps2}\gd_{-\gs(0)L(T)}}[M]}\right)^2 \frac{\bbE_{N^{1+\frac\eps2}\gd_{-\gs(0)L(T)}}[M]^2}{\bbE_{N^{1+\frac\eps2}\gd_{-\gs(0)L(T)}}[M^2]} \\&=\; \left(1-N^{-\frac\eps2}p_T^{-1}\right)^2 \frac{N^{1+\frac\eps2}p_T}{1-p_T+N^{1+\frac\eps2}p_T}\,.
	\end{align*}
	This implies $\bbP_{N^{1+\frac\eps2}\gd_{-\gs(0)L(T)}}(M<N)\leq N^{-\frac\eps2 +o(1)}$ as $T\to+\infty$, uniformly in $\gs\in\gsensrg$, which is the announced upper bound for the first term in~\eqref{eq:allregimes:UB:npart:1}. 
	
	We now turn to the second term in~\eqref{eq:allregimes:UB:npart:1}. Recall~\eqref{eq:defA:allregimes}: in particular, let $D(s)$ denotes the set of white particles that end in the interval $[\gga_T^\shifted(s)+\frac\eps{2}\gs(s/T)L(T),\ol\gga_T^\anyrg(s)-\frac\eps{2}\gs(s/T)L(T)]$ at time $s$, that is,
	\[
	D(s)\;:=\;A^\anyrg_{T,[\frac{5\eps}{6},h-\frac{\eps}{2}]}(s)\,.
	\]
	Then, we bound each term of the sum with an union bound, for $k\geq t'(T)$,
	\begin{align}\label{eq:allregimes:UB:npart:1.5}
			&\bbP_{N^{1+\frac\eps2}\gd_{-\gs(0)L(T)}}\big(\exists s\in[k,k+1]:|A_{T,\frac\eps3}^\anyrg(s)| < N\big)\\
			\notag &\quad\leq\; \bbP_{N^{1+\frac\eps2}\gd_{-\gs(0)L(T)}}\big(|D(k)|<N^{1+\frac\eps4}\big) \\
			\notag &\qquad + \bbP_{N^{1+\frac\eps2}\gd_{-\gs(0)L(T)}}\big(|D(k)|\geq N^{1+\frac\eps4}\;;\;\exists s\in[k,k+1]:|A_{T,\frac\eps3}^\anyrg(s)| < N\big)
	\end{align}
	We handle those two terms separately. Applying Paley-Zygmund's inequality, we have
	\begin{align}\label{eq:allregimes:UB:npart:2}
		&\bbP_{N^{1+\frac\eps2}\gd_{-\gs(0)L(T)}}\Big(|D(k)| \geq N^{1+\frac\eps4}\Big) \\
		\notag&\quad \geq\, \left(1-\frac{N^{1+\frac\eps4}}{\bbE_{N^{1+\frac\eps2}\gd_{-\gs(0)L(T)}}\big[|D(k)|\big]}\right)^2\times \frac{\bbE_{N^{1+\frac\eps2}\gd_{-\gs(0)L(T)}}\big[|D(k)|\big]^2}{\bbE_{N^{1+\frac\eps2}\gd_{-\gs(0)L(T)}}\big[|D(k)|^2\big]}.
	\end{align}
	Then, since $k\geq t'(T)\ggbis L(T)$, \eqref{eq:UB:allmoments:1st} implies that
	\[\bbE_{\gd_{-\gs(0)L(T)}}[|D(k)|]\,=\,N^{-\frac{\eps}{6}+o(1)},\]
	as $T\to+\infty$. Moreover,~\eqref{eq:UB:allmoments:2nd} yields
	\[\bbE_{\gd_{-\gs(0)L(T)}}\big[|D(k)|^2\big] \,\leq\, N^{1+o(1)},\]
	as $T\to+\infty$. Noticing that the first moment of $|D(k)|$ is additive in the initial measure, applying~\eqref{eq:PZ:2ndmoment} to its second moment and plugging these estimates into~\eqref{eq:allregimes:UB:npart:2}, we finally obtain
	\begin{align}\label{eq:allregimes:UB:npart:3}
		&\bbP_{N^{1+\frac\eps2}\gd_{-\gs(0)L(T)}}\Big(|D(k)| \geq N^{1+\frac\eps4}\Big) \\
		\notag &\quad\geq\, \left(1-N^{-\frac{\eps}{12}+o(1)}\right)^2\left(1+N^{-\frac{\eps}{6}+o(1)}\right)^{-1} = 1-N^{-\frac\eps{12}+o(1)}\;,
	\end{align}
	where $o(1)$ denotes a term vanishing as $T\to+\infty$ uniformly in $t'(T)\leq k\leq t^\anyrg_T$ and $\gs\in\gsensrg$.
	
	Regarding the second term in~\eqref{eq:allregimes:UB:npart:1.5}, let $\cA$ denote the set of counting measures supported on $[\gga_T^\shifted(k)+\frac\eps2\gs(k/T)L(T),\ol\gga_T^\anyrg(k)-\frac\eps2\gs(k/T)L(T)]$ with total mass at least $N^{1+\frac\eps4}$. Using the Markov property at time $k$, we have
	\begin{align*}
	&\bbP_{N^{1+\frac\eps2}\gd_{-\gs(0)L(T)}}\Big(|D(k)| \geq N^{1+\frac\eps4}\,;\, \exists\,s\in[k,k+1]:|A_{T,\frac\eps3}^\anyrg(s)| < N\Big)\\
	&\quad \leq\, \sup_{\mu\in\cA} \bbP_\mu\big( \exists\,s\leq 1:|\tilde A_{T,\frac\eps3}^\anyrg(s)| < N \big),
	\end{align*}
	with $\tilde A_{T,\frac\eps3}^\anyrg(\cdot)$ being defined similarly to the event $A_{T,\frac\eps3}^\anyrg(\cdot)$ for barriers $\gga^\anyrg_T$, $\ol\gga^\anyrg_T$ shifted in time by $k$. Since adding particles to the initial measure $\mu$ only decreases the probability in the r.h.s. above (this is follows from a direct coupling argument), on can restrict the supremum to measures $\mu$ with total mass exactly $N^{1+\frac\eps4}$. If the total population decreases from $N^{1+\frac\eps4}$ to $N$ at some time $s\leq 1$, this implies that, among the initial particles, at least $N^{1+\frac\eps4}-N$ of them have no living descendant at time $1$. Hence, a union bound yields
	\begin{equation}\label{eq:PZ:allregimes:UB:4}
		\sup_{\mu\in\cA} \bbP_\mu\big( \exists\,s\leq 1: |\tilde A_{T,\frac\eps3}^\anyrg(s)| < N \big) \,\leq\, \binom{N^{1+\frac\eps4}}{N^{1+\frac\eps4}-N} \times \left(\sup_{y} \bbP_{\gd_{y}} \big( |\tilde A_{T,\frac\eps3}^\anyrg(1)| =0 \big)\right)^{N^{1+\frac\eps4}-N},
	\end{equation}
	where the supremum is taken over $y\in [\gga_T^\shifted(k)+\frac\eps2\gs(k/T)L(T),\ol\gga_T^\anyrg(k)-\frac\eps2\gs(k/T)L(T)]$. For any such $y$, we may couple the $N$-BBM starting from $y$ with a Brownian motion $(B_s)_{s\geq0}$ without reproduction, by looking at an arbitrary descendant of $y$. If the $N$-BBM starting from $\gd_y$ goes extinct, the coupled Brownian motion crosses one of the barriers $\gga^\shifted_T$, $\ol\gga^\anyrg_T$ on the time interval $[0,1]$. To do so, it must travel a distance at least $\frac\eps{4}\gs(0)L(T)$, uniformly in $y$: therefore, there exists $c,C>0$, (locally) uniformly in the parameters as in the statement of the proposition, such that, for large $T$,
	\[
	\sup_{y} \bbP_{\gd_{y}} \big( |\tilde A_{T,\frac\eps3}^\anyrg(1)| =0 \big)\;\leq\; \sup_y \bP_{y}\left(\exists\, s\leq 1: |B_s-y|> \tfrac\eps{4}\gs(0)L(T) \right) \;\leq\; Ce^{-cL(T)^2} \le 1/2\,,
	\]
	where the second inequality follows from standard computations on the Brownian motion. Plugging this into~\eqref{eq:PZ:allregimes:UB:4} and using that $\binom ab\leq a^{a-b}$ for any $a,b\in\N$, the second term in~\eqref{eq:allregimes:UB:npart:1.5} is bounded from above by $N^{-c'N^{1+\frac\eps 4})}$, for some $c'>0$, (locally) uniformly in the parameters as in the statement of the proposition. Recollecting~\eqref{eq:allregimes:UB:npart:3} and summing over $O(t^\anyrg_T)$ terms in~\eqref{eq:allregimes:UB:npart:1}, we finally obtain the announced upper bound.
\end{proof}

\subsubsection*{Number of particles killed by the upper barrier}
Let us now turn to the second claim (C-2). We provide some estimates on the number of particles reaching either of the upper barriers before time $t^\anyrg_T$. First we bound from above the number of white particles that reach $\ol\gga^\anyrg_T$, i.e. white particles which are colored red: this is exactly given by $R^\anyrg_T(0,t^\anyrg_T)$ (recall its definition from~\eqref{eq:defR}). Then, for each particle reaching $\ol\gga^\anyrg_T$ for the first time, we estimate the number of (red) particles from its offspring that reach $\ol\gga^\shifted_T$. 
Recall~\eqref{eq:UB:allmoments:R}.

\begin{claim}\label{claim:allregimes:UB:olgga:white}
	One has, as $T\to+\infty$,
	\begin{equation}
		\bbE_{\mueps}\left[\left|R^\anyrg_T(0,t^\anyrg_T)\right|\right]\,\leq\, N^{\frac\eps6+o(1)},
	\end{equation}
	uniformly in $\gs\in\gsensrg$ and locally uniformly in $\eps\in(0,1)$.
\end{claim}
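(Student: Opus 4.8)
The plan is to reduce everything to linearity of the first moment in the initial configuration, combined with the shifted one-particle estimate \eqref{eq:UB:allmoments:R}. Since the BBM started from $\mueps$ is the superposition of $\mueps(\R)$ independent BBMs, one for each atom, and since $R^\anyrg_T(0,t^\anyrg_T)$ counts particles according to an event that depends only on each particle's ancestral trajectory, this count is additive over these subpopulations. Hence
\begin{equation*}
\bbE_{\mueps}\big[R^\anyrg_T(0,t^\anyrg_T)\big] \;=\; \sum_{k=0}^{\eps^{-1}} \big\lceil N^{k\eps+\frac\eps2}\big\rceil\;\bbE_{\gd_{-k\eps\gs(0)L(T)}}\big[R^\anyrg_T(0,t^\anyrg_T)\big].
\end{equation*}
The first thing to check is that \eqref{eq:UB:allmoments:R} is applicable to each summand: the $k$-th atom of $\mueps$ sits at height $-k\eps\gs(0)L(T)$ with $k\eps\le 1 < x = 1+\tfrac23\eps$, so the shift parameter $y=k\eps$ is admissible, and (as already recorded when defining $\mueps$) this atom lies strictly above $\gga^\anyrg_T(0)=-x\gs(0)L(T)$, so $R^\anyrg_T$ is meaningful.

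Next I would insert \eqref{eq:UB:allmoments:R} with $y=k\eps$, bounding the $k$-th summand by $N^{k\eps+\eps/2}\cdot N^{-(h+k\eps-x)+o(1)}$ (the integer part contributes only a harmless factor $2$, absorbed into $N^{o(1)}$). Recalling from \eqref{eq:UB:parameters} that $h=1+\eps$ and $x=1+\tfrac23\eps$, the exponent collapses to
\begin{equation*}
k\eps+\tfrac\eps2-(h+k\eps-x) \;=\; \tfrac\eps2 - h + x \;=\; \tfrac\eps2 - \tfrac\eps3 \;=\; \tfrac\eps6\,,
\end{equation*}
independently of $k$. Therefore each of the $\eps^{-1}+1$ summands is $N^{\eps/6+o(1)}$, and summing over the finitely many (for $\eps$ fixed) values of $k$ absorbs a further $O(1)$ factor into $N^{o(1)}$, which yields $\bbE_{\mueps}[R^\anyrg_T(0,t^\anyrg_T)]\le N^{\eps/6+o(1)}$. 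The uniformity in $\gs\in\gsensrg$ and local uniformity in $\eps\in(0,1)$ are inherited directly from the corresponding uniformity of the error term in \eqref{eq:UB:allmoments:R} (uniform in $\gs$, $t\le t^\anyrg_T$ and in shifts with $x-y\in[0,h]$), together with the fact that the number of atoms of $\mueps$ depends only on $\eps$.

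There is no substantial obstacle here: the argument is a pure first-moment computation, and all the analytic work — the bounds of Propositions~\ref{prop:killedpart:supercrit}, \ref{prop:killedpart:subcrit} and~\ref{prop:killedpart:crit} that underlie \eqref{eq:UB:allmoments:R} — has already been done in Section~\ref{sec:moments}. The only points requiring mild care are the bookkeeping just described: verifying admissibility of each shift $y=k\eps$ and the uniformity of the error terms across these finitely many shifts, and checking that the algebraic cancellation of the exponent indeed produces the claimed constant $\eps/6$ in every regime $\anyrg\in\{\supercrit,\subcrit,\crit\}$ (which it does, since \eqref{eq:UB:allmoments:R} has the same form in all three).
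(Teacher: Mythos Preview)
Your proof is correct and follows exactly the same approach as the paper: additivity of $R^\anyrg_T(0,t^\anyrg_T)$ in the initial measure, applying \eqref{eq:UB:allmoments:R} to each atom of $\mueps$ with $y=k\eps$, and observing that the exponent simplifies to $\eps/6$ independently of $k$. Your discussion of the uniformity is, if anything, more explicit than the paper's.
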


\begin{claim}\label{claim:allregimes:UB:olgga:red}
	Let $t_0\in[0,t^\anyrg_T)$, and consider a BBM starting from one (red) particle in time-space location $(t_0, \ol\gga_T^\anyrg(t_0))$. Then one has, as $T\to+\infty$,
	\begin{equation}
		\bbE_{\gd_{(t_0,\ol\gga_T^\anyrg(t_0))}}\left[\left|\bigcup_{t_0\leq s\leq t^\anyrg_T}\left\{
		u\in\cN_r\,\middle|\,
		\begin{aligned}
			&\forall\, t_0\leq r< s,\,X_u(r)\in (\gga_T^\shifted(r),\ol\gga_T^\shifted(r))\,;\\
			&X_u(s)=\ol\gga_T^\shifted(s)
		\end{aligned}
		\right\}\right|\right]\leq N^{-\frac\eps3+o(1)},
	\end{equation}
	where $o(1)$ vanishes as $T\to+\infty$ uniformly in $t_0\in[0,t^\anyrg_T)$ and $\gs\in\gsensrg$, and locally uniformly in $\eps\in(0,1)$.
\end{claim}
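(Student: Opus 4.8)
The plan is to reduce Claim~\ref{claim:allregimes:UB:olgga:red} to an application of the killed-particle estimate~\eqref{eq:UB:allmoments:R} for the red barriers $\gga^\shifted_T$, $\ol\gga^\shifted_T$. First I would observe that a red particle starts at time $t_0$ at the \emph{lower} red barrier: indeed, by~\eqref{eq:def:ggashifted}, $\ol\gga^\anyrg_T(t_0)=\gga^\shifted_T(t_0)+(h-\tfrac\eps3)\gs(t_0/T)L(T)-\tfrac\eps3\gs(t_0/T)L(T)$, so the starting height measured from $\gga^\shifted_T(t_0)$ equals $(h-\tfrac{2\eps}3)\gs(t_0/T)L(T)$; recalling $h=1+\eps$ from~\eqref{eq:UB:parameters}, this is $(1+\tfrac\eps3)\gs(t_0/T)L(T)$, and the gap between the red barriers is $\ol\gga^\shifted_T(s)-\gga^\shifted_T(s)=h\gs(s/T)L(T)=(1+\eps)\gs(s/T)L(T)$. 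In other words, relative to the red barrier pair, the initial particle plays the role of a single particle started at height $y\gs(0)L(T)$ below the upper red barrier, with the ``$x$'' parameter of~\eqref{eq:defxh} being $x^\shifted=1+\tfrac\eps3$ and the gap parameter $h^\shifted=1+\eps$; note $h^\shifted>x^\shifted>0$ as required, and $h^\shifted-x^\shifted=\tfrac{2\eps}3$, i.e.\ the particle is at distance $\tfrac{2\eps}3\gs(t_0/T)L(T)$ below $\ol\gga^\shifted_T(t_0)$.

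Second, the quantity in the claim is exactly $R^\shifted_T(t_0,t^\anyrg_T)$, the number of particles killed by the upper red barrier on $[t_0,t^\anyrg_T]$, for a BBM started from a single particle at $(t_0,\ol\gga^\anyrg_T(t_0))$. By a time shift (replacing $\gs(\cdot/T)$ by $\gs((t_0+\cdot)/T)$, which stays in $\gsensrg$ with the same $\gh$ since shifting does not worsen the bounds on $\gs,\gs',\gs''$, although one must be slightly careful in the super-critical regime that the number of monotonicity intervals does not increase—it does not, a sub-interval of a monotone piece is monotone, but the length-$\gh$ lower bound could fail near the endpoints; this is handled exactly as in the proof of Proposition~\ref{prop:2ndmom:supercrit}, or one simply notes that Propositions~\ref{prop:killedpart:supercrit}, \ref{prop:killedpart:subcrit}, \ref{prop:killedpart:crit} only require the relevant integrals of $\gs'$ and do not actually use the monotonicity-interval structure in the killed-particle bound), this becomes $R^\shifted_T(0,t^\anyrg_T-t_0)$ for the shifted variance profile, which is bounded by $R^\shifted_T(0,t^\shifted_T)$ where $t^\shifted_T$ is the appropriate time horizon (recall Propositions~\ref{prop:killedpart:supercrit}, \ref{prop:killedpart:subcrit} and~\ref{prop:killedpart:crit} hold up to $t^\anyrg_T$, and $t^\anyrg_T-t_0\le t^\anyrg_T$). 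Applying~\eqref{eq:UB:allmoments:R} with parameters $(h^\shifted,x^\shifted,y)=(1+\eps,1+\tfrac\eps3,0)$—equivalently, directly reading off Propositions~\ref{prop:killedpart:supercrit}, \ref{prop:killedpart:subcrit}, \ref{prop:killedpart:crit} with ``$h$''$=1+\eps$ and ``$x$''$=1+\tfrac\eps3$—gives
\begin{equation*}
	\bbE_{\gd_{(t_0,\ol\gga^\anyrg_T(t_0))}}\big[R^\shifted_T(t_0,t^\anyrg_T)\big]\,\leq\, N^{-(h^\shifted-x^\shifted)+o(1)}\,=\, N^{-\frac{2\eps}3+o(1)}\,,
\end{equation*}
which is even stronger than the claimed $N^{-\eps/3+o(1)}$. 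The $o(1)$ is uniform in $\gs\in\gsensrg$ and locally uniform in $\eps$, and—crucially—uniform in $t_0\in[0,t^\anyrg_T)$, because the error terms in the killed-particle propositions are uniform in $t\in[0,t^\anyrg_T]$ and the shifted variance profiles all lie in $\gsensrg$.

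I expect the only real subtlety to be bookkeeping: verifying that the ``starting height'' and ``gap'' for the red-barrier system translate into a pair $(h^\shifted,x^\shifted)$ with $h^\shifted>x^\shifted>0$ and $h^\shifted-x^\shifted>0$ bounded away from $0$ (here $=\tfrac{2\eps}3$), so that the killed-particle estimates genuinely yield an exponentially small bound, and checking the time-shift invariance of the class $\gsensrg$ carefully in the super-critical regime. One point to double-check is that in the super-critical case the red barrier $\gga^\shifted_T$ is still of the form~\eqref{def:gga:supercrit} up to an additive term $\tfrac\eps3\gs(s/T)L(T)$ absorbed into the ``$x$'' shift; this is immediate from~\eqref{eq:def:ggashifted} and the definition~\eqref{def:gga:supercrit}, since $\gga^\anyrg_T(s)+\tfrac\eps3\gs(s/T)L(T)$ does not have the same shape (the added term is time-dependent, not constant). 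This is the one place where a little care is needed: one should instead write $\gga^\shifted_T$ as a barrier in the sense of~\eqref{eq:defxh} with lower barrier $\gga^\shifted_T$ and gap $h\gs(s/T)L(T)$, and note that it still satisfies $\gga^\shifted_T{}'(s)=\gs(s/T)+O(L(T)/T)$ or the appropriate analogue in each regime—so the computations in the proofs of Propositions~\ref{prop:killedpart:supercrit}, \ref{prop:killedpart:subcrit}, \ref{prop:killedpart:crit} go through verbatim with $\ol\gga^\shifted_T(0)=(h-x^\shifted+?)\cdots$; more simply, since $\gga^\shifted_T(s)=\gga^{\anyrg}_T(s)+\tfrac\eps3\gs(s/T)L(T)$ differs from $\gga^\anyrg_T$ only through a term $O(L(T))$ with derivative $O(L(T)/T)=o(L(T)/t^\anyrg_T)$ absorbed into the Girsanov correction exactly as the $w_{h,T}$ term is in~\eqref{eq:crit:1stmom:0} and~\eqref{def:gga:supercrit}, the bound~\eqref{eq:UB:allmoments:R} applies directly with the effective parameter ``$h+y-x$''$=h^\shifted-x^\shifted=\tfrac{2\eps}3$. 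Once this identification is made, the claim follows immediately with room to spare.
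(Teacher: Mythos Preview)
Your approach is exactly the one the paper takes: apply the killed-particle estimate~\eqref{eq:UB:allmoments:R} (equivalently Propositions~\ref{prop:killedpart:supercrit}, \ref{prop:killedpart:subcrit}, \ref{prop:killedpart:crit}) to the time- and space-shifted system with barriers $\gga^\shifted_T,\ol\gga^\shifted_T$ and a single particle at $\ol\gga^\anyrg_T(t_0)$. Your discussion of why the shifted variance profile stays in $\gsensrg$ and why the extra additive term $\tfrac\eps3\gs(s/T)L(T)$ (with derivative $O(L(T)/T)$) is harmless in the Girsanov bookkeeping is correct and more explicit than the paper, which simply writes ``we do not write the details again.''

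However, there is an arithmetic slip in your distance computation. From~\eqref{eq:def:ggashifted} one has directly
\[
\ol\gga^\shifted_T(t_0)-\ol\gga^\anyrg_T(t_0)=\tfrac\eps3\,\gs(t_0/T)L(T),
\]
so the red particle starts at distance $\tfrac\eps3\gs(t_0/T)L(T)$ below $\ol\gga^\shifted_T(t_0)$, not $\tfrac{2\eps}3$. Equivalently, the height above $\gga^\shifted_T(t_0)$ is
\[
\ol\gga^\anyrg_T(t_0)-\gga^\shifted_T(t_0)=h\gs(t_0/T)L(T)-\tfrac\eps3\gs(t_0/T)L(T)=(h-\tfrac\eps3)\gs(t_0/T)L(T),
\]
so the effective parameter is $x^\shifted=h-\tfrac\eps3=1+\tfrac{2\eps}3$ (your identity $\ol\gga^\anyrg_T(t_0)=\gga^\shifted_T(t_0)+(h-\tfrac\eps3)\gs(t_0/T)L(T)-\tfrac\eps3\gs(t_0/T)L(T)$ contains a spurious extra $-\tfrac\eps3$). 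Hence $h^\shifted-x^\shifted=\tfrac\eps3$, and~\eqref{eq:UB:allmoments:R} yields
\[
\bbE_{\gd_{(t_0,\ol\gga^\anyrg_T(t_0))}}\big[R^\shifted_T(t_0,t^\anyrg_T)\big]\,\le\,N^{-\eps/3+o(1)},
\]
which is exactly the claimed bound, not a stronger one. The claim is sharp at this level of precision, and the exponent $-\tfrac\eps3$ is what is later combined with the $+\tfrac\eps6$ from Claim~\ref{claim:allregimes:UB:olgga:white} to produce the $-\tfrac\eps6$ in Proposition~\ref{prop:allregimes:UB:olgga}.
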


\begin{proof}[Proof of Claims~\ref{claim:allregimes:UB:olgga:white}, \ref{claim:allregimes:UB:olgga:red}]
	The first result is a direct corollary of~\eqref{eq:UB:allmoments:R} and the additivity in the initial measure: indeed, one has for any $0\leq k\leq \eps^{-1}$,
	\begin{align*}
	&\bbE_{N^{k\eps+\frac\eps2}\gd_{-k\eps\gs(0)L(T)}}[R_T^\anyrg(0,t^\anyrg_T)] \\
	&\quad =N^{k\eps+\frac\eps2}\bbE_{\gd_{-k\eps\gs(0)L(T)}}[R_T^\anyrg(0,t^\anyrg_T)]\leq N^{(k\eps+\frac\eps2)-(k\eps+\frac\eps3)+o(1)}=N^{\frac\eps6+o(1)}\,,
	\end{align*}
	so Claim~\ref{claim:allregimes:UB:olgga:white} follows by summing over $k$. Regarding Claim~\ref{claim:allregimes:UB:olgga:red}, it also follows from~\eqref{eq:UB:allmoments:R} applied to a time- and space-shifted BBM on the time interval $[t_0,t^\anyrg_T]$, killed at the barriers $\gga_T^\shifted$, $\ol\gga_T^\shifted$, and starting from a single particle at distance $\frac\eps3\gs(t_0/T)L(T)$ from the upper barrier $\ol\gga^\shifted_T$ (we do not write the details again).
\end{proof}

\begin{remark}\label{rem:coloring}
	Let us point out that the upper bound in Claim~\ref{claim:allregimes:UB:olgga:white} is larger than 1, so, with positive probability, there exist white particles that reach $\ol\gga^\anyrg_T$ before time $t^\anyrg_T$. The reader can check that, in the sub-critical and critical regimes, there \emph{does not exist} a choice of initial configuration $\mu$ and barrier parameters $(h,x)\to(1,1)$ (recall Lemma~\ref{lem:comparison:m:gga}) which yields simultaneously Proposition~\ref{prop:allregimes:UB:npart} and a moment estimate lower than 1 in Claim~\ref{claim:allregimes:UB:olgga:white}. This is not an issue in the super-critical regime since the proof of Proposition~\ref{prop:allregimes:UB:npart} can be simplified (see~\eqref{eq:allregimes:UB:npart:1}), giving more leeway in the choice of $(h,x)$.
\end{remark}

Following these observations, we may finally prove that $\ol\gga_T^\shifted$ does not kill any particle with high probability. Let $R^\tred_T(0,t)$ denote the number of particles killed by $\ol\gga^\shifted_T$. In particular, they remained above $\gga_T^\anyrg$ until some time $\tau_u\leq t^\anyrg_T$ upon which they reached $\ol\gga_T^\anyrg$ and were colored red; then they remained above $\gga_T^\shifted$ throughout $[\tau_u,t^\anyrg_T]$ and reached $\ol\gga_T^\shifted$ at some time $r\in[\tau_u,t]$ (upon which they are killed).
\begin{proposition}\label{prop:allregimes:UB:olgga}
	One has, as $T\to+\infty$,
	\begin{equation}
		\bbE_{\mueps}\left[R^\tred_T(0,t^\anyrg_T)\right]\,\leq\, N^{-\frac\eps6+o(1)},
	\end{equation}
	uniformly in $\gs\in\gsensrg$ and locally uniformly in $\eps\in(0,1)$.
\end{proposition}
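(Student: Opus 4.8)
The plan is to decompose $R^\tred_T(0,t^\anyrg_T)$ according to which white particle was first colored red, and then to combine Claims~\ref{claim:allregimes:UB:olgga:white} and~\ref{claim:allregimes:UB:olgga:red} via the branching property along stopping lines. Recall from the footnote after~\eqref{eq:defR} that $R^\anyrg_T(0,t^\anyrg_T)$ is a.s.\ finite and that, almost surely, no two particles reach $\ol\gga^\anyrg_T$ at the same time. First I would introduce the (random) set of coloring events
\[
\cL:=\Big\{(u,\tau)\in U\times[0,t^\anyrg_T]\ \Big|\ u\in\cN_\tau,\ X_u(\tau)=\ol\gga^\anyrg_T(\tau),\ \forall\, r<\tau,\ X_u(r)\in(\gga^\anyrg_T(r),\ol\gga^\anyrg_T(r))\Big\},
\]
and check, exactly as in Section~\ref{sec:coupling:NBBM}, that $\cL$ is an optional stopping line with $|\cL|=R^\anyrg_T(0,t^\anyrg_T)$. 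Since every red particle killed by $\ol\gga^\shifted_T$ before time $t^\anyrg_T$ is a descendant of exactly one element of $\cL$ (red particles never get recolored), I would write $R^\tred_T(0,t^\anyrg_T)=\sum_{(u,\tau_u)\in\cL}\mathcal R_u$, where $\mathcal R_u$ counts the descendants $v$ of $u$ that satisfy $X_v(r)\in(\gga^\shifted_T(r),\ol\gga^\shifted_T(r))$ for all $r\in[\tau_u,s)$ and $X_v(s)=\ol\gga^\shifted_T(s)$ for some $s\in[\tau_u,t^\anyrg_T]$.

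Next I would apply the branching property along $\cL$ (the stopping-line formalism recalled in Section~\ref{sec:coupling:NBBM}, cf.~\cite{Chau91,Jag89,Mai16}): conditionally on $\cF_\cL$, the subtrees rooted at the particles of $\cL$ are independent BBMs started from the time--space locations $(\tau_u,\ol\gga^\anyrg_T(\tau_u))$, $(u,\tau_u)\in\cL$. Hence, conditionally on $\cF_\cL$, the $\mathcal R_u$ are independent, and each one is controlled in expectation by Claim~\ref{claim:allregimes:UB:olgga:red} applied with $t_0=\tau_u\in[0,t^\anyrg_T)$, giving $\bbE[\mathcal R_u\mid\cF_\cL]\leq N^{-\eps/3+o(1)}$ with $o(1)$ uniform in $\tau_u$ and $\gs\in\gsensrg$, locally uniform in $\eps$. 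Taking expectations, and using that $|\cL|=R^\anyrg_T(0,t^\anyrg_T)$ is $\cF_\cL$-measurable, I obtain
\[
\bbE_{\mueps}\big[R^\tred_T(0,t^\anyrg_T)\big]\;=\;\bbE_{\mueps}\!\bigg[\sum_{(u,\tau_u)\in\cL}\bbE\big[\mathcal R_u\,\big|\,\cF_\cL\big]\bigg]\;\leq\; N^{-\eps/3+o(1)}\,\bbE_{\mueps}\big[R^\anyrg_T(0,t^\anyrg_T)\big].
\]
It then remains to insert the bound $\bbE_{\mueps}[R^\anyrg_T(0,t^\anyrg_T)]\leq N^{\eps/6+o(1)}$ from Claim~\ref{claim:allregimes:UB:olgga:white}, which yields $N^{-\eps/3+\eps/6+o(1)}=N^{-\eps/6+o(1)}$, with the desired uniformity carried over from the two claims.

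The main obstacle I anticipate is essentially bookkeeping rather than new estimates: one must carefully verify that $\cL$ is a bona fide optional stopping line (so that the conditional branching property and the $\cF_\cL$-measurability of $|\cL|$ are legitimate), that the decomposition $R^\tred_T=\sum_{(u,\tau_u)\in\cL}\mathcal R_u$ is exact, and that the ``locally uniform in $\eps$, uniform in $\gs\in\gsensrg$'' qualifiers survive both applications. All the probabilistic content is already packaged in Claims~\ref{claim:allregimes:UB:olgga:white} and~\ref{claim:allregimes:UB:olgga:red}; no further moment computation is needed.
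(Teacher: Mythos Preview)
Your proposal is correct and follows essentially the same approach as the paper: introduce the stopping line $\cL$ of white particles at their coloring time, invoke the strong branching property along $\cL$ to factor the expectation, and combine Claims~\ref{claim:allregimes:UB:olgga:white} and~\ref{claim:allregimes:UB:olgga:red} to obtain $N^{-\eps/3+o(1)}\cdot N^{\eps/6+o(1)}=N^{-\eps/6+o(1)}$. The paper cites \cite[Theorem~4.14]{Jag89} for the strong branching property and otherwise proceeds exactly as you outline.
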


With this proposition at hand, claim (C-2) is obtained by applying Markov's inequality to $R^\tred_T(0,t^\anyrg_T)$.

\begin{proof}
	This result is a consequence of Claims~\ref{claim:allregimes:UB:olgga:white},~\ref{claim:allregimes:UB:olgga:red} and a bit of stopping lines theory (recall Section~\ref{sec:coupling:NBBM}). Define
	\[
	\cL:=\big\{(u,s)\,\big|\, \forall r< s,\,X_u(r)\in(\gga_T^\anyrg(r),\ol\gga_T^\anyrg(r))\,;\, X_u(s)=\ol\gga_T^\anyrg(s)\big\},
	\]
	which is the (random) stopping line containing white particles at the moment they hit $\ol\gga_T^\anyrg(\cdot)$ and get colored. Let
	\[
	\cF_\cL:=\gs\left( \left\{\begin{aligned}&\forall r\leq s,\, X_u(r)>\gga_T^\anyrg(r)\,;\\& \forall r< s,\, X_u(r)<\ol\gga_T^\anyrg(r)\,;\\&X_u(s)\in A\end{aligned}\right\};\, s\geq0, u\in\cN_s, A\subset \R \text{ a Borel set}\right).
	\] 
	Informally, $\cF_\cL$ is the sigma-algebra containing all information about white particles. Then the \emph{strong branching property}~\cite[Theorem~4.14]{Jag89} states that, conditionally on $\cF_\cL$, the sub-trees of the process rooted at the pairs $(u,s)\in\cL$ are independent with respective distributions $\bbP_{\gd_{(s,X_u(s))}}=\bbP_{\gd_{(s,\ol\gga_T^\anyrg(s))}}$.
	
	Notice that Claim~\ref{claim:allregimes:UB:olgga:white} implies $|\cL|=R^\anyrg_T(0,t^\anyrg_T)<+\infty$, $\bbP_{\mueps}$-almost surely. Moreover, any particle in $R^\tred_T(0,t^\anyrg_T)$ almost surely has a single ancestor $(u,\tau)\in\cL$ ---in particular this ancestor $u$ was colored red at time $\tau$. Therefore, we obtain by conditioning with respect to $\cF_\cL$ and applying the strong branching property,
	\begin{align*}
		&\bbE_{\mueps}\left[R^\tred_T(0,t^\anyrg_T)\right]\\
		&\!= \bbE_{\mueps}\!\!\Bigg[ \bbE_{\mueps}\!\!\bigg[\sum_{(u,\tau)\in\cL}\Bigg|\!\bigcup_{\tau \leq r\leq t^\anyrg_T}\!\!\Bigg\{\!v\in\cN_r,v\succeq u\;\Bigg|\; \begin{aligned}
			&\forall\, s\in[\tau,r),\,X_v(s)\in(\gga_T^\shifted(s),\ol\gga_T^\shifted(s))\\& \text{and}\quad X_v(r)=\ol\gga_T^\shifted(r)
			\end{aligned}\Bigg\}\Bigg|\,\bigg|\cF_\cL\bigg]\!\Bigg]\\
		&\!= \bbE_{\mueps}\!\!\Bigg[ \sum_{(u,\tau)\in\cL}\bbE_{\gd_{(\tau,\ol\gga_T^\subcrit(\tau))}}\!\!\Bigg[\Bigg|\bigcup_{\tau\leq t\leq t^\anyrg_T}\!\!\Bigg\{\!v\in\cN_r\;\Bigg|\; \begin{aligned}
			&\forall\, s\in[\tau,r),\,X_v(s)\in(\gga_T^\shifted(s),\ol\gga_T^\shifted(s))\\& \text{and}\quad X_v(r)=\ol\gga_T^\shifted(r)
			\end{aligned}\Bigg\}\Bigg|\Bigg] \!\Bigg],
	\end{align*}
	where $v\succeq u$ means that $v\in\cN_r$ is a descendant of $u\in\cN_\tau$, $\tau\leq r$. Plugging Claims~\ref{claim:allregimes:UB:olgga:white},~\ref{claim:allregimes:UB:olgga:red} into this, we finally obtain
	\[\bbE_{\mueps}\left[R^\tred_T(0,t^\anyrg_T)\right]\,\leq\, \bbE_{\mueps}\left[ N^{-\frac\eps3+o(1)}\times\big|\cL\big| \right] = N^{-\frac\eps3+o(1)} \times N^{\frac\eps6+o(1)} = N^{-\frac\eps6+o(1)}, \]
	which concludes the proof.
\end{proof}

\subsubsection*{Particle distribution at the final time}
Finally, we prove the third statement (C-3). We first provide the following two estimates, respectively for white and red particles.

\begin{lemma}\label{lem:allregimes:UB:muK}
	The following statements hold uniformly in $\gs\in\gsensrg$ and locally uniformly in $\eps\in(0,1)$.
	
	$(i)$ For $0\leq j,k\leq \eps^{-1}$, one has, as $T\to+\infty$,
	\begin{align}\label{eq:prop:allregimes:UB:muK:white}
		\inf_{s\in[\gth(T)^{-1}L(T),t^\anyrg_T]}\, \bbP_{N^{k\eps+\frac\eps2}\gd_{-k\eps\gs(0)L(T)}}\left(N^{(j+1)\eps}\leq \Big|A^\anyrg_{T,[h-(j+1)\eps,h-j\eps]}(s)\Big|\leq N^{(j+2)\eps}\right)&\\
		\notag\quad \geq 1-N^{-\frac\eps6+o(1)}\,.&
	\end{align}
	
	$(ii)$ Let $t_0\in[0,t^\anyrg_T)$, and consider a BBM starting from one (red) particle in time-space location $(t_0, \ol\gga_T^\anyrg(t_0))$. Then for $0\leq j\leq \eps^{-1}$, one has, as $T\to+\infty$,
	\begin{align}\label{eq:prop:allregimes:UB:muK:red}
		\sup_{s\in[t_0,t^\anyrg_T]} \, \bbE_{\gd_{(t_0,\ol\gga^\anyrg_T(t_0))}}\left[\left| \left\{
		u\in\cN_{s}\,\middle|\,
		\begin{aligned}
			&\forall\, t_0\leq r< s,\, X_u(r)\in (\gga_T^\shifted(r),\ol\gga_T^\shifted(r))\,;\\
			&\tfrac{X_u(s) -\gga^\anyrg_T(s)}{\gs(s)L(T)} \in \left[h-(j+1)\eps,h-j\eps\right]
		\end{aligned}
		\right\} \right|\right]& \\
		\notag\quad \leq\, N^{(j+1)\eps+o(1)}\,.&
	\end{align}
\end{lemma}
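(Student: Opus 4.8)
The plan is to deduce part $(i)$ from a Markov upper bound combined with a Paley--Zygmund lower bound for $|A^\anyrg_{T,[h-(j+1)\eps,h-j\eps]}(s)|$ --- exactly the scheme of Proposition~\ref{prop:allregimes:UB:npart} --- and to deduce part $(ii)$ from a single first-moment estimate for the BBM killed at the red barriers. Recall $h=1+\eps$, $x=1+\tfrac23\eps$ from~\eqref{eq:UB:parameters}. Since only $O(\eps^{-1})$ pairs $(j,k)$ occur and the error exponents produced below will not depend on $(j,k)$, it suffices to fix one such pair; uniformity in $\gs\in\gsensrg$ and in $s$ resp.\ $t_0$, and local uniformity in $\eps$, will be inherited directly from the corresponding properties of the moment estimates~\eqref{eq:UB:allmoments:1stUB}--\eqref{eq:UB:allmoments:R}.

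\textbf{Part $(i)$.} Write $X:=|A^\anyrg_{T,[h-(j+1)\eps,h-j\eps]}(s)|$ and $X^+:=|A^\anyrg_{T,h-(j+1)\eps}(s)|\ge X$, and note $L(T)\llbis s\le t^\anyrg_T$ for $s\in[\gth(T)^{-1}L(T),t^\anyrg_T]$. First I would record the moments: by additivity in the initial configuration and \eqref{eq:UB:allmoments:1st} (usable since $s\ggbis L(T)$), with $y=k\eps$ and $\inf I=h-(j+1)\eps$,
\[
\bbE_{N^{k\eps+\frac\eps2}\gd_{-k\eps\gs(0)L(T)}}[X]\,=\,N^{k\eps+\frac\eps2}\,N^{(j-k+2/3)\eps+o(1)}\,=\,N^{(j+7/6)\eps+o(1)},
\]
so Markov's inequality gives $\bbP(X\ge N^{(j+2)\eps})\le N^{(j+7/6)\eps-(j+2)\eps+o(1)}=N^{-\frac56\eps+o(1)}$, which is the upper half of \eqref{eq:prop:allregimes:UB:muK:white}. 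For the second moment I would use $X\le X^+$, identity \eqref{eq:PZ:2ndmoment}, and the shifted bounds \eqref{eq:UB:allmoments:1stUB}--\eqref{eq:UB:allmoments:2nd}, which yield $\bbE_{\gd_{-k\eps\gs(0)L(T)}}[(X^+)^2]\le N^{(2j-k+5/3)\eps+o(1)}$ and $\bbE_{\gd_{-k\eps\gs(0)L(T)}}[X]=N^{(j-k+2/3)\eps+o(1)}$, hence
\[
\frac{\bbE_{N^{k\eps+\frac\eps2}\gd_{-k\eps\gs(0)L(T)}}[X^2]}{\bbE_{N^{k\eps+\frac\eps2}\gd_{-k\eps\gs(0)L(T)}}[X]^2}\;\le\;\frac{\bbE_{\gd_{-k\eps\gs(0)L(T)}}[(X^+)^2]}{N^{k\eps+\frac\eps2}\bbE_{\gd_{-k\eps\gs(0)L(T)}}[X]^2}+1\;\le\;N^{-\frac\eps6+o(1)}+1.
\]
Since $N^{(j+1)\eps}/\bbE[X]=N^{-\frac\eps6+o(1)}\to0$, Paley--Zygmund then gives $\bbP(X\ge N^{(j+1)\eps})\ge(1-N^{-\frac\eps6+o(1)})^2(1+N^{-\frac\eps6+o(1)})^{-1}\ge 1-N^{-\frac\eps6+o(1)}$, and a union bound with the Markov estimate yields \eqref{eq:prop:allregimes:UB:muK:white}.

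\textbf{Part $(ii)$.} Here one must bound the \emph{first} moment of the number of descendants of a single particle started at $(t_0,\ol\gga^\anyrg_T(t_0))$ that remain in $(\gga^\shifted_T(r),\ol\gga^\shifted_T(r))$ for $r<s$ and, at time $s$, fall in the window $[h-(j+1)\eps,h-j\eps]$ rescaled relative to $\gga^\anyrg_T$. I would argue as for Claim~\ref{claim:allregimes:UB:olgga:red}: by~\eqref{eq:def:ggashifted} the pair $(\gga^\shifted_T,\ol\gga^\shifted_T)$ differs from $(\gga^\anyrg_T,\ol\gga^\anyrg_T)$ only by the shift $\tfrac\eps3\gs(\cdot/T)L(T)$, which has the same profile as the barriers themselves, so --- after a time--space shift to the interval $[t_0,s]$ and the corresponding reparametrisation of $\gs$ --- the first-moment upper bound \eqref{eq:UB:allmoments:1stUB} applies verbatim, the only change being the bookkeeping of heights. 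Relative to the lower red barrier $\gga^\shifted_T$, the starting particle sits at height $(h-\tfrac\eps3)\gs(t_0/T)L(T)$, and the window $[h-(j+1)\eps,h-j\eps]$ becomes $[(h-(j+1)\eps-\tfrac\eps3)\vee 0,\ h-j\eps-\tfrac\eps3]$; hence \eqref{eq:UB:allmoments:1stUB} bounds the expectation by $N^{(h-\frac\eps3)-(h-(j+1)\eps-\frac\eps3)+o(1)}=N^{(j+1)\eps+o(1)}$ when $h-(j+1)\eps-\tfrac\eps3\ge0$, and by $N^{(h-\frac\eps3)+o(1)}=N^{(1+\frac23\eps)+o(1)}\le N^{(j+1)\eps}$ in the boundary case $j=\eps^{-1}$, for $T$ large. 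This gives \eqref{eq:prop:allregimes:UB:muK:red}.

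\textbf{Main obstacle.} The one point that is not purely mechanical is the transfer of the estimates of Section~\ref{sec:moments}, stated for the specific barrier shapes \eqref{def:gga:supercrit}--\eqref{def:gga:crit}, to the red barriers $\gga^\shifted_T,\ol\gga^\shifted_T$, which are \emph{not} of those forms since their shift $\tfrac\eps3\gs(\cdot/T)L(T)$ is time-dependent. Running the Many-to-one/Girsanov computation of Lemma~\ref{lem:MtOGirs} with $\gga^\shifted_T$ in place of $\gga^\anyrg_T$ produces additional terms of the shape $\tfrac\eps3\tfrac{L(T)}{T}\int\gs'(r/T)(\cdots)\,\dd r$, which must be checked to be $O(L(T)^2/T)=o(b^\anyrg_T)$ --- this is precisely the content of estimates of the type \eqref{eq:supercrit:girsanov}, and is the only place requiring some (routine) care. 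Everything else is a direct application of Markov, Paley--Zygmund, \eqref{eq:PZ:2ndmoment}, and the bounds \eqref{eq:UB:allmoments:1stUB}--\eqref{eq:UB:allmoments:R} already in hand.
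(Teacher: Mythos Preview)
Your proposal is correct and follows essentially the same route as the paper: Markov's inequality for the upper tail and Paley--Zygmund (via \eqref{eq:PZ:2ndmoment} and the bounds \eqref{eq:UB:allmoments:1stUB}--\eqref{eq:UB:allmoments:2nd}) for the lower tail in part $(i)$, and the first-moment upper bound \eqref{eq:UB:allmoments:1stUB} applied to the shifted red barriers in part $(ii)$, with the same bookkeeping of heights. Your explicit use of $X\le X^+$ to pass to an interval of the form $[z,h]$ for the second-moment bound, and your remark on the extra Girsanov terms arising from the time-dependent shift $\tfrac\eps3\gs(\cdot/T)L(T)$, are points the paper leaves implicit; both are handled correctly.
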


From these results, the claim (C-3) follows naturally: let $(\cX_s^{\twhite-\tred})_{s\in[0,t^\anyrg_T]}$ denote the empirical mass measure on $\R$ of the process defined by white and red particles, that is
\begin{equation}\label{eq:defXwhitered}
	\cX_s^{\twhite-\tred}\,:=\,\sum_{u\in \cN^\twhite_s\cup\cN^\tred_s} \gd_{X_u(s)}\,,\qquad s\in[0,t^\anyrg_T]\,.
\end{equation}
Recall~\eqref{eq:defmuK}. In order to have a condensed statement, let us write $\mu_{\eps,s}^{(y)}$ for the counting measure $\mu_{\eps,s}$ shifted upward by $y$, that is $\mu_{\eps,s}^{(y)}(\cdot):=\mu_{\eps,s}(\cdot-y)$, for $s\in[0,T]$, $y\in\R$.

\begin{proposition}\label{prop:allregimes:UB:muK}
	Let $\eps>0$. Then one has as $T\to+\infty$,
	\begin{equation}
		\inf_{s\in[\gth(T)^{-1}L(T),t^\anyrg_T]}\,
		\bbP_{\mueps} \left(\mu_{\eps,s}^{(\ol\gga^\anyrg_T(s)-\eps\gs(s)L(T))}\,\prec\, \cX_{s}^{\twhite-\tred}\,\prec\, N^{\frac52\eps}\mu_{\eps,s}^{(\ol\gga^\anyrg_T(s))}\right)\,\geq\, 1-N^{-\frac\eps6+o(1)}\,,
	\end{equation}
	uniformly in $\gs\in\gsensrg$ and locally uniformly in $\eps\in(0,1)$.
\end{proposition}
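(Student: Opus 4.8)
\medskip
\noindent\textbf{Proof plan for Proposition~\ref{prop:allregimes:UB:muK}.}
The plan is to realise the two-sided comparison on a single good event $\mathcal{G}=\mathcal{G}(T,\eps)$ with $\bbP_{\mueps}(\mathcal{G}^c)\le N^{-c\eps+o(1)}$ for some $c>0$, on which all the moment inputs of this subsection have been upgraded to pathwise bounds on the slices of $\cX^{\twhite-\tred}_s$. Concretely, I would take $\mathcal{G}$ to be the intersection of: (a) the event $\{\,|A_{T,\eps/3}^\anyrg(s)|\ge N$ for all $s\le t^\anyrg_T\,\}$ of Proposition~\ref{prop:allregimes:UB:npart}; (b) $\{R^\tred_T(0,t^\anyrg_T)=0\}$, from Proposition~\ref{prop:allregimes:UB:olgga} and Markov's inequality, so that on $\mathcal{G}$ no particle ever touches $\ol\gga^\shifted_T$; (c) a polynomial bound $\{R^\anyrg_T(0,t^\anyrg_T)\le N^{\eps/6+\eps/12}\}$ on the number of red founders, from Claim~\ref{claim:allregimes:UB:olgga:white} and Markov; (d) for every $0\le j\le\eps^{-1}$ and every integer $k\in[\gth(T)^{-1}L(T),t^\anyrg_T]$, the two-sided slice estimate $N^{(j+1)\eps}\le|A_{T,[h-(j+1)\eps,h-j\eps]}^\anyrg(k)|\le N^{(j+2)\eps}$ of Lemma~\ref{lem:allregimes:UB:muK}(i); and (e) the analogous one-sided bounds for the red particles, namely that each slice $[h-(j+1)\eps,h-j\eps]$ contains at most $N^{(j+1)\eps+\eps/3+o(1)}$ red particles and that at most $N^{\eps/2+o(1)}$ red particles lie strictly above $\ol\gga^\anyrg_T(\cdot)$. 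For (e) I would condition on the stopping line $\cL$ of colouring events (as in the proof of Proposition~\ref{prop:allregimes:UB:olgga}) and use the strong branching property: conditionally on $\cL$ the sub-trees rooted at the colouring points are independent BBM's started on $\ol\gga^\anyrg_T$, so Lemma~\ref{lem:allregimes:UB:muK}(ii) and \eqref{eq:UB:allmoments:1stUB} for the barriers $\gga^\shifted_T,\ol\gga^\shifted_T$ give conditional first moments $|\cL|\,N^{(j+1)\eps+o(1)}$ and $|\cL|\,N^{o(1)}$ respectively, which on (c) become $N^{(j+1)\eps+\eps/4+o(1)}$ and $N^{\eps/4+o(1)}$; Markov's inequality together with a union bound over the $N^{o(1)}$ relevant pairs $(j,k)$ then yields (e). In (d) and (e) the fluctuations on each unit interval $[k,k+1]$ are absorbed by the same first-moment/Markov trick used in the proof of Proposition~\ref{prop:allregimes:UB:npart}. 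A union bound gives $\bbP_{\mueps}(\mathcal{G}^c)\le N^{-c\eps+o(1)}$, with the error term uniform in $\gs\in\gsensrg$ and locally uniform in $\eps$.

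On $\mathcal{G}$, the lower comparison $\mu_{\eps,s}^{(\ol\gga^\anyrg_T(s)-\eps\gs(s)L(T))}\prec\cX^{\twhite-\tred}_s$ uses only the white particles. Since $\mu_{\eps,s}^{(\ol\gga^\anyrg_T(s)-\eps\gs(s)L(T))}$ is a decreasing step function jumping at the points $\ol\gga^\anyrg_T(s)-(k+1)\eps\gs(s/T)L(T)$, $0\le k\le\eps^{-1}$ (equivalently at normalised heights $h-(k+1)\eps$, recall $h=1+\eps$ from~\eqref{eq:UB:parameters}), it suffices to check the inequality for $[x,+\infty)$ at those points and above the top atom, where both sides vanish. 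At the point of index $K$ the left-hand mass is $\sum_{k=0}^{K}\lceil N^{k\eps+\eps/2}\rceil\le 2N^{K\eps+\eps/2}$, while on $\mathcal{G}$ the white particles at normalised height at least $h-(K+1)\eps$, i.e.\ those in the slices $j=0,\dots,K$, number at least $\sum_{j=0}^{K}N^{(j+1)\eps}\ge N^{(K+1)\eps}=N^{K\eps+\eps}$, which exceeds $2N^{K\eps+\eps/2}$ once $N^{\eps/2}\ge 2$.

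For the upper comparison $\cX^{\twhite-\tred}_s\prec N^{5\eps/2}\mu_{\eps,s}^{(\ol\gga^\anyrg_T(s))}$ I would split $\cX^{\twhite-\tred}_s=\cX^\twhite_s+\cX^\tred_s$ and again check $[x,+\infty)$ at the atoms of $\mu_{\eps,s}^{(\ol\gga^\anyrg_T(s))}$, which sit at normalised heights $h-k\eps$, $0\le k\le\eps^{-1}$, and carry mass $N^{5\eps/2}N^{k\eps+\eps/2}=N^{(k+3)\eps}$ after scaling. For the atom of index $K\ge 1$, Lemma~\ref{lem:allregimes:UB:muK}(i) bounds the white particles above it by $\sum_{j=0}^{K-1}N^{(j+2)\eps}\le 2N^{(K+1)\eps}$ and part (e) of $\mathcal{G}$ bounds the red ones by $\sum_{j=0}^{K-1}N^{(j+1)\eps+\eps/3+o(1)}+N^{\eps/2+o(1)}\le N^{K\eps+\eps/2+o(1)}$, so the total is $\le N^{(K+1)\eps+o(1)}\le N^{(K+3)\eps}$ for $T$ large. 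The delicate case is the top atom, $K=0$, i.e.\ $x>\ol\gga^\anyrg_T(s)$: there $\cX^\twhite_s$ contributes nothing (white particles never exceed $\ol\gga^\anyrg_T$), and one must absorb the few red particles that overshoot $\ol\gga^\anyrg_T$ into the mass $N^{3\eps}$ carried by the top atom of $N^{5\eps/2}\mu_{\eps,s}^{(\ol\gga^\anyrg_T(s))}$; this is precisely what the bound $N^{\eps/2+o(1)}\ll N^{3\eps}$ from (e) provides, and it is what dictates the choice of the gap $\tfrac\eps3$ between the white and red barriers in~\eqref{eq:def:ggashifted}. Uniformity in $s$ over $[\gth(T)^{-1}L(T),t^\anyrg_T]$ is built into $\mathcal{G}$ via the integer times and the unit-interval fluctuation control, and the uniformity (resp.\ local uniformity) in $\gs$ (resp.\ $\eps$) is inherited from the inputs.

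The step I expect to be the main obstacle is (e): converting the per-founder, fixed-time expectation bounds of Lemma~\ref{lem:allregimes:UB:muK}(ii) and of~\eqref{eq:UB:allmoments:1stUB} into a pathwise control valid simultaneously for all $s\le t^\anyrg_T$, and in particular controlling the red particles that climb above $\ol\gga^\anyrg_T$, where the many-to-one estimate offers no decay because the founder sits exactly on that barrier. The conditioning on the colouring stopping line $\cL$ and the strong branching property are the tools that make this tractable, but one has to combine them carefully with the discretisation of time, an ingredient that has no counterpart in the lower-bound argument of the previous section, where red particles play no role.
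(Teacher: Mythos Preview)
Your plan works in principle, but you are proving a stronger statement than what is asked: the infimum over $s$ sits \emph{outside} the probability, so you only need the bound for each fixed $s\in[\gth(T)^{-1}L(T),t^\anyrg_T]$, uniformly in $s$ --- not for all $s$ simultaneously on a single good event. This renders the time discretisation over integer $k$, the unit-interval fluctuation control, and items (a), (b), (c) of your good event superfluous. The paper's proof is accordingly much shorter: for a fixed $s$, it combines Claim~\ref{claim:allregimes:UB:olgga:white} with Lemma~\ref{lem:allregimes:UB:muK}(ii) via the strong branching property (exactly as you describe for (e), but taking the expectation over $\cL$ directly rather than a pathwise bound on $|\cL|$) to obtain $\bbE_{\mueps}[\text{red particles in slice }j]\le N^{\frac\eps6+(j+1)\eps+o(1)}$, applies Markov to get $\bbP(>N^{(j+2)\eps})\le N^{-\frac{5\eps}6+o(1)}$, uses Lemma~\ref{lem:allregimes:UB:muK}(i) for the white slices, and takes a union bound over the $O(\eps^{-2})$ pairs $(j,k)$ of slices and initial atoms of $\mueps$. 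The fluctuation control you propose for the slice counts on unit intervals is actually more delicate than in Proposition~\ref{prop:allregimes:UB:npart} (particles can migrate between adjacent slices), so avoiding it is not just cosmetic.

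That said, your careful treatment of the ``top'' case --- red particles sitting strictly above $\ol\gga^\anyrg_T(s)$, in $(\ol\gga^\anyrg_T(s),\ol\gga^\shifted_T(s)]$ --- is a genuine point the paper glosses over. As written, the upper comparison $\cX_s^{\twhite-\tred}\prec N^{\frac52\eps}\mu_{\eps,s}^{(\ol\gga^\anyrg_T(s))}$ fails at any $x>\ol\gga^\anyrg_T(s)$ whenever a red particle is there, since the right-hand side puts no mass above $\ol\gga^\anyrg_T(s)$. The fix is exactly what you outline: bound the expected number of red particles in that strip by $N^{\frac\eps6+o(1)}\cdot N^{\eps+o(1)}$ (Claim~\ref{claim:allregimes:UB:olgga:white} times the many-to-one bound~\eqref{eq:UB:allmoments:1stUB} for the red barriers started at distance $\tfrac\eps3$ below $\ol\gga^\shifted_T$), and absorb it into the top atom --- or, more simply, replace $\ol\gga^\anyrg_T(s)$ by $\ol\gga^\shifted_T(s)$ in the upper comparison, which costs only an extra $\tfrac\eps3\gs(s/T)L(T)$ shift and is harmless for the downstream application in Lemma~\ref{lem:main:UB:coupling}.
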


\begin{proof}[Proof of Lemma~\ref{lem:allregimes:UB:muK}]
	$(i)$ Recall~\eqref{eq:UB:parameters} and~(\ref{eq:UB:allmoments:1stUB}--\ref{eq:UB:allmoments:2nd}). On the one hand, Markov's inequality gives
	\[\begin{aligned}
		&\bbP_{N^{k\eps+\frac\eps2}\gd_{-k\eps\gs(0)L(T)}}\left( \Big|A^\anyrg_{T,[h-(j+1)\eps,h-j\eps]}(s)\Big| > N^{(j+2)\eps}\right) \\
		&\qquad\leq\, N^{k\eps+\frac\eps2}N^{-(j+2)\eps} \bbE_{\gd_{-k\eps\gs(0)L(T)}}\left[ \Big|A^\anyrg_{T,[h-(j+1)\eps,h-j\eps]}(s)\Big| \right]\\
		&\qquad\leq\, N^{k\eps+\frac\eps2}N^{-(j+2)\eps}N^{x-k\eps -h+(j+1)\eps+o(1)}=N^{-\frac56\eps+o(1)},
	\end{aligned}\]
	for $0\leq k,j\leq\eps^{-1}$, $s\leq t^\anyrg_T$ and $T$ large. On the other hand, Paley-Zygmund's inequality and~\eqref{eq:UB:allmoments:1stUB}, \eqref{eq:UB:allmoments:2nd} yield  for $s\in[\gth(T)^{-1}L(T),t^\anyrg_T]$, (we leave the details to the reader),
	\begin{align*}
		&\bbP_{N^{k\eps+\frac\eps2}\gd_{-k\eps\gs(0)L(T)}}\left(\Big|A^\anyrg_{T,[h-(j+1)\eps,h-j\eps]}(s)\Big| \geq N^{(j+1)\eps}\right)\\
		&\qquad\geq\, \left(1-N^{-\frac\eps6+o(1)}\right)^2\left(1+N^{-\frac\eps6+o(1)}\right)^{-1}\,\geq\,1-N^{-\frac\eps6+o(1)}\,,
	\end{align*}
	for some $c_1,c_2>0$ and $T$ sufficiently large, which concludes the proof of~\eqref{eq:prop:allregimes:UB:muK:white}.
	
	$(ii)$ Similarly to Claim~\ref{claim:allregimes:UB:olgga:red}, this follows from~\eqref{eq:UB:allmoments:1stUB} applied to a time- and space-shifted BBM on the time interval $[t_0,t^\anyrg_T]$, killed at the barriers $\gga_T^\shifted$, $\ol\gga_T^\shifted$, and starting from a single particle at distance $(h-\frac\eps3)\gs(t_0/T)L(T)$ from the lower barrier $\gga^\shifted_T$ (we leave the details to the reader).
\end{proof}

\begin{proof}[Proof of Proposition~\ref{prop:allregimes:UB:muK}]
	Recall Claim~\ref{claim:allregimes:UB:olgga:white} and~\eqref{eq:prop:allregimes:UB:muK:red}. Using the strong branching property~\cite[Theorem~4.14]{Jag89}, one deduces uniformly in $0\leq j\leq \eps^{-1}$, $s\in[\gth(T)^{-1}L(T),t^\anyrg_T]$,
	\[
	\bbE_{\mueps}\left[\left|\left\{u\in \cN^\tred_{s} \,\middle|\, \tfrac{X_u(s) - \ol\gga^\anyrg_T(s)}{\gs(s)L(T)} \in \big[-(j+1)\eps,-j\eps\big]  \right\}\right|\right]\,\leq\, N^{\frac\eps6+(j+1)\eps+o(1)}\,.
	\]
	Hence, a union bound and Markov's inequality yield,
	\begin{align*}
	\bbP_{\mueps}\left(\exists\, 0\leq j\leq \eps^{-1}:\left|\left\{u\in \cN^\tred_{s} \,;\, \tfrac{X_u(s) - \ol\gga^\anyrg_T(s)}{\gs(s)L(T)} \in [-(j+1)\eps,-j\eps] \right\}\right| > N^{(j+2)\eps}\right)&\\
	\leq\, N^{-\frac56\eps+o(1)}\,.&
	\end{align*}
	Moreover, a union bound and~\eqref{eq:prop:allregimes:UB:muK:white} yield that, uniformly in $s\in[\gth(T)^{-1}L(T),t^\anyrg_T]$,
	\begin{align*}
		&\bbP_{\mueps}\Bigg(\exists\,0\leq j\leq \eps^{-1}: \\
		&\quad \left|\left\{u\in \cN^\twhite_{t^\anyrg_T} \,;\, \tfrac{X_u(t^\anyrg_T) - \ol\gga^\anyrg_T(t^\anyrg_T)}{\gs(t^\anyrg_T)L(T)} \in \big[-(j+1)\eps,-j\eps\big]  \right\}\right|\notin (\eps^{-1}+1)\big[N^{(j+1)\eps},N^{(j+2)\eps}\big]\Bigg)\\
		&\leq \sum_{j=0}^{\eps^{-1}}\sum_{k=0}^{\eps^{-1}} \sup_{s\in[\frac12 t^\anyrg_T,t^\anyrg_T]}\, \bbP_{N^{k\eps+\frac\eps2}\gd_{-k\eps\gs(0)L(T)}}\left(\Big|A^\anyrg_{T,[h-(j+1)\eps,h-j\eps]}(s)\Big|\notin\big[N^{(j+1)\eps}, N^{(j+2)\eps}\big]\right) \\
		&\leq N^{-\frac\eps6+o(1)} \,,
	\end{align*}
	for $T$ large. Therefore, there exists $T_\eps>0$ such that for $T\geq T_\eps$, with large $\bbP_{\mueps}$-probability, for all $0\leq j\leq \eps^{-1}$, there are between $N^{(j+1)\eps}$ and $N^{(j+3)\eps}$ particles (white or red) ending in the interval $\ol\gga^\anyrg_T(s)+\gs(s)L(T)[-(j+1)\eps,-j\eps]$ at time $s$, uniformly in $s\in[\gth(T)^{-1}L(T),t^\anyrg_T]$. Recalling~\eqref{eq:defmuK}, this directly implies the proposition.
\end{proof}

\subsection{Proof of Proposition~\ref{prop:main:UB}}\label{sec:UB:coreprf}
We may finally provide the proof of Proposition~\ref{prop:main:UB}. It shares several similarities to that of Lemma~\ref{lem:main:LB}, notably with the sub-critical case needing additional work (since our moment estimates do not hold until time $T$). Recall that the point measure of the $N$-BBM throughout time is denoted by $(\cX_{t}^{N})_{t\geq0}$. Let $\eps\in(0,1)$,  $\anyrg\in\{\subcrit,\supercrit,\crit\}$, and recall~\eqref{eq:defta}, \eqref{eq:UB:parameters} and the definitions of $\gga^\anyrg_T$, $\ol\gga^\anyrg_T$ from~(\ref{def:gga:supercrit}--\ref{def:gga:crit}).

\begin{lemma}\label{lem:main:UB:coupling}
	Let $\eps\in(0,1)$ and $\anyrg\in\{\subcrit,\supercrit,\crit\}$. There exists $c_1,c_2>0$ such that for $T$ sufficiently large, one has
	\begin{equation}\label{eq:UB:mainineq}
		\inf_{t\in[0,t^\anyrg_T]} \bbP_{\mueps}\big(\max(\cX_{t}^{N}) \leq \ol\gga^\shifted_T(t)  \big) \;\geq\; 1-c_1 N^{-c_2}\,,
	\end{equation}
	and 
	\begin{equation}\label{eq:UB:mainineq:enddist}
		\inf_{t\in[\gth(T)^{-1}L(T),t^\anyrg_T]}
		\bbP_{\mueps} \left(\cX_{t}^{N}\,\prec\, \mu_{\eps,t}^{(\ol\gga^\anyrg_T(t)+3\eps\gs(t/T)L(T))}\right)\,\geq\, 1- c_1 N^{-c_2}\,,
	\end{equation}
	where $c_1,c_2$ are uniformly bounded away from 0 and $\infty$ in $\gs\in\gsensrg$ and locally uniformly in $\eps\in(0,1)$.
\end{lemma}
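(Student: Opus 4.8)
The plan is to dominate the $N$-BBM started from $\mueps$ by the white/red multi-type process introduced above, whose empirical measure is $\cX^{\twhite-\tred}$ (recall~\eqref{eq:defXwhitered}). Introduce the good events $G_1:=\{\forall s\le t^\anyrg_T:\ |A^\anyrg_{T,\eps/3}(s)|\ge N\}$ and $G_2:=\{R^\tred_T(0,t^\anyrg_T)=0\}$, for which Proposition~\ref{prop:allregimes:UB:npart}, Proposition~\ref{prop:allregimes:UB:olgga} and Markov's inequality give $\bbP_{\mueps}(G_1^c\cup G_2^c)\le c_1N^{-c_2}$, uniformly in $\gs\in\gsensrg$ and locally uniformly in $\eps$. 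A.s.\ the particles counted by $A^\anyrg_{T,\eps/3}(s)$ are precisely the white particles lying in $[\gga^\shifted_T(s),\ol\gga^\anyrg_T(s)]$ at time $s$; hence on $G_1$, every time a white particle reaches $\gga^\anyrg_T$ or a red particle reaches $\gga^\shifted_T$, there are at least $N$ particles strictly above it (those $N$ white particles sitting at heights $\ge\gga^\shifted_T(s)>\gga^\anyrg_T(s)$, and not exactly at $\gga^\shifted_T(s)$ at such a time, a.s.), while on $G_2$ no particle is removed at $\ol\gga^\shifted_T$. Thus, on $G_1\cap G_2$ the white/red process is an $N^+$-BBM (Definition~\ref{def:N-N+BBM}$(ii)$). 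Letting $\tau$ be the first time that $|A^\anyrg_{T,\eps/3}(\cdot)|$ drops below $N$ or a red particle reaches $\ol\gga^\shifted_T$ (so $G_1\cap G_2=\{\tau>t^\anyrg_T\}$), define $\cX^{N+}$ to coincide with the white/red process up to $\tau$ and to evolve as a plain BBM afterwards: this is a genuine $N^+$-BBM, and Lemma~\ref{lem:N-N+couping} (applied to the $N$-BBM started from $\mueps$, whose initial configuration is $\prec\mueps$, and to $\cX^{N+}$) provides a coupling under which $\cX^N_t\prec\cX^{N+}_t$ for all $t\le t^\anyrg_T$; on $G_1\cap G_2$ one moreover has $\cX^{N+}_t=\cX^{\twhite-\tred}_t$ on $[0,t^\anyrg_T]$.

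The bound~\eqref{eq:UB:mainineq} is then immediate: on $G_1\cap G_2$, every white particle stays strictly below $\ol\gga^\anyrg_T\le\ol\gga^\shifted_T$ and every red particle stays in $(\gga^\shifted_T,\ol\gga^\shifted_T)$ (recall~\eqref{eq:defNwhite}--\eqref{eq:defNred} and that $G_2$ precludes reaching $\ol\gga^\shifted_T$), so $\max(\cX^N_t)\le\max(\cX^{N+}_t)=\max(\cX^{\twhite-\tred}_t)\le\ol\gga^\shifted_T(t)$ for every $t\le t^\anyrg_T$ simultaneously, and $\bbP_{\mueps}(G_1^c\cup G_2^c)\le c_1N^{-c_2}$ concludes.

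For~\eqref{eq:UB:mainineq:enddist}, intersect additionally with the event $G_3$ of Proposition~\ref{prop:allregimes:UB:muK}, on which $\cX^{\twhite-\tred}_s\prec N^{5\eps/2}\mu_{\eps,s}^{(\ol\gga^\anyrg_T(s))}$ for all $s\in[\gth(T)^{-1}L(T),t^\anyrg_T]$, with $\bbP_{\mueps}(G_3^c)\le N^{-\eps/6+o(1)}$. On $G_1\cap G_2\cap G_3$ the coupling gives $\cX^N_t\prec N^{5\eps/2}\mu_{\eps,t}^{(\ol\gga^\anyrg_T(t))}$, and since $\cX^N_t(\R)\le N$, $\cX^N_t$ is dominated by the $N$ highest atoms of that measure. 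It remains to verify the deterministic comparison
\[
\Big(N\text{ highest atoms of }N^{5\eps/2}\mu_{\eps,t}^{(\ol\gga^\anyrg_T(t))}\Big)\ \prec\ \mu_{\eps,t}^{(\ol\gga^\anyrg_T(t)+3\eps\gs(t/T)L(T))},
\]
which follows by a routine comparison of atom masses: translating $\mu_{\eps,t}$ upward by $3\eps\gs(t/T)L(T)$ multiplies the mass available at each comparable height by roughly $N^{3\eps}$, which beats the loss $N^{5\eps/2}$, the truncation to $N$ atoms disposing of the heavy bottom atoms and the extra mass of the translate above $\ol\gga^\anyrg_T(t)$ absorbing the few red particles protruding there; once $\eps$ is small and $T$ large, the tail functions compare at every height. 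Chaining the three events yields~\eqref{eq:UB:mainineq:enddist}, all constants being uniform in $\gs\in\gsensrg$ and locally uniform in $\eps$.

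The step I expect to be the main obstacle is the coupling set-up of the first paragraph: one must argue carefully that the multi-type process---which \emph{colours} rather than kills at $\ol\gga^\anyrg_T$, runs red particles between shifted barriers, and may carry as many as $N^{1+3\eps}$ particles---is genuinely an $N^+$-BBM on $G_1\cap G_2$, handling the a.s.-negligible ties when a red particle is removed at $\gga^\shifted_T$ and the precise definition of the modification past $\tau$, so that Lemma~\ref{lem:N-N+couping} can be invoked. The remaining work---the two deterministic $\prec$-comparisons and the union bound over $G_1,G_2,G_3$---is bookkeeping, the one sensitive point being that the chosen exponents ($h=1+\eps$, $x=1+\tfrac23\eps$, the $3\eps$ upward shift, the $N^{5\eps/2}$ loss) are mutually compatible.
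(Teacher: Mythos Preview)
Your proof is correct and follows essentially the same route as the paper's. The one cosmetic difference is in how the auxiliary $N^+$-BBM is constructed: the paper defines it directly by the selection rule ``kill a particle whenever (i) there are $N$ others above it \emph{and} (ii) it is below $\gga^\anyrg_T$, or below $\gga^\shifted_T$ with an ancestor that has crossed $\ol\gga^\anyrg_T$''---condition (i) makes it an $N^+$-BBM automatically, with no stopping time needed---whereas you run the white/red process up to the stopping time $\tau$ and then switch off all selection. Both constructions yield a genuine $N^+$-BBM that coincides with $\cX^{\twhite-\tred}$ on $G_1\cap G_2$, so Lemma~\ref{lem:N-N+couping} applies either way; the paper's version just sidesteps the stopping-line verification you flagged as the obstacle. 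For~\eqref{eq:UB:mainineq:enddist} the paper argues exactly as you do (Proposition~\ref{prop:allregimes:UB:muK}, then the deterministic comparison using that $\cX^N_t$ has at most $N$ atoms while $\mu_{\eps,t}$ has more), only more tersely. One small phrasing point: Proposition~\ref{prop:allregimes:UB:muK} gives a bound for each fixed $s$, not simultaneously for all $s$; since the target statement is also an $\inf_t$ of per-$t$ probabilities, this is exactly what is needed, but your ``for all $s$'' should read ``for each fixed $s$''.
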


Let us mention that the proof of Theorem~\ref{thm:main} only requires~\eqref{eq:UB:mainineq}; however~\eqref{eq:UB:mainineq:enddist} is obtained with the same method and is needed to prove Proposition~\ref{prop:endtimedistribution} in Section~\ref{sec:compl}.

\begin{proof}[Proof of Lemma~\ref{lem:main:UB:coupling}]
	Let us start with~\eqref{eq:UB:mainineq}. Denote by $(\cX_{t}^{N+})_{t\geq0}$ the point process of a BBM with the following selection mechanism: a particle is killed whenever it satisfies simultaneously $(i)$ there are $N$ other particles above it, and $(ii)$ it is below the barrier $\gga_T^\anyrg$ \emph{or} it is below $\gga_T^\shifted$ and has an ancestor which is above $\ol\gga_T^\anyrg$. Recalling Definition~\ref{def:N-N+BBM}, this process is an $N^+$-BBM.
	
Recall from~(\ref{eq:defNwhite}--\ref{eq:defNred}) and~\eqref{eq:defXwhitered} the definition of the multi-type process $(\cX_t^{\twhite-\tred})_{t\geq0}$ containing both ``white'' and ``red'' particles. 
We claim that one has for some $c_1,c_2>0$, 
\begin{equation}\label{eq:lem:main:UB:coupling}
\bbP_{\mueps}\big(\max(\cX^{N+}_{t})> \ol\gga^\shifted_T(t)\big)\,\leq\, \bbP_{\mueps}\big(\max(\cX_{t}^{\twhite-\tred})> \ol\gga^\shifted_T(t)\big) + 2c_1N^{-c_2}\,=\,2c_1N^{-c_2}\,,
\end{equation}
for $T$ sufficiently large, uniformly in $t\leq t^\anyrg_T$. Indeed, Proposition~\ref{prop:allregimes:UB:olgga} and Markov's inequality imply that, as $T\to+\infty$,
\begin{equation*}
\bbP_{\mueps}\Big(\exists s\in[0,t^\anyrg_T],\,\exists u\in\cN^\twhite_{s}\cup \cN^\tred_{s}: X_u(s)=\ol\gga^\shifted_T(s) \Big) \;\leq\; c_1N^{-c_2}\,,
\end{equation*}
for some $c_1,c_2>0$, uniformly in $\gs\in\gsensrg$ and locally uniformly in $\eps\in(0,1)$. Furthermore, Proposition~\ref{prop:allregimes:UB:npart} implies that, with probability larger than $1-c_1N^{-c_2}$, there are $N$ particles above $\gga^\shifted_T(\cdot)$ at all time $t\leq t^\anyrg_T$. Therefore, with probability larger than $1-2c_1N^{c_2}$, the two processes $(\cX_{t}^{N+})_{t\geq0}$ and $(\cX_t^{\twhite-\tred})_{t\geq0}$ constructed by applying a selection mechanism to a BBM have the exact same trajectory, yielding~\eqref{eq:lem:main:UB:coupling}. Finally,~\eqref{eq:UB:mainineq} follows directly from the coupling between an $N^+$-BBM and an $N$-BBM given in Lemma~\ref{lem:N-N+couping}.
	
	Regarding~\eqref{eq:UB:mainineq:enddist}, the same coupling argument and Proposition~\ref{prop:allregimes:UB:muK} yield that
	\begin{equation*}
		\inf_{s\in[\frac12 t^\anyrg_T,t^\anyrg_T]}
		\bbP_{\mueps} \left(\cX_{s}^{N}\,\prec\, N^{\frac52\eps}\mu_{\eps,s}^{(\ol\gga^\anyrg_T(s))}\right)\,\geq\, 1- 3c_1 N^{-c_2}\,,
	\end{equation*}
	for large $T$, uniformly in $\gs\in\gsensrg$ and locally uniformly in $\eps\in(0,1)$. Moreover, $\cX_{s}^{N}$ contains at most $N$ particles by definition, whereas $\mu_{\eps,s}$ contains strictly more. Hence, recalling the definition of $\mu_{\eps,s}$~\eqref{eq:defmuK} and shifting it upward by $3\eps\gs(s/T)L(T)$, this finally implies~\eqref{eq:UB:mainineq:enddist}.
\end{proof}

\begin{proof}[Proof of Proposition~\ref{prop:main:UB}, super-critical and critical regimes]
	The result follows directly from Lemma~\ref{lem:main:UB:coupling} (more precisely~\eqref{eq:UB:mainineq}) and Lemma~\ref{lem:comparison:m:gga} in the super-critical and critical regimes. Indeed, recall~(\ref{eq:UB:parameters}--\ref{eq:def:ggashifted}) and that $t^\anyrg_T=T$ in those regimes (see~\eqref{eq:defta}). Recalling the notation $\ol\gga^{\anyrg,h,x}_T$, $h>x>0$, one has
	\[
	\ol\gga^{\shifted}_T(T)\,=\, \ol\gga^{\anyrg,1+\eps,1+\frac{2\eps}3}_T(T) + \frac\eps3\gs(1)L(T)\,.
	\]
	Letting $\eps$ arbitrarily small and applying Lemmata~\ref{lem:main:UB:coupling} and~\ref{lem:comparison:m:gga}, this implies Proposition~\ref{prop:main:UB} in both regimes.
\end{proof}

We now turn to the sub-critical regime. In a similar manner to Section~\ref{sec:LB:coreprf}, we split the interval $[0,T]$ into blocks of length $\frac12 t^\subcrit_T$: more precisely, let $K:=\lfloor 2T/t^\subcrit_T\rfloor$, and for $0\leq k\leq K-1$, let $t_k:= \frac k2t^\subcrit_T$, and $t_{K}=T$ (so $t_{K}-t_{K-1}\in[\frac12t^\subcrit_T,t^\subcrit_T]$). 
However, in contrast to Section~\ref{sec:LB:coreprf}, a first moment method is sufficient to prove Proposition~\ref{prop:main:UB} (no second moment estimate is required), and this proof holds throughout the sub-critical regime (so there is no need to handle the super-polynomial case separately).

\begin{proof}[Proof of Proposition~\ref{prop:main:UB}, sub-critical regime]
	We define an auxiliary process $(\hat\cX^N_t)_{t\in[0,T]}$ as follows: it starts from $N\gd_0$ and evolves as the process $\cX^N$ between times $t_k$ and $t_{k+1}$. Then at each time $t_k$, all particles are displaced to the highest among their positions: in other words, a configuration $\mu$ is replaced with $N\gd_{\max(\mu)}$ (notice that $\hat\cX^N$ always contains exactly $N$ particles). By a coupling argument (recall Corollary~\ref{cor:coupling:N}) and an induction, one may construct a coupling such that $\cX^N_T\prec\hat\cX^N_T$ with probability 1. In particular, it is sufficient to prove the proposition with $\max(\hat\cX^N_T)$ instead of $\max(\cX^N_T)$.
	
	For $1\leq k\leq K$, let us define
	\[
	Y_k\,:=\, \max(\hat\cX^N_{t_k}) - \max(\hat\cX^N_{t_{k-1}})\,,
	\]
	so that $\max(\hat\cX^N_T)=Y_1+\cdots+Y_K$. Let us prove that, for $1\leq k\leq K$, one has
	\begin{equation}\label{Y_k_expectation}
		\bbE[Y_k]\,\leq\, \ol\gga^\subcrit_T(t_k)-\ol\gga^\subcrit_T(t_{k-1})+c_1 L(T)\,,
	\end{equation}
	for some $c_1>0$, and we claim that the Proposition~\ref{prop:main:UB} follows. Indeed, this implies
	\[
	\bbE[\max(\hat\cX^N_T)]\,\leq\, \ol\gga^\subcrit_T(T)+c_1 L(T)\times K\,.
	\]
	Recalling that $K\leq 2T/t^\subcrit_T$ and that $t^\subcrit_T\gg L(T)^3$ (see~\eqref{eq:defta:gthapprox}), Markov's inequality yields that
	\[
	\max(\hat\cX^N_T)\,\leq\, \ol\gga^\subcrit_T(T)+ o_{\bbP}\left(\frac T{L(T)^2}\right)\,.
	\]
	Recall~\eqref{def:gga:subcrit} and that $\max(\hat\cX^N_T)$ dominates stochastically $\max(\cX^N_T)$: hence, letting $\eps\to0$ and applying Lemma~\ref{lem:comparison:m:gga}, this finally yields~\eqref{eq:prop:main:UB}.
	
	Let us prove~\eqref{Y_k_expectation}. Notice that the process $\hat\cX^N$ evolves between times $t_k$ and $t_{k+1}$ as the $N$-BBM with variance profile $\gs(t_k/T+\cdot)$, and that $\hat\cX^N_{t_k}=N\gd_{\max(\hat\cX^N_{t_k})}$ for all $0\leq k<K$. Thus it is enough to show that
	\begin{equation}\label{eq:lem:UB:smallL:1}
		\bbE_{N\gd_0}[\max\cX^N_t]\,\leq\, \bbE_{\mueps}[\max\cX^N_t]+\gh^{-1}L(T)\,\leq\, \ol\gga^\subcrit_T(t)\,+\, c_1 L(T)\,,
	\end{equation}
	for some $c_1>0$ uniform in $\gs\in\gsens$ and $t\in[\frac12 t^\subcrit_T,t^\subcrit_T]$. The first inequality in~\eqref{eq:lem:UB:smallL:1} is obtained by writing $\mueps\succ N\gd_{-\gh^{-1}L(T)}$ and translating the $N$-BBM by $\gh^{-1}L(T)$, so we only have to prove the second one. Recall from Proposition~\ref{prop:coupling:BBM}.$(i)$ that there exists a coupling between $\cX^N$ and a BBM $(\cX_t)_{t\in[0,T]}$ without selection, such that $\cX^N_t\subset\cX_t$ a.s. for all $t\in[0,T]$. Thus for $t\in[\frac12 t^\subcrit_T,t^\subcrit_T]$, one has
	\begin{equation}\label{eq:lem:UB:smallL:2}
		\bbE_{\mueps}\big[\max\cX^N_t\big]\,\leq\, \bbE_{\mueps}\big[\max\cX^N_t\,\ind_{\{\max \cX^N_t\leq \ol\gga^\shifted_T(t)\}}\big] \,+\, \bbE_{\mueps}\big[ \max\cX_t\, \ind_{\{\max \cX^N_t> \ol\gga^\shifted_T(t)\}} \big]\,.
	\end{equation}
	The first term from~\eqref{eq:lem:UB:smallL:2} is clearly bounded by $\ol\gga^\shifted_T(t)\leq \ol\gga^\subcrit_T(t)+\gh^{-1}L(T)$, so it remains to bound the second term. Let $C>0$ a large constant: then one has
	\begin{align*}
	&\bbE_{\mueps}\big[ \max\cX_t\, \ind_{\{\max \cX^N_t> \ol\gga^\shifted_T(t)\}} \big] \\
	&\quad \leq\, C (t^\subcrit_T)^2 \,\bbP_{\mueps}\big(\max \cX^N_t> \ol\gga^\shifted_T(t)\big) \,+\, \bbE_{\mueps}\big[ \max\cX_t, \ind_{\{\max \cX_t> C (t^\subcrit_T)^2\}} \big].
	\end{align*}
	Recalling Lemma~\ref{lem:main:UB:coupling}, more precisely~\eqref{eq:UB:mainineq}, the first term in the r.h.s. above is bounded by $C(t^\subcrit_T)^2 \times c_1N^{-c_2}$ uniformly in $t\in[\frac12 t^\subcrit_T,t^\subcrit_T]$. Moreover,~\eqref{eq:defta} implies that $t^\subcrit_T\leq L(T)^4$, so this term vanishes when $T$ becomes large.
	
	Finally, let us prove that $\bbE_{\mueps}[\max\cX_t\, \ind_{\{\max \cX_t> C (t^\subcrit_T)^2\}}]$ also vanishes as $T\to+\infty$, uniformly in $t\in[\frac12 t^\subcrit_T,t^\subcrit_T]$ and locally uniformly in $\eps$. Recalling Proposition~\ref{prop:coupling:BBM}.$(ii)$, it is sufficient to prove this for a BBM starting from the configuration $N^2\gd_0\succ \mueps$. Moreover, we claim that for any $M\geq2$, $A\geq1$, and $(g_i)_{1\leq i\leq M}$ a centered Gaussian vector such that each $g_i$ has variance $\rho^2>0$, one has
	\begin{equation}
		\bE\big[\max(g_i,1\leq i\leq M)\,\ind_{\{\max(g_i,1\leq i\leq M)\geq A\}} \big]\,\leq\, \frac{M\rho}{\sqrt{2\pi}}  \,\big(1+\rho^2/A^{2}\big) \, e^{-A^2/2\rho^2}\,.
	\end{equation}
	This result is standard, and can be proven similarly to Lemma~\ref{lem:gaussmax} by writing for $A\geq0$ and any real random variable $Y$,
	\[
	\bE[Y\,\ind_{\{Y\geq A\}}]\,=\, A \,\bP(Y\geq A) + \int_A^{+\infty}\bP(Y\geq x)\,\dd x\,.
	\]
	For the sake of conciseness we leave the details to the reader. Therefore, conditioning $\cX$ with respect to its branching epochs and letting $Z_t$ denote its population size at time $t\in[\frac12 t^\subcrit_T,t^\subcrit_T]$, we obtain
	\begin{align*}
		\bbE_{\mueps}\big[\max\cX_t\, \ind_{\{\max \cX_t> C (t^\subcrit_T)^2\}}\big]\,&\leq\, 
		\bbE_{N^2\gd_0}\big[\max\cX_t\, \ind_{\{\max \cX_t> C (t^\subcrit_T)^2\}}\big]\\&\leq\,  c_1\times\bbE_{N^2\gd_0}\big[Z_t\big] \times t^\subcrit_T e^{-c_2 (t^\subcrit_T)^2}\,,
	\end{align*}
	for some $c_1,c_2>0$, where the first inequality follows from Proposition~\ref{prop:coupling:BBM}.(ii). Moreover, one has $\bbE_{N^2\gd_0}[Z_t] = N^2 e^{t/2} \leq N^2 e^{t^\subcrit_T}$, so this concludes the proof of~\eqref{eq:lem:UB:smallL:1}.
\end{proof}

\section{Proofs of complementary results}\label{sec:compl}
In this section we complete the proofs of all remaining statements from Section~\ref{sec:intro}, by showing Propositions~\ref{prop:main:gsdecreasing},~\ref{prop:endtimedistribution} and~\ref{prop:main:detbranching} (the latter and~\eqref{eq:NBBM-NCREMidentity} implying Theorem~\ref{thm:crem}). Moreover, in Theorem~\ref{thm:inhselec} below we present an extension of our results to the $N$-BBM with time-inhomogeneous selection. All of these are either obtained through refinements of arguments from the proof of Theorem~\ref{thm:main} presented before; or they are direct applications of Theorem~\ref{thm:main}, coupling propositions or other results from previous sections. Therefore, let us warn the reader that most of the upcoming proofs are not presented in full details. Indeed, the authors believe that understanding the pivotal arguments from the proof of Theorem~\ref{thm:main}, specifically Corollary~\ref{cor:coupling:N} and the main ideas from Sections~\ref{sec:LB}--\ref{sec:UB}, is vital before moving on to other results. Hence, the focus of this section is put on additional, new arguments which are to be combined with those presented before.

\subsection{Super-critical \texorpdfstring{$L(T)$}{L(T)}, decreasing variance (Proposition~\ref{prop:main:gsdecreasing})}\label{sec:compl:gsdecreasing}
Let $\gs\in\cC^2([0,1])$ be strictly decreasing, let $L(T)=\log N(T) \gg T^{1/3}$ (so the regime is super-critical), and consider the initial configuration $\gd_0$. In~\cite{MZ16}, the authors study the speed of a time-inhomogeneous BBM $(\cX_t)_{t\in[0,T]}$, without selection and with strictly decreasing variance. Recall that $\mathrm{a}_1$ denotes the absolute value of the largest zero of $\Ai$. Starting from a single particle in 0 (so $\shft(\gd_0)=0$), they prove in~\cite[Theorem~1.1]{MZ16} that, under those assumptions,
\begin{equation}\label{eq:MZ16}
	\left(\max(\cX_T)-v(1)T+\frac{\mathrm{a}_1}{2^{1/3}}T^{1/3}\int_0^1\gs(u)^{1/3}|\gs'(u)|^{2/3}\,\dd u +\gs(1)\log(T) \right)_{T\geq0} \quad\text{is tight.} 
\end{equation}

Thus, Proposition~\ref{prop:main:gsdecreasing} can be deduced from the couplings from Corollary~\ref{cor:coupling:N} and Proposition~\ref{prop:coupling:BBM}, by comparing the super-critical regime of the $N$-BBM with the critical regime on the one hand, and with a BBM without selection on the other hand. For $\ga\in\R$, recall the definitions of $m^\crit_T=m^\crit_T(\ga)$ and $m^\supdecr_T$ from~\eqref{eq:defmanyrg}. Then, recalling the asymptotic properties of $\Psi(\cdot)$ (see~\eqref{def:Psi}) and that $\gs'(\cdot)$ is negative, one notices that,
\[
\lim_{\ga\to+\infty}\,\limsup_{T\to+\infty}\, T^{-1/3} \left|m^\crit_T(\ga) - m^\supdecr_T \right|\;=\; 0\,.
\]
Fix $\alpha$ and define $M_\ga(T):=\exp(\ga T^{1/3})$: for $T$ sufficiently large, one has $M_\ga(T)\leq N(T)$. By Corollary~\ref{cor:coupling:N} and Proposition~\ref{prop:coupling:BBM}, for $T$ large, one can construct two couplings with an $M_\ga(T)$-BBM and a BBM without selection (all started from $\gd_0$), such that
\[
\cX_s^{M_\ga(T)}\;\prec\; \cX_s^{N(T)}\;\prec\; \cX_s\;,\qquad\forall\,s\in[0,T]\,.
\]
Recall that, for $\gl>0$, Proposition~\ref{prop:main:LB} yields
\[
\lim_{T\to+\infty}\,\bbP_{\gd_{0}}\left(T^{-1/3}\left(\max\big(\cX_T^{M_\ga(T)}\big) - m^\crit_T(\ga)\right)\leq -\gl \right) \;=\; 0\,.
\]
Moreover, the statement~\eqref{eq:MZ16} directly implies,
\begin{equation}\label{eq:gsdecreasing:upperbound}
	\lim_{T\to+\infty}\,\bbP_{\gd_{0}}\left(T^{-1/3}\big(\max(\cX_T) - m^\supdecr_T\big)\geq \gl \right) \;=\; 0\,.
\end{equation}
Letting $\ga$ be very large (depending on $\gl$) and combining these estimates with the coupling above, we conclude that $T^{-1/3}\big(\max(\cX^{N(T)}_T) - m^\supdecr_T \big)$ converges to 0 in $\bbP_{\gd_0}$-probability as $T\to+\infty$.
\qed

\begin{remark}\label{rem:decreas}
	Let us mention that~\cite{MZ16} makes strong assumptions ($\gs$ strictly decreasing and initial configuration $\gd_0$) in order to obtain a sharp result in~\eqref{eq:MZ16}. If the convergence~\eqref{eq:gsdecreasing:upperbound} were to be proven with $\gs$ non-increasing and a generic initial configuration, then Proposition~\ref{prop:main:gsdecreasing} and its proof would immediately extend to this more general setting. This is discussed again in Remark~\ref{rem:decreas:detbranching} below.
\end{remark}

\subsection{Final-time distribution for the critical and super-critical regimes (Proposition~\ref{prop:endtimedistribution})}\label{sec:compl:enddist}
Let $\anyrg\in\{\crit,\supercrit\}$ (in particular $t^\anyrg_T=T$) and $\gh>0$ throughout this section. We first prove~\eqref{eq:prop:endtimedist:1}, then we use it to deduce~\eqref{eq:prop:endtimedist:2}.

Let $\gl>0$, and recall from Section~\ref{sec:coupling:mainprops} how a coupling argument yields Theorem~\ref{thm:main} subject to Propositions~\ref{prop:main:LB},~\ref{prop:main:UB}. 
The same coupling method can be used to deduce~\eqref{eq:prop:endtimedist:1} from the following lemma. 
\begin{lemma}\label{lem:enddist}
	Let $\anyrg\in\{\crit,\supercrit\}$, and $\gl, \gh>0$. Then, one has
	\begin{align}\label{eq:lem:enddist:UB}
		&\lim_{\eps\to0}\,\limsup_{T\to+\infty} \,\sup_{\gs\in\gsensrg} \,\bbP_{\mu_\eps}\bigg(\exists\,y\in[0,1]: \quad \\
		\notag &\qquad\qquad\qquad \cX_T^{N(T)}\big(\big[\shft(\mu_T)+m^\anyrg_T-y\gs(1)L(T),+\infty\big)\big) \geq N(T)^{y+\gl} \bigg) = 0,
	\end{align}
	and, for any fixed $r\in(0,1)$,
	\begin{align}\label{eq:lem:enddist:LB}
		&\lim_{T\to+\infty} \sup_{\gk\in[0,1]} \sup_{\gs\in\gsensrg} \bbP_{N^{\gk}\gd_{-\gk\gs(0)L(T)}}\bigg(\exists y\in[r,1]: \quad \\
		\notag &\qquad\qquad\qquad \cX_T^{N(T)}\big(\big[\shft(\mu_T)+m^\anyrg_T-y\gs(1)L(T),+\infty\big)\big) \leq N(T)^{y-\gl} \bigg) = 0.
	\end{align}
\end{lemma}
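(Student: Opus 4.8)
\textbf{Proof plan for Lemma~\ref{lem:enddist}.}

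The plan is to mirror the structure of the proofs of Propositions~\ref{prop:main:LB} and~\ref{prop:main:UB}, exploiting the fact that in the critical and super-critical regimes the relevant time horizon is the full $t^\anyrg_T = T$, so all the moment estimates from Section~\ref{sec:moments} hold directly up to time $T$ without the block decomposition needed in the sub-critical case. Recall that by Lemma~\ref{lem:comparison:m:gga} we may, after letting the barrier parameters $(h,x)\to(1,1)$, replace $m^\anyrg_T$ by $\ol\gga^{\anyrg,h,x}_T(T)$ up to a $o(b^\anyrg_T)$ error, and since $L(T) = O(b^\anyrg_T)$ in these two regimes, the shifts by $y\gs(1)L(T)$ in the statement are compatible with the barrier picture: the level $\ol\gga^\anyrg_T(T) - y\gs(1)L(T)$ corresponds to the rescaled height $h - y$ between the barriers (for $y \le h$), i.e.\ the set $A^\anyrg_{T,[h-y,h]}(T)$ up to negligible corrections. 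Thus the statement is essentially about the size of $|A^\anyrg_{T,[h-y,h]}(T)|$ for all $y \in [0,1]$ simultaneously.

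For the \emph{upper bound}~\eqref{eq:lem:enddist:UB}: first, by the coupling of Lemma~\ref{lem:main:UB:coupling} and Proposition~\ref{prop:allregimes:UB:npart}, with probability $1 - c_1 N^{-c_2}$ the $N$-BBM started from $\mu_\eps$ is below the red-barrier multi-type process on $[0,T]$; so it suffices to control, for all $y$ at once, the number of white-or-red particles at time $T$ above the level $\ol\gga^\anyrg_T(T) - y\gs(1)L(T)$ (recall the set $\cX_T^{\twhite-\tred}$ of~\eqref{eq:defXwhitered}). The white part is handled by Markov's inequality applied to $\bbE_{\mu_\eps}[|A^\anyrg_{T,[h-y-O(\eps),h]}(T)|]$, which by~\eqref{eq:UB:allmoments:1stUB} summed over the atoms of $\mu_\eps$ is $N^{y + O(\eps) + o(1)}$; the red part is controlled by combining Claim~\ref{claim:allregimes:UB:olgga:white} with~\eqref{eq:prop:allregimes:UB:muK:red} via the strong branching property exactly as in Proposition~\ref{prop:allregimes:UB:muK}, giving a red-particle count above that level which is $N^{y + \frac\eps6 + o(1)}$. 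To make the estimate uniform in $y \in [0,1]$, I would first establish it on a finite $\eps$-net $\{y_j = j\eps : 0\le j \le \eps^{-1}\}$ by a union bound over the $O(\eps^{-1})$ net points, then extend to all $y$ by monotonicity: for $y \in [y_j, y_{j+1}]$ the count above level $\ol\gga^\anyrg_T(T) - y\gs(1)L(T)$ is at most the count above $\ol\gga^\anyrg_T(T) - y_{j+1}\gs(1)L(T)$, which is $\le N^{y_{j+1} + O(\eps) + o(1)} \le N^{y + O(\eps) + o(1)}$. Choosing $\eps$ small after $T \to \infty$ absorbs the $O(\eps)$ into the $\gl$ and finishes.

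For the \emph{lower bound}~\eqref{eq:lem:enddist:LB}: here I use the $N^-$-coupling of Lemma~\ref{lem:main:LB:coupling}, so that the $N$-BBM started from $N^\gk\gd_0$ dominates the BBM killed at the barriers $\gga^\anyrg_T, \ol\gga^\anyrg_T$ on $[0,T]$, with the barrier picture having fewer than $N$ particles throughout with high probability (Proposition~\ref{prop:allregimes:LB:npart}). It then suffices to bound from below $|A^\anyrg_{T,[h-y,h]}(T)|$; but this is exactly the content of Proposition~\ref{prop:allregimes:LB:endpoint} (with $z = h - y - \eps$ there, noting $t = t^\anyrg_T = T \gg L(T)$), which gives $|A^\anyrg_{T,[z,z+\eps]}(T)| \ge N^{1-\eps-z} = N^{y + O(\eps)}$ with probability $1 - c_1 N^{-c_2}$. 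A union bound over the $\eps$-net $\{y_j\}$ with $y_j \ge r$ (so that $z = h - y_j - \eps$ stays in a compact subset of $[0,1)$ where the constants $c_1, c_2$ are uniform — this is why the restriction $y \ge r$ is needed), followed by monotonicity to fill in $y \in [y_j, y_{j+1}]$ (the count above level $-y\gs(1)L(T)$ dominates the count above level $-y_j\gs(1)L(T)$), and finally letting $\eps \to 0$, yields the claim; the uniformity in $\gk \in [0,1]$ is inherited from Proposition~\ref{prop:allregimes:LB:endpoint} after the usual shift, plus the small-$\gk$ and large-$\gk$ reductions already carried out in the proof of Proposition~\ref{prop:main:LB}.

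\textbf{Deducing Proposition~\ref{prop:endtimedistribution}.} Given Lemma~\ref{lem:enddist}, the coupling argument of Section~\ref{sec:coupling:mainprops} (sandwiching an arbitrary $\mu_T \in \meas_{N(T)}$ between $N^\gk\gd_{-\gk\gs(0)L(T)}$ from below and $\mu_\eps(\cdot - 2\eps\gs(0)L(T))$ from above, using Corollary~\ref{cor:coupling:N}) transfers both bounds to general initial data, giving~\eqref{eq:prop:endtimedist:1}: for $y$ bounded away from $0$ the two-sided bound $N^{y-\gl} \le \cX_T^{N}([\cdots,\infty)) \le N^{y+\gl}$ holds with high probability, hence $\log_+ \cX_T^{N}([\cdots,\infty))/L(T) \to y$ uniformly; for $y$ near $0$ the upper bound alone plus the trivial lower bound $\log_+(\cdot) \ge 0$ suffices since $y$ itself is near $0$. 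For~\eqref{eq:prop:endtimedist:2}, the maximum is $\shft(\mu_T) + m^\anyrg_T + o_\bbP(b^\anyrg_T)$ by Theorem~\ref{thm:main}; for the minimum, note that by~\eqref{eq:prop:endtimedist:1} taken at $y$ slightly less than $1$ there are at least $N^{1-\gl}$ particles above $\shft(\mu_T) + m^\anyrg_T - \gs(1)L(T) + o_\bbP(L(T))$, so since the total population is at most $N$ the minimum cannot be much below that level; and the upper-bound half of~\eqref{eq:prop:endtimedist:1} at $y$ slightly more than $1$ shows there are strictly fewer than $N$ particles above $\shft(\mu_T) + m^\anyrg_T - \gs(1)L(T) - o_\bbP(L(T))$, whence the minimum lies below it. Combining, $\max - \min = \gs(1)L(T) + o_\bbP(L(T))$.

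\textbf{Expected main obstacle.} The one genuinely delicate point is the uniformity in $y$ over the \emph{whole} interval $[0,1]$ for the upper bound~\eqref{eq:lem:enddist:UB}, and in particular making sure the red-particle contribution does not spoil the estimate near $y = 1$ (where the target count $N^{1+\gl}$ is largest and the red particles live between the shifted barriers $\gga^\shifted_T, \ol\gga^\shifted_T$): one must check that~\eqref{eq:prop:allregimes:UB:muK:red} combined with the strong branching property and Claim~\ref{claim:allregimes:UB:olgga:white} indeed keeps the red count at level $h - y$ below $N^{y + O(\eps)}$ uniformly, i.e.\ that the extra $N^{\eps/6}$ factor from the number of red ancestors is harmless. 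The monotonicity-plus-net reduction makes this manageable, but the bookkeeping of the $\eps$-dependence (ensuring every $O(\eps)$ error is genuinely $o(1)$ as $\eps \to 0$ after $T \to \infty$, independently of $y$ and $\gs \in \gsensrg$) is where care is required; the sub-critical regime is deliberately excluded here precisely because there the time horizon $t^\subcrit_T$ is shorter than $T$ and the block reconstruction would destroy the clean relation between the final configuration and a single barrier computation.
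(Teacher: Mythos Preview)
Your plan for Lemma~\ref{lem:enddist} is essentially correct and follows the same route as the paper. For~\eqref{eq:lem:enddist:UB} the paper is slightly more economical: rather than redoing the white--red analysis, it directly invokes~\eqref{eq:UB:mainineq:enddist} from Lemma~\ref{lem:main:UB:coupling}, which already gives the stochastic domination $\cX_T^{N}\prec \mu_{\eps,T}^{(\ol\gga^\anyrg_T(T)+3\eps\gs(1)L(T))}$; since $\mu_{\eps,T}$ is an explicit discrete measure, one then just computes its mass above each level and checks that the ``bad'' set of $y$'s is empty once $\eps$ is small. Your Markov-plus-strong-branching argument reproduces the proof of Proposition~\ref{prop:allregimes:UB:muK} and would work, but the $N^{\eps/6}$ bookkeeping you flag as the main obstacle is already absorbed there. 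For~\eqref{eq:lem:enddist:LB} your plan (Proposition~\ref{prop:allregimes:LB:endpoint} over an $\eps$-net, monotonicity, then the small/large-$\gk$ reductions from Section~\ref{sec:LB:coreprf}) is exactly what the paper does.

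One genuine gap, though it lies outside Lemma~\ref{lem:enddist} proper: your sketch for~\eqref{eq:prop:endtimedist:2} does not establish the \emph{upper} bound on the diameter. The sentence ``at least $N^{1-\gl}$ particles above $\ldots$, so since the total population is at most $N$ the minimum cannot be much below that level'' is a non sequitur --- knowing many particles are high says nothing about how low the lowest one sits. The paper's argument for $\min(\cX_T^{N(T)})\ge m^\anyrg_T-(1+o(1))\gs(1)L(T)$ is substantially more delicate: it applies the Markov property at time $T-\gd^3L(T)$, uses~\eqref{eq:lem:enddist:LB} to localize the configuration there, passes to the $N$-quantile via Corollary~\ref{corol:quantile:NBBM}, then couples with a BBM \emph{without} selection, uses symmetry to turn the minimum into a maximum, and finishes with a Gaussian tail bound over the short remaining time $\gd^3L(T)$. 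Your approach to the lower bound on the diameter is fine.
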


\begin{proof}[Proof of~\eqref{eq:prop:endtimedist:1} subject to Lemma~\ref{lem:enddist}]
	Using arguments very similar to the ones from the proof of Theorem~\ref{thm:main} in Section~\ref{sec:coupling:mainprops}, we obtain from Lemma~\ref{lem:enddist} that for every fixed $r\in (0,1)$, as $T\to\infty$,
	\begin{equation}
		\label{eq:prop:endtimedist:1:proof1}
		\sup_{y\in[r,1]}\left|\,\frac{\log\cX_T^{N(T)}([\shft(\mu_T)+m_T^\anyrg - y\gs(1)L(T),+\infty)) }{L(T)}\,-\,y\,\right|\;\longrightarrow\;0\,,
	\end{equation}
	in $\bbP_{\mu_T}$-probability.
	In particular, we have $\cX_T^{N(T)}([\shft(\mu_T)+ m_T^\anyrg - r\gs(1)L(T),+\infty)) \ge 1$ with high probability, so that we can replace $\log$ by $\log_+$. It remains to consider $y\in [0,r]$. We have
	\begin{align*}
		&\sup_{y\in[0,r]}\left|\,\frac{\log_+\cX_T^{N(T)}([\shft(\mu_T)+m_T^\anyrg - y\gs(1)L(T),+\infty)) }{L(T)}\,-\,y\,\right|\\
		&\qquad\le\, \frac{\log_+\cX_T^{N(T)}([ \shft(\mu_T)+m_T^\anyrg - r\gs(1)L(T),+\infty))}{L(T)} + r,
	\end{align*}
	so that \eqref{eq:prop:endtimedist:1:proof1} implies that the r.h.s. converges to $2r$ in $\bbP_{\mu_T}$-probability as $T\to+\infty$. Letting $r\to 0$, we obtain the result.
\end{proof}

\begin{proof}[Proof of Lemma~\ref{lem:enddist}]
We can assume $\shft(\mu_T)=0$ without loss of generality (recall~\eqref{eq:shft:invariance}).
	We first prove~\eqref{eq:lem:enddist:UB}, i.e. the upper bound on the final-time distribution of the process. Let $\eps\in(0,1)$, $\gl>0$ and recall the definition of $\mu_{\eps,s}$, $s\in[0,T]$ from~\eqref{eq:defmuK}, and of $m^\anyrg_T$ from~\eqref{eq:defmanyrg}. By Lemma~\ref{lem:comparison:m:gga}, one has for $\eps$ sufficiently small and $T$ large that,
	\begin{equation}\label{eq:prfenddist:2}
		\frac1{L(T)} \big| \ol\gga^{\anyrg,1+\eps,1+\frac23\eps}_T(T) - m^\anyrg_T \big|\,\leq\, \frac\gl2\,,
	\end{equation}
	where we wrote the parameters of the barrier $\ol\gga^\anyrg_T$ explicitly. 
	Then, recall that by equation~\eqref{eq:UB:mainineq:enddist} from Lemma~\ref{lem:main:UB:coupling}, we have for $T$ sufficiently large,
	\begin{equation*}
		\inf_{\gs\in\gsensrg}
		\bbP_{\mueps} \left(\cX_{T}^{N}\,\prec\, \mu_{\eps,T}^{(\ol\gga^\anyrg_T(T)+3\eps\gs(1)L(T))}\right)\,\geq\, 1- c_1 N^{-c_2}\,,
	\end{equation*}
	where $c_1$, $c_2$ are constants depending locally uniformly on $\eps$; and $\ol\gga^\anyrg_T(\cdot)$ is defined in~\eqref{def:gga:supercrit} and~\eqref{def:gga:crit} (with parameters $h=1+\eps$, $x=1+\frac23\eps$). In particular, defining
	\[
	I_{T}^{\eps,\gl,\gs}\,\coloneqq\, \left\{ y\in[0,1],\, \mu_{\eps,T}^{(\ol\gga^\anyrg_T(T)+3\eps\gs(1)L(T))}\big(\big[m^\anyrg_T-y\gs(1)L(T),+\infty\big)\big)\geq N(T)^{y+\gl} \right\}\,,
	\]
	we deduce that,
	\begin{align}\label{eq:prfenddist:1}
		&\sup_{\gs\in\gsensrg}\bbP_{\mueps} \left(\exists y\in[0,1]: \cX_{T}^{N}\big(\big[m^\anyrg_T-y\gs(1)L(T),+\infty\big)\big)\geq N(T)^{y+\gl}\right)\\
		\notag &\quad \leq\; \ind_{\{\exists \gs\in\gsensrg\,;\,I_{T}^{\eps,\gl,\gs}\neq\emptyset\}} + c_1 N^{-c_2}\,,
	\end{align}
	Finally, for $y\in[0,1]$ and $\gs\in\gsensrg$, one has
	\begin{align*}
		&\mu_{\eps,s}^{(\ol\gga^\anyrg_T(s)+3\eps\gs(s/T)L(T))}\big( \big[m^\anyrg_T-y\gs(1)L(T),+\infty\big) \big)\\
		&\quad=\, \sum_{k=0}^{\eps^{-1}} N^{k\eps+\frac\eps2} \,\ind_{\{ \ol\gga^\anyrg_T(s)+3\eps\gs(s/T)L(T) -k\eps\gs(1)L(T) \,\geq\, m^\anyrg_T - y\gs(1)L(T) \}}\\
		&\quad\leq\, \sum_{k=0}^{y\eps^{-1}+3\gh^{-2}+\eps^{-1}\frac\gl2} N^{k\eps+\frac\eps2}\;\leq\; (\eps^{-1}(1+\gl/2)+3\gh^{-2}) N^{y+4\gh^{-2}\eps+\frac\gl2}\,,
	\end{align*}
	where we used~\eqref{eq:prfenddist:2}. Assuming $\eps$ is sufficiently small above (depending on $\gl$ and $\gh$), this implies that $I_{T}^{\eps,\gl,\gs}$ is empty for $T$ large enough uniformly in $\gs\in\gsensrg$; 
	this completes the proof of~\eqref{eq:lem:enddist:UB}. 
	\medskip
	
	We now turn to~\eqref{eq:lem:enddist:LB}. Quite similarly to Proposition~\ref{prop:main:LB}, let us first prove that the convergence holds locally uniformly in $\gk\in(0,1)$, then extend it to $\gk\in[0,1]$ via coupling arguments (recall Lemma~\ref{lem:main:LB} from Section~\ref{sec:LB:coreprf}). Let $\eps\in(0,1)$, $h=1-\eps$, $x=(1-\eps)(1-\gk)$, and recall Proposition~\ref{prop:allregimes:LB:endpoint}. Replicating the coupling arguments from Section~\ref{sec:LB:coreprf}, more precisely~(\ref{eq:LBmainineq:coupl1}--\ref{eq:LBmainineq:coupl2}), one obtains for $1\leq j\leq \eps^{-1}$ and $T$ sufficiently large, 
	\begin{equation*}
		\bbP_{N^{\gk}\gd_{0}}\Big( \cX^N_T\big(\big[\ol\gga^\anyrg_T(T)-j\eps\gs(1)L(T),+\infty\big)\big)\,\geq\, N^{j\eps}  \Big)\,\geq\, 1-c_1N^{-c_2}\,,
	\end{equation*}
	where $c_1,c_2>0$ are constants uniform in $\gs\in\gsensrg$, $1\leq j\leq \eps^{-1}$, and locally uniform in $\eps,\gk\in(0,1)$ (we do not reproduce the details). Writing a union bound, this yields
	\begin{equation}\label{eq:prfenddist:3}
		\bbP_{N^{\gk}\gd_{0}}\Big(\forall 1\leq j\leq\eps^{-1},\, \cX^N_T\big(\big[\ol\gga^\anyrg_T(T)-j\eps\gs(1)L(T),+\infty\big)\big)\,\geq\, N^{j\eps}  \Big)\,\geq\, 1-\eps^{-1}c_1N^{-c_2}\,.
	\end{equation}
	Let $\eps'>0$: recalling~\eqref{eq:defmanyrg} and Lemma~\ref{lem:comparison:m:gga}, one has for $\eps$ sufficiently small and $T$ large that,
	\begin{equation}\label{eq:prfenddist:4}
		\frac1{L(T)} \big| \ol\gga^{\anyrg,1-\eps,(1-\eps)(1-\gk)}_T(T) - \gk\gs(0)L(T) - m^\anyrg_T \big|\,\leq\, \eps'\,,
	\end{equation}
	where we used that $\ol\gga^{\anyrg,1-\eps,(1-\eps)(1-\gk)}_T(T) - \gk\gs(0)L(T) = \ol\gga^{\anyrg,1-\eps,1-\eps(1-\gk)}_T(T)$. 
	Moreover, for $y\in[\eps+\eps'\gh,1]$, there exists $1\leq j_y\leq \eps^{-1}$ such that $(y-\eps'\gh^{-1})\eps^{-1}\in[j_y, j_y+1)$; thus we deduce that,
	\begin{align*}
		&\Big\{\exists y\in[\eps+\eps'\gh^{-1},1]: \cX^N_T\big(\big[m^\anyrg_T-y\gs(1)L(T),+\infty\big)\big)< N^{y-\gl}\Big\}\\
		&\quad\subset \Big\{\exists 1\leq j\leq \eps^{-1}:\cX^N_T\big(\big[\ol\gga^{\anyrg,1-\eps,(1-\eps)(1-\gk)}_T(T)-\gk\gs(0)L(T)-j\eps\gs(1)L(T),+\infty\big)\big)\\
		&\qquad\qquad\qquad\qquad\qquad\qquad\qquad\qquad\qquad\qquad\qquad\qquad\qquad\quad< N^{(j+1)\eps+\eps'\gh^{-1}-\gl}\Big\}.
	\end{align*}
	Assume that $\eps',\eps$ are sufficiently small so that $\eps+\eps'\gh^{-1}\leq \min(r,\gl)$. Then, shifting the initial distribution in~\eqref{eq:prfenddist:3} by $-\gk\gs(0)L(T)$, this implies
	\begin{equation}\label{eq:prfenddist:5}
		\bbP_{N^\gk\gd_{-\gk\gs(0)L(T)}}\Big( \exists \,y\in[r,1]: \cX^N_T\big(\big[m^\anyrg_T-y\gs(1)L(T),+\infty\big)\big)< N^{y-\gl} \Big)\,\leq\, \eps^{-1}c_1N^{-c_2}\,,
	\end{equation}
	uniformly in $\gs\in\gsensrg$ and locally uniformly in $\gk\in(0,1)$, which is the expected result.
	
	To finish the proof of Lemma~\ref{lem:enddist}, it remains to extend~\eqref{eq:prfenddist:5} to $\gk$ close to 0 or 1, which is achieved very similarly to the proof of Proposition~\ref{prop:main:LB} subject to Lemma~\ref{lem:main:LB} in Section~\ref{sec:LB:coreprf}. For the case $\gk$ large, assume without loss of generality that $0<r<\eps<\gl$, and notice that for any $\gk\in[1-\eps\gh^{-2},1]$, one has $N^{1-\eps\gh^{-2}}\gd_{-\gs(0)L(T)} \prec N^{\gk}\gd_{-\gk\gs(0)L(T)}$. Hence, Corollary~\ref{cor:coupling:N} yields,
	\begin{align*}
		&\bbP_{N^\gk\gd_{-\gk\gs(0)L(T)}}\Big( \exists y\in[r,1]:\cX^N_T\big(\big[m^\anyrg_T-y\gs(1)L(T),+\infty\big)\big)\,<\, N^{y-\gl}\Big)\\
		&\leq \bbP_{N^{(1-\eps\gh^{-2})}\gd_{-(1-\eps\gh^{-2})\gs(0)L(T)}}\Big(\exists y\in[r,1]: \\
		&\qquad\qquad\qquad\qquad\qquad\quad \cX^N_T\big(\big[m^\anyrg_T-y\gs(1)L(T)+\eps\gh^{-2}\gs(0)L(T),+\infty\big)\big)\,<\, N^{y-\gl}\Big)\\
		&\leq \bbP_{N^{(1-\eps\gh^{-2})}\gd_{-(1-\eps\gh^{-2})\gs(0)L(T)}}\Big(\exists y\in[r-\eps,1]: \\
		&\qquad\qquad\qquad\qquad\qquad\qquad\qquad\qquad\; \cX^N_T\big(\big[m^\anyrg_T-y\gs(1)L(T),+\infty\big)\big)\,<\, N^{y-(\gl-\eps)}\Big),
	\end{align*}
	where we also shifted the process upward by $\eps\gh^{-2}\gs(0)L(T)$ in the first inequality. Applying~\eqref{eq:prfenddist:5} to the latter (with $\gk=1-\eps\gh^{-2}$), this proves that the convergence also holds uniformly in $\gk$ close to 1.
	
	It only remains to treat the case $\gk$ small. Recall Lemma~\ref{lem:LB:standardresults}, which implies that, for an $N$-BBM started from $\gd_0$, one has $\cX^N_{\eps L(T)}\succ N^{\eps/4}\gd_{-2\eps\gh^{-1}L(T)}$ with $\bbP_{\gd_0}$-probability close to 1. Letting $\gk\in[0,\eps]$, writing $\gd_{-\eps\gs(0)L(T)}\prec N^\gk\gd_{-\gk\gs(0)L(T)}$ and applying the Markov property at time $\eps L(T)$, one can again extend the convergence uniformly to $\gk\in[0,\eps]$ through Corollary~\ref{cor:coupling:N}: since this is a carbon copy of arguments presented in Section~\ref{sec:LB:coreprf}, we leave the details to the reader.
\end{proof}

\begin{remark}\label{rem:enddist:subcrit}
	Let us mention that the proof above can be applied to the sub-critical regime, yielding an estimate on the distribution of the process at any time $t\in[\gth(T)^{-1}L(T),t^\subcrit_T]$. Unfortunately, the error term $o(T/L(T)^2)$ in our maximal displacement result renders this obsolete at time $T$. If one manages to compute sharper estimates on the maximal displacement of the $N$-BBM with sub-critical population (with an error at most $o(L(T))$), the arguments above can be adapted straightforwardly. However, it seems very unlikely that such a sharp limit estimate would be non-random: instead we expect it to be obtained through martingale methods or similar arguments, with a non-trivial dependency on the realization of the process near time $t=0$ and on the ``peaking'' events throughout the trajectory (i.e. when a particle rises and reproduces significantly: see~\cite{Mai16} for a study of those in the case of an $N$-BBM with constant $N$, time-homogeneous variance and close to the equilibrium measure).
\end{remark}

\subsubsection*{Proof of~\eqref{eq:prop:endtimedist:2}}
We can assume $\shft(\mu_T)=0$ without loss of generality (recall~\eqref{eq:shft:invariance}). 
We start with the lower bound on the diameter. Let $\gd>0$, and notice that Theorem~\ref{thm:main} implies for $\anyrg\in\{\crit,\supercrit\}$,
\begin{align*}
	&\bbP_{\mu_T}\big(\max(\cX_T^{N(T)}) - \min(\cX_T^{N(T)}) \leq (1-2\gd)\gs(1)L(T)\big)\\
	&\quad\leq\, \bbP_{\mu_T}\big(\min(\cX_T^{N(T)}) \geq m^\anyrg_T - (1-\gd)\gs(1)L(T)\big) + o(1)\\
	&\quad=\, \bbP_{\mu_T}\big(\cX_T^{N(T)} \big([m^\anyrg_T - (1-\gd)\gs(1)L(T),+\infty)\big) \geq N\big) + o(1)\,,
\end{align*}
and by~\eqref{eq:prop:endtimedist:1}, the latter goes to 0 as $T$ goes to $+\infty$, for any $\gd>0$.

Regarding the upper bound on the diameter, it suffices to prove that
\begin{equation}\label{eq:proofdiam:UB:1}
	\bbP_{\mu_T}\Big(\min(\cX_T^{N(T)}) \leq m^\anyrg_T - (1+2\gd)\gs(1)L(T)\Big) = o(1)
\end{equation}
as $T\to+\infty$ for any $\gd>0$, and the result follows similarly from Theorem~\ref{thm:main}. Consider $\cX_{T-\gd^3 L(T)}^{N(T)}$ the particle configuration of the $N$-BBM at time $T-\gd^3 L(T)$, and define
\begin{equation}\label{eq:proofdiam:cA}
	\cA\,:=\,\cX_{T-\gd^3 L(T)}^{N(T)}\cap \big[m^\anyrg_{T}-(1+\gd)\gs(1)L(T),+\infty \big) \,\prec\, \cX_{T-\gd^3 L(T)}^{N(T)}\,.
\end{equation}
Recall~\eqref{def:quantile:M}, and notice that for any $\mu\in\meas_N$ one has $q_N(\mu)=\min(\mu)$ if $\mu(\R)=N$, and $q_N(\mu)=-\infty$ else. Thus, applying the Markov property at time $T-\gd^3L(T)$, one obtains that, 
\begin{align*}
	&\bbP_{\mu_T}\Big(\min(\cX_T^{N(T)}) \leq m^\anyrg_T - (1+2\gd)\gs(1)L(T)\,\Big|\, \cX_{T-\gd^3 L(T)}^{N(T)} \Big)\\
	&\quad \leq\,\bbP_{\mu_T}\Big(q_N(\cX_T^{N(T)}) \leq m^\anyrg_T - (1+2\gd)\gs(1)L(T)\,\Big|\, \cX_{T-\gd^3 L(T)}^{N(T)} \Big)\\
	&\quad =\,\bbP_{(T-\gd^3 L(T),\cX_{T-\gd^3 L(T)}^{N(T)})} \Big(q_N(\cX_{T}^{N(T)}) \leq m^\anyrg_T - (1+2\gd)\gs(1)L(T)\Big)\\
	&\quad \leq \,\bbP_{(T-\gd^3L(T),\cA)} \Big(q_N(\cX_{T}^{N(T)}) \leq m^\anyrg_T - (1+2\gd)\gs(1)L(T)\Big)\,,
\end{align*}
where the last inequality follows from Corollary~\ref{corol:quantile:NBBM}. Moreover, Lemma~\ref{lem:enddist} (more specifically~\eqref{eq:lem:enddist:LB}) implies that, with large probability, $\cA$ contains at least $N^{1-\gd^4}$ particles. Using standard arguments on birth processes (such as Proposition~\ref{prop:LB:standardresults:quote}), one observes that, with large probability, the $N$-BBM started from $(T-\gd^3L(T),\cA)$ contains at time $T$ exactly $N(T)$ particles: in particular, $q_N(\cX_T^{N(T)})=\min(\cX_T^{N(T)})$ with large $\bbP_{(T-\gd^3L(T),\cA)}$-probability, and
\begin{align*}
&\bbP_{(T-\gd^3L(T),\cA)} \Big(q_N(\cX_{T}^{N(T)}) \leq m^\anyrg_T - (1+2\gd)\gs(1)L(T)\Big) \\
&\quad=\, \bbP_{(T-\gd^3L(T),\cA)} \Big(\min(\cX_{T}^{N(T)}) \leq m^\anyrg_T - (1+2\gd)\gs(1)L(T)\Big) +o(1)\,.
\end{align*}
 
Recall from Proposition~\ref{prop:coupling:BBM}.$(i)$ that one can couple $\cX^{N(T)}$ with a BBM started from $(T-\gd^3L(T),\cA)$ such that $\cX^{N(T)}_T\subset \cX_T$. Thus,
\begin{align*}
	&\bbP_{(T-\gd^3L(T),\cA)} \Big(\min(\cX_{T}^{N(T)}) \leq m^\anyrg_T - (1+2\gd)\gs(1)L(T)\Big)\\
	&\quad\leq\, \bbP_{(T-\gd^3L(T),\cA)} \Big(\min(\cX_{T}) \leq m^\anyrg_T - (1+2\gd)\gs(1)L(T)\Big)\\
	&\quad=\, \bbP_{(T-\gd^3L(T),-\cA)} \Big(\max(\cX_{T}) \geq -m^\anyrg_T + (1+2\gd)\gs(1)L(T)\Big)\,,
\end{align*}
where we used the symmetry of the BBM. Since $(-\cA)\prec N\gd_{-m^\anyrg_{T}+(1+\gd)\gs(1)L(T)}$ by definition, one deduces from Proposition~\ref{prop:coupling:BBM} and a shift that
\begin{align*}
&\bbP_{(T-\gd^3L(T),-\cA)} \Big(\max(\cX_{T}) \geq -m^\anyrg_T + (1+2\gd)\gs(1)L(T)\Big)\\
&\quad \leq\, \bbP_{(T-\gd^3L(T),N\gd_0)} \Big(\max(\cX_{T}) \geq \gd\gs(1)L(T)\Big)\,.
\end{align*}
Then, we conclude the proof directly with a union bound: 
indeed, one deduce again from standard arguments on birth processes that, with large $\bbP_{(T-\gd^3L(T),N\gd_0)}$-probability, $\cX$ contains at most $N^{1+\gd^2}$ particles; so letting $Y$ be a centered Gaussian variable with variance $T\int_{1-\gd^3L(T)/T}^1 \gs^2(s)\dd s = (1+o(1))\gs^2(1)\gd^3L(T)$, one deduces from standard Gaussian estimates that,
\begin{align*}
&\bbP_{(T-\gd^3L(T),N\gd_0)} \Big(\max(\cX_{T}) \leq \gd\gs(1)L(T)\Big) \\
&\quad\leq\, o(1)+N^{1+\gd^2}\bP(Y\ge \gd\gs(1)L(T))\\
&\quad\leq\, o(1) + e^{(1+\gd^2) L(T)}\times c_1\sqrt{\gd/L(T)} \,e^{-c_2\gd^{-1}L(T)}\,,
\end{align*}
where $c_1,c_2>0$ are universal constants. Provided that $\gd>0$ is small enough, this concludes the proof of~\eqref{eq:prop:endtimedist:2}. \qed

\subsection{Adaptation of the proof to deterministic branching times (Proposition~\ref{prop:main:detbranching})}\label{sec:compl:detbranching}
In this section we do not present a full proof of Proposition~\ref{prop:main:detbranching}: instead we detail how all our previous results and proofs can be adapted to the time-inhomogeneous BBMdb. Throughout this section, we let $(\tilde\cX_t)_{t\in[0,T]}$ (resp. $(\tilde\cX^N_t)_{t\in[0,T]}$) denote the particle configurations of the BBMdb (resp. $N$-BBMdb). We first focus on adapting the proof of Theorem~\ref{thm:main}, that is Sections~\ref{sec:coupling} through~\ref{sec:UB}.
\medskip

\noindent\emph{Section~\ref{sec:coupling}.} 
We start with the coupling statements on the BBMdb and $N$-BBMdb. 
Regarding Lemma~\ref{lem:N-N+couping}, the construction of the coupling in~\cite[Section~2.3]{Mai16} is done by induction on the sequence of (random) epochs $(t_n)_{n\ge1}$ upon which either a particle branches, or a particle dies. Having the epochs be deterministic does not alter the construction, except for the fact that multiple particles are branching simultaneously. However, the construction can be adapted to that case very straightforwardly: as a matter of fact it has already be done in~\cite[Lemma~1]{BG10} for one step of the (homogeneous) branching random walk with selection. Regarding Proposition~\ref{prop:coupling:BBM}, the definition of the $N$-BBM as a BBM with some selection mechanism $\cL$ is unchanged, so the first statement still holds; and in the proof of the second statement one only needs to replace the BBM's with BBMdb's. 

Finally, the statements of Propositions~\ref{prop:main:LB} and~\ref{prop:main:UB} can be rephrased straightforwardly in temrs of the $N$-BBM: then the same arguments as in Section~\ref{sec:coupling:mainprops} yield that Theorem~\ref{thm:main} also holds for the $N$-BBMdb subject to these propositions.\medskip

\noindent\emph{Sections~\ref{sec:prelim} and~\ref{sec:moments}.} All definitions and notation from Section~\ref{sec:prelim:notation} are unchanged. The main differences are in Section~\ref{sec:prelim:toolbox}, more precisely in the statements of the Many-to-one and Many-to-two lemmas. On the one hand, recall that $|\cN_t|$ denotes the population size of the BBM at time $t\in[0,T]$, and let $|\tilde\cN_t|$ denote that of the BBMdb. Then the Many-to-one lemma still holds for the BBMdb, subject to replacing $\bbE_{\gd_0}[|\cN_t|]=e^{t/2}$ with,
\[
\bbE_{\gd_0}[|\tilde\cN_t|]\,=\,\big(\bbE\xi\big)^{\lfloor t/2\log\bbE\xi \rfloor}\,,
\]
in Lemma~\ref{lem:MtOGirs} (the equality above is proven by induction). Notice that this is of order $e^{t/2+O(1)}$, so that change does not affect the resulting first moment estimates up to constant factors. On the other hand, we provide a discrete-time version of the Many-to-two lemma. It is obtained by standard arguments involving the decomposition of pairs of individuals according to their most recent common ancestor, see e.g. \cite[Appendix II]{Saw76} or \cite[Lemma~3.6]{Mal15}. Let $\tilde A_{T,z}^\anyrg(\cdot)$, $\tilde G(\cdot)$ be defined similarly to $A_{T,z}^\anyrg(\cdot)$, $G(\cdot)$ respectively in~\eqref{eq:defA:allregimes} and~\eqref{eq:defG:allregimes}, where we replace the BBM with a BBMdb. 

\begin{lemma}[Many-to-two lemma, deterministic branching]\label{lem:MtT:detbranching}
	Let $\anyrg\in\{\supercrit,\subcrit,\crit\}$. 
	Let $t\leq T$, $\gga^\anyrg_T, \ol\gga^\anyrg_T \in\cC^1([0,T])$ which satisfy~\eqref{eq:defxh}, and $z\in[0,h)$. Then, one has,
	\begin{align}
		&\bbE_{\gd_0}\big[|\tilde A_{T,z}^\anyrg(t)|^2\big] -  \bbE_{\gd_0}\big[|\tilde A_{T,z}^\anyrg(t)|\big]  \\
		\notag&= \bbE[\xi(\xi-1)] \! \sum_{k=0}^{\lfloor t/(2\log \bbE\xi)\rfloor-1} \!\! \int_0^{h} \! \tilde G^\anyrg(x,y,0,(2\log\bbE\xi)k) \bigg(\!\int_{z}^{h} \tilde G^\anyrg(y,w,(2\log\bbE\xi)k,t) \dd w\!\bigg)^{\!\!2} \!\dd y. 
	\end{align}
\end{lemma}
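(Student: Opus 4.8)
\textbf{Proof plan for Lemma~\ref{lem:MtT:detbranching}.}

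The plan is to mimic the standard proof of the continuous-time Many-to-two lemma (Lemma~\ref{lem:MtT}), replacing the exponential branching clock by the deterministic grid $(2\log\bbE\xi)\bbN$. First I would expand the square
\[
|\tilde A_{T,z}^\anyrg(t)|^2 = \sum_{u,v\in\tilde\cN_t} \ind_{\{u\in \tilde A_{T,z}^\anyrg(t)\}}\ind_{\{v\in\tilde A_{T,z}^\anyrg(t)\}},
\]
and split the sum according to whether $u=v$ or $u\ne v$. The diagonal contributes exactly $\bbE_{\gd_0}\big[|\tilde A_{T,z}^\anyrg(t)|\big]$, which is the first term on the right-hand side. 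For the off-diagonal part, every ordered pair of distinct particles $u\ne v$ alive at time $t$ has a well-defined most recent common ancestor; since branching only happens at the grid times, the splitting event separating $u$ and $v$ occurred at some time $s_k:=(2\log\bbE\xi)k$ for a unique $k\in\{0,1,\ldots,\lfloor t/(2\log\bbE\xi)\rfloor-1\}$. I would therefore reindex the off-diagonal sum by this $k$ together with the position $y$ (rescaled as in~\eqref{eq:defG:allregimes}) of that common ancestor just after the branch, and the two (distinct) children of that branching event from which $u$ and $v$ respectively descend.

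The key step is then a conditioning/branching-property argument: condition on the genealogy and trajectory up to time $s_k$, on the event that the common ancestor's path stayed between the barriers and arrived in a neighbourhood of $\gga_T^\anyrg(s_k)+y\gs(s_k/T)L(T)\dd y$; at that time the ancestor is replaced by $\xi$ i.i.d.\ children, and the expected number of ordered pairs of distinct children is $\bbE[\xi(\xi-1)]$. Conditionally on choosing two distinct children, the subtrees rooted at them are independent copies of a BBMdb started at the same space-time point, so each independently contributes a factor $\int_z^h \tilde G^\anyrg(y,w,s_k,t)\,\dd w$ to the expected count of descendants that stay between the barriers and end in $[z,h]$ at time $t$. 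Integrating over the ancestor's arrival position $y\in[0,h]$ gives a factor $\tilde G^\anyrg(x,y,0,s_k)\,\dd y$ by definition of $\tilde G$, and summing over the admissible branching times $k$ produces exactly the stated formula. Here one uses the slight subtlety, already flagged in a footnote, that with probability one no two particles occupy the same height at a fixed time, so the pair decomposition is well posed; and that the offspring numbers at distinct grid times (and distinct vertices) are independent, which is precisely what makes $\bbE[\xi(\xi-1)]$ appear with no branching-rate prefactor (in contrast to the $\gb_0\bbE[\xi(\xi-1)]$ of Lemma~\ref{lem:MtT}).

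The main obstacle is purely bookkeeping rather than conceptual: one must carefully verify that the decomposition of a pair by its most recent common ancestor is \emph{exactly} a partition — no pair counted twice, none omitted — when branching is simultaneous across the whole population at the grid times, and that the Markov/branching property may legitimately be applied at the deterministic time $s_k$ to detach the two descendant subtrees. This is routine given the stopping-line framework recalled in Section~\ref{sec:coupling:NBBM} (the deterministic grid is a fortiori a family of stopping lines), and it has already been carried out in the references cited, e.g.~\cite[Appendix~II]{Saw76} and \cite[Lemma~3.6]{Mal15}; I would therefore only sketch it and refer the reader there. The rest is a direct transcription of the continuous-time argument, replacing $\gb_0\bbE[\xi(\xi-1)]\int_0^t\dd s$ by $\bbE[\xi(\xi-1)]\sum_{k}$ and noting that $\bbE_{\gd_0}[|\tilde\cN_t|]=(\bbE\xi)^{\lfloor t/(2\log\bbE\xi)\rfloor}=e^{t/2+O(1)}$ so that the first-moment ingredients $\tilde G^\anyrg$ obey the same estimates as $G^\anyrg$ up to constants.
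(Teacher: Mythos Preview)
Your proposal is correct and follows exactly the approach the paper indicates: the paper does not give a detailed proof but simply states that the lemma ``is obtained by standard arguments involving the decomposition of pairs of individuals according to their most recent common ancestor, see e.g.~\cite[Appendix~II]{Saw76} or \cite[Lemma~3.6]{Mal15},'' which is precisely the decomposition you outline. Your expansion of these standard arguments---splitting into diagonal and off-diagonal, indexing by the grid time of the splitting event, and applying the branching property at the deterministic times---is the intended route, and your observation about why $\gb_0$ disappears from the prefactor is the right point of contrast with Lemma~\ref{lem:MtT}.
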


Replacing Lemma~\ref{lem:MtT} with Lemma~\ref{lem:MtT:detbranching} in the proofs of Propositions~\ref{prop:2ndmom:supercrit},~\ref{prop:2ndmom:subcrit} and~\ref{prop:2ndmom:crit}, and using the adaptation of Lemma~\ref{lem:MtOGirs} discussed above in all the other propositions from Section~\ref{sec:moments}, we obtain the same moment estimates on the BBMdb between barriers as we do for the BBM, in all regimes.\medskip

\noindent\emph{Sections~\ref{sec:LB} and~\ref{sec:UB}.} All statements in these sections immediately hold for the $N$-BBMdb: indeed, the proofs in these sections are entirely based on the moment estimates from Section~\ref{sec:moments}, as well as some technical results on the barriers (such as Lemma~\ref{lem:ggachangeparam}) which are not affected by the choice of branching mechanism. Therefore, this finishes the adaptation of Propositions~\ref{prop:main:LB} and~\ref{prop:main:UB} to the $N(T)$-BBMdb, proving that the results of Theorem~\ref{thm:main} also hold for that process.\medskip

\noindent\emph{Super-critical regime, decreasing variance.} Recall the proof of Proposition~\ref{prop:main:gsdecreasing} from Section~\ref{sec:compl:gsdecreasing}. To the authors knowledge, there is currently no equivalent to~\eqref{eq:MZ16} for the BBMdb (or BRW) in the literature, however~\cite{Mal15} has proven that~\eqref{eq:gsdecreasing:upperbound} holds for a large class of discrete-time, time-inhomogeneous branching random walks (BRW), among which Gaussian BRW, see~\cite[Theorem 1.3]{Mal15}. Therefore, replacing Corollary~\ref{cor:coupling:N} and Proposition~\ref{prop:coupling:BBM} with the coupling arguments discussed above for the $N$-BBMdb, and applying~\cite[Theorem 1.3]{Mal15} to the BBMdb at its final time, the adaptation of Proposition~\ref{prop:main:gsdecreasing} to deterministic branching times is straightforward (we do not replicate the proof).

\begin{remark}\label{rem:decreas:detbranching}
	Let us mention that~\cite[Theorem 1.3]{Mal15} also holds for $\gs$ non-increasing, hence so does~\eqref{eq:mainresult:manyrg} for the superscript $\supdecr$ in the $N$-BBMdb. This provides a slightly more complete statement for the $N$-BBMdb and $N$-CREM than Proposition~\ref{prop:main:gsdecreasing} (where we assumed $\gs$ strictly decreasing), however one still requires an initial configuration $\gd_0$ when quoting~\cite{Mal15}.
\end{remark}
\smallskip

\noindent\emph{Final-time distribution} Regarding~\eqref{eq:prop:endtimedist:1}, it is sufficient to prove that Lemma~\ref{lem:enddist} also holds for the $N$-BBMdb $\tilde\cX^N$. A thorough inspection of the proof shows that all arguments therein can be adjusted to deterministic branching times, with no more work than what has already been presented for the adaptation Theorem~\ref{thm:main} above. In order not to overburden this paper, we do not repeat those arguments here. In the proof of~\eqref{eq:prop:endtimedist:2}, there is one occurrence of the Many-to-one lemma which has to be replaced with the $N$-BBMdb version as above, and the rest of the proof is unchanged. This fully concludes the proof of Proposition~\ref{prop:main:detbranching}.
\qed

\subsection{\texorpdfstring{$N$}{N}-BBM with time-inhomogeneous selection}\label{sec:compl:inhselec}

	It is natural to extend the $N$-BBM by allowing the selection mechanism to be time-inhomogeneous as well. That is, starting from a (time-inhomogeneous) BBM over the time horizon $T>0$, at any time $s\in[0,T]$, keep only the particles at the $N(s,T)$ highest of the whole population, for some function $N(\cdot,T)$ fixed beforehand. Let us call this model the $N(\cdot,T)$-BBM. The results presented in Section~\ref{sec:intro}---namely Theorem~\ref{thm:main} and Proposition~\ref{prop:endtimedistribution}---can be extended to a class of $N(\cdot,T)$-BBM in which the selection does not vary too much: more precisely, the selection remains in the ``same regime'' throughout the time interval $[0,T]$.
	
	Consider some growing function $\hat L(T)\to+\infty$ as $T\to+\infty$, and a positive function $\ell\in\cC^1([0,1])$, note that this implies that $\ell$ is bounded away from $0$ and $+\infty$. Define for $0\leq s\leq T$,
	\begin{equation}\label{eq:def:Linhselec}
		L(s,T)\,=\, \log N(s,T)\,:=\, \ell(s/T) \,\hat L(T)\,,
	\end{equation}
	the log-population size at time $s\in[0,T]$ of the $N(\cdot,T)$-BBM. We say that,
	
	--- $\hat L(T)$ is \emph{sub-critical} if $1\ll \hat L(T) \ll T^{1/3}$,
	
	--- $\hat L(T)$ is \emph{critical} if $\hat L(T)\sim T^{1/3}$,
	
	--- $\hat L(T)$ is \emph{super-critical} if $T^{1/3}\ll \hat L(T)\ll T$.
	
	\noindent Let us adapt the notation from~(\ref{eq:defbanyrg}--\ref{eq:defmanyrg}) by defining for $T\geq0$,
	\begin{equation}\label{eq:defbanyrg:inhselec}
		\hat b^\supercrit_T\,:=\, \hat L(T)\,,\qquad
		\hat b^\crit_T
		\,:=\, T^{1/3}\,,\qquad\text{and}\qquad
		\hat b^\subcrit_T\,:=\, \frac{T}{\hat L(T)^2}\,,
	\end{equation}
	as well as,
	\begin{equation}\label{eq:defmanyrg:inhselec}\begin{aligned}
			\hat m^\supercrit_T\,&:=\, v(1)T + \left[\int_0^1\ell(u)(\gs')^+(u)\,\dd u\right]\hat L(T)\,,\\
			\hat m^\crit_T\,&:=\,v(1)T + \left[\int_0^1\frac{\gs(u)}{\ell(u)^2}\Psi\Big(-\ell(u)^3\frac{\gs'(u)}{\gs(u)}\Big)\dd u\right] T^{1/3}  \,,\\
			\text{and}\qquad \hat m^\subcrit_T\,&:=\, v(1)T  - \frac{\pi^2T}{2\,\hat L(T)^2}\int_0^1\frac{\gs(u)}{\ell(u)^2}\,\dd u \,.
		\end{aligned}
	\end{equation}
	Recall also the definitions of $b^\supdecr_T$, $m^\supdecr_T$ from~(\ref{eq:defbanyrg},~\ref{eq:defmanyrg}). Then we have the following.
	\begin{theorem}\label{thm:inhselec}
		Let $\gs\in\cC^2([0,1])$. Let $\hat L(T)\to+\infty$ as $T\to+\infty$, let $\ell\in\cC^1([0,1])$ and define $N(\cdot,T)$ as in~\eqref{eq:def:Linhselec}. Denote with $\cX_T^{N(\cdot,T)}$ the empirical measure on $\R$ at time $T>0$ of a $N(\cdot,T)$-BBM with infinitesimal variance $\gs^2(\cdot/T)$, started from some initial configuration $\mu_T\in\meas_{N(0,T)}$, $T\geq0$. Let $\max(\cX_T^{N(\cdot,T)})$ denote the maximal displacement of the process at time $T$. Let $\anyrg\in\{\subcrit, \crit, \supercrit\}$ denote the regime satisfied by $\hat L(T)$. Then as $T\to+\infty$, one has
		\begin{equation}\label{eq:thm:inhselec:max}
			\max(\cX_T^{N(\cdot,T)})\;=\; \shft(\mu_T) + \hat m^\anyrg_T + o_{\bbP}\big(b^\anyrg_T\big).
		\end{equation}
		Moreover, if $\gs$ is strictly decreasing, $\mu_T\equiv \gd_0$ and $T^{1/3}\ll \hat L(T)\leq+\infty$, then 
		\begin{equation}\label{eq:thm:inhselec:max:decreas}
			\max(\cX_T^{N(\cdot,T)}) \;=\; m^\supdecr_T + o_{\bbP}\big(b^\supdecr_T\big).
		\end{equation}
		Finally if the regime satisfies $\anyrg \in \{\crit,\sup\}$, one also has
		\begin{equation}\label{eq:thm:inhselec:enddist}
			\sup_{y\in[0,1]}\left|\,\frac{\log_+\cX_T^{N(\cdot,T)}([\shft(\mu_T) + \hat m_T^\anyrg - y\gs(1)\ell(1)\hat L(T)\,,\,+\infty)) }{\ell(1) \hat L(T)}\,-\,y\,\right|\;\longrightarrow\;0\,,
		\end{equation}
		in $\bbP_{\mu_T}$-probability, and
		\begin{equation}\label{eq:thm:inhselec:enddist:2}
			\max(\cX_T^{N(\cdot,T)}) - \min(\cX_T^{N(\cdot,T)}) = \sigma(1)\ell(1)\hat L(T) + o_{\bbP}(L(T)).
		\end{equation}
	\end{theorem}
	\begin{remark}
		The result in the regime sup-d matches Proposition~\ref{prop:main:gsdecreasing} for homogeneous selection: in particular, it does not depend on $\ell(\cdot)$ or the precise asymptotics of $\hat L(T)$.
	\end{remark}
	\begin{remark}
		If $\sigma$ is constant, i.e.~in the case of time-homogeneous diffusion, the critical regime of Theorem~\ref{thm:inhselec} is an analogue for branching Brownian motion of Theorem~4.1 of Mallein~\cite{Mal17}, who considers quite general branching random walks with time-inhomogeneous selection (but time-homogeneous displacement).
	\end{remark}

We now proceed to the proof of Theorem~\ref{thm:inhselec}. The idea is to approximate the $N(\cdot,T)$-BBM by a process with constant selection on a short time interval, and applying Theorem~\ref{thm:main} and Proposition~\ref{prop:endtimedistribution} to the latter.
We first consider the super-critical and critical regimes ($\hat L(T)\gg T^{1/3}$ or $\hat L(T)=T^{1/3}$): we prove~\eqref{eq:thm:inhselec:max} and~\eqref{eq:thm:inhselec:enddist} simultaneously, then~\eqref{eq:thm:inhselec:max:decreas} and~\eqref{eq:thm:inhselec:enddist:2}. Finally, we prove~\eqref{eq:thm:inhselec:max} in the sub-critical regime ($\hat L(T)\ll T^{1/3}$).

Before that, we provide a coupling proposition that extends Corollary~\ref{cor:coupling:N} and Proposition~\ref{prop:coupling:BBM}.$(i)$ to a setting with time-inhomogeneous selection. Notice that, with the definitions above, for any $T>0$ there are finitely many $t\in[0,T]$ upon which $t\mapsto N(t,T)$ changes its integer part.

\begin{proposition}\label{prop:coupling:inhselec}
	Let $T\geq0$ fixed. Let $N_1(\cdot,T)$, $N_2(\cdot,T)$ two positive functions on $[0,T]$ such that $N_1(t,T)\leq N_2(t,T)$ for all $0\leq t\leq T$. Assume that their integer parts change finitely many times in $t\in[0,T]$. Let $\mu_1\in\meas_{N_1(0,T)}$ and $\mu_2\in\meas_{N_2(0,T)}$ which satisfy $\mu_1\prec\mu_2$: then there exists $(\cX^{N_1(\cdot,T)}_{t})_{t\in[0,T]}$ and $(\cX^{N_2(\cdot,T)}_{t})_{t\in[0,T]}$, respectively an $N_1(\cdot,T)$- and an $N_2(\cdot,T)$-BBM, such that $\cX^{N_1(\cdot,T)}_{0}=\mu_1$, $\cX^{N_2(\cdot,T)}_{0}=\mu_2$ and $\cX^{N_1(\cdot,T)}_{t}\prec\cX^{N_2(\cdot,T)}_{t}$ for all $t\in[0,T]$ with probability 1.
	
	Moreover, there also exists a coupling with a time-inhomogeneous BBM without selection started from $\mu_1$, such that $\cX^{N_1(\cdot,T)}_{t}\subset\cX_{t}$ for all $t\in[0,T]$ with probability 1.
\end{proposition}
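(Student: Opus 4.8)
The statement is the time-inhomogeneous-selection analogue of Corollary~\ref{cor:coupling:N} and Proposition~\ref{prop:coupling:BBM}$(i)$, and the plan is to reduce to those by induction over the (finitely many) times at which the integer parts of $N_1(\cdot,T)$ or $N_2(\cdot,T)$ jump. First I would fix $T$ and let $0=s_0<s_1<\ldots<s_m=T$ denote a common refinement of the discontinuity sets of $\lfloor N_1(\cdot,T)\rfloor$ and $\lfloor N_2(\cdot,T)\rfloor$, so that on each half-open interval $[s_{j-1},s_j)$ both functions are constant, say equal to integers $n_1^{(j)}\le n_2^{(j)}$. On such an interval, an $N_i(\cdot,T)$-BBM is simply a (time-inhomogeneous) $n_i^{(j)}$-BBM with diffusion $\gs^2(\cdot/T)$; note that since $N_1\le N_2$ on all of $[0,T]$ we have $n_1^{(j)}\le n_2^{(j)}$ for every $j$.

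The induction runs as follows. Suppose we have built, on $[0,s_{j-1}]$, a coupling of the two processes with $\cX^{N_1(\cdot,T)}_t\prec \cX^{N_2(\cdot,T)}_t$ for all $t\le s_{j-1}$; this holds for $j=1$ by hypothesis on the initial configurations $\mu_1\prec\mu_2$. At time $s_{j-1}$ the selection thresholds may drop, but killing the lowest particles so as to keep only the top $n_i^{(j)}$ of each configuration preserves $\prec$ (this is the elementary fact that if $\mu\prec\nu$ then the top-$k$ truncation of $\mu$ is $\prec$ the top-$k'$ truncation of $\nu$ for $k\le k'$, used already implicitly via $q_M(\mu)\le q_M(\nu)$ in Corollary~\ref{corol:quantile:NBBM}). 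Hence the truncated configurations $\tilde\mu_1\prec\tilde\mu_2$ still satisfy the comparison, with $\tilde\mu_i\in\meas_{n_i^{(j)}}$ and $n_1^{(j)}\le n_2^{(j)}$. On $[s_{j-1},s_j)$, both processes evolve as genuine $n_i^{(j)}$-BBMs with the \emph{same} diffusion function, so Corollary~\ref{cor:coupling:N} (equivalently Lemma~\ref{lem:N-N+couping}, using that an $n_2^{(j)}$-BBM is also an $(n_1^{(j)})^+$-BBM) provides a coupling on that block with $\cX^{N_1(\cdot,T)}_t\prec\cX^{N_2(\cdot,T)}_t$ throughout. Splicing these block couplings together (concatenating the driving Brownian motions, branching clocks, and offspring variables block by block, which is legitimate by the Markov property of the BBM at the deterministic times $s_j$) yields a coupling on all of $[0,T]$ with the desired monotonicity; at the finitely many junction times $s_j$ the comparison holds by right-continuity of the configurations and the truncation argument above.

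For the second assertion, I would use that $n_1^{(j)}\le n_2^{(j)}$ is irrelevant and instead invoke Proposition~\ref{prop:coupling:BBM}$(i)$ on each block: a time-inhomogeneous $n_1^{(j)}$-BBM is a BBM with a selection mechanism (stopping line) $\cL_j$, so there is a coupling on $[s_{j-1},s_j)$ with $\cX^{N_1(\cdot,T)}_t\subset\cX_t$. Across the junction $s_{j-1}$, passing from the block-$j{-}1$ selection to block-$j$ selection only removes \emph{more} particles (and their descendants) from the BBM-with-selection, while the free BBM $\cX$ continues unchanged; concatenating the stopping lines $\cL_1,\ldots,\cL_m$ into a single stopping line $\cL=\bigcup_j\cL_j$ (restricted appropriately to the relevant time windows) realizes the $N_1(\cdot,T)$-BBM as a BBM with selection mechanism $\cL$ run on the same probability space, whence $\cX^{N_1(\cdot,T)}_t\subset\cX_t$ for all $t\in[0,T]$ almost surely.

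\textbf{Main obstacle.} The only delicate point is the bookkeeping at the junction times: one must check that truncating to a smaller number of top particles is compatible with the monotone coupling coming from Lemma~\ref{lem:N-N+couping}, i.e.\ that the coupling can be restarted at $s_j$ from the truncated pair $(\tilde\mu_1,\tilde\mu_2)$ without breaking $\prec$. This is essentially immediate from the definition of $\prec$ and from the fact (already noted in the excerpt, in the remark following Lemma~\ref{lem:N-N+couping}) that the monotone coupling of Mai16 works for arbitrary selection mechanisms as long as the diffusion function is the same; so I expect the argument to be short, with the real content being the careful statement of the block-concatenation and a one-line lemma on truncations. No new estimate is needed.
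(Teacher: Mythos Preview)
Your proposal is correct and takes essentially the same approach as the paper: the paper's proof simply notes that the only novelty compared to Corollary~\ref{cor:coupling:N} and Proposition~\ref{prop:coupling:BBM}$(i)$ is the presence of finitely many deterministic epochs at which particles may be killed (when the integer part of $N_i(\cdot,T)$ drops), and refers back to the analogous adaptation in Section~\ref{sec:compl:detbranching}. Your block-by-block induction over the partition $0=s_0<\cdots<s_m=T$ with the truncation argument at junction times is exactly the explicit version of what the paper calls ``adapted straightforwardly''.
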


\begin{proof}
This is very similar to the adaptation made in Section~\ref{sec:compl:detbranching} above: the only difference between this proposition and the setting of Corollary~\ref{cor:coupling:N} and Proposition~\ref{prop:coupling:BBM}.$(i)$ is that particles may be killed at some deterministic epochs, i.e.\ when the integer part of $N_1(\cdot,T)$ or $N_2(\cdot,T)$ diminishes. Since we assumed that there are finitely many such epochs, the constructions of the coupling and the stopping line can be adapted straightforwardly.
\end{proof}

We resume the proof of Theorem~\ref{thm:inhselec}, starting with~\eqref{eq:thm:inhselec:max} and~\eqref{eq:thm:inhselec:enddist} in the critical and super-critical regimes.

\begin{proof}[Proof of~\eqref{eq:thm:inhselec:max} and~\eqref{eq:thm:inhselec:enddist}, super-critical and critical regimes]
	First of all recall the definition of $\shft(\cdot)$ from \eqref{eq:defshift}, and let us extend it into
	\begin{equation}\label{eq:defshift:endtime} 
		\shftend{s}(\mu)\;:=\; \sup\big\{q\in\R\,\big|\, \exists \gk\in[0,1]: \mu\big([q-\gk\gs(s/T)L(T),+\infty)\big)\,\geq\, N(T)^{\gk}\big\}\,,\quad s\in[0,T].
	\end{equation}
	Recall Proposition~\ref{prop:endtimedistribution}: then we have the following. 
	\begin{lemma}\label{lem:enddist:inhselec}
		Let $\anyrg\in\{\crit, \supercrit\}$ and $\gh>0$. Let $\mu_T\in\measN$, and denote with $\cX^{N(T)}$ an $N(T)$-BBM (with time-homogeneous selection) started from $\mu_T$, $T\geq0$, with infinitesimal variance $\gs^2(\cdot/T)$, $\gs\in\gsensrg$. Then as $T\to+\infty$, one has
		\begin{equation}
			\sup_{\gs\in\gsensrg}\,\frac1{L(T)} \left|\shftend{T}(\cX^{N(T)}_T)-\shft(\mu_T)-m^\anyrg_T\right|\,\longrightarrow\, 0\,,\qquad\text{in }\bbP_{\mu_T}\text{-probability.}
		\end{equation}
	\end{lemma}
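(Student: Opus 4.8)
The goal is to show that the quantity $\shftend{T}(\cX^{N(T)}_T)$ (the ``end-time shift'' of the final configuration, which plays the same role near time $T$ as $\shft$ plays near time $0$) is asymptotically $\shft(\mu_T)+m^\anyrg_T$, with the error being $o_{\bbP}(L(T))$ uniformly in $\gs\in\gsensrg$. The point is that $\shftend{T}$ is precisely the quantity one needs to feed into the Markov property at time $T$ when iterating the $N$-BBM; it is a ``weighted maximum'' that already incorporates the exponential profile of the population below the leading edge. The key is that both the maximal displacement of $\cX^{N(T)}_T$ and its empirical distribution below the maximum are pinned down by Theorem~\ref{thm:main} and Proposition~\ref{prop:endtimedistribution} respectively, so $\shftend{T}$ is squeezed between the two.

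First I would recall that by definition $\shftend{T}(\mu)\geq \max(\mu)$ for any $\mu\in\measN$ (take $\gk=0$ in the supremum in \eqref{eq:defshift:endtime}), and that $\shftend{T}(\mu)\leq \max(\mu)+\gs(T/T)L(T)=\max(\mu)+\gs(1)L(T)$ (since $\mu(\R)\leq N(T)=e^{L(T)}$, the condition $\mu([q-\gk\gs(1)L(T),+\infty))\geq N^\gk$ forces $q-\gk\gs(1)L(T)\leq\max(\mu)$). These two crude bounds, combined with Theorem~\ref{thm:main}, already give $\shftend{T}(\cX^{N(T)}_T)=\shft(\mu_T)+m^\anyrg_T+O_{\bbP}(L(T))$, but the error term is of the same order as the fluctuations we are trying to control, so I need the finer information from Proposition~\ref{prop:endtimedistribution}.

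The main step is therefore to upgrade this using \eqref{eq:prop:endtimedist:1}. Write $M_T:=\shft(\mu_T)+m^\anyrg_T$. By Proposition~\ref{prop:endtimedistribution}, with high probability we have, uniformly in $y\in[\eps,1]$ (say), $\cX^{N(T)}_T([M_T-y\gs(1)L(T),+\infty))=N(T)^{y+o_{\bbP}(1)}$, and moreover $\max(\cX^{N(T)}_T)=M_T+o_{\bbP}(L(T))$ by Theorem~\ref{thm:main}. For the upper bound on $\shftend{T}$: suppose $q=M_T+\gd L(T)$ for some $\gd>0$; I want to show that for all $\gk\in[0,1]$ the inequality $\cX^{N(T)}_T([q-\gk\gs(1)L(T),+\infty))\geq N^\gk$ fails once $\gd$ is fixed and $T$ is large (w.h.p.). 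If $\gk\gs(1)\leq\gd$ this is clear since $q-\gk\gs(1)L(T)\geq M_T>\max(\cX^{N(T)}_T)-o(L(T))$ leaves a set of measure at most $1<N^\gk$ (after absorbing the $o_{\bbP}(1)$), while if $\gk\gs(1)>\gd$ then $q-\gk\gs(1)L(T)=M_T-(\gk\gs(1)-\gd)L(T)$, and by the empirical distribution estimate the count there is $N^{(\gk\gs(1)-\gd)/\gs(1)+o_{\bbP}(1)}=N^{\gk-\gd/\gs(1)+o_{\bbP}(1)}<N^\gk$ for $T$ large. Hence $\shftend{T}(\cX^{N(T)}_T)\leq M_T+\gd L(T)$ w.h.p.\ for every $\gd>0$. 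For the matching lower bound: taking $\gk=1$ in \eqref{eq:defshift:endtime}, it suffices to exhibit $q$ with $q-\gs(1)L(T)$ below the level where $\cX^{N(T)}_T$ has at least $N=e^{L(T)}$ particles; but $\cX^{N(T)}_T(\R)$ is close to $N$ (it equals $N$ if the initial population is at least $N$, and in general is at least $N^{1-o_{\bbP}(1)}$ by the birth-process lower bounds used throughout Section~\ref{sec:LB}), and $\min(\cX^{N(T)}_T)\geq M_T-(1+o_{\bbP}(1))\gs(1)L(T)$ by \eqref{eq:prop:endtimedist:2}, so $q=M_T-o(L(T))$ works, giving $\shftend{T}(\cX^{N(T)}_T)\geq M_T-o_{\bbP}(L(T))$.

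The obstacle — and the only place requiring genuine care — is \textbf{uniformity in $\gs\in\gsensrg$}, together with the subcritical-type issue that Proposition~\ref{prop:endtimedistribution} as stated is ``for each $\gs$'' rather than uniform. I would handle this exactly as in the proof of Theorem~\ref{thm:main} from Propositions~\ref{prop:main:LB}--\ref{prop:main:UB}: both the upper bound on $\shftend{T}$ and the lower bound reduce, via the coupling Corollary~\ref{cor:coupling:N} applied to the extremal initial configurations $N^\gk\gd_{-\gk\gs(0)L(T)}$ and $\mu_\eps$ (as in \eqref{eq:defmuK:sec2}), to the uniform statements already proved there — specifically, the upper bound on $\shftend{T}$ follows from the uniform version of the empirical-measure upper bound contained in Lemma~\ref{lem:enddist} (equation \eqref{eq:lem:enddist:UB}), and the lower bound follows from \eqref{eq:lem:enddist:LB} together with the uniform diameter lower bound implicit in the proof of \eqref{eq:prop:endtimedist:2}. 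Thus the lemma is really a corollary of Lemma~\ref{lem:enddist} and Theorem~\ref{thm:main}, repackaged in terms of $\shftend{T}$; the argument above is the dictionary between the two formulations, and I would present it concisely, pointing to the relevant uniform estimates rather than reproving them.
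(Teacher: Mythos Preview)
Your proposal is correct and follows essentially the same approach as the paper: both recognize that the lemma is a direct corollary of Lemma~\ref{lem:enddist} (the uniform-in-$\gs$ version of Proposition~\ref{prop:endtimedistribution}) combined with the definition~\eqref{eq:defshift:endtime} of $\shftend{T}$. Your case analysis on $\gk$ for the upper bound is exactly the unpacking of how \eqref{eq:lem:enddist:UB} controls $\shftend{T}$ from above, which the paper leaves implicit.

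One simplification: for the lower bound you route through $\gk=1$, the total population count, and the diameter estimate \eqref{eq:prop:endtimedist:2}, whereas the paper uses \eqref{eq:lem:enddist:LB} alone. Indeed, for any $y\in[r,1]$ and small $\gl>0$, setting $\gk=y-\gl$ and $q=M_T-\gl\gs(1)L(T)$ gives $q-\gk\gs(1)L(T)=M_T-y\gs(1)L(T)$, and \eqref{eq:lem:enddist:LB} then yields the count there exceeds $N^{y-\gl}=N^\gk$, so $\shftend{T}\geq M_T-\gl\gs(1)L(T)$ directly. This avoids the minor wrinkle in your argument when $\mu_T(\R)<N$ (where $\gk=1$ gives no information and you must retreat to $\gk=1-o(1)$), and spares you the diameter input entirely.
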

	\begin{proof}[Proof of Lemma~\ref{lem:enddist:inhselec}]
		This is a direct corollary of Proposition~\ref{prop:endtimedistribution} ---more precisely Lemma~\ref{lem:enddist}--- and the definition~\eqref{eq:defshift:endtime}. Indeed, the upper bound on $\shftend{T}(\cX^{N(T)}_T)$ follows from the fact that~\eqref{eq:lem:enddist:UB} provides an upper bound on $\cX^{N(T)}_T\big([m^\anyrg_T-y\gs(1)L(T),+\infty)\big)$ holding for \emph{all} $y\in[0,1]$ with large probability; the lower bound is obtained through a direct application of~\eqref{eq:lem:enddist:LB}. Finally, as stated in Lemma~\ref{lem:enddist}, the result is uniform in $\gs\in\gsensrg$. 
	\end{proof}
	
	With Proposition~\ref{prop:coupling:inhselec} and Lemma~\ref{lem:enddist:inhselec} at hand, Theorem~\ref{thm:inhselec} is obtained quite naturally in the critical and super-critical regimes, by writing a block decomposition of the process. Indeed, let us first consider the critical regime, i.e. $\hat L(T)=T^{1/3}$. Recall the definition of $\hat m^\crit_T$ from~\eqref{eq:defmanyrg:inhselec}. Let $\eps>0$ small (assume $\eps^{-1}\in\N$ for the sake of simplicity), and part $[0,T]$ into $\eps^{-1}$ intervals of length $\eps T$. Define for $1\leq i\leq \eps^{-1}$,
	\[
	\ul\ell_i\,:=\, \inf\{\ell(u),\,u\in[(i-1)\eps,i\eps]\}\,,\qquad\text{and}\quad \ol\ell_i\,:=\, \sup\{\ell(u),\,u\in[(i-1)\eps,i\eps]\}\,,
	\]
	and define similarly $\ul\gs_i$, $\ol\gs_i$, $\ul\gs_i'$ and $\ol\gs_i'$. Define also,
	\[
	\ul N_i(T)\,:=\, \exp(\ul\ell_i T^{1/3})\,,\qquad\text{and}\quad \ol N_i(T)\,:=\, \exp(\ol\ell_i T^{1/3})\,.
	\]
	Finally, we let for $1\leq i\leq \eps^{-1}$,
	\begin{align*}
		&\ul m_{i,\eps}\,:=\, T\int_{(i-1)\eps T}^{i\eps T} \gs(u)\dd u\,+\, \eps \,T^{1/3}\inf\left\{\frac{\gs}{\ell}\,\Psi\left(-\ell^3\frac{\gs'}{\gs}\right) \,\middle|\; \begin{aligned}
		&\gs\in[\ul\gs_i,\ol\gs_i], \gs'\in[\ul\gs_i',\ol\gs_i']\\
		&\text{and}\quad \ell\in[\ul\ell_i,\ol\ell_i]\end{aligned}\right\}\,,\\
		&\ol m_{i,\eps}\,:=\, T\int_{(i-1)\eps T}^{i\eps T} \gs(u)\dd u\,+\, \eps \,T^{1/3}\sup\left\{\frac{\gs}{\ell}\,\Psi\left(-\ell^3\frac{\gs'}{\gs}\right) \,\middle|\; \begin{aligned}
		&\gs\in[\ul\gs_i,\ol\gs_i], \gs'\in[\ul\gs_i',\ol\gs_i']\\
		&\text{and}\quad \ell\in[\ul\ell_i,\ol\ell_i]\end{aligned}\right\}.
	\end{align*}
	Using a Riemann sum approximation and that $\gs\in\gsensrg$, $\ell\in\cC^1([0,1])$, one observes that
	\begin{equation}\label{eq:prf:inhselec:riemann}
		\lim_{\eps\to0}\,T^{-1/3}\,\Bigg|\hat m_T^\crit - \sum_{i=1}^{\eps^{-1}} \ul m_{i,\eps}\Bigg|\;=\;
		\lim_{\eps\to0}\,T^{-1/3}\,\Bigg|\hat m_T^\crit - \sum_{i=1}^{\eps^{-1}} \ol m_{i,\eps}\Bigg|\;=\;0\,.
	\end{equation}
	Let $(\cF_s)_{s\in[0,T]}$ denote the natural filtration of the $N(\cdot,T)$-BBM. Following from Proposition~\ref{prop:coupling:inhselec}, there exists an $\ul N_i(T)$- and a $\ol N_i(T)$-BBM, both starting from the initial measure $\cX^{N(\cdot,T)}_{(i-1)\eps T}$ at time $(i-1)\eps T$, such that,
	\begin{equation}\label{eq:prf:inhselec:coupl}
		\bbP_{\mu_T}\left(\cX_{i\eps T}^{\ul N_i(T)}\,\prec\, \cX_{i\eps T}^{N(\cdot,T)} \,\prec\, \cX_{i\eps T}^{\ol N_i(T)}\,\middle|\, \cF_{(i-1)\eps T}\,;\,\cX^{\ul N_i(T)}_{(i-1)\eps T}=\cX^{N(\cdot,T)}_{(i-1)\eps T}=\cX^{\ol N_i(T)}_{(i-1)\eps T} \right)\;=\;1\,.
	\end{equation}
	Moreover, Lemma~\ref{lem:enddist:inhselec} states that, for $\gl>0$, there exists $T_0>0$ (depending only on $\gh,\eps,\gl$) such that for $T\geq T_0$,
	\begin{align}\label{eq:prf:inhselec:1}
			&\bbP_{\mu_T}\bigg( \frac1{T^{1/3}}\bigg(\shftend{i\eps T}\big(\cX^{\ul N_i(T)}_{i\eps T}\big) - \shftend{(i-1)\eps T}\big(\cX^{\ul N_{(i-1)}(T)}_{(i-1)\eps T}\big) - \ul m_{i,\eps} \bigg) <-\eps\gl \,\bigg|\, \cF_{(i-1)\eps T} \bigg)\leq \eps^2,\\
			\notag \text{and}\;\; &\bbP_{\mu_T}\bigg( \frac1{T^{1/3}}\bigg(\shftend{i\eps T}\big(\cX^{\ol N_i(T)}_{i\eps T}\big) - \shftend{(i-1)\eps T}\big(\cX^{\ol N_{(i-1)}(T)}_{(i-1)\eps T}\big) - \ol m_{i,\eps} \bigg) >\eps\gl \,\bigg|\, \cF_{(i-1)\eps T} \bigg)\leq \eps^2.
	\end{align}
	Recalling Theorem~\ref{thm:main}, one obtains similarly for $T\geq T_0$,
	\begin{align}\label{eq:prf:inhselec:2}
			&\bbP_{\mu_T}\bigg( \frac1{T^{1/3}}\bigg(\!\max\!\big(\cX^{\ul N_i(T)}_{i\eps T}\big) - \shftend{(i-1)\eps T}\big(\cX^{\ul N_{(i-1)}(T)}_{(i-1)\eps T}\big) - \ul m_{i,\eps} \bigg) <-\eps\gl \,\bigg|\, \cF_{(i-1)\eps T} \bigg)\leq \eps^2,\\
			\notag \text{and}\;\; &\bbP_{\mu_T}\bigg( \frac1{T^{1/3}}\bigg(\!\max\!\big(\cX^{\ol N_i(T)}_{i\eps T}\big) - \shftend{(i-1)\eps T}\big(\cX^{\ol N_{(i-1)}(T)}_{(i-1)\eps T}\big) - \ol m_{i,\eps} \bigg) >\eps\gl \,\bigg|\, \cF_{(i-1)\eps T} \bigg)\leq \eps^2.
	\end{align}
	Apply the Markov property at times $(i-1)\eps T$, $1\leq i\leq \eps^{-1}-1$, one deduces with a union bound and~\eqref{eq:prf:inhselec:coupl},
	\begin{align*}
		&\bbP_{\mu_T}\left( \frac1{T^{1/3}}\bigg(\max\big(\cX^{N(\cdot,T)}_{T}\big) - \shft(\mu_T) - \sum_{i=1}^{\eps^{-1}} \ul m_{i,\eps} \bigg) <-\gl \right)\\
		&\quad\leq \sum_{i=1}^{\eps^{-1}-1} \bbP_{\mu_T}\bigg( \frac1{T^{1/3}}\bigg(\shftend{i\eps T}\big(\cX^{\ul N_i(T)}_{i\eps T}\big) - \shftend{(i-1)\eps T}\big(\cX^{\ul N_{(i-1)}(T)}_{(i-1)\eps T}\big) - \ul m_{i,\eps} \bigg) <-\eps\gl \\
		& \qquad\qquad\qquad\qquad\qquad\qquad\qquad\qquad\qquad\qquad\qquad\qquad\qquad \bigg|\, \cX^{\ul N_i(T)}_{(i-1)\eps T}=\cX^{N(\cdot,T)}_{(i-1)\eps T} \bigg)\\
		&\qquad+ \bbP_{\mu_T}\bigg( \frac1{T^{1/3}}\bigg(\max\big(\cX^{\ul N_{\,\eps^{-1}}(T)}_{T}\big) - \shftend{(1-\eps)T}\big(\cX^{\ul N_{(\eps^{-1}-1)}(T)}_{(1-\eps)T}\big) - \ul m_{\eps^{-1},\eps} \bigg) <-\eps\gl \\
		& \qquad\qquad\qquad\qquad\qquad\qquad\qquad\qquad\qquad\qquad\qquad\qquad\qquad \bigg|\, \cX^{\ul N_{\eps^{-1}}(T)}_{(1-\eps)T}=\cX^{N(\cdot,T)}_{(1-\eps)T} \bigg),
	\end{align*}
	for $T\geq T_0(\gh,\gl,\eps)$; and by~(\ref{eq:prf:inhselec:1}--\ref{eq:prf:inhselec:2}), the latter sum is bounded by $\eps$. Recalling~\eqref{eq:prf:inhselec:riemann} and letting $\eps\to0$, this finally proves the lower bound in~\eqref{eq:thm:inhselec:max}; and the upper bound is obtained similarly. Moreover,~\eqref{eq:thm:inhselec:enddist} is obtained with an analogous computation, by applying Proposition~\ref{prop:endtimedistribution} instead of~\eqref{eq:prf:inhselec:2} to the last block ($i=\eps^{-1}$) of the decomposition above. This concludes the proof of Theorem~\ref{thm:inhselec} in the critical regime.
	\smallskip
	
	Regarding the super-critical case, it is proven by replicating the proof above \emph{mutatis mutandis}: that is, replacing $T^{1/3}$ with $\hat L(T)\gg T^{1/3}$, $\hat m_T^\crit$ with $\hat m_T^\supercrit$, and adapting the definitions of $\ul m_{i,\eps}$, $\ol m_{i,\eps}$ above accordingly. This is very straightforward, so we leave the details to the reader.
\end{proof}

\begin{proof}[Proof of~\eqref{eq:thm:inhselec:max:decreas}]
	This is immediate: recall that, in the super-critical regime with decreasing variance, the convergence~\eqref{eq:thm:inhselec:max:decreas} \emph{does not} depend on the specifics of $\ell(\cdot)$, $\hat L(T)$. Hence the super-critical $N(\cdot,T)$-BBM can still be coupled with, on the one hand a critical $M_\ga(T)$-BBM with population size $M_\ga(T):=\exp(\ga T^{1/3})\ll\inf\{N(s,T),0\leq s\leq T\}$, $\ga>0$; and on the other hand a BBM without selection (see Proposition~\ref{prop:coupling:inhselec}). Therefore, the proof of Proposition~\ref{prop:main:gsdecreasing} above fully accommodates to super-critical, time-inhomogeneous selection. We leave the details to the reader.
\end{proof}

\begin{proof}[Proof of~\eqref{eq:thm:inhselec:enddist:2}]
	This is also straightforward: recalling the proof of (\ref{eq:thm:inhselec:max},~\ref{eq:thm:inhselec:enddist}), it suffices to estimate $\min(\cX^{N(\cdot,T)}_T)$ in the last block of the decomposition. One can estimate both $\min(\cX^{\ol N_{\eps^{-1}}}_T)$ and $\min(\cX^{\ul N_{\eps^{-1}}}_T)$ by using~\eqref{eq:prop:endtimedist:2} and Theorem~\ref{thm:main}; and reproducing the coupling arguments from the proof of~\eqref{eq:prop:endtimedist:2} (we do not write the details again, but let us recall that they do not immediately follow from the definition of stochastic domination), this gives the expected estimate on $\min(\cX^{N(\cdot,T)}_T)$.
\end{proof}

\begin{proof}[Proof of~\eqref{eq:thm:inhselec:max}, sub-critical regime]
	In that regime there is no equivalent to Proposition~\ref{prop:endtimedistribution} or Lemma~\ref{lem:enddist:inhselec}, so one cannot split the interval $[0,T]$ into blocks of length $\eps T$; however, the proofs of Lemma~\ref{lem:main:LB} (for the lower bound) and Proposition~\ref{prop:main:UB} (upper bound), in the sub-critical regime, already rely on a block decomposition. Hence, Theorem~\ref{thm:inhselec} can be obtained by approximating the $N(\cdot,T)$-BBM by a process with piecewise-constant selection, and reproducing the arguments from the proof of Theorem~\ref{thm:main}. 
	
	More precisely, let us first discuss the lower bound. Let $K=\lfloor 2T/t^\subcrit\rfloor$ and for $0\leq k\leq K-1$, $t_k=\frac k2t^\subcrit_T$, and $t_{K}=T$. Let $N_k=\inf\{N(s,T), t_{k-1}\leq s\leq t_k\}$, and recall the construction of the auxiliary process $\ol \cX^{N}$ from Section~\ref{sec:LB:coreprf}: let us tweak it such that $(1)$ at times $t_k$, $1\leq k\leq K$, all particles but the $\lfloor N_k\rfloor$ top-most ones are removed, and the remaining particles are set to the lowest among their positions; and $(2)$ on the interval $[t_{k-1},t_k]$, the process $\ol \cX^{N}$ evolves like an $N_k$-BBM, where $N_k$ is constant. Therefore, this process is still stochastically dominated by the $N$-BBM, and the remainder of the proof from Section~\ref{sec:LB:coreprf} still holds for this process (since the selection is constant on each interval $[t_{k-1},t_k]$), both for the super-polynomial and small $N(T)$ cases. We deduce from this a lower bound on the maximal displacement of the $N(\cdot,T)$-BBM, and a Riemann sum approximation (analogous to~\eqref{eq:prf:inhselec:riemann}) finally shows that it is of order $\hat m^\subcrit_T - o(T/L(T)^2)$ for $T$ large. Since all these adaptations are very straightforward, and rely on arguments that were already applied in other parts of the paper, we leave the remaining details to the reader. Finally, the upper bound is obtained with analogous arguments.
\end{proof}

\begin{remark}\label{rem:inhselec}
	Let us point out that Theorem~\ref{thm:inhselec} assumes that the ``regime'' for the selection (i.e. $\hat L(T)$) remains the same throughout $[0,T]$. If the regime changes finitely many times, e.g. $\hat L(T)$ is replaced with some $\hat L_k(T)$ on an interval $[t_{k-1},t_k]$, $1\leq k\leq K$, one can derive similar results by induction: indeed,~\eqref{eq:thm:inhselec:enddist} and Lemma~\ref{lem:enddist:inhselec} provide an estimate on $\shftend{t_k}(\cX^{N(\cdot,T)}_{t_k})$ if the regime is critical or super-critical on $[t_{k-1},t_k]$; and in the sub-critical case $\hat L_k(T)\ll T^{1/3}$, one has,
	\[\shftend{t_k}(\cX^{N(\cdot,T)}_{t_k})=\shftend{t_k}(\gd_{\max \cX^{N(\cdot,T)}_{t_k}})+O(\hat L_k(T)).\]
	Then, one can apply Theorem~\ref{thm:inhselec} on each interval $[t_{k-1},t_k]$ inductively. We do not write any statement or proof for this fact, since this follows from a carbon copy of arguments presented above.
\end{remark}


\section*{Acknowledgments}
The authors would like to thank Marc Lelarge for mentioning the relation of our work with the beam-search algorithm, and G\'erard Ben Arous for fruitful discussions on the topic of spin glasses. We also thank four anonymous referees for their constructive comments that improved the
quality of this paper.

\section*{Funding}
A. Legrand acknowledges support from the ANR projects ``REMECO'', ANR-20-CE92-0010 and ``LOCAL'', ANR-22-CE40-0012. P. Maillard acknowledges support from  the ANR-DFG project ``REMECO'', ANR-20-CE92-0010 and Institut Universitaire de France (IUF). 

\bibliographystyle{abbrv}
\bibliography{biblio}

\begin{thebibliography}{10}

\bibitem{ABM19}
L.~Addario-Berry and P.~Maillard.
\newblock The algorithmic hardness threshold for continuous random energy
  models.
\newblock {\em Math. Stat. Learn.}, 2(1):77--101, 2019.

\bibitem{Aid13}
E.~A\"{\i}d\'{e}kon.
\newblock Convergence in law of the minimum of a branching random walk.
\newblock {\em Ann. Probab.}, 41(3A):1362--1426, 2013.

\bibitem{AldousGreedy}
D.~Aldous.
\newblock Greedy search on the binary tree with random edge-weights.
\newblock {\em Combin. Probab. Comput.}, 1(4):281--293, 1992.

\bibitem{Aldous1998}
D.~Aldous.
\newblock A metropolis-type optimization algorithm on the infinite tree.
\newblock {\em Algorithmica}, 22(4):388--412, Dec 1998.

\bibitem{AtarWeakFormulation2025}
R.~Atar.
\newblock A weak formulation of free boundary problems and its application to
  hydrodynamic limits of particle systems with selection.
\newblock {\em The Annals of Probability}, 53(5), 2025.

\bibitem{AN72}
K.~B. Athreya and P.~E. Ney.
\newblock {\em Branching processes}.
\newblock Die Grundlehren der mathematischen Wissenschaften, Band 196.
  Springer-Verlag, New York-Heidelberg, 1972.

\bibitem{baity-jesi_activated_2018}
M.~Baity-Jesi, G.~Biroli, and C.~Cammarota.
\newblock Activated {Aging} {Dynamics} and {Effective} {Trap} {Model}
  {Description} in the {Random} {Energy} {Model}.
\newblock {\em Journal of Statistical Mechanics: Theory and Experiment},
  2018(1):013301, Jan. 2018.
\newblock arXiv:1708.03268 [cond-mat].

\bibitem{benarous_glauber_2003}
G.~Ben~Arous, A.~Bovier, and V.~Gayrard.
\newblock Glauber {Dynamics} of the {Random} {Energy} {Model}.
\newblock {\em Communications in Mathematical Physics}, 235(3):379--425, Apr.
  2003.

\bibitem{BerardFrenaisHydrodynamicLimit2023}
J.~B{\'e}rard and B.~Fr{\'e}nais.
\newblock Hydrodynamic limit of {{N-branching Markov}} processes, 2023.

\bibitem{BG10}
J.~B\'{e}rard and J.-B. Gou\'{e}r\'{e}.
\newblock Brunet-{D}errida behavior of branching-selection particle systems on
  the line.
\newblock {\em Comm. Math. Phys.}, 298(2):323--342, 2010.

\bibitem{Berard2011}
J.~B{\'{e}}rard and J.-B. Gou{\'{e}}r{\'{e}}.
\newblock {Survival probability of the branching random walk killed below a
  linear boundary}.
\newblock {\em Electronic Journal of Probability}, 16(14):396--418, 2011.

\bibitem{Berestycki2010a}
J.~Berestycki, N.~Berestycki, and J.~Schweinsberg.
\newblock {Survival of near-critical branching Brownian motion}.
\newblock {\em Journal of Statistical Physics}, 143(5):833--854, may 2011.

\bibitem{BBS13}
J.~Berestycki, N.~Berestycki, and J.~Schweinsberg.
\newblock The genealogy of branching brownian motion with absorption.
\newblock {\em The Annals of Probability}, 41(2), mar 2013.

\bibitem{Berestycki2014a}
J.~Berestycki, N.~Berestycki, and J.~Schweinsberg.
\newblock {Critical branching Brownian motion with absorption: survival
  probability}.
\newblock {\em Probability Theory and Related Fields}, 160(3-4):489--520, 2014.

\bibitem{Berestycki2015}
J.~Berestycki, N.~Berestycki, and J.~Schweinsberg.
\newblock {Critical branching Brownian motion with absorption: Particle
  configurations}.
\newblock {\em Annales de l'Institut Henri Poincar{\'{e}}, Probabilit{\'{e}}s
  et Statistiques}, 51(4):1215--1250, 2015.

\bibitem{Berestycki2018}
J.~Berestycki, {\'E}.~Brunet, and S.~Penington.
\newblock Global existence for a free boundary problem of {{Fisher-KPP}} type.
\newblock {\em Nonlinearity}, 32(10):3912--3939, 2019.

\bibitem{BerestyckiToughSelectionPrinciple2024}
J.~Berestycki and O.~Tough.
\newblock Selection principle for the \${{N}}\$-{{BBM}}, 2024.

\bibitem{BerestyckiZhaoShapeMultidimensional2018}
N.~Berestycki and L.~Z. Zhao.
\newblock The shape of multidimensional {{Brunet}}--{{Derrida}} particle
  systems.
\newblock {\em The Annals of Applied Probability}, 28(2):339--386, 2018.

\bibitem{Ber80}
S.~M. Berman.
\newblock Isotropic {G}aussian processes on the {H}ilbert sphere.
\newblock {\em Ann. Probab.}, 8(6):1093--1106, 1980.

\bibitem{Big95}
J.~D. Biggins.
\newblock The growth and spread of the general branching random walk.
\newblock {\em Ann. Appl. Probab.}, 5(4):1008--1024, 1995.

\bibitem{Big04}
J.~D. Biggins and A.~E. Kyprianou.
\newblock Measure change in multitype branching.
\newblock {\em Adv. in Appl. Probab.}, 36(2):544--581, 2004.

\bibitem{Bil99}
P.~Billingsley.
\newblock {\em Convergence of probability measures}.
\newblock Wiley Series in Probability and Statistics: Probability and
  Statistics. John Wiley \& Sons, Inc., New York, second edition, 1999.
\newblock A Wiley-Interscience Publication.

\bibitem{Sha87AI}
R.~Bisiani.
\newblock Beam search.
\newblock In C.~S. Shapiro, editor, {\em Encyclopedia of Artificial
  Intelligence}, pages 56--58. John Wiley \& Sons, Inc., 1987.

\bibitem{BS02}
A.~N. Borodin and P.~Salminen.
\newblock {\em Handbook of {B}rownian motion---facts and formulae}.
\newblock Probability and its Applications. Birkh\"{a}user Verlag, Basel,
  second edition, 2002.

\bibitem{BH15}
A.~Bovier and L.~Hartung.
\newblock {Variable speed branching Brownian motion 1. Extremal processes in
  the weak correlation regime}.
\newblock {\em Alea}, 12(1):261--291, mar 2015.

\bibitem{BK04-2}
A.~Bovier and I.~Kurkova.
\newblock Derrida's generalized random energy models 2: models with continuous
  hierarchies.
\newblock {\em Annales de l'Institut Henri Poincare (B) Probability and
  Statistics}, 40(4):481--495, 2004.

\bibitem{BK07}
A.~Bovier and I.~Kurkova.
\newblock {\em Much Ado about Derrida's GREM}, pages 81--115.
\newblock Springer Berlin Heidelberg, Berlin, Heidelberg, 2007.

\bibitem{Bra83}
M.~Bramson.
\newblock Convergence of solutions of the {K}olmogorov equation to travelling
  waves.
\newblock {\em Mem. Amer. Math. Soc.}, 44(285):iv+190, 1983.

\bibitem{Bra78}
M.~D. Bramson.
\newblock Maximal displacement of branching {B}rownian motion.
\newblock {\em Comm. Pure Appl. Math.}, 31(5):531--581, 1978.

\bibitem{BD97}
E.~Brunet and B.~Derrida.
\newblock Shift in the velocity of a front due to a cutoff.
\newblock {\em Phys. Rev. E (3)}, 56(3):2597--2604, 1997.

\bibitem{Brunet1999}
{\'{E}}.~Brunet and B.~Derrida.
\newblock {Microscopic models of traveling wave equations}.
\newblock {\em Computer Physics Communications}, 121-122:376--381, sep 1999.

\bibitem{BDMM06}
E.~Brunet, B.~Derrida, A.~H. Mueller, and S.~Munier.
\newblock Phenomenological theory giving the full statistics of the position of
  fluctuating pulled fronts.
\newblock {\em Phys. Rev. E}, 73:056126, May 2006.

\bibitem{Chau91}
B.~Chauvin.
\newblock Product martingales and stopping lines for branching {B}rownian
  motion.
\newblock {\em Ann. Probab.}, 19(3):1195--1205, 1991.

\bibitem{DeMasiFerrariPresuttiSoprano-LotoHydrodynamicsNBBM2019}
A.~De~Masi, P.~A. Ferrari, E.~Presutti, and N.~{Soprano-Loto}.
\newblock Hydrodynamics of the {{N-BBM Process}}.
\newblock In G.~Giacomin, S.~Olla, E.~Saada, H.~Spohn, and G.~Stoltz, editors,
  {\em Stochastic {{Dynamics Out}} of {{Equilibrium}}}, volume 282, pages
  523--549. Springer International Publishing, Cham, 2019.

\bibitem{derrida_genealogy_2016}
B.~Derrida and P.~Mottishaw.
\newblock On the genealogy of branching random walks and of directed polymers.
\newblock {\em EPL (Europhysics Letters)}, 115(4):40005, Aug. 2016.

\bibitem{Derrida2007}
B.~Derrida and D.~Simon.
\newblock {The survival probability of a branching random walk in presence of
  an absorbing wall}.
\newblock {\em Europhysics Letters (EPL)}, 78(6):60006, jun 2007.

\bibitem{DS88}
B.~Derrida and H.~Spohn.
\newblock Polymers on disordered trees, spin glasses, and traveling waves.
\newblock {\em Journal of Statistical Physics}, 51(5):817--840, Jun 1988.

\bibitem{ElAlaoui2021}
A.~{El Alaoui}, A.~Montanari, and M.~Sellke.
\newblock {Optimization of mean-field spin glasses}.
\newblock {\em The Annals of Probability}, 49(6), nov 2021.

\bibitem{Fang2010}
M.~Fang and O.~Zeitouni.
\newblock {Consistent Minimal Displacement of Branching Random Walks}.
\newblock {\em Electronic Communications in Probability}, 15:no. 11, 106--118,
  mar 2010.

\bibitem{Faraud2012b}
G.~Faraud, Y.~Hu, and Z.~Shi.
\newblock {Almost sure convergence for stochastically biased random walks on
  trees}.
\newblock {\em Probability Theory and Related Fields}, 154(3-4):621--660, dec
  2012.

\bibitem{Gamarnik2021}
D.~Gamarnik.
\newblock {The overlap gap property: A topological barrier to optimizing over
  random structures}.
\newblock {\em Proceedings of the National Academy of Sciences of the United
  States of America}, 118(41), 2021.

\bibitem{gamarnik_overlap_2021}
D.~Gamarnik and A.~Jagannath.
\newblock The overlap gap property and approximate message passing algorithms
  for \$p\$-spin models.
\newblock {\em The Annals of Probability}, 49(1), Jan. 2021.

\bibitem{GHS11}
N.~Gantert, Y.~Hu, and Z.~Shi.
\newblock Asymptotics for the survival probability in a killed branching random
  walk.
\newblock {\em Annales de l'Institut Henri Poincaré, Probabilités et
  Statistiques}, 47(1):111 -- 129, 2011.

\bibitem{huang2022tight}
B.~Huang and M.~Sellke.
\newblock Tight lipschitz hardness for optimizing mean field spin glasses.
\newblock In {\em 2022 IEEE 63rd Annual Symposium on Foundations of Computer
  Science (FOCS)}, pages 312--322. IEEE, 2022.

\bibitem{huang2023algorithmic}
B.~Huang and M.~Sellke.
\newblock Algorithmic threshold for multi-species spherical spin glasses.
\newblock {\em arXiv preprint arXiv:2303.12172}, 2023.

\bibitem{INW69Part1}
N.~Ikeda, M.~Nagasawa, and S.~Watanabe.
\newblock Branching {M}arkov processes. {I}.
\newblock {\em J. Math. Kyoto Univ.}, 8:233--278, 1968.

\bibitem{INW69Part3}
N.~Ikeda, M.~Nagasawa, and S.~Watanabe.
\newblock Branching {M}arkov processes. {III}.
\newblock {\em J. Math. Kyoto Univ.}, 9:95--160, 1969.

\bibitem{Jaffuel2012}
B.~Jaffuel.
\newblock The critical barrier for the survival of branching random walk with
  absorption.
\newblock {\em Ann. Inst. Henri Poincar\'{e} Probab. Stat.}, 48(4):989--1009,
  2012.

\bibitem{Jag89}
P.~Jagers.
\newblock General branching processes as {M}arkov fields.
\newblock {\em Stochastic Process. Appl.}, 32(2):183--212, 1989.

\bibitem{Kesten1978}
H.~Kesten.
\newblock {Branching Brownian motion with absorption}.
\newblock {\em Stochastic Processes and their Applications}, 7(1):9--47, mar
  1978.

\bibitem{Lemons2022}
S.~Lemons, C.~L. L{\'{o}}pez, R.~C. Holte, and W.~Ruml.
\newblock {Beam Search: Faster and Monotonic}.
\newblock {\em Proceedings International Conference on Automated Planning and
  Scheduling, ICAPS}, 32:222--230, 2022.

\bibitem{MaiPhD}
P.~Maillard.
\newblock {\em {Branching Brownian motion with selection}}.
\newblock Theses, {Universit{\'e} Pierre et Marie Curie - Paris VI}, Oct. 2012.

\bibitem{Mai16}
P.~Maillard.
\newblock Speed and fluctuations of {$N$}-particle branching {B}rownian motion
  with spatial selection.
\newblock {\em Probab. Theory Related Fields}, 166(3-4):1061--1173, 2016.

\bibitem{YaglomBBM}
P.~Maillard and J.~Schweinsberg.
\newblock {Yaglom-type limit theorems for branching Brownian motion with
  absorption}.
\newblock {\em Annales Henri Lebesgue}, 5:921--985, 2022.

\bibitem{BBMsmalltime}
P.~Maillard and J.~Schweinsberg.
\newblock {The all-time maximum for branching Brownian motion with absorption
  conditioned on long-time survival}.
\newblock {\em arXiv:2310.00707}, page 23pp, 2023.

\bibitem{MZ16}
P.~Maillard and O.~Zeitouni.
\newblock Slowdown in branching brownian motion with inhomogeneous variance.
\newblock {\em Annales de l’Institut Henri Poincaré, Probabilités et
  Statistiques}, 52(3), Aug 2016.

\bibitem{Mal15}
B.~Mallein.
\newblock Maximal displacement of a branching random walk in time-inhomogeneous
  environment.
\newblock {\em Stochastic Processes and their Applications},
  125(10):3958--4019, 2015.

\bibitem{Mal17}
B.~Mallein.
\newblock Branching random walk with selection at critical rate.
\newblock {\em Bernoulli}, 23(3):1784--1821, 2017.

\bibitem{Mallein2015a}
B.~Mallein.
\newblock \${{N}}\$-{{Branching}} random walk with \$\textbackslash
  alpha\$-stable spine.
\newblock {\em Theory of Probability \& Its Applications}, 62(2):295--318,
  2018.

\bibitem{McK75}
H.~P. McKean.
\newblock Application of {B}rownian motion to the equation of
  {K}olmogorov-{P}etrovskii-{P}iskunov.
\newblock {\em Comm. Pure Appl. Math.}, 28(3):323--331, 1975.

\bibitem{montanari2021optimization}
A.~Montanari.
\newblock Optimization of the sherrington--kirkpatrick hamiltonian.
\newblock {\em SIAM Journal on Computing}, (0):FOCS19--1, 2021.

\bibitem{Nolen2014}
J.~Nolen, J.-M. Roquejoffre, and L.~Ryzhik.
\newblock {Power-Like Delay in Time Inhomogeneous Fisher-KPP Equations}.
\newblock {\em Communications in Partial Differential Equations},
  40(3):475--505, dec 2014.

\bibitem{Pain2015}
M.~Pain.
\newblock Velocity of the {{L-branching Brownian}} motion.
\newblock {\em Electronic Journal of Probability}, 21:no. 28, 1--28, 2016.

\bibitem{Pemantle2009}
R.~Pemantle.
\newblock Search cost for a nearly optimal path in a binary tree.
\newblock {\em Ann. Appl. Probab.}, 19(4):1273--1291, 2009.

\bibitem{Roberts2012}
M.~I. Roberts.
\newblock {Fine asymptotics for the consistent maximal displacement of
  branching brownian motion}.
\newblock {\em Electronic Journal of Probability}, 20:1--26, sep 2015.

\bibitem{Sav69}
T.~H. Savits.
\newblock The explosion problem for branching {M}arkov process.
\newblock {\em Osaka Math. J.}, 6:375--395, 1969.

\bibitem{Saw76}
S.~Sawyer.
\newblock Branching diffusion processes in population genetics.
\newblock {\em Advances in Applied Probability}, 8(4):659--689, 1976.

\bibitem{Sel24}
M.~Sellke.
\newblock The threshold energy of low temperature langevin dynamics for pure
  spherical spin glasses, 2024.

\bibitem{SLep62}
D.~Slepian.
\newblock The one-sided barrier problem for {G}aussian noise.
\newblock {\em Bell System Tech. J.}, 41:463--501, 1962.

\bibitem{subag2021following}
E.~Subag.
\newblock Following the ground states of full-rsb spherical spin glasses.
\newblock {\em Communications on Pure and Applied Mathematics},
  74(5):1021--1044, 2021.

\end{thebibliography}

\end{document}